\title{Residuals-based distributionally robust optimization \\ with covariate information}
\date{Version 2 (this document): May 3, 2022 \\[0.1in] \hspace*{-0.85in} Version 1: December 2, 2020}
\author[1]{Rohit Kannan}
\author[2]{G{\"u}zin Bayraksan}
\author[3]{James R. Luedtke}
\affil[1]{Center for Nonlinear Studies (T-CNLS) and Applied Mathematics \& Plasma Physics (T-5), \protect\\ Los Alamos National Laboratory, Los Alamos, NM, USA. E-mail: rohitk@alum.mit.edu}
\affil[2]{Department of Integrated Systems Engineering, The Ohio State University, Columbus, OH, USA. \protect\\ E-mail: bayraksan.1@osu.edu}
\affil[3]{Department of Industrial \& Systems Engineering and Wisconsin Institute for Discovery, \protect\\ University of Wisconsin-Madison, Madison, WI, USA. E-mail: jim.luedtke@wisc.edu}
\newcommand{\Set}[2]{\left\lbrace #1 : #2 \right\rbrace}
\DeclareMathOperator*{\argmin}{arg\,min}
\newcommand{\tr}[1]{\ensuremath{{#1}^\text{T}}}
\newcommand{\uset}[2]{\ensuremath{\underset{#1}{#2}}}
\newcommand{\1}{\mathds{1}}
\DeclarePairedDelimiter\abs{\lvert}{\rvert}%
\DeclarePairedDelimiter\norm{\lVert}{\rVert}%
\newcommand{\prob}[1]{\mathbb{P}\left\lbrace{#1}\right\rbrace}
\newcommand{\expect}[1]{\mathbb{E}\left[{#1}\right]}
\newcommand{\expectation}[2]{\mathbb{E}_{#1}\left[{#2}\right]}
\newcommand{\dev}[2]{\mathbb{D}\left({#1},{#2}\right)}
\newcommand{\convinprob}{\xrightarrow{p}}
\newcommand{\proj}[2]{\operatorname{proj}_{#1}(#2)}
\newcommand{\D}{\mathcal{D}}
\newcommand{\F}{\mathcal{F}}
\newcommand{\R}{\mathbb{R}}
\newcommand{\hS}{\hat{S}}
\newcommand{\X}{\mathcal{X}}
\newcommand{\Y}{\mathcal{Y}}
\newcommand{\Z}{\mathcal{Z}}
\newcommand{\hf}{\hat{f}}
\newcommand{\hg}{\hat{g}}
\newcommand{\heps}{\hat{\varepsilon}}
\newcommand{\teps}{\tilde{\varepsilon}}
\newcommand{\hv}{\hat{v}}
\newcommand{\hz}{\hat{z}}
\newcommand{\hP}{\hat{\mathcal{P}}}
\newcommand{\Pf}{\mathfrak{P}}
\newcommand{\tE}{\texttt{E}}
\newcommand{\tJ}{\texttt{J}}
\newcommand{\tW}{\texttt{W}}
\newcommand{\tS}{\texttt{S}}
\newcommand{\tH}{\texttt{H}}
\newcommand{\tone}{\texttt{1}}
\newcommand{\ttwo}{\texttt{2}}
\newcommand{\tthree}{\texttt{3}}
\providecommand{\keywords}[1]
{
  \small	
  \textbf{Key words:} #1
}
\newtheorem{theorem}{Theorem}[]
\newtheorem{lemma}[theorem]{Lemma}
\newtheorem{assumption}{Assumption}[]
\newtheorem{remark}{Remark}
\newtheorem{definition}{Definition}
\newtheorem{proposition}[theorem]{Proposition}
\newtheorem{example}{Example}
\let\oldtheorem\theorem
\renewcommand{\theorem}{\oldtheorem\normalfont}
\let\oldlemma\lemma
\renewcommand{\lemma}{\oldlemma\normalfont}
\let\oldassumption\assumption
\renewcommand{\assumption}{\oldassumption\normalfont}
\let\oldremark\remark
\renewcommand{\remark}{\oldremark\normalfont}
\let\olddefinition\definition
\renewcommand{\definition}{\olddefinition\normalfont}
\let\oldcorollary\corollary
\renewcommand{\corollary}{\oldcorollary\normalfont}
\let\oldproposition\proposition
\renewcommand{\proposition}{\oldproposition\normalfont}
\let\oldexample\example
\renewcommand{\example}{\oldexample\normalfont}
\let\oldconjecture\conjecture
\renewcommand{\conjecture}{\oldconjecture\normalfont}
\begin{document}

\maketitle

\begin{abstract}
We consider data-driven approaches that integrate a machine learning prediction model within distributionally robust optimization (DRO) given limited joint observations of uncertain parameters and covariates.
Our framework is flexible in the sense that it can accommodate a variety of regression setups and DRO ambiguity sets. 
We investigate asymptotic and finite sample properties of solutions obtained using Wasserstein, sample robust optimization, and phi-divergence-based ambiguity sets within our DRO formulations, and explore cross-validation approaches for sizing these ambiguity sets.
Through numerical experiments, we validate our theoretical results, study the effectiveness of our approaches for sizing ambiguity sets, and illustrate the benefits of our DRO formulations in the limited data regime even when the prediction model is misspecified. \\[0.1in]
\keywords{Data-driven stochastic programming, distributionally robust optimization, Wasserstein distance, phi-divergences, covariates, machine learning, convergence rate, large deviations}
\end{abstract}

\section{Introduction}
\label{sec:intro}

Stochastic programming~\cite{shapiro2009lectures} is a powerful modeling framework for decision-making under uncertainty that finds applications in engineering, operations research, and economics.
A standard formulation of a stochastic program is
\begin{alignat}{2}
\label{eqn:tradsp}
&\uset{z \in \Z}{\min} \: && \expect{c(z,Y)},
\end{alignat}
where $z$ denotes the decision vector, $\Z \subseteq \R^{d_z}$ is the set of feasible decisions, $Y$ denotes a random vector of model parameters with support $\Y \subseteq \R^{d_y}$, and $c: \Z \times \Y \to \R$ is the objective function.
Because the distribution of the random vector~$Y$ is typically unknown, popular data-driven approaches for solving problem~\eqref{eqn:tradsp}, such as sample average approximation (SAA)~\cite{shapiro2009lectures,homem2014monte}, only assume access to a finite sample of~$Y$.
Often, in real-world applications, the random vector~$Y$ (e.g., demand for a new product) can be predicted using knowledge of covariates~$X$ (e.g., web chatter and historical demands for similar existing products).
In our previous work~\cite{kannan2020data}, we investigated extensions of SAA that can incorporate covariate information in problem~\eqref{eqn:tradsp} and studied the asymptotic and finite sample properties of the resulting solutions (see Section~\ref{subsec:ddsaaform}).
Despite its favorable theoretical guarantees~\cite{homem2014monte,shapiro2009lectures,kannan2020data}, a limitation of the SAA approach is that its solutions may exhibit disappointing out-of-sample performance in the small sample size regime~\cite{bertsimas2018robust,esfahani2018data}.

Distributionally robust optimization (DRO)~\cite{rahimian2019distributionally} is a  framework for addressing ambiguity in the distribution of~$Y$.
The DRO counterpart of problem~\eqref{eqn:tradsp} can be formulated as
\begin{alignat}{2}
\label{eqn:traddro}
&\uset{z \in \Z}{\min} \: \uset{Q \in \hP}{\sup} \: && \expectation{Y \sim Q}{c(z,Y)},
\end{alignat}%
where we minimize the worst-case expected objective over an ambiguity set~$\hP$ of distributions.
Several studies have shown that the DRO problem~\eqref{eqn:traddro} can regularize a small-sample SAA of problem~\eqref{eqn:tradsp} and its solutions can mitigate the out-of-sample disappointment of decisions determined using the SAA approach (see the reviews~\cite{kuhn2019wasserstein,rahimian2019distributionally,blanchet2021statistical}).

We introduce a DRO framework for decision-making under uncertainty in the presence of covariate information and study its theoretical properties.
We first consider the setup in~\cite{ban2018big,bertsimas2014predictive,sen2018learning} for incorporating covariate information in problem~\eqref{eqn:tradsp}.
Suppose we have access to joint observations of the random vector $Y$ and random covariates~$X$.
Given a new random observation $X = x$, our goal is to approximate the solution to the {\it conditional stochastic program}
\begin{alignat}{2}
\label{eqn:sp}
v^*(x) &:= \uset{z \in \Z}{\min} \: && \expect{c(z,Y) \mid X = x}. \tag{SP}
\end{alignat}
Applications of this framework include 
shipment planning under demand uncertainty~\cite{bertsimas2014predictive,bertsimas2019dynamic}, where product demands can be predicted using past demands, location, and web search results before making production and inventory decisions, 
and portfolio optimization under market uncertainty~\cite{dou2019distributionally}, where stock prices can be predicted using economic indicators and historical stock data before making investment decisions.

Motivated by applications where we may only have access to limited data, we consider data-driven DRO formulations that incorporate a regression model within a DRO framework in a bid to construct estimators for~\eqref{eqn:sp} with better out-of-sample performance.
Our data-driven DRO formulations are built around the residuals-based SAA formulations that we studied in~\cite{kannan2020data}.
We  define our DRO frameworks in Section~\ref{sec:drsaa}, and analyze their asymptotic and finite sample properties in Sections~\ref{sec:wassdro} and~\ref{sec:samprobustopt}.
Section~\ref{sec:wassdro} focuses on ambiguity sets defined using Wasserstein distances, whereas Section~\ref{sec:samprobustopt} studies a family of ambiguity sets with discrete support.
The analysis in Sections~\ref{sec:wassdro} and~\ref{sec:samprobustopt} builds on our previous analysis of residuals-based SAA formulations in~\cite{kannan2020data}, but makes several new contributions, including analysis of the average rates of convergence of our DRO formulations and finite sample solution guarantees of Wasserstein DRO solutions.
We also investigate data-driven methods for choosing the radii of these ambiguity sets in the presence of covariate observations in Section~\ref{subsec:wass_radius}
without access to samples from the conditional distribution of $Y$ given $X = x$.
Numerical experiments in Section~\ref{sec:computexp} demonstrate the potential benefits of our modular data-driven DRO framework  in the limited data regime.

\subsection{Related work}

We begin by reviewing related work that aims to solve the conditional stochastic program~\eqref{eqn:sp} without using DRO.
Ban and Rudin~\cite{ban2018big} and Bertsimas and Kallus~\cite{bertsimas2014predictive} study policy-based empirical risk minimization and nonparametric regression-based reweighted SAA approaches for solving~\eqref{eqn:sp}.
Bertsimas and Kallus~\cite{bertsimas2014predictive} establish asymptotic optimality of their data-driven decisions, whereas Ban and Rudin~\cite{ban2018big} also present finite sample guarantees in the context of the data-driven newsvendor problem. 
Bazier-Matte and Delage~\cite{bazier2020generalization} explore linear decision rules for a regularized portfolio selection problem given side information.
They derive finite sample and suboptimality performance guarantees for their solutions.
Ban et al.\ \cite{ban2018dynamic} and Sen and Deng~\cite{sen2018learning} use parametric regression methods along with their empirical residuals to generate scenarios of the random variables given covariate information.
Ban et al.\ \cite{ban2018dynamic} prove asymptotic optimality of their decisions for their particular application.
Kannan et al.\ \cite{kannan2020data} introduce two new SAA formulations that use leave-one-out residuals.
They identify conditions under which solutions to their data-driven SAAs possess asymptotic and finite sample guarantees.
Kannan et al.\ \cite{kannan2020data} also review other data-driven approximations to~\eqref{eqn:sp} that do not use DRO.

Solutions to the above approximations to~\eqref{eqn:sp} might display poor out-of-sample performance when we only have access to limited joint data on the random variables and covariates.
DRO offers a structured framework for determining solutions with better out-of-sample performance in such situations.
Next, we review related work that attempts to solve~\eqref{eqn:sp} using DRO.

Hanasusanto and Kuhn~\cite{hanasusanto2013robust} study multi-stage stochastic programs with time series data.
They propose a $\chi^2$-distance-based DRO formulation that uses Nadaraya-Watson regression estimates to approximate value functions, and solve it using an approximate dynamic programming method.
Bertsimas et al. \cite{bertsimas2019dynamic} consider a multi-stage DRO extension of the approach in~\cite{bertsimas2014predictive} using the sample robust optimization method of~\cite{bertsimas2018data}.
They demonstrate asymptotic optimality of their decisions and develop an approximate solution method using linear decision rules.
Bertsimas and Van Parys~\cite{bertsimas2017bootstrap} propose a notion of `bootstrap robustness'.
They define DRO extensions of the Nadaraya-Watson and $k$-nearest neighbors formulations in~\cite{bertsimas2014predictive} using ambiguity sets based on discrepancy measures and study their theoretical properties.

Blanchet et al.\ \cite{blanchet2019robust} and Nguyen et al.\ \cite{nguyen2020distributionally} consider Wasserstein DRO formulations of single-stage stochastic programs arising in statistics or machine learning applications.
Blanchet et al.\ \cite{blanchet2019robust} study how to optimally size their ambiguity sets.
Boskos et al.\ \cite{boskos2020data,boskos2020data2,boskos2021high} explore the construction of Wasserstein ambiguity sets for noisy observations of dynamically evolving random variables with \textit{known dynamics} within a control setting.
They also consider the case when the random variables may only be estimated using noisy observations of outputs of a linear time-varying system.
Similar to our Wasserstein ambiguity sets in Section~\ref{sec:wassdro}, they propose to enlarge the radius of the ambiguity set to account for errors in the estimates of the random variables due to measurement noise.
Dou and Anitescu~\cite{dou2019distributionally} consider a tailored Wasserstein DRO formulation of single-stage stochastic convex programs when the data obeys a linear vector autoregressive model and derive its tractable dual.
Finally, Esteban-P{\'e}rez and Morales~\cite{esteban2020distributionally} construct a Wasserstein DRO extension of~\eqref{eqn:sp} by linking trimmings of probability distributions with the partial mass transportation problem.
They show that their approach naturally produces DRO extensions of formulations based on some nonparametric regression techniques.
They also allow for the available data to be contaminated, and establish asymptotic and finite sample guarantees for their solutions.

We consider a flexible data-driven DRO extension of~\eqref{eqn:sp} that integrates a regression model within a DRO framework.
Our work is similar in spirit to~\cite{dou2019distributionally,esteban2020distributionally}, but we consider more general formulations~\eqref{eqn:sp}, including two-stage stochastic programs, generic regression models, and more general DRO setups, including ones based on Wasserstein distances, sample robust optimization, and phi-divergences.
A key difference between our Wasserstein DRO formulation in Section~\ref{sec:wassdro} and the formulation in~\cite{dou2019distributionally} is that we consider an ambiguity set for the residuals of the regression model, but do not consider one for its coefficients for the sake of generality.
We investigate the theoretical properties of our residuals-based DRO formulations in Sections~\ref{sec:wassdro} and~\ref{sec:samprobustopt}.
The case study in Section~\ref{sec:computexp} demonstrates the modularity benefit of our formulations.

\subsection{Summary of main contributions}

The following summarizes the main contributions of this paper:
\begin{enumerate}
\item We introduce a general residuals-based DRO framework for approximating the solution to problem~\eqref{eqn:sp}
based on the residuals-based SAA framework in~\cite{kannan2020data}.
Our DRO framework is flexible
in the sense that it can accommodate side information effectively using a variety of regression setups and ambiguity sets.
It also seamlessly extends existing DRO formulations that do not utilize covariate information.

\item We study asymptotic optimality, pointwise and average rates of convergence, and finite sample guarantees of solutions determined using Wasserstein ambiguity sets.

\item We consider a family of ambiguity sets with only discrete distributions and study the asymptotic and finite sample properties of resulting solutions.

\item We investigate three data-driven approaches for choosing the radii of ambiguity sets for our residuals-based DRO formulations.
An important difference compared to traditional DRO \textit{without} covariate information is that we cannot assume access to samples from the conditional distribution of $Y$ given $X = x$.
Consequently, radius selection strategies used in traditional DRO (that do not use covariate information) may no longer yield the best radius for our ambiguity sets.
We empirically demonstrate that our new radius selection strategies yield good-quality decisions that work better for our setting than the strategies in the traditional DRO literature.

\item 
Finally, our numerical experiments investigate the effectiveness of proposed approaches for sizing ambiguity sets, validate our theoretical results, and demonstrate the advantages of our data-driven DRO formulations in the limited data regime even when the prediction model is misspecified. 
These experiments also provide insight into the relative performance of different DRO formulations and illustrate the benefit of our framework's modularity.

\end{enumerate}

\paragraph{\bf Notation.}
Let $[n] := \{1,\dots,n\}$, $\norm{\cdot}_p$ denote the $\ell_p$-norm for $p \in [1,+\infty]$, $\proj{S}{v}$ denote the orthogonal projection of $v$ onto a nonempty closed convex set~$S$, and $\delta$ denote the Dirac measure.
We write $\norm{\cdot}$ as shorthand for $\norm{\cdot}_2$.
Let $\mathcal{P}(S)$ denote the space of probability distributions with support contained in the set~$S \subseteq \R^{d_y}$.
Given $Q_1,Q_2 \in \mathcal{P}(S)$, let $\Pi(Q_1,Q_2)$ denote the set of joint distributions with marginals $Q_1$ and $Q_2$. The $p$-Wasserstein distance $d_{W,p}(Q_1,Q_2)$ between $Q_1$ and $Q_2$ with respect to the $\ell_2$-norm\footnote{Our results can be extended to Wasserstein distances defined using $\ell_q$-norms with $q \neq 2$.} is given by
\begin{align*}
d_{W,p}(Q_1,Q_2) &:= \bigl(\inf_{\pi \in \Pi(Q_1,Q_2)} \: \int_{S^2} \norm{y_1 - y_2}^p d\pi(y_1,y_2)\bigr)^{1/p}, \quad \text{if } p \in [1,+\infty), \\
d_{W,\infty}(Q_1,Q_2) &:= \inf_{\pi \in \Pi(Q_1,Q_2)} \: \uset{S \times S}{\pi\text{-ess sup}} \norm{y_1 - y_2},
\end{align*}
where $\pi\text{-ess sup}_{S \times S} \norm{y_1 - y_2} := \inf\{C : \pi(\norm{y_1 - y_2} > C) = 0\}$ 
denotes the essential supremum with respect to the measure~$\pi$.
Let $(S,\Sigma,\mu)$ be a measure space.
Given $q \in [1,+\infty]$, we write $\norm{F}_{L^q}$ to denote the $L^q$-norm of a measurable function $F : S \to \R^{d_F}$, i.e.,
$\norm{F}_{L^q} := \bigl(\int_S \norm{F}^q d\mu \bigr)^{1/q}$.
For any $S \subseteq \R^{d_z}$, let $C(S)$ denote the Banach space of real-valued continuous functions on $S$ equipped with the supremum norm.
For sets $A, B \subseteq \R^{d_z}$, let $\dev{A}{B} := \sup_{v \in A} \text{dist}(v,B)$ denote the deviation of~$A$ from~$B$, where $\text{dist}(v,B) := \inf_{w \in B} \norm{v - w}$.

The abbreviations `a.e.', `a.s.', `LLN', `i.i.d.', and `r.h.s.' are shorthand for `almost everywhere', `almost surely', `law of large numbers', `independent and identically distributed', and `right-hand side'. 
For a random vector~$V$ with probability measure~$P_V$, we write a.e.\ $v \in V$ to denote $P_V$-a.e.\ $v \in V$.
The symbols~$\xrightarrow{p}$,~$\xrightarrow{a.s.}$, and~$\xrightarrow{d}$ denote convergence in probability, almost surely, and in distribution with respect to the probability measure generating the joint data on~$Y$ and~$X$.
For random sequences~$\{V_n\}$ and~$\{W_n\}$, we write $V_n = o_p(W_n)$ and $V_n = O_p(W_n)$ to  convey that~$V_n = R_n W_n$ with $\{R_n\}$ converging in probability to zero, or being bounded in probability, respectively.
We write 
$O(1)$ to denote generic constants and 
$v_n = \Theta(w_n)$ to mean that the sequence $\{v_n\}$ is asymptotically bounded both above and below by the sequence $\{w_n\}$.
We assume that all functions, sets and selections are measurable (see~\cite{vaart1996weak,shapiro2009lectures} for detailed consideration of these issues).

\section{Preliminaries}
\label{sec:prelim}

\subsection{Framework}

We assume throughout that the random vector~$Y$ (commonly referred to as ``dependent variables'') is related to the random covariates~$X$ (commonly referred to as ``independent variables'') as $Y = f^*(X) + \varepsilon$, where $f^*(x) := \expect{Y \mid X = x}$ is the regression function and the random vector~$\varepsilon$ is the associated regression error.
We also assume that the zero-mean errors~$\varepsilon$ are independent\footnote{We investigate extensions of our framework that can adapt to heteroscedasticity in~\cite{kannan2021heteroscedasticity}, where we assume that $Y = f^*(X) + Q^*(X)\varepsilon$ with $X$ and $\varepsilon$ independent (here, $Q^*(X)$ denotes the covariate-dependent covariance matrix of the errors).} of the covariates~$X$, and that $f^*$ is known to belong to a class of functions\footnote{See Remark~\ref{rem:modelclass} at the end of this section for a discussion of the case when $f^* \not\in \F$.} $\F$.
The model class~$\F$ can be infinite-dimensional and can depend on the sample size~$n$.
Let $\Y \subseteq \R^{d_y}$, $\X \subseteq \R^{d_x}$, and $\Xi \subseteq \R^{d_y}$ denote the supports of $Y$, $X$, and~$\varepsilon$, respectively.
Additionally, let $P_{Y \mid X=x}$ denote the conditional distribution of $Y$ given $X = x$ and $P_X$ and $P_{\varepsilon}$ denote the distribution of $X$ and~$\varepsilon$. 
Finally, we assume that the support~$\Y$ is nonempty and convex, which ensures that the orthogonal projection onto~$\Y$ is unique and Lipschitz continuous. If~$\Y$ is not convex (e.g., if it is discrete), one option is to instead project onto its convex hull, $\text{conv}(\Y)$, and replace $\Y$ by $\text{conv}(\Y)$ in our formulations, assumptions, and results.

Under the above assumptions, problem~\eqref{eqn:sp} is equivalent to
\begin{alignat}{2}
\label{eqn:speq}
v^*(x) &= \uset{z \in \Z}{\min} && \left\lbrace g(z;x) := \expect{c(z,f^*(x)+\varepsilon)} \right\rbrace,
\end{alignat}
where the expectation is computed with respect to the distribution $P_{\varepsilon}$ of~$\varepsilon$.
We refer to problem~\eqref{eqn:speq} as \textit{the true problem}.
We assume throughout that the set $\Z$ is nonempty and compact, $\expect{\abs{c(z,f^*(x)+\varepsilon)}} < +\infty$ for each $z \in \Z$ and a.e.\ $x \in \X$, and the function $g(\cdot;x)$ is lower semicontinuous on~$\Z$ for a.e.\ $x \in \X$.
These assumptions ensure that problem~\eqref{eqn:speq} is well-defined  and the set $S^*(x)$ of optimal solutions to problem~\eqref{eqn:speq} is nonempty for a.e.\ $x \in \X$.

\subsection{Review of data-driven SAA formulations}
\label{subsec:ddsaaform}

We now summarize the residuals-based SAA formulations considered in~\cite{kannan2020data}.
Let~$\D_n := \{(y^i,x^i)\}_{i=1}^{n}$ denote the joint observations of $(Y,X)$ and $\{\varepsilon^i\}_{i=1}^{n}$, with $\varepsilon^i := y^i - f^*(x^i)$, $i \in [n]$, denote the corresponding realizations of the errors.
If we know the regression function~$f^*$, then we can construct the following \textit{full-information SAA} (FI-SAA) to problem~\eqref{eqn:speq} using the data~$\D_n$:
\begin{align}
\label{eqn:fullinfsaa}
&\uset{z \in \Z}{\min} \Big\{ g^*_n(z;x) := \dfrac{1}{n} \displaystyle\sum_{i=1}^{n} c(z,f^*(x)+\varepsilon^i) \Big\}.
\end{align} 
Because~$f^*$ is unknown, we first estimate it by $\hf_n$ using a regression method on the data~$\D_n$.
We then use $\hf_n$ and its residuals on the training data $\heps^i_{n} := y^i - \hat{f}_n(x^i)$, $i \in [n]$, to construct the following \textit{empirical residuals-based SAA} (ER-SAA) to problem~\eqref{eqn:speq}:
\begin{alignat}{2}
\label{eqn:app}
\hv^{ER}_n(x) &:= \uset{z \in \Z}{\min} && \Big\{ \hg^{ER}_n(z;x) := \dfrac{1}{n}\displaystyle\sum_{i=1}^{n} c\bigl(z,\proj{\Y}{\hf_n(x) + \heps^i_{n}}\bigr) \Big\}.
\end{alignat}
In contrast with~\cite{kannan2020data}, we project the points $\{\hf_n(x) + \heps^i_{n}\}_{i\in [n]}$ onto the support~$\Y$ in this work. 
This projection step may be helpful when the ER-SAA scenarios $\{\hf_n(x) + \heps^i_{n}\}_{i \in [n]}$ lie outside the support $\Y$ even though the ``true'' FI-SAA scenarios $\{f^*(x) + \varepsilon^i\}_{i \in [n]}$ are elements of $\Y$.
It also ensures that the Wasserstein and sample robust optimization-based DRO formulations considered in Section~\ref{sec:drsaa} are tractable under suitable assumptions on the true problem~\eqref{eqn:speq}. 
This is because the ambiguity set of these DRO formulations then becomes a ball around the ER-SAA distribution defined below, with respect to a suitable Wasserstein metric (cf.\ Section~4 of~\cite{esfahani2018data}). We stick with this modification of the ER-SAA formulation~\eqref{eqn:app} throughout for uniformity.

When the sample size $n$ is small relative to the complexity of the regression method, the empirical residuals $\{\heps^i_{n}\}_{i=1}^{n}$ may be optimistically biased and provide a poor estimate of the samples $\{\varepsilon^i\}_{i=1}^{n}$ of~$\varepsilon$.
This motivated our construction in~\cite{kannan2020data} of two alternative SAA formulations that instead use leave-one-out (jackknife) residuals to construct scenarios of $Y$ given $X = x$.

Let $P^*_n(x)$ denote the \textit{true empirical distribution} of $Y$ given $X = x$ corresponding to the FI-SAA problem~\eqref{eqn:fullinfsaa}
and $\hat{P}^{ER}_n(x)$ denote the \textit{estimated empirical distribution} 
corresponding to the ER-SAA problem~\eqref{eqn:app}, i.e., 
\[
P^*_n(x) := \frac{1}{n} \sum_{i=1}^{n} \delta_{f^*(x)+\varepsilon^i}, \quad\quad \hat{P}^{ER}_n(x) := \dfrac{1}{n} \sum_{i=1}^{n} \delta_{\proj{\Y}{\hf_n(x) + \heps^i_{n}}}.
\]
A main component of the analysis conducted in this paper is controlling the distance between the estimated empirical distribution~$\hat{P}^{ER}_n(x)$ and the true empirical distribution~$P^*_n(x)$.
To enable this, note that the Lipschitz continuity of orthogonal projections\footnote{For any $u,v \in \R^{d_y}$, $\norm{\proj{\Y}{u} - \proj{\Y}{v}} \leq \norm{u-v}$.} implies that for each $x \in \X$
\begin{align}
\label{eqn:projlipschitz}
\norm{\proj{\Y}{\hf_n(x) + \heps^i_{n}} - (f^*(x) + \varepsilon^i)} \leq \norm{\teps^{i}_{n}(x)}, \quad \forall i \in [n],
\end{align}
where
$\teps^{i}_{n}(x) := (\hf_n(x) + \heps^i_{n}) - (f^*(x) + \varepsilon^i) = \bigl( \hf_n(x) - f^*(x) \bigr) + \bigl( f^*(x^i) - \hf_n(x^i) \bigr)$.
Note that $\teps^{i}_{n}(x)$ equals the sum of the \textit{prediction error} at the new covariate realization $x \in \X$ and the \textit{estimation error} at the training point $x^i \in \X$.

\begin{remark}
\label{rem:modelclass}
Although we assume that the regression function~$f^*$ belongs to the model class~$\F$ to establish our theoretical guarantees, our data-driven approximations of~\eqref{eqn:speq} are well defined even when $f^* \not\in \F$, i.e., when the regression model is misspecified.
In this setting, under mild assumptions the regression estimates $\hf_n$ converge  to the best approximation to $f^*$ in the model class $\F$, denoted by $\bar{f}$, and
residuals-based SAA and DRO formulations then yield solutions converging to the optimal solution of the problem
\[
\uset{z \in \Z}{\min} \: \dfrac{1}{n} \displaystyle\sum_{i=1}^{n} c(z,\bar{f}(x)+\bar{\varepsilon}^i),
\]
where $\bar{\varepsilon}^i := f^*(x^i) - \bar{f}(x^i) + \varepsilon^i$.
Therefore, we can replace $f^*$ by $\bar{f}$ and $\{\varepsilon^i\}$ by $\{\bar{\varepsilon}^i\}$ in our assumptions and results to characterize the asymptotic and finite sample properties of our data-driven approximations in this case.
\end{remark}

\section{Residuals-based DRO formulations}
\label{sec:drsaa}

We consider the following DRO extension of the data-driven SAA formulations reviewed in Section~\ref{subsec:ddsaaform} to approximate the solution to problem~\eqref{eqn:speq}:
\begin{alignat}{2}
\label{eqn:dro}
\hv^{DRO}_n(x) &= \uset{z \in \Z}{\min} \: \uset{Q \in \hP_n(x)}{\sup} \: && \expectation{Y \sim Q}{c(z,Y)},
\end{alignat}
where $\hP_n(x)$ is a data-driven ambiguity set for the distribution of $Y$ given $X = x$ that is centered at $\hat{P}^{ER}_n(x)$.
Let $\hz^{DRO}_n(x)$ denote an optimal solution to problem~\eqref{eqn:dro} and $\hS^{DRO}_n(x)$ denote its set of optimal solutions.
We assume throughout that the objective function of problem~\eqref{eqn:dro} is real-valued and lower semicontinuous on~$\Z$ for each $x \in \X$.
This ensures that its optimal solution set~$\hS^{DRO}_n(x)$ is nonempty for each $x \in \X$.

We seek to derive DRO formulations~\eqref{eqn:dro} that obtain a solution~$\hz^{DRO}_n(x)$ with good out-of-sample performance $g(\hz^{DRO}_n(x);x)$ for relatively small sample sizes~$n$.
To support our investigation of such formulations, we consider different desirable properties they may have.
Given a risk level $\alpha \in (0,1)$, we wish to construct the ambiguity set $\hP_n(x)$ such that one or more of the following properties hold for a.e.\ $x \in \X$ (cf.\ \cite{bertsimas2018robust,esfahani2018data}):
\begin{enumerate}
\item  \textbf{Consistency and asymptotic optimality:} the optimal value $\hv^{DRO}_n(x)$ and solution $\hz^{DRO}_n(x)$ of the residuals-based DRO problem~\eqref{eqn:dro} satisfy
\[
\hv^{DRO}_n(x) \xrightarrow{p} v^*(x), \:\: \text{dist}(\hz^{DRO}_n(x),S^*(x)) \convinprob 0, \:\: g(\hz^{DRO}_n(x);x) \convinprob v^*(x).
\]

\item \textbf{Rate of convergence:} for some constant $r \in (0,1]$ (ideally close to one), the optimal value $\hv^{DRO}_n(x)$ and solution $\hz^{DRO}_n(x)$ satisfy\footnote{In special cases (e.g., smooth unconstrained problems, see~\cite[Section~5]{gotoh2021calibration}), it may be possible to establish the sharper convergence rate $\abs*{g(\hz^{DRO}_n(x);x) - v^*(x)} = O_p\big(n^{-r}\big)$.}
\[
\abs*{\hv^{DRO}_n(x) - v^*(x)} = O_p\big(n^{-r/2}\big), \quad \abs*{g(\hz^{DRO}_n(x);x) - v^*(x)} = O_p\big(n^{-r/2}\big).
\] 

\item \textbf{Finite sample certificate guarantee:} the optimal value $\hv^{DRO}_n(x)$ provides the following certificate on the out-of-sample cost of $\hz^{DRO}_n(x)$:
\[
\prob{g(\hz^{DRO}_n(x);x) \leq \hv^{DRO}_n(x)} \geq 1 - \alpha.
\]

\end{enumerate}
We would also like the solution~$\hz^{DRO}_n(x)$ to possess the following guarantee:
\begin{enumerate}
\setcounter{enumi}{3}
\item \textbf{Finite sample solution guarantee:} for a.e.\ $x \in \X$ and any $\eta > 0$, there exist positive constants $\Gamma(\eta,x)$ and $\gamma(\eta,x)$ such that the solution~$\hz^{DRO}_n(x)$ of the DRO problem~\eqref{eqn:dro} with a suitable specification of the radius of the ambiguity set $\hP_n(x)$ satisfies
\[
\prob{\text{dist}(\hz^{DRO}_n(x),S^*(x)) \geq \eta} \leq \Gamma(\eta,x) \exp(-n\gamma(\eta,x)).
\]
\end{enumerate}
Finally, we would also like problem~\eqref{eqn:dro} to be efficiently solvable in practice.
Although our asymptotic guarantees are stated in terms of convergence in probability, they can be naturally extended to consider almost sure convergence under stronger assumptions on problem~\eqref{eqn:speq} and the regression estimate~$\hf_n$.

We call problem~\eqref{eqn:dro} with the ambiguity set~$\hP_n(x)$ centered at~$\hat{P}^{ER}_n(x)$ the \textit{empirical residuals-based DRO} (ER-DRO) problem.
While in this paper we focus our attention on ER-DRO formulations, note that the ambiguity set~$\hP_n(x)$ can also be centered at the estimated empirical distributions corresponding to its jackknife-based counterparts introduced in \cite{kannan2020data}.
The analysis in~\cite[Appendix~EC.1]{kannan2020data} can be used to extend this paper's results for ER-DRO to its jackknife-based variants.

In the remainder of this work, we focus on the use of the following data-driven ambiguity sets $\hP_n(x)$ in the construction of ER-DRO problem~\eqref{eqn:dro}.
Unlike the classical DRO setting~\cite{rahimian2019distributionally}, we allow the radius of these ambiguity sets~$\hP_n(x)$ to depend not only on the sample size~$n$ and the risk level~$\alpha$ that, e.g., shows up in the finite sample certificate, 
but also on the covariate realization~$x \in \X$; see~$\zeta_n(x)$ and~$\mu_n(x)$ below.
We often omit the dependence of the radius on $\alpha$ to simplify notation.
\begin{enumerate}
\item Wasserstein-based ambiguity sets (cf.\ \cite{pflug2007ambiguity,esfahani2018data,gao2016distributionally}): given radius $\zeta_n(x) \geq 0$ and order $p \in [1,+\infty]$, set
\[
\hP_n(x) = \big\{Q \in \mathcal{P}(\Y) : d_{W,p}(Q,\hat{P}^{ER}_n(x)) \leq \zeta_n(x)\big\}.
\]

\item Sample robust optimization-based ambiguity sets (cf.\ \cite{xu2012distributional,bertsimas2018data}): given radius~$\mu_n(x) \geq 0$ and parameter $p \in [1,+\infty]$, set\footnote{We use $\mu_n(x)$ to avoid a clash with the notation~$\zeta_n(x)$ for the radius of ambiguity sets with the same support as $\hat{P}^{ER}_n(x)$. Having different notation for these two radii will prove useful in our unified analysis of the corresponding ER-DRO problems in Section~\ref{sec:samprobustopt}.}
\[
\displayindent0pt
\displaywidth\textwidth
\hP_n(x) = \Big\{Q = \frac{1}{n}\sum_{i=1}^{n} \delta_{\bar{y}^i} : \norm{\bar{y}^i - \proj{\Y}{\hf_n(x) + \heps^i_{n}}}_p \leq \mu_n(x), \bar{y}^i \in \Y, \forall i \in [n]\Big\}.
\]
We focus on ambiguity sets constructed using $p = 2$ to keep the exposition simple, but our analysis also extends to ambiguity sets with $p \neq 2$.

\item Ambiguity sets with the same support as $\hat{P}^{ER}_n(x)$ (cf.\ \cite{ben2013robust,bayraksan2015data}, for instance): 
given radius $\zeta_n(x) \geq 0$, set
\[
\hP_n(x) = \Big\{Q = \sum_{i=1}^{n} p_i\delta_{\proj{\Y}{\hf_n(x) + \heps^i_{n}}} : p \in \mathfrak{P}_n(x;\zeta_n(x))\Big\},
\]
where $\mathfrak{P}_n(x;\zeta_n(x))$ is a generic ambiguity set for the $n$-dimensional vector of probabilities~$p$.
We focus on sets~$\mathfrak{P}_n(x;\zeta_n(x))$ that satisfy for each $x \in \X$
\begin{equation}
\begin{aligned}
\label{eqn:ambiguitysetconsistency}
p \in \R^n_+ \:\: \text{and} \:\: \sum_{i=1}^{n} p_i = 1, \quad \forall p \in \mathfrak{P}_n(x;\zeta_n(x)), & \\
\lim_{\zeta \downarrow 0} \mathfrak{P}_n(x;\zeta) = \mathfrak{P}_n(x;0) = \biggl\{\biggl(\frac{1}{n},\frac{1}{n},\dots,\frac{1}{n}\biggr)\biggr\}. &
\end{aligned}
\end{equation}
The above family of ambiguity sets---that use the same support as $\hat{P}^{ER}_n(x)$---result in tractable ER-DRO formulations~\eqref{eqn:dro} under milder assumptions on the true problem~\eqref{eqn:speq} compared to Wasserstein and sample robust optimization ambiguity sets, which go beyond the support of~$\hat{P}^{ER}_n(x)$.

\end{enumerate}

We now provide two examples of the last category of ambiguity sets. Appendix~\ref{app:ambiguitysetrate} includes a third example based on mean-upper semideviations.

\begin{example}
\label{exm:cvar}
CVaR-based ambiguity set~\cite{rockafellar2000optimization,shapiro2009lectures}: given radius $\zeta_n(x) \in [0,1)$, set
\[
\Pf_n(x;\zeta_n(x)) := \bigg\{p \in \R^{n}_+ : \sum_{i=1}^{n} p_i = 1, \: p_i \leq \frac{1}{n(1-\zeta_n(x))}, \: \forall i \in [n] \bigg\}.
\]
Observe that~$\zeta_n(x)$ enters the ambiguity set~$\Pf_n(x;\zeta_n(x))$ through the CVaR risk parameter.
\end{example}

\begin{example}
\label{exm:phidivergence}
Phi-divergence-based ambiguity sets~\cite{ben2013robust,bayraksan2015data}: Let $\phi:\R_+ \to \overline{\R}_+$ be a lower semicontinuous, convex phi-divergence function with a unique minimum at $1$ and $\phi(1) = 0$.
Given radius $\zeta_n(x) \geq 0$, define~$\hP_n(x)$ using 
\[
\mathfrak{P}_n(x;\zeta_n(x)) := \bigg\{p \in \R^n_+ : \sum_{i=1}^{n} p_i = 1, \: \frac{1}{n} \sum_{i=1}^{n} \phi(n p_i) \leq \zeta_n(x)\bigg\}.
\]
Particular instances include Kullback Leibler divergence, variation distance, and Hellinger distance-based ambiguity sets.
\end{example}

In the next section, we investigate the theoretical properties of using Wasserstein ambiguity sets within the ER-DRO problem.
In Section~\ref{sec:samprobustopt}, we present a unified analysis of the theoretical properties of using both sample robust optimization ambiguity sets and ambiguity sets with the same support as $\hat{P}^{ER}_n(x)$.
Hereafter, we often write $\hP_n(x;\zeta_n(x))$ instead of $\hP_n(x)$ to make its dependence on the radius~$\zeta_n(x)$ explicit.
We also write~$\zeta_n(\alpha,x)$ instead of~$\zeta_n(x)$ when we want to emphasize the dependence of the radius on the risk level~$\alpha$.

\section{Wasserstein-based ambiguity sets}
\label{sec:wassdro}

We now establish asymptotic optimality, rates of convergence, and finite sample guarantees for ER-DRO formulations defined using $p$-Wasserstein distance-based ambiguity sets with $p \in [1,+\infty)$.
Section~\ref{sec:samprobustopt} presents analysis for ambiguity sets defined using the $\infty$-Wasserstein distance by exploiting a link with sample robust optimization~\cite{bertsimas2018data}.
Sections~4.1 and~5 of~\cite{esfahani2018data} and Section~2.2 of~\cite{kuhn2019wasserstein} identify conditions under which the resulting ER-DRO formulation~\eqref{eqn:dro} is computationally tractable.
References~\cite{gao2016distributionally,hanasusanto2018conic,bansal2018decomposition} also consider solution approaches for the setting where problem~\eqref{eqn:speq} is a two-stage stochastic program.

We begin with a light-tail assumption on the distribution~$P_{\varepsilon}$ of the errors~$\varepsilon$.

\begin{assumption}
\label{ass:lighttailerr}
There is a constant $a > p$ such that $\expect{\exp(\norm{\varepsilon}^a)} < +\infty$.
\end{assumption}

Assumption~\ref{ass:lighttailerr} (cf.\ \citep[Assumption~3.3]{esfahani2018data}) may not hold for sub-exponential distributions.
Additionally, when $p \geq 2$, it requires the tails of $\varepsilon$ to decay at a faster rate than Gaussian tails.
However, sub-Gaussian errors (see Definition~\ref{def:subgaussian} below) can be handled using $p \in [1,2)$.
Our first result identifies sufficient conditions under which sub-Gaussian errors satisfy Assumption~\ref{ass:lighttailerr} for $p \in [1,2)$.

\begin{definition}
\label{def:subgaussian}
A random vector $V \in \R^{d_v}$ is said to be sub-Gaussian with variance proxy $\sigma^2$ if $\expect{V} = 0$ and
\[
\expect{\exp(s\tr{u}V)} \leq \exp(0.5\sigma^2s^2), \:\: \forall s \in \R \text{ and } u \in \R^{d_v} \text{ s.t. } \norm{u} = 1.
\]
\end{definition}

Definition~\ref{def:subgaussian} implies that the class of sub-Gaussian random vectors includes zero-mean Gaussian random vectors.

\begin{proposition}
\label{prop:subgaussian}
Suppose $\varepsilon$ is a sub-Gaussian random vector with independent components and variance proxy $\sigma^2$.
Then $\expect{\exp(\norm{\varepsilon}^a)} < +\infty$, $\forall a \in (1,2)$.
\end{proposition}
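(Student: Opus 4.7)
The plan is to reduce the claim to the standard sub-Gaussian tail bound on $\|\varepsilon\|$ and then exploit the gap between $a$ and $2$ via a change of variables. Here is the rough structure I would follow.

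First, I would extract a sub-Gaussian property for each coordinate. Applying Definition~\ref{def:subgaussian} with $u = e_j$ (the $j$-th standard basis vector) immediately gives that each component $\varepsilon_j$ is a centered scalar sub-Gaussian random variable with variance proxy $\sigma^2$. The standard Chernoff argument then yields the one-dimensional tail bound $\prob{\abs{\varepsilon_j} > t} \leq 2\exp(-t^2/(2\sigma^2))$ for every $t \geq 0$. (I would invoke this tail bound as a textbook fact rather than rederive it.)

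Next, I would pass from coordinates to the Euclidean norm. Using $\norm{\varepsilon} \leq \sqrt{d_y}\,\norm{\varepsilon}_\infty$ together with a union bound over the $d_y$ coordinates gives
\[
\prob{\norm{\varepsilon} > t} \leq \prob{\norm{\varepsilon}_\infty > t/\sqrt{d_y}} \leq 2 d_y \exp\!\bigl(-t^2/(2 d_y \sigma^2)\bigr), \qquad t \geq 0.
\]
Here the independence of the components is not strictly needed for the union bound, but it keeps the argument clean; alternatively, a one-line Cauchy--Schwarz calculation on the MGF of $\sum_j \varepsilon_j^2$ would work.

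Finally, I would convert the tail bound into the desired integrability statement. The layer-cake formula followed by the substitution $s = t^a$ gives
\[
\expect{\exp(\norm{\varepsilon}^a)} = 1 + a \int_0^{\infty} t^{a-1} e^{t^a} \prob{\norm{\varepsilon} > t}\, dt \leq 1 + 2 a d_y \int_0^{\infty} t^{a-1} \exp\!\Bigl(t^a - \tfrac{t^2}{2 d_y \sigma^2}\Bigr) dt.
\]
Since $a \in (1,2)$, we have $t^a = o(t^2)$ as $t \to \infty$, so the exponent tends to $-\infty$ and the integrand has super-polynomial decay, making the integral finite. This yields the claim.

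There is no real obstacle here beyond bookkeeping; the only point that could trip one up is ensuring the substitution $s = t^a$ handles the endpoint at $s = 0$ correctly, which is harmless because $t^{a-1}$ is integrable near zero for $a > 0$. The condition $a > 1$ is not actually needed for integrability in my argument; it is presumably stated because Assumption~\ref{ass:lighttailerr} with $p \in [1,2)$ only requires $a \in (p, 2)$ and thus $a > 1$ suffices for the Wasserstein setup being set up in this section.
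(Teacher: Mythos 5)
Your proof is correct, but it takes a genuinely different route from the paper's. You work through the tail: coordinate-wise sub-Gaussian tails, a union bound to get $\prob{\norm{\varepsilon} > t} \leq 2 d_y \exp(-t^2/(2 d_y \sigma^2))$, and then the layer-cake identity plus the observation that $t^a - t^2/(2d_y\sigma^2) \to -\infty$ for $a < 2$. The paper instead works through the moments: it first proves a deterministic norm-comparison inequality $\bigl(\sum_i a_i^2\bigr)^{a/2} \leq \sum_i a_i^{1+a/2}$ for $a_i \geq 1$ and $a \in [1,2]$ (its Lemma~\ref{lem:ineq}), uses it to bound $\exp(\norm{\varepsilon}^a)$ by $\prod_i \exp(\abs{\varepsilon_i}^{1+a/2})$, invokes independence of the components to factor the expectation into a product, and then handles each scalar factor by expanding $\exp(\abs{W}^{1+a/2})$ as a moment series and applying the sub-Gaussian moment bound together with the ratio test (its Lemma~\ref{lem:subgineq}). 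Your argument is shorter and, as you correctly observe, strictly more general on two counts: the union bound does not use independence of the components (which the paper's product factorization requires), and nothing in your argument uses $a > 1$ (whereas the paper's Lemma~\ref{lem:ineq} is only proved for $a \in [1,2]$, and its reduction to the event $\{\norm{\varepsilon}_\infty \geq 1\}$ exists precisely to meet that lemma's hypothesis $a_i \geq 1$). The only thing the paper's route buys is that it stays entirely within moment-generating-function manipulations rather than passing to tail probabilities; for the bare finiteness claim at stake here, that confers no advantage, and your constants-agnostic tail argument is the more economical one.
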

\begin{proof}
{See Appendix~\ref{app:subgaussian}.}
\end{proof}

Next, we make a finite sample assumption on the regression estimate~$\hf_n$.

\begin{assumption}
\label{ass:reglargedevwass}
The regression estimate~$\hf_n$ possesses the following finite sample property:
for a.e.\ $x \in \X$ and any risk level $\alpha \in (0,1)$, there exists a constant $\kappa_{p,n}(\alpha,x) > 0$ such that 
\begin{align*}
\mathbb{P}\big\{\norm{f^*(x) - \hf_n(x)}^p > \kappa^p_{p,n}(\alpha,x)\big\} &\leq \alpha, \quad \text{and}\\
\mathbb{P}\biggl\{\dfrac{1}{n} \displaystyle\sum_{i=1}^{n} \norm{f^*(x^i) - \hf_n(x^i)}^p > \kappa^p_{p,n}(\alpha,x)\biggr\} &\leq \alpha.
\end{align*}
\end{assumption}

We use same the constant $\kappa_{p,n}(\alpha,x)$ for both the prediction error at the new covariate realization $x \in \X$ and the power-mean estimation error on the training data points $\{x^i\}_{i=1}^{n}$ to keep the notation simple even though the latter does not depend on $x$.
Appendix~EC.3 of~\cite{kannan2020data} identifies conditions under which parametric regression methods such as ordinary least squares (OLS) and Lasso regression satisfy Assumption~\ref{ass:reglargedevwass} for the case $p = 2$ with constants $\kappa^2_{2,n}(\alpha,x) = O(n^{-1} \log(\alpha^{-1}))$; {we omit the dependence on $x$ here for simplicity}.
Nonparametric regression methods, on the other hand, typically only satisfy Assumption~\ref{ass:reglargedevwass} with constants 
$\kappa^p_{p,n}(\alpha,x) = O(n^{-1}\log(\alpha^{-1}))^{O(1)/d_x}$.
Similar bounds readily hold for $p \neq 2$, e.g., if the support~$\X$ of the covariates is compact.
If Assumption~\ref{ass:reglargedevwass} holds for $p = 2$, the power mean inequality implies that it also holds for any $p \in [1,2)$ with $\kappa_{p,n}(\alpha,x) = \kappa_{2,n}(\alpha,x)$.

We make the light-tail Assumption~\ref{ass:lighttailerr} on the distribution~$P_{\varepsilon}$ of the errors~$\varepsilon$ to invoke the concentration inequality in Lemma~\ref{lem:wassmeasureconc} for the true empirical distribution~$P^*_n(x)$.
Throughout, we assume $p \neq d_y/2$ for a slightly simpler form of this concentration inequality; see~\citep[Theorem~2]{fournier2015rate} for the case $p = d_y/2$.
Lemma~\ref{lem:wassmeasureconc} also applies to non-i.i.d.\ data~$\D_n$ such as time series data (cf.\ \cite{dou2019distributionally}).

\begin{lemma}[Theorem~2 of~\cite{fournier2015rate}]
\label{lem:wassmeasureconc}
Suppose Assumption~\ref{ass:lighttailerr} holds, $p \neq d_y/2$, and the samples $\{\varepsilon^{i}\}_{i=1}^{n}$ are i.i.d.
Then, for all $\kappa > 0$, $n \in \mathbb{N}$, and $x \in \X$
\[
\mathbb{P}\big\{d_{W,p}( P^*_n(x), P_{Y \mid X=x} ) \geq \kappa\big\} \leq \begin{cases} O(1) \exp(-O(1) n \kappa^{\max\{d_y/p,2\}}) &\mbox{if } \kappa \leq 1 \\
O(1) \exp(-O(1) n \kappa^{a/p}) & \mbox{if } \kappa > 1 \end{cases}.
\]
\end{lemma}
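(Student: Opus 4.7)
The stated bound is essentially a direct application of Theorem~2 in~\cite{fournier2015rate}, so my plan is to reduce to that result via a translation argument. First I would invoke translation invariance of the $p$-Wasserstein distance. Since the errors~$\varepsilon$ are independent of~$X$, conditional on $X = x$ the random vector~$Y$ has the distribution of $f^*(x) + \varepsilon$; equivalently, $P_{Y \mid X = x}$ is the pushforward of~$P_{\varepsilon}$ under the shift $y \mapsto y + f^*(x)$, and $P^*_n(x)$ is the pushforward of the empirical error measure $\hat{P}_{\varepsilon,n} := \frac{1}{n}\sum_{i=1}^{n} \delta_{\varepsilon^i}$ under the same shift. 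Translation invariance then yields
\[
d_{W,p}(P^*_n(x), P_{Y \mid X = x}) = d_{W,p}(\hat{P}_{\varepsilon,n}, P_{\varepsilon}),
\]
so that the quantity we must bound no longer depends on~$x$.

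Next I would verify the hypotheses of Theorem~2 of~\cite{fournier2015rate}. That theorem gives a two-regime large-deviation bound for the $p$-Wasserstein distance between an i.i.d.\ empirical measure and its true distribution, and the exponent $a/p$ in the large-$\kappa$ tail requires an exponential moment of the form $\expect{\exp(c\norm{\varepsilon}^a)} < +\infty$ for some $a > p$ and $c > 0$. Assumption~\ref{ass:lighttailerr} supplies precisely this (with $c = 1$), and the i.i.d.\ hypothesis on $\{\varepsilon^i\}$ is assumed. The exclusion $p \neq d_y/2$ is imposed only to avoid the extra logarithmic factor in the small-$\kappa$ regime that arises in the boundary case; the cited theorem handles this dichotomy explicitly.

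Applying Theorem~2 of~\cite{fournier2015rate} to the empirical error measure then delivers the two-regime bound in the statement, with constants hidden in the $O(1)$ terms depending only on $p$, $d_y$, $a$, and the exponential moment of~$\varepsilon$. The uniformity of the bound in $x \in \X$ is automatic from the reduction above, since after translation invariance the right-hand side does not involve~$x$. There is no serious obstacle in this argument: essentially all of the mathematical content resides in the cited theorem, and our only responsibility is the bookkeeping that passes from the conditional measure of~$Y$ given $X=x$ to the distribution of the i.i.d.\ error samples, which is straightforward given that $\varepsilon$ is assumed independent of~$X$.
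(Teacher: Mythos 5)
Your proposal is correct and matches the paper's treatment: the paper states this lemma as a direct citation of Theorem~2 of Fournier and Guillin, with the translation-invariance reduction from $d_{W,p}(P^*_n(x), P_{Y \mid X=x})$ to $d_{W,p}\bigl(\tfrac{1}{n}\sum_i \delta_{\varepsilon^i}, P_{\varepsilon}\bigr)$ being exactly the implicit bookkeeping that makes the bound uniform in~$x$. Your verification of the moment hypothesis and the role of the exclusion $p \neq d_y/2$ is also consistent with the paper's remarks.
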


{The $O(1)$ constants in Lemma~\ref{lem:wassmeasureconc} only depend on $a$, $d_y$, and $\expect{\exp(\norm{\varepsilon}^a)}$ (see~\citep[Theorem~2]{fournier2015rate}).}
We require a few intermediate results before we can establish a finite sample certificate guarantee for Wasserstein ER-DRO estimators in Theorem~\ref{thm:wassfinitesampcert} (cf.\ \cite[Theorem~19]{kuhn2019wasserstein}).
The first result bounds the $p$-Wasserstein distance between the estimated empirical distribution $\hat{P}^{ER}_n(x)$ and the conditional distribution $P_{Y \mid X = x}$ of $Y$ given $X = x$.

\begin{lemma}
\label{lem:wass_dist_bound}
For each $x \in \X$
\[
d_{W,p}( \hat{P}^{ER}_n(x), P_{Y \mid X=x} ) \leq \biggl(\dfrac{1}{n} \displaystyle\sum_{i=1}^{n} \norm{\teps^{i}_{n}(x)}^{{p}}\biggr)^{1/p} + d_{W,p}( P^*_n(x), P_{Y \mid X=x} ).
\]
\end{lemma}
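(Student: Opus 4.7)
The plan is to apply the triangle inequality for the $p$-Wasserstein distance, splitting the distance on the left-hand side through the true empirical distribution $P^*_n(x)$ as an intermediate measure:
\[
d_{W,p}(\hat{P}^{ER}_n(x), P_{Y\mid X=x}) \leq d_{W,p}(\hat{P}^{ER}_n(x), P^*_n(x)) + d_{W,p}(P^*_n(x), P_{Y\mid X=x}).
\]
The second term already appears on the right-hand side of the claim, so everything reduces to bounding the first term by the power mean $\bigl(n^{-1}\sum_i \|\teps^i_n(x)\|^p\bigr)^{1/p}$.

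To bound $d_{W,p}(\hat{P}^{ER}_n(x), P^*_n(x))$, I would exhibit an explicit coupling rather than taking an infimum. Both measures are uniform empirical distributions on $n$ atoms, so the natural ``identity'' coupling pairs the $i$-th atom of $\hat{P}^{ER}_n(x)$, namely $\proj{\Y}{\hf_n(x)+\heps^i_n}$, with the $i$-th atom of $P^*_n(x)$, namely $f^*(x)+\varepsilon^i$. Concretely, set
\[
\pi_n(x) := \frac{1}{n}\sum_{i=1}^{n} \delta_{\bigl(\proj{\Y}{\hf_n(x)+\heps^i_n},\, f^*(x)+\varepsilon^i\bigr)},
\]
which clearly has the correct marginals and therefore lies in $\Pi(\hat{P}^{ER}_n(x), P^*_n(x))$. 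Plugging this coupling into the definition of $d_{W,p}$ yields
\[
d_{W,p}(\hat{P}^{ER}_n(x), P^*_n(x))^p \leq \frac{1}{n}\sum_{i=1}^{n} \bigl\| \proj{\Y}{\hf_n(x)+\heps^i_n} - (f^*(x)+\varepsilon^i) \bigr\|^p.
\]

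The final step invokes the Lipschitz bound for the projection recorded in \eqref{eqn:projlipschitz}, which gives $\|\proj{\Y}{\hf_n(x)+\heps^i_n} - (f^*(x)+\varepsilon^i)\| \leq \|\teps^i_n(x)\|$ for every $i$. Taking $p$-th roots and substituting back into the triangle inequality yields the desired bound. No step here is genuinely difficult: the only subtlety is recognizing that the atom-by-atom coupling is admissible (it is, since both distributions have equal weights $1/n$), after which the Lipschitz property of orthogonal projection onto the convex set $\Y$ does all the real work.
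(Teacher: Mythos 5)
Your proposal is correct and follows essentially the same route as the paper: triangle inequality through $P^*_n(x)$, then bounding $d_{W,p}(\hat{P}^{ER}_n(x),P^*_n(x))$ via the atom-by-atom coupling (which the paper leaves implicit in the phrase ``by the definition of the $p$-Wasserstein distance'') and the projection Lipschitz bound~\eqref{eqn:projlipschitz}. Making the identity coupling explicit is a nice touch but does not change the argument.
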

\begin{proof}
The triangle inequality for the $p$-Wasserstein distance yields
\[
d_{W,p}( \hat{P}^{ER}_n(x), P_{Y \mid X=x} ) \leq d_{W,p}( \hat{P}^{ER}_n(x), P^*_n(x) ) + d_{W,p}( P^*_n(x), P_{Y \mid X=x} ).
\]
The stated result then follows from the definition of the $p$-Wasserstein distance
and inequality~\eqref{eqn:projlipschitz} since
\begin{align*}
d_{W,p}( \hat{P}^{ER}_n(x), P^*_n(x) ) &\leq \biggl(\frac{1}{n} \sum_{i=1}^{n} \norm{\proj{\Y}{\hf_n(x) + \heps^i_{n}} - (f^*(x) + \varepsilon^i)}^p\biggr)^{1/p} \\
&\leq \biggl(\frac{1}{n} \sum_{i=1}^{n} \norm{\teps^{i}_{n}(x)}^p\biggr)^{1/p}. \qedhere
\end{align*}
\end{proof}

The next result bounds the power mean deviation $\bigl(\frac{1}{n}\sum_{i=1}^{n} \norm{\teps^{i}_{n}(x)}^p\bigr)^{1/p}$.

\begin{lemma}
\label{lem:boundregerror}
For each $x \in \X$
\[
\biggl(\dfrac{1}{n}\displaystyle\sum_{i=1}^{n} \norm{\teps^{i}_{n}(x)}^p\biggr)^{1/p} \leq \norm{f^*(x) - \hf_n(x)} + \biggl(\dfrac{1}{n} \displaystyle\sum_{i=1}^{n} \norm{f^*(x^i) - \hf_n(x^i)}^p\biggr)^{1/p}.
\]
\end{lemma}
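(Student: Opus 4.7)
The plan is a two-line argument: apply the triangle inequality pointwise to $\teps^{i}_{n}(x)$, then apply Minkowski's inequality (the triangle inequality for the discrete $\ell^p$-norm) to the resulting two sequences indexed by $i \in [n]$.

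First I would recall the decomposition stated in the paragraph preceding the lemma, namely
\[
\teps^{i}_{n}(x) = \bigl( \hf_n(x) - f^*(x) \bigr) + \bigl( f^*(x^i) - \hf_n(x^i) \bigr),
\]
which is just a telescoping rewrite of $(\hf_n(x) + \heps^i_n) - (f^*(x) + \varepsilon^i)$ using $\heps^i_n = y^i - \hf_n(x^i)$ and $\varepsilon^i = y^i - f^*(x^i)$. Applying the triangle inequality in $\R^{d_y}$ gives, for every $i \in [n]$,
\[
\norm{\teps^{i}_{n}(x)} \leq \norm{\hf_n(x) - f^*(x)} + \norm{f^*(x^i) - \hf_n(x^i)}.
\]

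Next I would view the right-hand side as the sum of two nonnegative sequences in $i$: the constant sequence $a_i := \norm{\hf_n(x) - f^*(x)}$ and the sequence $b_i := \norm{f^*(x^i) - \hf_n(x^i)}$. Since the power-mean map $(\tfrac{1}{n}\sum_i \,\cdot\,^p)^{1/p}$ is a norm on $\R^n$ (scaled $\ell^p$-norm), Minkowski's inequality yields
\[
\Bigl(\tfrac{1}{n}\sum_{i=1}^{n} (a_i + b_i)^p \Bigr)^{1/p} \leq \Bigl(\tfrac{1}{n}\sum_{i=1}^{n} a_i^p \Bigr)^{1/p} + \Bigl(\tfrac{1}{n}\sum_{i=1}^{n} b_i^p \Bigr)^{1/p}.
\]
Because $a_i$ is independent of $i$, the first term on the right collapses to $\norm{\hf_n(x) - f^*(x)}$, giving exactly the asserted bound.

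There is no real obstacle here: the result is just the combination of the triangle inequality with Minkowski's inequality, leveraging the additive structure of the prediction/estimation error decomposition of $\teps^{i}_{n}(x)$. The only thing worth flagging is monotonicity of $t \mapsto t^p$ on $\R_+$ (needed to raise the pointwise triangle-inequality bound to the $p$-th power before summing), and the observation that the ``prediction error'' component does not depend on $i$, which is what produces a clean (unaveraged) first summand on the right-hand side.
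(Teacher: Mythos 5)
Your proof is correct and is essentially identical to the paper's: both apply the triangle inequality in $\R^{d_y}$ to the decomposition of $\teps^{i}_{n}(x)$ and then the triangle (Minkowski) inequality for the scaled $\ell_p$-norm on $\R^n$, with the constant first sequence collapsing to $\norm{f^*(x)-\hf_n(x)}$. No gaps.
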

\begin{proof}
We have from the definition of $\teps^{i}_{n}(x)$ that
\begin{align*}
\biggl(\dfrac{1}{n}\displaystyle\sum_{i=1}^{n} \norm{\teps^{i}_{n}(x)}^p\biggr)^{1/p} \leq& \biggl(\dfrac{1}{n}\displaystyle\sum_{i=1}^{n} \bigl(\norm{f^*(x) - \hf_n(x)} + \norm{f^*(x^i) - \hf_n(x^i)}\bigr)^p\biggr)^{1/p} \\
\leq& \norm{f^*(x) - \hf_n(x)} + \biggl(\dfrac{1}{n} \displaystyle\sum_{i=1}^{n} \norm{f^*(x^i) - \hf_n(x^i)}^p\biggr)^{1/p},
\end{align*}
where the first step follows from the triangle inequality for the $\ell_2$-norm, and the second step follows from the triangle inequality for the $\ell_p$-norm.
\end{proof}

{We also require the following simple inequality.}

\begin{lemma}
\label{lem:probineq}
{Let $V$ and $W$ be random variables and $c_1, c_2 \in \R$.
Then}
\[
\mathbb{P}(V+W > c_1 + c_2) \leq \mathbb{P}(V > c_1) + \mathbb{P}(W > c_2).
\]
\end{lemma}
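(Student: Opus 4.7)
The plan is to proceed by a simple event-inclusion argument followed by the union bound. The key observation is contrapositive: if both $V \le c_1$ and $W \le c_2$ hold simultaneously, then $V + W \le c_1 + c_2$. Equivalently, the event $\{V + W > c_1 + c_2\}$ cannot occur unless at least one of $\{V > c_1\}$ or $\{W > c_2\}$ occurs.

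Formally, I would establish the set inclusion
\[
\{V + W > c_1 + c_2\} \subseteq \{V > c_1\} \cup \{W > c_2\},
\]
which follows because on the complement $\{V \le c_1\} \cap \{W \le c_2\}$ we have $V + W \le c_1 + c_2$. Taking probabilities on both sides, using monotonicity of $\mathbb{P}$ and then subadditivity (the union bound), yields
\[
\mathbb{P}(V + W > c_1 + c_2) \le \mathbb{P}\bigl(\{V > c_1\} \cup \{W > c_2\}\bigr) \le \mathbb{P}(V > c_1) + \mathbb{P}(W > c_2),
\]
which is the claimed inequality.

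There is essentially no obstacle here; the result is a one-line consequence of the triangle inequality for real numbers combined with Boole's inequality, and requires no assumption of independence or integrability. The lemma is stated for use as a convenient bookkeeping tool in later proofs (e.g., splitting a deviation event between the two terms appearing in Lemmas \ref{lem:wass_dist_bound} and \ref{lem:boundregerror}), so the proof should be kept to just a few lines.
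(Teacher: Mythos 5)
Your proof is correct and is the standard argument (event inclusion via the contrapositive, then the union bound); the paper states this lemma without proof, treating it as immediate, and your two-line justification is exactly the reasoning it implicitly relies on.
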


\noindent {We are now ready to} derive a finite sample guarantee for $\bigl(\frac{1}{n}\sum_{i=1}^{n} \norm{\teps^{i}_{n}(x)}^p\bigr)^{1/p}$.

\begin{lemma}
\label{lem:wassfinitesamp}
Suppose Assumption~\ref{ass:reglargedevwass} holds and $\alpha \in (0,1)$.
Then for a.e.\ $x \in \X$
\[
\mathbb{P}\bigg\{\biggl(\dfrac{1}{n}\displaystyle\sum_{i=1}^{n} \norm{\teps^{i}_{n}(x)}^p\biggr)^{1/p} > 2\kappa_{p,n}\Bigl(\frac{\alpha}{4},x\Bigr)\bigg\} \leq \frac{\alpha}{2}.
\] 
\end{lemma}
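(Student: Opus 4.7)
The plan is to chain together the three preceding lemmas (Lemmas~4, 5, and the probability inequality in Lemma~6, i.e., \texttt{boundregerror} and \texttt{probineq}) with Assumption~2 applied at the reduced risk level $\alpha/4$, splitting the target deviation evenly between the prediction error term and the training-set estimation error term.

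First I would invoke Lemma~\ref{lem:boundregerror} to replace the power mean of the $\teps^i_n(x)$ by the sum
\[
V := \norm{f^*(x) - \hf_n(x)}, \qquad W := \biggl(\dfrac{1}{n}\displaystyle\sum_{i=1}^{n} \norm{f^*(x^i) - \hf_n(x^i)}^p\biggr)^{1/p},
\]
so that the event $\{(\tfrac{1}{n}\sum_i \norm{\teps^i_n(x)}^p)^{1/p} > 2\kappa_{p,n}(\alpha/4,x)\}$ is contained in the event $\{V + W > 2\kappa_{p,n}(\alpha/4,x)\}$ and therefore has probability bounded by that of the latter.

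Next I would apply Lemma~\ref{lem:probineq} with $c_1 = c_2 = \kappa_{p,n}(\alpha/4,x)$, reducing the problem to bounding $\mathbb{P}\{V > \kappa_{p,n}(\alpha/4,x)\}$ and $\mathbb{P}\{W > \kappa_{p,n}(\alpha/4,x)\}$ separately. Since the $p$-th power is monotone on $\R_+$, these tail events coincide with the two tail events appearing in Assumption~\ref{ass:reglargedevwass} at risk level $\alpha/4$, and hence each has probability at most $\alpha/4$. Summing gives the $\alpha/2$ bound.

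There is no real obstacle here: the only subtlety is to match the risk level of Assumption~\ref{ass:reglargedevwass} to the factor of $2$ in the threshold (splitting $\alpha/2$ as $\alpha/4 + \alpha/4$ to pay for the two tail events produced by Lemma~\ref{lem:probineq}). The ``a.e.\ $x \in \X$'' qualifier is inherited directly from Assumption~\ref{ass:reglargedevwass}.
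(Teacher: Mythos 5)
Your proposal is correct and follows essentially the same route as the paper's proof: Lemma~\ref{lem:boundregerror} to bound the power mean by the sum of the prediction and estimation error terms, Lemma~\ref{lem:probineq} with an even $\kappa_{p,n}(\alpha/4,x)$ split, and Assumption~\ref{ass:reglargedevwass} at risk level $\alpha/4$ for each tail (your explicit remark about monotonicity of the $p$-th power is left implicit in the paper but is the right justification for matching the second tail event). Nothing is missing.
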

\begin{proof}
We have for a.e.\ $x \in \X$
\begin{align*}
& \mathbb{P}\bigg\{\biggl(\dfrac{1}{n}\displaystyle\sum_{i=1}^{n} \norm{\teps^{i}_{n}(x)}^p\biggr)^{1/p} > 2\kappa_{p,n}\Bigl(\frac{\alpha}{4},x\Bigr)\bigg\} \\
\leq & \mathbb{P}\bigg\{\norm{f^*(x) - \hf_n(x)} + \biggl(\dfrac{1}{n} \displaystyle\sum_{i=1}^{n} \norm{f^*(x^i) - \hf_n(x^i)}^p\biggr)^{1/p} > 2\kappa_{p,n}\Bigl(\frac{\alpha}{4},x\Bigr)\bigg\} \\
\leq & \prob{\norm{f^*(x) - \hf_n(x)} > \kappa_{p,n}\Bigl(\frac{\alpha}{4},x\Bigr)} \\
& \qquad + \mathbb{P}\bigg\{\dfrac{1}{n} \displaystyle\sum_{i=1}^{n} \norm{f^*(x^i) - \hf_n(x^i)}^p > \kappa^p_{p,n}\Bigl(\frac{\alpha}{4},x\Bigr)\bigg\} \\
\leq & \frac{\alpha}{4} + \frac{\alpha}{4} = \frac{\alpha}{2},
\end{align*}
where the first step follows by Lemma~\ref{lem:boundregerror}, {the second step follows from Lemma~\ref{lem:probineq}},
and the last step holds by Assumption~\ref{ass:reglargedevwass}.
\end{proof}

To establish our asymptotic and finite sample guarantees,
we enlarge the radius of the Wasserstein ambiguity set that is used in the absence of covariate information~\cite{esfahani2018data,kuhn2019wasserstein}.  This enlargement accounts for the error in estimating the regression function~$f^*$.
In particular, for a given covariate realization $x \in \X$ and risk level $\alpha \in (0,1)$, we use 
\begin{align}
\label{eqn:wassradius}
\zeta_n(\alpha,x) := \kappa^{(1)}_{p,n}(\alpha,x) +  \kappa^{(2)}_{p,n}(\alpha)
\end{align}
as the radius of the ambiguity set,
where $\kappa^{(1)}_{p,n}(\alpha,x) := 2\kappa_{p,n}\bigl(\frac{\alpha}{4},x\bigr)$ and
\begin{align*}
\kappa^{(2)}_{p,n}(\alpha) &:= \begin{cases} \left( \frac{O(1)\log(O(1)\alpha^{-1})}{n} \right)^{\min\{p/d_y,1/2\}} &\mbox{if } n \geq O(1) \log(O(1) \alpha^{-1}) \\
\left( \frac{O(1)\log(O(1)\alpha^{-1})}{n} \right)^{p/a} & \mbox{if } n < O(1) \log(O(1) \alpha^{-1}) \end{cases}.
\end{align*}
The constants $a$ and $\kappa_{p,n}$ above are defined in Assumptions~\ref{ass:lighttailerr} and~\ref{ass:reglargedevwass}.
{The term $\kappa^{(2)}_{p,n}(\alpha)$ is obtained by setting the r.h.s.\ of the inequality in Lemma~\ref{lem:wassmeasureconc} to $\alpha/2$.}
While this choice of~$\zeta_n$ helps us derive our theoretical guarantees, it involves unknown constants and is often conservative in practice (see Remark~\ref{rem:wassdrorate}).
We investigate practical data-driven approaches for choosing~$\zeta_n$
in Section~\ref{subsec:wass_radius}.

\begin{theorem}[Finite sample certificate guarantee]
\label{thm:wassfinitesampcert}
Suppose Assumptions~\ref{ass:lighttailerr} and~\ref{ass:reglargedevwass} hold, $\alpha \in (0,1)$ is a given risk level, and the samples $\{\varepsilon^{i}\}_{i=1}^{n}$ of the errors are i.i.d. 
Then, for a.e.\ $x \in \X$, the finite sample certificate guarantee $\prob{g(\hz^{DRO}_n(x);x) \leq \hv^{DRO}_n(x)} \geq 1 - \alpha$ holds for the ER-DRO problem~\eqref{eqn:dro} with radius $\zeta_n(\alpha,x)$ of the ambiguity set $\hP_n(x;\zeta_n(\alpha,x))$ specified by equation~\eqref{eqn:wassradius}.
\end{theorem}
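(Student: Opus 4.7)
The plan is to follow the standard Wasserstein DRO finite sample certificate template (in the spirit of Esfahani--Kuhn), with an extra step to absorb the regression estimation error into the radius. The key driver is the simple observation: if the true conditional distribution $P_{Y \mid X=x}$ lies inside the ambiguity set $\hP_n(x;\zeta_n(\alpha,x))$, then the DRO supremum is an upper bound on the true expected cost evaluated at any feasible $z$, so in particular
\[
\hv^{DRO}_n(x) \;\geq\; \sup_{Q \in \hP_n(x;\zeta_n(\alpha,x))} \expectation{Y \sim Q}{c(\hz^{DRO}_n(x),Y)} \;\geq\; g(\hz^{DRO}_n(x);x).
\]
So it suffices to prove the event $\{d_{W,p}(\hat{P}^{ER}_n(x), P_{Y \mid X=x}) \leq \zeta_n(\alpha,x)\}$ has probability at least $1-\alpha$.

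To bound this Wasserstein distance, I would first apply Lemma~\ref{lem:wass_dist_bound}, which splits it as
\[
d_{W,p}(\hat{P}^{ER}_n(x), P_{Y \mid X=x}) \;\leq\; \Bigl(\tfrac{1}{n}\sum_{i=1}^n \norm{\teps^i_n(x)}^p\Bigr)^{1/p} + d_{W,p}(P^*_n(x), P_{Y \mid X=x}).
\]
The first (regression) term is controlled by Lemma~\ref{lem:wassfinitesamp}: with probability at least $1 - \alpha/2$ it is at most $\kappa^{(1)}_{p,n}(\alpha,x) = 2\kappa_{p,n}(\alpha/4,x)$. The second (measure concentration) term is controlled by Lemma~\ref{lem:wassmeasureconc} applied to the i.i.d.\ samples $\{\varepsilon^i\}$: inverting the tail bound in $\kappa$ by setting the right-hand side to $\alpha/2$ and solving in the two regimes ($\kappa \leq 1$ vs.\ $\kappa > 1$, corresponding to the two pieces of $\kappa^{(2)}_{p,n}(\alpha)$) yields $d_{W,p}(P^*_n(x), P_{Y \mid X=x}) \leq \kappa^{(2)}_{p,n}(\alpha)$ with probability at least $1-\alpha/2$.

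Combining the two tail bounds via Lemma~\ref{lem:probineq} (i.e., a union-bound-style inequality) gives that the sum of the two terms is at most $\kappa^{(1)}_{p,n}(\alpha,x) + \kappa^{(2)}_{p,n}(\alpha) = \zeta_n(\alpha,x)$ with probability at least $1-\alpha$. On this event, $P_{Y \mid X=x} \in \hP_n(x;\zeta_n(\alpha,x))$, and the opening inequality delivers $g(\hz^{DRO}_n(x);x) \leq \hv^{DRO}_n(x)$, yielding the desired certificate.

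The main obstacle is purely bookkeeping: matching the two-regime piecewise form of $\kappa^{(2)}_{p,n}(\alpha)$ with the two-regime tail in Lemma~\ref{lem:wassmeasureconc} by inverting $\exp(-O(1)n\kappa^{\max\{d_y/p,2\}})$ and $\exp(-O(1)n\kappa^{a/p})$ at level $\alpha/2$, being careful that when $p < d_y/2$ the exponent $\min\{p/d_y,1/2\}$ appears instead of $1/2$. No deeper analytical step is needed beyond confirming that the constants $\kappa^{(1)}_{p,n}$ and $\kappa^{(2)}_{p,n}$ together exactly exhaust the $\alpha/2 + \alpha/2$ budget, with the splitting $\alpha/4$ inside $\kappa^{(1)}_{p,n}$ being the one already arranged in Lemma~\ref{lem:wassfinitesamp}.
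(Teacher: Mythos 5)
Your proposal is correct and follows the paper's proof essentially verbatim: it uses Lemma~\ref{lem:wass_dist_bound} to split the Wasserstein distance, controls the regression term via Lemma~\ref{lem:wassfinitesamp} and the concentration term via Lemma~\ref{lem:wassmeasureconc} (each at level $\alpha/2$), combines them with Lemma~\ref{lem:probineq}, and concludes from the containment $P_{Y\mid X=x}\in\hP_n(x;\zeta_n(\alpha,x))$. The only difference is that you spell out the final step (that containment of the true conditional distribution implies the certificate), which the paper compresses into ``follows from the definition of the ER-DRO problem.''
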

\begin{proof}
Lemma~\ref{lem:wassfinitesamp} and Lemma~\ref{lem:wassmeasureconc}
imply that
\begin{align*}
\mathbb{P}\bigg\{\biggl(\frac{1}{n}\sum_{i=1}^{n} \norm{\teps^{i}_{n}(x)}^p\biggr)^{1/p} > \kappa^{(1)}_{p,n}(\alpha,x)\bigg\} &\leq \frac{\alpha}{2}, \quad \text{for a.e. } x \in \X, \\
\mathbb{P}\big\{d_{W,p}( P^*_n(x), P_{Y \mid X=x} ) > \kappa^{(2)}_{p,n}(\alpha)\big\} &\leq \frac{\alpha}{2}, \quad \forall x \in \X.
\end{align*}
Consequently, equation~\eqref{eqn:wassradius},
Lemma~\ref{lem:wass_dist_bound} {and Lemma~\ref{lem:probineq}} 
imply that
\[
\mathbb{P}\big\{d_{W,p}( \hat{P}^{ER}_n(x), P_{Y \mid X=x} ) > \zeta_n(\alpha,x)\big\} \leq \alpha \:\: \text{for a.e. } x \in \X.
\]
The stated result follows from the definition of the ER-DRO problem~\eqref{eqn:dro}.
\end{proof}

We now make the following assumption along the lines of~\cite{esfahani2018data,kuhn2019wasserstein,gao2016distributionally} to show in Theorem~\ref{thm:wassaympconsist} that solutions to the ER-DRO problem~\eqref{eqn:dro} with radii $\zeta_n(\alpha_n,x)$ 
are 
asymptotically optimal for a suitable sequence of risk levels $\{\alpha_n\}$.

\begin{assumption}
\label{ass:growthrate}
The function $c(\cdot,Y)$ is lower semicontinuous on $\Z$ for each $Y \in \Y$ and the function~$c(z,\cdot)$ is continuous on~$\Y$ for each~$z \in \Z$. Furthermore, there exists a constant $B_{c,p} \geq 0$ such that 
\[
\abs{c(z,Y)} \leq B_{c,p} (1 + \norm{Y}^p), \quad \forall z \in \Z, \: Y \in \Y.
\]
\end{assumption}

We also make either of the following assumptions on the function~$c$ to establish a rate of convergence of the ER-DRO estimator in Theorem~\ref{thm:wassconvrate}.

\begin{assumption}
\label{ass:lipschitzy}
For each $z \in \Z$, the function $c(z,\cdot)$ is Lipschitz continuous on $\Y$ with Lipschitz constant~$L_1(z)$. 
\end{assumption}

\begin{assumption}
\label{ass:lipschitzy2}
The Wasserstein order is $p \geq 2$.
Furthermore, for each $z \in \Z$, the function $c(z,\cdot)$ is differentiable on~$\Y$ with $\expect{\norm{\nabla c(z,Y)}^2} < +\infty$ and
\[
\norm{\nabla c(z,\bar{y}) -\nabla c(z,y)} \leq L_2(z) \norm{\bar{y} - y}, \quad \forall y, \bar{y} \in \Y.
\]
\end{assumption}

Assumptions~\ref{ass:growthrate},~\ref{ass:lipschitzy}, and~\ref{ass:lipschitzy2} hold for broad classes of stochastic programs, including two-stage stochastic mixed-integer linear programs (MIPs) with continuous recourse~\citep[Appendix~EC.2]{kannan2020data}.

\begin{assumption}
\label{ass:wassrisklevelseq}
The sequence of risk levels $\{\alpha_n\} \subset (0,1)$ satisfies $\sum_{n} \alpha_n < \infty$ and $\uset{n \to \infty}{\lim} \zeta_n(\alpha_n,x) = 0$ for a.e.\ $x \in \X$ with the radius~$\zeta_n$ defined in~\eqref{eqn:wassradius}.
\end{assumption}

{We have the following useful result.}

\begin{lemma}
\label{lem:wassintres}
{Suppose Assumptions~\ref{ass:lighttailerr},~\ref{ass:reglargedevwass}, and~\ref{ass:wassrisklevelseq} hold and the samples $\{\varepsilon^{i}\}_{i=1}^{n}$ are i.i.d.
Then, a.s.\ for $n$ large enough
\[
v^*(x) \leq g(\hz^{DRO}_n(x);x) \leq \hv^{DRO}_n(x), \quad \text{for a.e. } x \in \X.
\]
Furthermore, let $z^*(x) \in S^*(x)$ be an optimal solution to problem~\eqref{eqn:speq}. Then}
\begin{enumerate}[label=\Alph*.]
\item {If Assumption~\ref{ass:lipschitzy} holds, we a.s.\ have for a.e.\ $x \in \X$ and $n$ large enough}
\[
\hv^{DRO}_n(x) \leq v^*(x) + 2L_1(z^*(x))\zeta_n(\alpha_n,x).
\]

\item {If Assumption~\ref{ass:lipschitzy2} holds, we a.s.\ have for a.e.\ $x \in \X$ and $n$ large enough}
\[
\hspace*{-0.2in} \hv^{DRO}_n(x) \leq v^*(x) + 2\bigl(\expect{\norm{\nabla c(z^*(x),Y)}^2}\bigr)^{1/2} \zeta_n(\alpha_n,x) + 4L_2(z^*(x))\zeta^2_n(\alpha_n,x).
\]
\end{enumerate}
\end{lemma}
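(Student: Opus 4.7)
The proof decomposes into three parts: (i) the sandwich $v^*(x) \leq g(\hz^{DRO}_n(x);x) \leq \hv^{DRO}_n(x)$; (ii) the Lipschitz upper bound (A); and (iii) the Lipschitz-gradient upper bound (B). The left inequality of (i) is immediate because $\hz^{DRO}_n(x) \in \Z$ is feasible for the true problem~\eqref{eqn:speq}, so $g(\hz^{DRO}_n(x);x) \geq v^*(x)$. For the right inequality of (i), Theorem~\ref{thm:wassfinitesampcert} gives $\mathbb{P}\{g(\hz^{DRO}_n(x);x) > \hv^{DRO}_n(x)\} \leq \alpha_n$ for a.e.\ $x \in \X$. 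Since $\sum_n \alpha_n < +\infty$ by Assumption~\ref{ass:wassrisklevelseq}, the Borel--Cantelli lemma yields $g(\hz^{DRO}_n(x);x) \leq \hv^{DRO}_n(x)$ a.s.\ for $n$ large enough.

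For the upper bounds on $\hv^{DRO}_n(x)$, the plan is to bound $\hv^{DRO}_n(x) \leq \sup_{Q \in \hP_n(x;\zeta_n(\alpha_n,x))} \mathbb{E}_{Y \sim Q}[c(z^*(x),Y)]$ by plugging in the fixed feasible solution $z^*(x)$, then to control the worst-case expectation over the Wasserstein ball. The key observation, extracted from the proof of Theorem~\ref{thm:wassfinitesampcert}, is that $\mathbb{P}\{d_{W,p}(\hat{P}^{ER}_n(x), P_{Y \mid X=x}) > \zeta_n(\alpha_n,x)\} \leq \alpha_n$, so again Borel--Cantelli implies $P_{Y \mid X=x} \in \hP_n(x;\zeta_n(\alpha_n,x))$ a.s.\ for $n$ large enough. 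Hence, via the triangle inequality for $d_{W,p}$, every $Q \in \hP_n(x;\zeta_n(\alpha_n,x))$ satisfies $d_{W,p}(Q, P_{Y \mid X=x}) \leq 2\zeta_n(\alpha_n,x)$ a.s.\ for $n$ large enough.

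Part A then follows from Kantorovich--Rubinstein duality: since $c(z^*(x),\cdot)$ is $L_1(z^*(x))$-Lipschitz on $\Y$ by Assumption~\ref{ass:lipschitzy}, and since $d_{W,1} \leq d_{W,p}$ by Jensen, we obtain
\[
\mathbb{E}_Q[c(z^*(x),Y)] - v^*(x) \leq L_1(z^*(x)) \, d_{W,1}(Q, P_{Y \mid X=x}) \leq 2L_1(z^*(x))\zeta_n(\alpha_n,x)
\]
uniformly over $Q \in \hP_n(x;\zeta_n(\alpha_n,x))$, giving the stated bound after taking the supremum. For Part B, I would invoke the standard descent-lemma inequality
\[
c(z,\bar y) \leq c(z,y) + \nabla c(z,y)^{\mathrm T}(\bar y - y) + L_2(z)\norm{\bar y - y}^2
\]
implied by Assumption~\ref{ass:lipschitzy2}, apply it to an arbitrary coupling $\pi$ of $(Q, P_{Y \mid X=x})$, take expectations, bound the cross term via Cauchy--Schwarz by $(\mathbb{E}[\norm{\nabla c(z^*(x),Y)}^2])^{1/2}(\mathbb{E}_\pi[\norm{Y_1 - Y_2}^2])^{1/2}$, then optimize over $\pi$ to recover $d_{W,2}(Q, P_{Y \mid X=x})$. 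Since $p \geq 2$, Jensen gives $d_{W,2} \leq d_{W,p} \leq 2\zeta_n(\alpha_n,x)$, yielding the claimed quadratic bound.

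\textbf{Main obstacles.} The argument is essentially a concatenation of (a) Borel--Cantelli applied to the probability guarantees already established and (b) the two standard Wasserstein-to-expectation transfer inequalities. The only slightly delicate point is Part B, where one must pass through a coupling-based argument to avoid assuming any stronger smoothness on $c$ and to legitimately invoke $d_{W,2}$ on both the cross term and the quadratic remainder; the precise numerical constant in the quadratic term simply reflects the constant chosen in the descent-lemma form of Assumption~\ref{ass:lipschitzy2}.
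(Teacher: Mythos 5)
Your proposal is correct and follows essentially the same route as the paper: the sandwich inequality via Theorem~\ref{thm:wassfinitesampcert} plus Borel--Cantelli, the a.s.\ containment of every $Q$ in the ambiguity set within a $2\zeta_n(\alpha_n,x)$-Wasserstein ball around $P_{Y \mid X=x}$ (the paper isolates this as Lemma~\ref{lem:convwassdist}), and Kantorovich--Rubinstein for Part~A. The only difference is cosmetic: for Part~B the paper cites~\cite[Lemma~2]{gao2020finite}, whereas you re-derive that transfer inequality directly via the descent lemma, a coupling, and Cauchy--Schwarz, which yields the same (in fact slightly sharper) constants.
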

\begin{proof}
{See Appendix~\ref{app:wassintres}.} 
\end{proof}

{We now state our asymptotic guarantees for Wasserstein ER-DRO.}

\begin{theorem}[Consistency and asymptotic optimality]
\label{thm:wassaympconsist}
Suppose Assumptions~\ref{ass:lighttailerr}, \ref{ass:reglargedevwass},~\ref{ass:growthrate}, and~\ref{ass:wassrisklevelseq} hold and the samples $\{\varepsilon^{i}\}_{i=1}^{n}$ are i.i.d.
Then, for a.e.\ $x \in \X$, the optimal value and solution of the ER-DRO problem~\eqref{eqn:dro} with ambiguity set $\hP_n(x;\zeta_n(\alpha_n,x))$ are consistent and asymptotically optimal, i.e.,
\[
\hv^{DRO}_n(x) \xrightarrow{p} v^*(x), \:\: \textup{dist}(\hz^{DRO}_n(x),S^*(x)) \convinprob 0, \:\: g(\hz^{DRO}_n(x);x) \convinprob v^*(x).
\]
\end{theorem}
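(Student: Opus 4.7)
The plan is to establish the sandwich $v^*(x) \leq g(\hz^{DRO}_n(x);x) \leq \hv^{DRO}_n(x)$ a.s.\ for $n$ large, prove $\limsup_n \hv^{DRO}_n(x) \leq v^*(x)$ a.s., and then deduce solution consistency from compactness of $\Z$ and lower semicontinuity of $g(\cdot;x)$. Since a.s.\ convergence implies convergence in probability, all three claims will then follow. The lower bound is exactly the first part of Lemma~\ref{lem:wassintres}: by Borel-Cantelli applied to the finite sample certificate of Theorem~\ref{thm:wassfinitesampcert}, and using $\sum_n \alpha_n < \infty$ from Assumption~\ref{ass:wassrisklevelseq}, for a.e.\ $x \in \X$ we have $P_{Y \mid X=x} \in \hP_n(x;\zeta_n(\alpha_n,x))$ a.s.\ for all large $n$, which immediately yields $\hv^{DRO}_n(x) \geq g(\hz^{DRO}_n(x);x) \geq v^*(x)$.

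For the upper bound on $\hv^{DRO}_n(x)$, I would apply the same Borel-Cantelli argument to Lemmas~\ref{lem:wassfinitesamp} and~\ref{lem:wassmeasureconc} to get $d_{W,p}(\hat{P}^{ER}_n(x), P_{Y \mid X=x}) \leq \zeta_n(\alpha_n,x)$ a.s.\ eventually, with $\zeta_n(\alpha_n,x) \to 0$ by Assumption~\ref{ass:wassrisklevelseq}. Fixing any $z^*(x) \in S^*(x)$, I would then write
\[
\hv^{DRO}_n(x) \leq \uset{Q \in \hP_n(x;\zeta_n(\alpha_n,x))}{\sup} \expectation{Y \sim Q}{c(z^*(x),Y)},
\]
select a $(1/n)$-near optimizer $Q_n$ of the right-hand side, and invoke the triangle inequality to obtain $d_{W,p}(Q_n, P_{Y \mid X=x}) \leq 2\zeta_n(\alpha_n,x) \to 0$ a.s. Under Assumption~\ref{ass:growthrate}, $c(z^*(x),\cdot)$ is continuous with $\abs{c(z^*(x),\cdot)} \leq B_{c,p}(1+\norm{\cdot}^p)$, and Assumption~\ref{ass:lighttailerr} ensures $\expect{\norm{Y}^p \mid X=x} < \infty$. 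The classical characterization of $p$-Wasserstein convergence---equivalent to weak convergence together with convergence of $p$-th absolute moments---then yields $\expectation{Y \sim Q_n}{c(z^*(x),Y)} \to g(z^*(x);x) = v^*(x)$ a.s. Combined with the sandwich, this delivers $\hv^{DRO}_n(x) \to v^*(x)$ and $g(\hz^{DRO}_n(x);x) \to v^*(x)$ a.s.

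For solution consistency I would use a subsequence argument: by compactness of $\Z$, every subsequence of $\{\hz^{DRO}_n(x)\}$ admits a further subsequence $\hz^{DRO}_{n_k}(x) \to \bar{z} \in \Z$, and lower semicontinuity of $g(\cdot;x)$ gives $g(\bar{z};x) \leq \liminf_k g(\hz^{DRO}_{n_k}(x);x) = v^*(x)$, so $\bar{z} \in S^*(x)$. Since every subsequential limit lies in $S^*(x)$, $\textup{dist}(\hz^{DRO}_n(x), S^*(x)) \to 0$ a.s., which implies convergence in probability. The main obstacle will be the upper-bound step: lacking the Lipschitz assumptions used in Lemma~\ref{lem:wassintres}(A--B), one cannot control $\abs{\expectation{Q}{c} - \expectation{P}{c}}$ directly by $d_{W,p}(Q,P)$, and must instead rely on the qualitative Villani-type continuity of expectations of continuous $p$-growth functions under $p$-Wasserstein convergence.
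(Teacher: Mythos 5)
Your proposal is correct and follows essentially the same route as the paper's proof: the sandwich $v^*(x) \le g(\hz^{DRO}_n(x);x) \le \hv^{DRO}_n(x)$ via Borel--Cantelli applied to the finite sample certificate, the upper bound $\limsup_n \hv^{DRO}_n(x) \le v^*(x)$ obtained by evaluating a near-optimal distribution in the ambiguity set at $z^*(x)$ and invoking the Villani characterization of $p$-Wasserstein convergence (weak convergence plus $p$-th moments, hence convergence of expectations of continuous functions with $p$-growth, which is exactly what Assumption~\ref{ass:growthrate} supplies), and finally a compactness/subsequence argument using lower semicontinuity of $g(\cdot;x)$ for solution consistency. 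The only cosmetic differences are your $(1/n)$-near optimizer versus the paper's fixed-$\eta$ near optimizer, and your direct use of the standing l.s.c.\ assumption on $g(\cdot;x)$ where the paper re-derives it from l.s.c.\ of $c(\cdot,Y)$ via Fatou's lemma.
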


\begin{theorem}[Rate of convergence]
\label{thm:wassconvrate}
Suppose the assumptions of Theorem~\ref{thm:wassaympconsist} and either Assumption~\ref{ass:lipschitzy} or Assumption~\ref{ass:lipschitzy2} hold.
Then, for a.e.\ $x \in \X$, 
the ER-DRO problem~\eqref{eqn:dro} with ambiguity set $\hP_n(x;\zeta_n(\alpha_n,x))$ satisfies
\[
\hspace*{-0.05in}\abs*{\hv^{DRO}_n(x) - v^*(x)} = O_p\big(\zeta_n(\alpha_n,x)\big), \:\: \abs*{g(\hz^{DRO}_n(x);x) - v^*(x)} = O_p\big(\zeta_n(\alpha_n,x)\big).
\]
\end{theorem}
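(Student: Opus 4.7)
The plan is to read the rate of convergence off directly from Lemma~\ref{lem:wassintres}, which has already packaged the analytic work (the concentration inequality of Lemma~\ref{lem:wassmeasureconc}, the regression-error control of Lemma~\ref{lem:wassfinitesamp}, and the triangle bound of Lemma~\ref{lem:wass_dist_bound}) into deterministic-looking inequalities that hold a.s.\ for $n$ large. Concretely, Lemma~\ref{lem:wassintres} furnishes, a.s.\ for all sufficiently large $n$ and a.e.\ $x \in \X$, the sandwich
\[
v^*(x) \leq g(\hz^{DRO}_n(x);x) \leq \hv^{DRO}_n(x),
\]
together with a one-sided upper bound on $\hv^{DRO}_n(x) - v^*(x)$ in each of the two cases. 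These two ingredients are all I need.

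First I would control $\abs{\hv^{DRO}_n(x) - v^*(x)}$. In case A (Assumption~\ref{ass:lipschitzy}), combining the sandwich with part~A of Lemma~\ref{lem:wassintres} yields
\[
0 \leq \hv^{DRO}_n(x) - v^*(x) \leq 2L_1(z^*(x))\zeta_n(\alpha_n,x)
\]
a.s.\ for $n$ large, so $\abs{\hv^{DRO}_n(x) - v^*(x)} = O(\zeta_n(\alpha_n,x))$ almost surely, which a fortiori delivers the advertised $O_p$ rate. In case B (Assumption~\ref{ass:lipschitzy2}), part~B of the lemma gives
\[
0 \leq \hv^{DRO}_n(x) - v^*(x) \leq 2\bigl(\expect{\norm{\nabla c(z^*(x),Y)}^2}\bigr)^{1/2}\zeta_n(\alpha_n,x) + 4L_2(z^*(x))\zeta^2_n(\alpha_n,x).
\]
Since $\zeta_n(\alpha_n,x) \to 0$ under Assumption~\ref{ass:wassrisklevelseq}, the quadratic term is $o(\zeta_n(\alpha_n,x))$ and we again obtain $\abs{\hv^{DRO}_n(x) - v^*(x)} = O_p(\zeta_n(\alpha_n,x))$.

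For the second assertion, I would simply invoke the sandwich to write
\[
0 \leq g(\hz^{DRO}_n(x);x) - v^*(x) \leq \hv^{DRO}_n(x) - v^*(x),
\]
so the rate just established transfers verbatim to $g(\hz^{DRO}_n(x);x) - v^*(x)$. There is essentially no obstacle here because Lemma~\ref{lem:wassintres} has done all the heavy lifting; the only detail I would double-check is that the sandwich and the relevant one-sided upper bound in Lemma~\ref{lem:wassintres} are asserted on a common a.s.\ event for $n$ large (they are, since both conclusions of the lemma are stated on the same limiting event), so the two inequalities can be combined without any extra union-bound argument over the summable sequence~$\{\alpha_n\}$.
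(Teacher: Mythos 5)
Your proposal is correct and follows essentially the same route as the paper's proof: both combine the a.s.\ sandwich $v^*(x) \leq g(\hz^{DRO}_n(x);x) \leq \hv^{DRO}_n(x)$ with the one-sided upper bounds from parts A and B of Lemma~\ref{lem:wassintres} to read off the rate for both quantities at once. Your additional remark that the $\zeta_n^2$ term in case B is $o(\zeta_n)$ under Assumption~\ref{ass:wassrisklevelseq} is a correct (and slightly more explicit) justification of a detail the paper leaves implicit.
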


Proofs of Theorems~\ref{thm:wassaympconsist} and~\ref{thm:wassconvrate} are in Appendices~\ref{app:wassaympconsist} and~\ref{app:wassconvrate}.
The proof of Theorem~\ref{thm:wassaympconsist} mirrors that of~\cite[Theorem~3.6]{esfahani2018data} (it shows that the conclusions in fact hold almost surely).
Similar to the setting without covariate information~\cite{esfahani2018data}, we can typically choose the sequence of risk levels~$\{\alpha_n\}$ in {Assumption~\ref{ass:wassrisklevelseq} to be any sequence converging at a slower rate than $\{\exp(-n)\}$} 
when the errors~$\varepsilon$ are sub-Gaussian (see the discussion following Assumption~\ref{ass:reglargedevwass}).

\begin{remark}
Assumption~\ref{ass:lipschitzy2} can be weakened to consider functions~$c$ that satisfy
\[
\norm{\nabla c(z,\bar{y}) -\nabla c(z,y)} \leq L_2(z,y) \norm{\bar{y} - y}^{\kappa}, \quad \forall z \in \Z, \: y, \bar{y} \in \Y,
\]
and $\expect{\norm{L_2(z,Y)}^{p/(p-1)}} < +\infty$, $\forall z \in \Z$, for some constant $\kappa \in (0,1]$ and orders $p \geq 1+\kappa$, see~\cite[Proposition~1]{gao2017wasserstein}.
Furthermore, Assumption~\ref{ass:lipschitzy2} can also be weakened to consider functions~$c$  of the form $c(z,Y) = \max_{j \in [N_c]} c_j(z,Y)$, where $N_c \in \mathbb{N}$ and for each $z \in \Z$, the constituent functions $c_j(z,\cdot)$ are differentiable on~$\Y$ and satisfy $\mathbb{E}\bigl[\max_{j \in [N_c]} \norm{\nabla c_j(z,Y)}^2\bigr] < +\infty$ and
\[
\norm{\nabla c_j(z,\bar{y}) -\nabla c_j(z,y)} \leq L_{j,2}(z) \norm{\bar{y} - y}, \quad \forall y, \bar{y} \in \Y, \: j \in [N_c].
\]
The above weakening of Assumption~\ref{ass:lipschitzy2} makes it applicable to a larger class of 
stochastic programs.
We stick with Assumption~\ref{ass:lipschitzy2} for simplicity.
\end{remark}

\begin{remark}
\label{rem:wassdrorate}
Recall the radius given in \eqref{eqn:wassradius} consists of two parts. For the part that relates to the Wasserstein ambiguity set without covariate information, 
because the rate $d_{W,p}( P^*_n(x), P_{Y \mid X=x} ) = O_p(n^{-p/d_y})$ cannot be improved in general (see~\cite[Example~3]{kuhn2019wasserstein}), we usually  have $\kappa^{(2)}_{p,n}(\alpha_n)$ converging to zero only at the slow rate $\Theta(n^{-p/d_y})$.
Therefore, the convergence rate afforded by Theorem~\ref{thm:wassconvrate} suffers from the curse of dimensionality even when we use parametric regression methods, which typically exhibit better rates of convergence on the part of the radius that relates to the estimation of $f^*$ (cf.\ \cite[Theorem~2]{kannan2020data}). 
The analysis in Gao~\cite{gao2020finite} {and Blanchet et al.\ \cite{blanchet2019confidence}} implies that, under certain assumptions, using the {smaller} radius $\zeta_n(\alpha,x) := \max\{ \kappa^{(1)}_{p,n}(\alpha,x), \bar{\kappa}^{(2)}_{p,n}(\alpha) \}$ with suitably chosen $\bar{\kappa}^{(2)}_{p,n}(\alpha) = O(n^{-1/2})$ 
results in estimators with a finite sample certificate-type guarantee {and sharper convergence rates}.
This {smaller} choice of the radius $\zeta_n$ also yields estimators with the conventional $O_p(n^{-1/2})$ rate of convergence when we use parametric regression methods to estimate the function~$f^*$.
{Consequently, if sharper finite sample guarantees such as those in~\cite{gao2020finite,blanchet2019confidence} apply, then Theorem~\ref{thm:wassconvrate} can be readily adapted to derive sharper convergence rates.}
{However, the assumptions in~\cite{gao2020finite,blanchet2019confidence} may exclude some formulations of interest, such as two-stage stochastic programs (see, e.g.,~\citep[Assumption~(A3)]{blanchet2019confidence} and~\citep[Assumption~2]{gao2020finite}), or may be difficult to verify in general (see~\citep[Section~5]{gao2020finite})}.
\end{remark}

{Next, we identify conditions under which the optimal objective value of the Wasserstein ER-DRO problem~\eqref{eqn:dro} converges to $v^*(x)$ at a suitable rate with respect to the $L^q$-norm on~$\X$ for $q \in [1,\infty]$.
We make the following stronger form of Assumption~\ref{ass:reglargedevwass} for simplicity.}

\begin{assumption}
\label{ass:reglargedevunif}
{The regression estimate~$\hf_n$ possesses the following finite sample property: for any risk level $\alpha \in (0,1)$, there exists a positive constant $\kappa_n(\alpha)$ such that
$\mathbb{P}\bigl\{\sup_{x \in \X} \norm{f^*(x) - \hf_n(x)} > \kappa_n(\alpha)\bigr\} \leq \alpha$.}
\end{assumption}

{
Appendix~EC.3 of~\cite{kannan2020data} verifies that Assumption~\ref{ass:reglargedevunif} holds for some parametric and nonparametric regression methods such as OLS, Lasso, and kNN regression when the support~$\X$ of the covariates is compact.
When Assumption~\ref{ass:reglargedevunif} holds, we write~$\zeta_n(\alpha)$ instead of~$\zeta_n(\alpha,x)$ for the radius specified by~\eqref{eqn:wassradius}.}

\begin{theorem}[Mean convergence rate]
\label{thm:wassconvratelq}
{Suppose Assumptions~\ref{ass:lighttailerr}, \ref{ass:growthrate},~\ref{ass:wassrisklevelseq}, and~\ref{ass:reglargedevunif} hold, and $\{\varepsilon^{i}\}_{i=1}^{n}$ are i.i.d.
Let $q \in [1,+\infty]$ and suppose either Assumption~\ref{ass:lipschitzy} holds with $\norm*{L_1(z^*(\cdot))}_{L^q} < +\infty$, or Assumption~\ref{ass:lipschitzy2} holds with $\norm{\bigl(\mathbb{E}_{Y}\bigl[\norm{\nabla c(z^*(\cdot),Y)}^2\bigr]\bigr)^{1/2}}_{L^q} < +\infty$ and $\norm*{L_2(z^*(\cdot))}_{L^q} < +\infty$.
Then, 
the ER-DRO problem~\eqref{eqn:dro} with ambiguity set $\hP_n(x;\zeta_n(\alpha_n,x))$ satisfies}
\begin{align*}
\norm*{\hv^{DRO}_n(X) - v^*(X)}_{L^q} &= O_p\bigl(\zeta_n(\alpha_n)\bigr), \\ 
\norm*{g(\hz^{DRO}_n(X);X) - v^*(X)}_{L^q} &= O_p\bigl(\zeta_n(\alpha_n)\bigl).
\end{align*}
\end{theorem}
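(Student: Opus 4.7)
The plan is to upgrade the pointwise bounds of Lemma~\ref{lem:wassintres} to bounds that hold uniformly in~$x$, then integrate. Two observations are central. First, Assumption~\ref{ass:reglargedevunif} implies Assumption~\ref{ass:reglargedevwass} with the uniform choice $\kappa_{p,n}(\alpha,x) = \kappa_n(\alpha)$, so the radius $\zeta_n(\alpha_n,x)$ defined in~\eqref{eqn:wassradius} collapses to a deterministic, $x$-independent quantity $\zeta_n(\alpha_n) = 2\kappa_n(\alpha_n/4) + \kappa^{(2)}_{p,n}(\alpha_n)$. Second, since $Y = f^*(X)+\varepsilon$ with $\varepsilon$ independent of~$X$, the conditional law $P_{Y \mid X=x}$ is the pushforward of $P_\varepsilon$ under the shift $v \mapsto f^*(x)+v$, and $P^*_n(x)$ is the pushforward of $\hat{P}^{\varepsilon}_n := n^{-1}\sum_{i} \delta_{\varepsilon^i}$ under the same shift. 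Translation invariance of the Wasserstein distance therefore yields $d_{W,p}(P^*_n(x), P_{Y \mid X=x}) = d_{W,p}(\hat{P}^{\varepsilon}_n, P_\varepsilon)$ for every $x \in \X$, so Lemma~\ref{lem:wassmeasureconc} produces a concentration bound that is automatically uniform in~$x$.

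Combining these observations with Lemmas~\ref{lem:wass_dist_bound} and~\ref{lem:boundregerror}, I would introduce the event
\[
E_n := \Bigl\{\sup_{x \in \X} \norm{f^*(x) - \hf_n(x)} \leq \kappa_n(\alpha_n/4)\Bigr\} \cap \bigl\{d_{W,p}(\hat{P}^{\varepsilon}_n, P_\varepsilon) \leq \kappa^{(2)}_{p,n}(\alpha_n)\bigr\}
\]
and check that on $E_n$ the uniform Wasserstein bound $\sup_{x \in \X} d_{W,p}(\hat{P}^{ER}_n(x), P_{Y \mid X=x}) \leq \zeta_n(\alpha_n)$ holds. Since $\mathbb{P}(E_n^c) \leq \alpha_n$ by Assumption~\ref{ass:reglargedevunif} and the translation-invariant form of Lemma~\ref{lem:wassmeasureconc}, and $\sum_n \alpha_n < +\infty$ by Assumption~\ref{ass:wassrisklevelseq}, Borel--Cantelli implies that $E_n$ occurs a.s.\ for all $n$ large enough. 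Repeating the Kantorovich--Rubinstein-type argument used in the proof of Lemma~\ref{lem:wassintres}, with a selection $z^*(x) \in S^*(x)$ plugged in as the feasible candidate for the inner minimum, then gives the \emph{uniform-in-$x$} upper bound
\[
0 \leq \hv^{DRO}_n(x) - v^*(x) \leq 2 L_1(z^*(x))\, \zeta_n(\alpha_n)
\]
under Assumption~\ref{ass:lipschitzy}, and the analogous bound with $L_1(z^*(x))$ replaced by $\bigl(\mathbb{E}_Y[\norm{\nabla c(z^*(x),Y)}^2]\bigr)^{1/2}$ plus a $4 L_2(z^*(x))\zeta_n^2(\alpha_n)$ term under Assumption~\ref{ass:lipschitzy2}.

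Taking the $L^q$-norm with respect to $P_X$ of both sides and invoking the integrability hypotheses on $L_1(z^*(\cdot))$, $\bigl(\mathbb{E}_Y[\norm{\nabla c(z^*(\cdot),Y)}^2]\bigr)^{1/2}$, and $L_2(z^*(\cdot))$ then produces a deterministic bound of the form $\norm*{\hv^{DRO}_n(X) - v^*(X)}_{L^q} \leq C\,\zeta_n(\alpha_n) + C'\,\zeta_n^2(\alpha_n)$ that holds a.s.\ for $n$ large enough. Since $\zeta_n(\alpha_n) \to 0$ by Assumption~\ref{ass:wassrisklevelseq} and the right-hand side is nonrandom, this is $O_p(\zeta_n(\alpha_n))$. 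The companion bound on $\norm*{g(\hz^{DRO}_n(X);X) - v^*(X)}_{L^q}$ follows immediately by monotonicity of $\norm{\cdot}_{L^q}$ from the sandwich $v^*(x) \leq g(\hz^{DRO}_n(x);x) \leq \hv^{DRO}_n(x)$ already established in Lemma~\ref{lem:wassintres}. The chief obstacle is ensuring that the ``a.s., for $n$ large enough'' qualifier in Lemma~\ref{lem:wassintres} can be made uniform in $x$; Assumption~\ref{ass:reglargedevunif} together with the translation invariance of $d_{W,p}$ is precisely what supplies this uniformity in the present additive-noise setup, and without both of these ingredients the interchange of ``a.s.'' and the $L^q$-integral would not go through.
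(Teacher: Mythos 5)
Your proposal is correct and follows essentially the same route as the paper's proof: exploit Assumption~\ref{ass:reglargedevunif} to make the radius $x$-independent, obtain the Wasserstein containment uniformly over $x \in \X$, apply Borel--Cantelli, repeat the Lemma~\ref{lem:wassintres} bounds uniformly in $x$, and take $L^q$-norms using the stated integrability hypotheses. Your explicit appeal to translation invariance of $d_{W,p}$ to show that the concentration event for $d_{W,p}(P^*_n(x), P_{Y\mid X=x})$ is literally the same event for every $x$ is a nice clarification of a point the paper leaves implicit, but it does not change the structure of the argument.
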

\begin{proof}
{Following the proof of Theorem~\ref{thm:wassfinitesampcert}, we have
\[
\mathbb{P}\big\{d_{W,p}( \hat{P}^{ER}_n(x), P_{Y \mid X=x} ) > \zeta_n(\alpha_n), \:\: \forall x \in \X\big\} \leq \alpha_n.
\]
Note that we are able to make the above assertion \textit{jointly} over all $x \in \X$ because the radius~$\zeta_n(\alpha_n)$ is independent of $x$ by Assumption~\ref{ass:reglargedevunif}.
Following the proof of Lemma~\ref{lem:wassintres}, we then a.s.\ have for all $n$ large enough:
\[
v^*(x) \leq g(\hz^{DRO}_n(x);x) \leq \hv^{DRO}_n(x), \quad \forall x \in \X.
\]
Suppose Assumption~\ref{ass:lipschitzy} holds.
Let $z^*(x) \in S^*(x)$.
From part A of Lemma~\ref{lem:wassintres}, the above inequalities a.s.\ imply for all $n$ large enough:
\[
\hv^{DRO}_n(x) - v^*(x) \leq 2L_1(z^*(x))\zeta_n(\alpha_n), \quad \forall x \in \X.
\]
Consequently, when Assumption~\ref{ass:lipschitzy} holds
\[
\norm*{\hv^{DRO}_n(X) - v^*(X)}_{L^q} = \norm*{L_1(z^*(\cdot))}_{L^q} \: O_p(\zeta_n(\alpha_n)).
\]
Suppose instead that Assumption~\ref{ass:lipschitzy2} holds.
From part B of Lemma~\ref{lem:wassintres}, the above inequalities a.s.\ imply for all $n$ large enough and each $x \in \X$:
\[
\hv^{DRO}_n(x) - v^*(x) \leq 2\bigl(\mathbb{E}_Y\bigl[\norm{\nabla c(z^*(x),Y)}^2\bigr]\bigr)^{1/2} \zeta_n(\alpha_n) + 4L_2(z^*(x))\zeta^2_n(\alpha_n).
\]
Consequently, when Assumption~\ref{ass:lipschitzy2} holds}
\[
\norm*{\hv^{DRO}_n(X) - v^*(X)}_{L^q} = \norm{\bigl(\mathbb{E}_{Y}\bigl[\norm{\nabla c(z^*(\cdot),Y)}^2\bigr]\bigr)^{1/2}}_{L^q} \: O_p(\zeta_n(\alpha_n)). \quad \qedhere
\]
\end{proof}

{Appendix~EC.2 of~\cite{kannan2020data} presents conditions under which some of the new assumptions of Theorem~\ref{thm:wassconvratelq} hold.}
We now identify conditions under which the ER-DRO estimators possess a finite sample solution guarantee.
We first refine Assumption~\ref{ass:reglargedevwass} to another more convenient, stronger form in Assumption~\ref{ass:reglargedevwass2}.

\begin{assumption}
\label{ass:reglargedevwass2}
The regression estimate~$\hf_n$ possesses the following large deviation properties: for any constant $\kappa > 0$, there exist positive constants $K_{p,f}(\kappa,x)$, $\bar{K}_{p,f}(\kappa)$, $\beta_{p,f}(\kappa,x)$, and $\bar{\beta}_{p,f}(\kappa)$ satisfying
\begin{align*}
&\mathbb{P}\big\lbrace \norm{f^*(x) - \hf_n(x)}^p > \kappa^p \big\rbrace \leq K_{p,f}(\kappa,x) \exp\left(-n\beta_{p,f}(\kappa,x)\right), \: \text{for a.e. } x \in \X, \\
&\mathbb{P}\bigg\lbrace \dfrac{1}{n} \displaystyle\sum_{i=1}^{n} \norm{f^*(x^i) - \hf_n(x^i)}^p > \kappa^p \bigg\rbrace \leq \bar{K}_{p,f}(\kappa) \exp\left(-n\bar{\beta}_{p,f}(\kappa)\right).
\end{align*}
\end{assumption}

Appendix~EC.3 of~\cite{kannan2020data} verifies Assumption~\ref{ass:reglargedevwass2} for some popular regression setups for $p = 2$; see the discussion after Assumption~\ref{ass:reglargedevwass} for $p \neq 2$.
{The following result will prove useful in deriving our finite sample solution guarantee.}

\begin{lemma}
\label{lem:wassfinitesampsoln}
{Suppose Assumptions~\ref{ass:lighttailerr}, \ref{ass:reglargedevwass}, \ref{ass:growthrate}, and~\ref{ass:reglargedevwass2} hold,}
 {the samples $\{\varepsilon^{i}\}_{i=1}^{n}$ are i.i.d., and either Assumption~\ref{ass:lipschitzy} or Assumption~\ref{ass:lipschitzy2} holds.
Then, for a.e.\ $x \in \X$ and any $\kappa > 0$, there exist positive constants $\tilde{\Gamma}(\kappa,x)$ and $\tilde{\gamma}(\kappa,x)$ such that the solution of the ER-DRO problem~\eqref{eqn:dro} with risk level $\alpha = \tilde{\Gamma}(\kappa,x) \exp(-n\tilde{\gamma}(\kappa,x))$, radius~$\zeta_n(\alpha,x)$ specified by~\eqref{eqn:wassradius}, and ambiguity set~$\hP_n(x;\zeta_n(\alpha,x))$ satisfies}
\begin{align}
\label{eqn:largedevoutofsampcost}
&\prob{g(\hz^{DRO}_n(x);x) > v^*(x) + \kappa} \leq 2\tilde{\Gamma}(\kappa,x) \exp(-n\tilde{\gamma}(\kappa,x)).
\end{align}
\end{lemma}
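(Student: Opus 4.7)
The plan is to combine the finite sample certificate (Theorem~\ref{thm:wassfinitesampcert}) with the pathwise inequalities from the proof of Lemma~\ref{lem:wassintres}, and then to tune the risk level~$\alpha$ so that the resulting deterministic upper bound on $g(\hz^{DRO}_n(x);x)$ does not exceed $v^*(x)+\kappa$. The strengthened large deviation form in Assumption~\ref{ass:reglargedevwass2} is exactly what converts an ``$\alpha$-with-probability'' bound into an exponential-in-$n$ bound.

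Let $E_n(\alpha) := \{d_{W,p}(\hat{P}^{ER}_n(x),P_{Y\mid X=x}) \leq \zeta_n(\alpha,x)\}$. The argument inside the proof of Theorem~\ref{thm:wassfinitesampcert} yields $\mathbb{P}(E_n(\alpha)) \geq 1-\alpha$, since Assumption~\ref{ass:reglargedevwass2} implies Assumption~\ref{ass:reglargedevwass}. On $E_n(\alpha)$, the distribution $P_{Y\mid X=x}$ lies in $\hP_n(x;\zeta_n(\alpha,x))$, so by plugging any $z^*(x)\in S^*(x)$ into the DRO problem and repeating the Kantorovich--Rubinstein-type step from the proof of Lemma~\ref{lem:wassintres} under Assumption~\ref{ass:lipschitzy} (or the first-order Taylor plus Cauchy--Schwarz step under Assumption~\ref{ass:lipschitzy2}), one has the deterministic inequality
\[
g(\hz^{DRO}_n(x);x) \leq \hv^{DRO}_n(x) \leq v^*(x) + 2L_1(z^*(x))\,\zeta_n(\alpha,x)
\]
in the Lipschitz case, with an additional $4L_2(z^*(x))\,\zeta_n(\alpha,x)^2$ term in the smooth case. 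No $n\to\infty$ limit is needed: the inequality holds pathwise on $E_n(\alpha)$.

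It therefore suffices to choose $\alpha=\alpha_n(\kappa,x)$ so that $\zeta_n(\alpha,x)\leq \eta(\kappa,x)$, where $\eta$ is the largest threshold for which the above deterministic upper bound is still at most $v^*(x)+\kappa$; under Assumption~\ref{ass:lipschitzy} we simply take $\eta=\kappa/(2L_1(z^*(x)))$, and under Assumption~\ref{ass:lipschitzy2} we use the quadratic formula (or just shrink $\eta$ by a constant depending on $\kappa$ and $L_2(z^*(x))$, noting that $\zeta_n$ will be made small anyway). Since $\zeta_n(\alpha,x)=\kappa^{(1)}_{p,n}(\alpha,x)+\kappa^{(2)}_{p,n}(\alpha)$, I would force each summand below $\eta/2$. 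Assumption~\ref{ass:reglargedevwass2} turns ``$\kappa_{p,n}(\alpha/4,x)\leq\eta/8$'' into a requirement of the form $\alpha/4\geq \bigl(K_{p,f}(\eta/8,x)+\bar{K}_{p,f}(\eta/8)\bigr)\exp\!\bigl(-n\min\{\beta_{p,f}(\eta/8,x),\bar{\beta}_{p,f}(\eta/8)\}\bigr)$, while Lemma~\ref{lem:wassmeasureconc} forces $\kappa^{(2)}_{p,n}(\alpha)\leq\eta/2$ as soon as $\log(1/\alpha)$ is at most linear in $n$ at an $\eta$-dependent rate. Setting $\alpha=\tilde\Gamma(\kappa,x)\exp\!\bigl(-n\tilde\gamma(\kappa,x)\bigr)$ with $\tilde\gamma$ the smaller of the two exponents, and enlarging $\tilde\Gamma$ as necessary so that $2\tilde\Gamma(\kappa,x)\exp(-n\tilde\gamma(\kappa,x))\geq 1$ on the finite range of small $n$ where the large deviation estimates do not yet bite, satisfies both constraints simultaneously. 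The factor of~$2$ in the conclusion absorbs the union bound used to control $\kappa^{(1)}$ and $\kappa^{(2)}$ separately.

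The main obstacle I anticipate is algebraic bookkeeping: verifying that the constants in Assumption~\ref{ass:reglargedevwass2} (which depend on the threshold $\eta$, and hence on $\kappa$ and on the unknown $z^*(x)$) and the case split in Lemma~\ref{lem:wassmeasureconc} (depending on whether $\kappa^{(2)}_{p,n}$ is $\leq 1$ or $>1$, and on whether $n\geq O(1)\log(O(1)\alpha^{-1})$) merge cleanly into a single pair $(\tilde\Gamma(\kappa,x),\tilde\gamma(\kappa,x))$. The smooth case is slightly more delicate because the quadratic term $\zeta_n(\alpha,x)^2$ must be dominated in terms of $\kappa$ as well, but this only forces us to shrink $\eta$ by a $\kappa$-dependent constant and does not change the exponential form of the final bound.
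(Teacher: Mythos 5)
Your proposal is correct and takes essentially the same route as the paper: combine the finite sample certificate with the Lemma~\ref{lem:wassintres}-type bound $\hv^{DRO}_n(x)\le v^*(x)+2L_1(z^*(x))\zeta_n(\alpha,x)$ (plus the quadratic correction in the smooth case), then invert Assumption~\ref{ass:reglargedevwass2} and the explicit form of $\kappa^{(2)}_{p,n}(\alpha)$ to pick $\alpha=\tilde{\Gamma}(\kappa,x)\exp(-n\tilde{\gamma}(\kappa,x))$. The only cosmetic difference is the source of the factor $2$: the paper obtains $2\alpha$ from a union bound over the certificate event and the event $\{\hv^{DRO}_n(x)>v^*(x)+\kappa\}$ via Lemma~\ref{lem:probineq}, whereas your single-good-event argument actually yields the slightly stronger bound $\alpha$.
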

\begin{proof}
{See Appendix~\ref{app:wassfinitesampsoln}.}
\end{proof}

\begin{theorem}[Finite sample solution guarantee]
\label{thm:wassfinitesampsoln}
Suppose Assumptions~\ref{ass:lighttailerr}, \ref{ass:reglargedevwass}, \ref{ass:growthrate}, and~\ref{ass:reglargedevwass2} hold, the samples $\{\varepsilon^{i}\}_{i=1}^{n}$ are i.i.d., and either Assumption~\ref{ass:lipschitzy} or Assumption~\ref{ass:lipschitzy2} holds.
{Then, for a.e.\ $x \in \X$ and any $\eta > 0$, there exist positive constants $\Gamma(\eta,x)$ and $\gamma(\eta,x)$ such that the solution of the ER-DRO problem~\eqref{eqn:dro} with risk level $\alpha = \Gamma(\eta,x) \exp(-n\gamma(\eta,x))$, radius~$\zeta_n(\alpha,x)$ determined using equation~\eqref{eqn:wassradius}, and ambiguity set~$\hP_n(x;\zeta_n(\alpha,x))$ satisfies}
\[
\prob{\textup{dist}(\hz^{DRO}_n(x),S^*(x)) \geq \eta} \leq 2\Gamma(\eta,x) \exp(-n\gamma(\eta,x)).
\]
\end{theorem}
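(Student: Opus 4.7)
The plan is to reduce the claim to Lemma~\ref{lem:wassfinitesampsoln} by showing that, for a.e.\ $x\in\X$, having $\hz^{DRO}_n(x)$ far from the optimal set forces its out-of-sample cost to be bounded away from $v^*(x)$. Concretely, I aim to establish the following \emph{growth condition}: for a.e.\ $x\in\X$ and every $\eta>0$, there exists $\kappa(\eta,x)>0$ such that
\[
\Set{z\in\Z}{g(z;x)\le v^*(x)+\kappa(\eta,x)} \subseteq \Set{z\in\Z}{\text{dist}(z,S^*(x))<\eta}.
\]
The contrapositive then gives $\text{dist}(\hz^{DRO}_n(x),S^*(x))\ge \eta \implies g(\hz^{DRO}_n(x);x) > v^*(x)+\kappa(\eta,x)$.

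To prove the growth condition, I would use compactness of $\Z$ (assumed in Section~\ref{sec:prelim}) together with lower semicontinuity of $g(\cdot;x)$ on $\Z$ for a.e.\ $x\in\X$ (which follows from the lsc Assumption~\ref{ass:growthrate} on $c(\cdot,Y)$ together with the dominated convergence/Fatou argument already invoked in Section~\ref{sec:prelim} to make problem~\eqref{eqn:speq} well-defined). Suppose for contradiction that no such $\kappa(\eta,x)$ exists. Then there is a sequence $\{z_k\}\subset\Z$ with $g(z_k;x)\to v^*(x)$ and $\text{dist}(z_k,S^*(x))\ge\eta$. By compactness of $\Z$, a subsequence $z_{k_j}\to\bar z\in\Z$; lsc then yields $g(\bar z;x)\le v^*(x)$, so $\bar z\in S^*(x)$, contradicting $\text{dist}(z_{k_j},S^*(x))\ge\eta$. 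Hence the growth condition holds.

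With the growth condition in hand, the main result follows directly by invoking Lemma~\ref{lem:wassfinitesampsoln} with $\kappa := \kappa(\eta,x)$: the lemma supplies constants $\tilde\Gamma(\kappa,x),\tilde\gamma(\kappa,x)>0$ and a radius $\zeta_n(\alpha,x)$ (with $\alpha=\tilde\Gamma(\kappa,x)\exp(-n\tilde\gamma(\kappa,x))$) such that
\[
\prob{g(\hz^{DRO}_n(x);x) > v^*(x)+\kappa} \le 2\tilde\Gamma(\kappa,x)\exp(-n\tilde\gamma(\kappa,x)).
\]
Combining this with the contrapositive of the growth condition yields
\[
\prob{\text{dist}(\hz^{DRO}_n(x),S^*(x))\ge\eta} \le 2\tilde\Gamma(\kappa(\eta,x),x)\exp(-n\tilde\gamma(\kappa(\eta,x),x)),
\]
so defining $\Gamma(\eta,x):=\tilde\Gamma(\kappa(\eta,x),x)$ and $\gamma(\eta,x):=\tilde\gamma(\kappa(\eta,x),x)$ gives the stated bound, along with the corresponding prescription of the risk level $\alpha$ and radius $\zeta_n(\alpha,x)$.

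The only non-trivial step is the growth condition: this is standard once lsc of $g(\cdot;x)$ on the compact set $\Z$ is available, and requires no new probabilistic input. Everything else is bookkeeping that re-indexes the constants produced by Lemma~\ref{lem:wassfinitesampsoln} through the mapping $\eta\mapsto\kappa(\eta,x)$.
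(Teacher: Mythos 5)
Your proposal is correct and follows essentially the same route as the paper: both reduce the solution guarantee to the out-of-sample cost bound of Lemma~\ref{lem:wassfinitesampsoln} via a growth condition linking $\text{dist}(z,S^*(x))\geq\eta$ to $g(z;x)>v^*(x)+\kappa(\eta,x)$. The only difference is that you prove the growth condition directly by a compactness/lower-semicontinuity contradiction argument, whereas the paper cites Lemma~3 of~\cite{kannan2020data} for exactly this fact.
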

\begin{proof}
{From Lemma~\ref{lem:wassfinitesampsoln}, we have for any $\kappa > 0$, there exist $\tilde{\Gamma}(\kappa,x) > 0$ and $\tilde{\gamma}(\kappa,x) > 0$ such that inequality~\eqref{eqn:largedevoutofsampcost} holds with~$\alpha = \tilde{\Gamma}(\kappa,x) \exp(-n\tilde{\gamma}(\kappa,x))$.
We now argue that inequality~\eqref{eqn:largedevoutofsampcost} implies the stated result.}

Suppose we have $\text{dist}(\hz^{DRO}_n(x),S^*(x)) \geq \eta$ for some $\eta > 0$, $x \in \X$, and sample path.
Since $g(\cdot;x)$ is lower semicontinuous on the compact set~$\Z$ for a.e.\ $x \in \X$,~\cite[Lemma~3]{kannan2020data} implies $g(\hz^{DRO}_n(x);x) > v^*(x) + \kappa(\eta,x)$ for some constant $\kappa(\eta,x) > 0$ on that path (except for paths of measure zero).
We now bound the probability of this event.
The above arguments imply for a.e.\ $x \in \X$
\begin{align*}
\prob{\text{dist}(\hz^{DRO}_n(x),S^*(x)) \geq \eta} &\leq \prob{g(\hz^{DRO}_n(x);x) > v^*(x) + \kappa(\eta,x)} \\
&\leq 2\tilde{\Gamma}(\kappa(\eta,x),x) \exp(-n\tilde{\gamma}(\kappa(\eta,x),x)).
\end{align*}
{Therefore, the desired result holds with constants $\Gamma(\eta,x) = \tilde{\Gamma}(\kappa(\eta,x),x)$ and $\gamma(\eta,x) = \tilde{\gamma}(\kappa(\eta,x),x)$.}
\end{proof}

Theorem~\ref{thm:wassfinitesampsoln} is similar to the finite sample guarantee in~\cite[Theorem~3]{kannan2020data} for solutions to the ER-SAA problem.
However, unlike~\cite[Theorem~3]{kannan2020data}, the dependence of the convergence rate on the parameter $\eta$ in Theorem~\ref{thm:wassfinitesampsoln} suffers from the curse of dimensionality even if we use parametric regression methods to estimate~$f^*$ (cf.\ Remark~\ref{rem:wassdrorate}).
{Inequality~\eqref{eqn:largedevoutofsampcost} shows that the out-of-sample cost of the Wasserstein ER-DRO estimators possesses a finite sample guarantee similar to the guarantee in the solution space.
This convergence rate estimate also suffers from the curse of dimensionality with respect to the parameter~$\kappa$ (we do not know if faster rates of convergence can be derived).}

\section{Sample robust optimization-based ambiguity sets and ambiguity sets with the same support as \texorpdfstring{$\hat{P}^{ER}_n(x)$}{an estimated empirical distribution}}
\label{sec:samprobustopt}

In this section we present a unified analysis of using two forms of ambiguity sets within problem~\eqref{eqn:dro}:  sample robust optimization-based ambiguity sets and ambiguity sets with the same support as $\hat{P}^{ER}_n(x)$.
Specifically, we consider ambiguity sets of the form
\begin{align*}
\label{eqn:esdroambiguityset}
&\hP_n(x) := \bigg\{Q = \sum_{i=1}^{n} p_i\delta_{\bar{y}^i} : p \in \mathfrak{P}_n(x;\zeta_{n}(x)), \: \bar{y}^i \in \hat{\Y}^{i}_n(x;\mu_n(x)), \forall i \in [n]\bigg\},\\
&\hat{\Y}^{i}_n(x;\mu_n(x)) := \big\lbrace y \in \Y : \norm{y - \proj{\Y}{\hf_n(x) + \heps^i_{n}}} \leq \mu_{n}(x) \big\rbrace, \forall i \in [n],
\end{align*}
where $\mu_n(x)$ and $\zeta_n(x)$ are nonnegative radii and the ambiguity set~$\mathfrak{P}_n(x;\zeta_{n}(x))$ for the probabilities~$p$ satisfies~\eqref{eqn:ambiguitysetconsistency}.
This family of ambiguity sets generalizes both sample robust optimization-based ambiguity sets constructed using the $\ell_2$-norm (obtained by setting $\zeta_{n}(x) = 0$) and ambiguity sets with the same support as~$\hat{P}^{ER}_n(x)$ (obtained by setting $\mu_{n}(x) = 0$).
We establish asymptotic optimality, rates of convergence, and finite sample-type guarantees for the corresponding ER-DRO estimators~\eqref{eqn:dro}.

When~$\mu_{n}(x) = 0$ and problem~\eqref{eqn:speq} is a tractable convex program, the resulting ER-DRO problem~\eqref{eqn:dro} remains tractable and convex for many choices of the ambiguity set~$\Pf_n(x;\zeta_{n}(x))$ such as Examples~\ref{exm:cvar} and~\ref{exm:phidivergence} (see, e.g.,~\cite{ben2013robust}).
On the other hand, when~$\mu_{n}(x) > 0$ and problem~\eqref{eqn:speq} is a two-stage stochastic linear program, then the ER-DRO problem~\eqref{eqn:dro} exhibits a $\min\text{-}\max\text{-}\min$ structure whose solution is in general NP-hard.
References~\cite{bertsimas2019two,xie2020tractable} investigate approaches for approximately solving the ER-DRO problem~\eqref{eqn:dro} when the true problem~\eqref{eqn:speq} is a two-stage stochastic LP and~$\zeta_{n}(x) = 0$.

To facilitate our analysis, denote by $\hg^{ER}_{s,n}$ and $g^*_{s,n}$ the functions
\begin{align*}
\hg^{ER}_{s,n}(z;x) &:= \uset{p \in \Pf_n(x;\zeta_{n}(x))}{\sup} \sum_{i=1}^{n} p_i \sup_{y \in \hat{\Y}^{i}_n(x;\mu_n(x))} c(z,y), \\
g^*_{s,n}(z;x) &:= \uset{p \in \Pf_n(x;\zeta_{n}(x))}{\sup} \sum_{i=1}^{n} p_i c(z,f^*(x)+\varepsilon^i).
\end{align*}
Note that the function~$\hg^{ER}_{s,n}$ is equivalent to the objective function of the ER-DRO problem~\eqref{eqn:dro} with the above definition of the ambiguity set~$\hP_n(x)$.
Additionally, $g^*_{s,n}$ is equivalent to the objective function of the FI-SAA problem~\eqref{eqn:fullinfsaa} when $\zeta_n(x) = 0$ and condition~\eqref{eqn:ambiguitysetconsistency} holds.

We begin by investigating conditions under which the optimal value and set of optimal solutions to the ER-DRO problem~\eqref{eqn:dro} converge in probability to the true problem~\eqref{eqn:speq}.
We make the following assumptions in this regard.

\begin{assumption}
\label{ass:equilipschitz}
For each $z \in \Z$, the function $c(z,\cdot)$ is Lipschitz continuous on $\Y$ with Lipschitz constant $L(z)$ satisfying $\sup_{z \in \Z} L(z) < +\infty$.
\end{assumption}

\begin{assumption}
\label{ass:uniflln}
For a.e.\ $x \in \X$, the sequence of FI-SAA objectives $\left\lbrace g^*_n(\cdot;x) \right\rbrace$ converges in probability to the function $g(\cdot;x)$ uniformly on the set~$\Z$.
\end{assumption}

\begin{assumption}
\label{ass:regconsist}
The regression estimate~$\hf_n$ has the consistency properties
\begin{align*}
&\hf_n(x) \xrightarrow{p} f^*(x), \:\: \text{for a.e. } x \in \X, \quad \text{and} \quad \dfrac{1}{n} \displaystyle\sum_{i=1}^{n} \norm{f^*(x^i) - \hf_n(x^i)}^2 \xrightarrow{p} 0.
\end{align*}
\end{assumption}

Assumption~\ref{ass:equilipschitz} is a uniform Lipschitz continuity assumption that strengthens Assumption~\ref{ass:lipschitzy}.
Appendix~EC.2 of~\cite{kannan2020data} verifies that Assumption~\ref{ass:equilipschitz} 
holds for two-stage stochastic MIPs with continuous recourse.
Assumption~\ref{ass:uniflln} is a uniform weak LLN assumption, whereas Assumption~\ref{ass:regconsist} is a mild consistency assumption that holds for many popular regression setups (cf.\ Assumptions~3 and~4 of~\cite{kannan2020data}).
Assumption~\ref{ass:regconsist} is weaker than Assumption~\ref{ass:reglargedevwass}.
We require the following additional assumptions for ambiguity sets with $\zeta_n(x) > 0$.

\begin{assumption}
\label{ass:ambiguityset}
The radius $\zeta_{n}(x)$ of the ambiguity set
is chosen such that 
\[
\uset{p \in \Pf_n(x;\zeta_{n}(x))}{\sup} \sum_{i=1}^{n} \Big(p_i - \frac{1}{n}\Big)^2 = O(n^{-\rho}), \quad \text{for a.e. } x \in \X,
\]
for some constant $\rho > 1$.
\end{assumption}

\begin{assumption}
\label{ass:uniflln2}
The following weak uniform LLN holds for a.e.\ $x \in \X$:
\[
\uset{z \in \Z}{\sup}\: \Bigl\lvert\frac{1}{n}\sum_{i=1}^{n} \big(c(z,f^*(x) + \varepsilon^i)\big)^2 - \mathbb{E}\Bigl[ \big(c(z,f^*(x) + \varepsilon)\big)^2\Bigr] \Bigr\rvert \convinprob 0,
\]
with $\sup_{z \in \Z} \mathbb{E}\bigl[ \big(c(z,f^*(x) + \varepsilon)\big)^2\bigr] < +\infty$ for a.e.\ $x \in \X$.
\end{assumption}

Assumption~\ref{ass:ambiguityset} requires us to choose the radius~$\zeta_{n}(x)$ so that the ambiguity set~$\Pf_n(x;\zeta_{n}(x))$ converges to the singleton $\big\lbrace\bigl(\tfrac{1}{n},\dots,\tfrac{1}{n}\bigr)\big\rbrace$ at a fast enough rate. This is always possible since we assume equation~\eqref{eqn:ambiguitysetconsistency} holds.
We are interested in cases when Assumption~\ref{ass:ambiguityset} holds with $\rho \in (1, 2]$ (see Theorem~\ref{thm:samprobustrateofconv}).
{Lemma~13 of~\cite{duchi2016statistics} (cf.\ \cite{ben2013robust,lam2016robust,lam2019recovering}) shows that for phi-divergence ambiguity sets $\Pf_n(x;\zeta_{n}(x))$ constructed using a twice continuously differentiable and strictly convex divergence function~$\phi$ with $\phi'(1) = 0$ (these conditions are satisfied by most of the divergence functions listed in~\cite[Table~2]{ben2013robust}), we have
\[
\uset{p \in \Pf_n(x;\zeta_{n}(x))}{\sup} \sum_{i=1}^{n} \Big(p_i - \frac{1}{n}\Big)^2 = \Theta\Bigl(\frac{\zeta_{n}(x)}{n}\Bigr).
\]
Consequently, Assumption~\ref{ass:ambiguityset} holds for such phi-divergence-based ambiguity sets $\Pf_n(x;\zeta_{n}(x))$ whenever the radius~$\zeta_{n}(x) = O(n^{1-\rho})$.
This bound on~$\zeta_{n}(x)$ is \textit{sharp} in the sense that Assumption~\ref{ass:ambiguityset} does not hold if~$\zeta_{n}(x)$ grows faster than $n^{1-\rho}$ asymptotically.
Appendix~\ref{app:ambiguitysetrate} presents some other examples of ambiguity sets for which Assumption~\ref{ass:ambiguityset} holds.}

Theorem~7.48 of~\cite{shapiro2009lectures} presents conditions under which both Assumptions~\ref{ass:uniflln} and~\ref{ass:uniflln2} hold when the samples~$\{\varepsilon^i\}_{i=1}^{n}$ are i.i.d.
Note that Assumption~\ref{ass:uniflln2} can also be equivalently stated as a weak uniform LLN assumption on the sample variance of the sequence $\{c(z,f^*(x)+\varepsilon^i)\}_{i=1}^{n}$ when~$\{\varepsilon^i\}_{i=1}^{n}$ are i.i.d.\ \cite{duchi2016statistics}.

{The following result will be useful in deriving asymptotic guarantees for the ER-DRO formulations studied in this section.}

\begin{lemma}
\label{lem:unifconvsamprobust}
{Suppose Assumption~\ref{ass:equilipschitz} holds.
We have for each $x \in \X$}
\begin{align}
\label{eqn:unifconvsamprobustres}
&{\uset{z \in \Z}{\sup}\: \abs*{\hg^{ER}_{s,n}(z;x) - g(z;x)}} \nonumber\\
{\leq}& \: {\uset{z \in \Z}{\sup} \: L(z) \Bigl(\mu_{n}(x) + \Bigl(\dfrac{1}{n}\displaystyle\sum_{i=1}^{n} \bigl( \norm{\teps^{i}_{n}(x)}\bigr)^2\Bigr)^{\frac{1}{2}} \Bigr) \uset{p \in \Pf_n(x;\zeta_{n}(x))}{\sup} \Bigl( 1 + n \sum_{i=1}^{n} \Bigl(p_i - \frac{1}{n}\Bigr)^2\Bigr)^{\frac{1}{2}}} \nonumber\\
&\: {+ \uset{p \in \Pf_n(x;\zeta_{n}(x))}{\sup} \Bigl(n \sum_{i=1}^{n} \Bigl(p_i - \frac{1}{n}\Bigr)^2 \Bigr)^{\frac{1}{2}} \uset{z \in \Z}{\sup} \Bigl(\frac{1}{n}\sum_{i=1}^{n} \big(c(z,f^*(x) + \varepsilon^i)\big)^2\Bigr)^{\frac{1}{2}}} \nonumber\\
&\: {+ \uset{z \in \Z}{\sup} \: \abs*{g^*_n(z;x) - g(z;x)}.}
\end{align}
\end{lemma}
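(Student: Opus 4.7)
The plan is to decompose the uniform difference via a telescoping triangle inequality through two natural intermediate quantities and then bound each resulting piece. Specifically, I would insert $g^*_{s,n}(z;x)$ and $g^*_n(z;x)$ to write
\begin{align*}
\sup_{z \in \Z}\abs{\hg^{ER}_{s,n}(z;x) - g(z;x)} \leq &\: \sup_{z \in \Z}\abs{\hg^{ER}_{s,n}(z;x) - g^*_{s,n}(z;x)} \\
&+ \sup_{z \in \Z}\abs{g^*_{s,n}(z;x) - g^*_n(z;x)} + \sup_{z \in \Z}\abs{g^*_n(z;x) - g(z;x)}.
\end{align*}
The third piece is already the last term in~\eqref{eqn:unifconvsamprobustres}, so the work reduces to controlling the first two pieces and showing they coincide, respectively, with the first two terms in the stated upper bound.

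For the first piece, I would use the general inequality $\abs{\sup_p A_p - \sup_p B_p} \leq \sup_p \sum_i p_i \abs{a_i - b_i}$ where $a_i(z) := \sup_{y \in \hat{\Y}^i_n(x;\mu_n(x))} c(z,y)$ and $b_i(z) := c(z,f^*(x)+\varepsilon^i)$. For each $i$ and each admissible $y$, Assumption~\ref{ass:equilipschitz} combined with the triangle inequality and the projection bound~\eqref{eqn:projlipschitz} gives
\[
\abs{c(z,y) - c(z,f^*(x)+\varepsilon^i)} \leq L(z)\bigl(\mu_n(x) + \norm{\teps^i_n(x)}\bigr),
\]
so that $\abs{a_i(z) - b_i(z)} \leq L(z)(\mu_n(x) + \norm{\teps^i_n(x)})$. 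The key algebraic step is the identity $\sum_i p_i^2 = \frac{1}{n}\bigl(1 + n\sum_i (p_i - 1/n)^2\bigr)$ obtained by expanding $\sum_i (p_i - 1/n + 1/n)^2$ and using $\sum_i p_i = 1$. Combining this with Cauchy-Schwarz bounds $\sum_i p_i \norm{\teps^i_n(x)}$ by $\bigl(\frac{1}{n}\sum_i \norm{\teps^i_n(x)}^2\bigr)^{1/2}\bigl(1+n\sum_i(p_i-1/n)^2\bigr)^{1/2}$, while the constant term $\mu_n(x)$ absorbs the extra $\sqrt{1+\cdots}$ factor trivially since that quantity is at least one. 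Supping over $z$ and over $p$ and separating factors (valid because $L(z)$ depends only on $z$ and the probability-variance factor depends only on $p$) yields precisely the first term in~\eqref{eqn:unifconvsamprobustres}.

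For the second piece, observe that $(1/n,\dots,1/n) \in \Pf_n(x;\zeta_n(x))$ by~\eqref{eqn:ambiguitysetconsistency}, so $g^*_{s,n}(z;x) - g^*_n(z;x) = \sup_p \sum_i (p_i - 1/n)\,c(z,f^*(x)+\varepsilon^i) \geq 0$. Applying Cauchy-Schwarz to this inner product gives
\[
g^*_{s,n}(z;x) - g^*_n(z;x) \leq \sup_p \Bigl(n\sum_i (p_i - 1/n)^2\Bigr)^{1/2} \Bigl(\tfrac{1}{n}\sum_i c^2(z,f^*(x)+\varepsilon^i)\Bigr)^{1/2}.
\]
Since the first factor is independent of $z$ and both factors are nonnegative, the sup over $z$ factors through as a product, matching the second term in~\eqref{eqn:unifconvsamprobustres}.

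The main obstacle is purely bookkeeping: making sure the Cauchy-Schwarz bounds are set up with the correct $\sqrt{n}$ rescalings so that the sample-variance factor appears as $\sum_i (p_i - 1/n)^2$ with the extra $n$ inside the square root, and spotting the expansion of $\sum_i p_i^2$ that cleanly separates $1/n$ from the variance term. Once those normalizations are chosen correctly, the three pieces assemble into~\eqref{eqn:unifconvsamprobustres} without any further subtlety.
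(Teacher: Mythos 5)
Your proposal is correct and follows essentially the same route as the paper's proof: the same three-term decomposition through $g^*_{s,n}$ and $g^*_n$, the same Lipschitz-plus-projection bound $\abs{a_i(z)-b_i(z)} \leq L(z)(\mu_n(x)+\norm{\teps^i_n(x)})$, the same double application of Cauchy--Schwarz together with the identity $n\sum_i p_i^2 = 1 + n\sum_i(p_i - 1/n)^2$ for the first piece, and the same Cauchy--Schwarz bound on $\sum_i(p_i-1/n)c(z,f^*(x)+\varepsilon^i)$ for the second. The observation that $\mu_n(x)$ absorbs the factor $\bigl(1+n\sum_i(p_i-1/n)^2\bigr)^{1/2}\geq 1$ is exactly the second Cauchy--Schwarz step in the paper's argument, so no gap remains.
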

\begin{proof}
{See Appendix~\ref{app:unifconvsamprobust}.}
\end{proof}

Our first result identifies conditions under which the sequence of objective functions $\{\hg^{ER}_{s,n}(\cdot;x)\}$ of the ER-DRO problem~\eqref{eqn:dro} converges uniformly to the objective function~$g(\cdot;x)$ of the true problem~\eqref{eqn:speq} on~$\Z$.
Theorem~9 of~\cite{duchi2016statistics} presents an analogous result for a class of phi-divergence-based ambiguity sets in the absence of covariate information.

\begin{proposition}
\label{prop:unifconvsamprobust}
Suppose Assumptions~\ref{ass:equilipschitz} to~\ref{ass:uniflln2} hold and the radius $\mu_{n}(x)$ satisfies $\lim_{n \to \infty} \mu_{n}(x) = 0$ for a.e.\ $x \in \X$.
Then, for a.e.~$x \in \X$, the sequence of objectives $\{\hg^{ER}_{s,n}(\cdot;x)\}$ of the ER-DRO problem~\eqref{eqn:dro} converges in probability to the objective {$g(\cdot;x)$} of the true problem~\eqref{eqn:speq} uniformly on the set~$\Z$.
\end{proposition}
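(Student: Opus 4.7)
The plan is to apply the uniform bound from Lemma~\ref{lem:unifconvsamprobust} and then show that each of the three terms on its right-hand side converges to zero in probability for a.e.\ $x \in \X$. Since a finite sum of terms that converge to zero in probability also converges to zero in probability, this will yield the stated uniform convergence of $\{\hg^{ER}_{s,n}(\cdot;x)\}$ to $g(\cdot;x)$ on~$\Z$.

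The first step is to control the probability-weight factor that appears in both the first and second terms, namely $\sup_{p \in \Pf_n(x;\zeta_n(x))} \sum_{i=1}^n (p_i - 1/n)^2$. Assumption~\ref{ass:ambiguityset} guarantees this supremum is $O(n^{-\rho})$ with $\rho > 1$, so $n \sup_p \sum_i (p_i - 1/n)^2 = O(n^{1-\rho}) \to 0$ deterministically for a.e.\ $x$. Consequently, the factor $\bigl(1 + n \sup_p \sum_i (p_i-1/n)^2\bigr)^{1/2}$ appearing in the first term is bounded (in fact tends to $1$), while the factor $\bigl(n \sup_p \sum_i (p_i-1/n)^2\bigr)^{1/2}$ appearing in the second term converges to zero.

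Next, I would dispatch the first term. By Assumption~\ref{ass:equilipschitz}, $\sup_{z \in \Z} L(z) < +\infty$. By hypothesis, $\mu_n(x) \to 0$ for a.e.\ $x$. Applying Lemma~\ref{lem:boundregerror} with $p = 2$ gives
\[
\Bigl(\frac{1}{n}\sum_{i=1}^n \norm{\teps^i_n(x)}^2\Bigr)^{1/2} \leq \norm{f^*(x) - \hf_n(x)} + \Bigl(\frac{1}{n}\sum_{i=1}^n \norm{f^*(x^i) - \hf_n(x^i)}^2\Bigr)^{1/2},
\]
and both summands on the right converge to zero in probability by Assumption~\ref{ass:regconsist}. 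Combined with the boundedness of the probability-weight factor, this shows that the first term of the bound in Lemma~\ref{lem:unifconvsamprobust} tends to zero in probability.

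For the second term, the probability-weight factor already converges to zero deterministically, so it suffices to show the companion factor $\sup_{z \in \Z}\bigl(\tfrac{1}{n}\sum_i (c(z,f^*(x)+\varepsilon^i))^2\bigr)^{1/2}$ is bounded in probability. This follows from Assumption~\ref{ass:uniflln2}, which gives uniform convergence of $\tfrac{1}{n}\sum_i (c(z,f^*(x)+\varepsilon^i))^2$ to $\mathbb{E}[(c(z,f^*(x)+\varepsilon))^2]$ together with $\sup_{z \in \Z}\mathbb{E}[(c(z,f^*(x)+\varepsilon))^2] < +\infty$. Multiplying a bounded-in-probability sequence by an $o(1)$ sequence yields $o_p(1)$. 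Finally, the third term $\sup_{z \in \Z}\abs{g^*_n(z;x) - g(z;x)}$ vanishes in probability directly by Assumption~\ref{ass:uniflln}. Summing these contributions completes the argument.

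The main conceptual step is the decomposition encoded in Lemma~\ref{lem:unifconvsamprobust}, which cleanly separates the three sources of error: (i) errors from enlarging each scenario into the balls $\hat{\Y}^i_n(x;\mu_n(x))$ and from approximating $f^*(x)+\varepsilon^i$ by $\proj{\Y}{\hf_n(x) + \heps^i_n}$, (ii) errors from reweighting the empirical scenarios over the simplex-neighborhood $\Pf_n(x;\zeta_n(x))$, and (iii) the usual FI-SAA sampling error. I do not anticipate a serious obstacle beyond verifying that the product structure in the first two terms is safe under convergence in probability, which is handled by the standard fact that a product of a $o_p(1)$ sequence and an $O_p(1)$ sequence is $o_p(1)$.
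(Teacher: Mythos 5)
Your proposal is correct and follows essentially the same route as the paper: decompose via Lemma~\ref{lem:unifconvsamprobust} and show each of the three terms is $o_p(1)$ using Assumptions~\ref{ass:equilipschitz}--\ref{ass:uniflln2}. The only cosmetic difference is that you invoke Lemma~\ref{lem:boundregerror} with $p=2$ to control $\bigl(\tfrac{1}{n}\sum_i \norm{\teps^i_n(x)}^2\bigr)^{1/2}$, whereas the paper cites the equivalent inequality from its companion work; the substance is identical.
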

\begin{proof}
We wish to show that
\[
\uset{z \in \Z}{\sup} \: \abs*{\hg^{ER}_{s,n}(z;x) - g(z;x)} \xrightarrow{p} 0, \quad \text{for a.e. } x \in \X.
\]
We bound this term from above using Lemma~\ref{lem:unifconvsamprobust}.

The third term on the r.h.s.\ of~\eqref{eqn:unifconvsamprobustres} vanishes in the limit in probability for a.e.\ $x \in \X$ under Assumption~\ref{ass:uniflln}.
We show that the first two terms also converge to zero in probability; the result then follows by $o_p(1) + o_p(1) = o_p(1)$.

Consider the first term on the r.h.s.\ of~\eqref{eqn:unifconvsamprobustres}.
We have for a.e.\ $x \in \X$
\begin{align*}
& \:\uset{z \in \Z}{\sup} \: L(z) \Bigl(\mu_{n}(x) + \Bigl(\dfrac{1}{n}\displaystyle\sum_{i=1}^{n} \bigl( \norm{\teps^{i}_{n}(x)}\bigr)^2\Bigr)^{\frac{1}{2}} \Bigr) \uset{p \in \Pf_n(x;\zeta_{n}(x))}{\sup} \Bigl( 1 + n \sum_{i=1}^{n} \Bigl(p_i - \frac{1}{n}\Bigr)^2\Bigr)^{\frac{1}{2}} \nonumber\\
&= \: O(1) o_p(1) O(1) = o_p(1),
\end{align*}%
on account of Assumptions~\ref{ass:equilipschitz},~\ref{ass:regconsist} and~\ref{ass:ambiguityset}, $\lim_{n \to \infty} \mu_{n}(x) = 0$, and~\cite[Lemma~1]{kannan2020data}.

Next, consider the second term on the r.h.s.\ of~\eqref{eqn:unifconvsamprobustres}.
We have for a.e.\ $x \in \X$
{
\begin{align*}
& \: \uset{p \in \Pf_n(x;\zeta_{n}(x))}{\sup} \Bigl(n \sum_{i=1}^{n} \Bigl(p_i - \frac{1}{n}\Bigr)^2 \Bigr)^{\frac{1}{2}} \uset{z \in \Z}{\sup} \Bigl(\frac{1}{n}\sum_{i=1}^{n} \bigl(c(z,f^*(x) + \varepsilon^i)\bigl)^2\Bigr)^{\frac{1}{2}} \nonumber\\
&= \: o(1) O_p(1) = o_p(1),
\end{align*}
}%
on account of Assumptions~\ref{ass:ambiguityset} and~\ref{ass:uniflln2}.
\end{proof}

It can be seen from the proof that Assumptions~\ref{ass:ambiguityset} and~\ref{ass:uniflln2} are not required for sample robust optimization-based DRO, i.e., when the radius~$\zeta_n(x) \equiv 0$.

\begin{remark}
Assumption~\ref{ass:equilipschitz} can be weakened to a local Lipschitz continuity assumption under stronger assumptions on the regression setup.
In particular, when $\zeta_{n}(x) \equiv 0$, the conclusion of Proposition~\ref{prop:unifconvsamprobust} holds if we replace Assumption~\ref{ass:equilipschitz} with~\cite[Assumption~2]{kannan2020data}.
When~$\zeta_{n}(x) \neq 0$, we need to replace Assumption~\ref{ass:equilipschitz} with strengthened versions of Assumption~\ref{ass:regconsist} and~\cite[Assumption~2]{kannan2020data} involving fourth degree terms.
\end{remark}

Proposition~\ref{prop:unifconvsamprobust} provides the foundation for showing that the ER-DRO estimators are asymptotically optimal.  
We omit the proof of Theorem~\ref{thm:samprobustconv} since it is identical to the proof of~\cite[Theorem~1]{kannan2020data} in light of Proposition~\ref{prop:unifconvsamprobust}.

\begin{theorem}[Consistency and asymptotic optimality]
\label{thm:samprobustconv}
Suppose the assumptions of Proposition~\ref{prop:unifconvsamprobust} hold.
Then, for a.e.\ $x \in \X$
\[
\hv^{DRO}_n(x) \xrightarrow{p} v^*(x), \:\: \dev{\hS^{DRO}_n(x)}{S^*(x)} \xrightarrow{p} 0, \:\: \uset{z \in \hS^{DRO}_n(x)}{\sup} g(z;x) \xrightarrow{p} v^*(x).
\]
\end{theorem}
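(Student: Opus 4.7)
The plan is to invoke Proposition~\ref{prop:unifconvsamprobust}, which asserts that for a.e.\ $x \in \X$ the sequence of DRO objectives $\{\hg^{ER}_{s,n}(\cdot;x)\}$ converges in probability to $g(\cdot;x)$ uniformly on the compact set~$\Z$, and then apply the standard machinery for consistency of solutions to approximating optimization problems (see, e.g.,~\citep[Theorem~5.3]{shapiro2009lectures}). Since the proof of~\cite[Theorem~1]{kannan2020data} was written precisely to convert a uniform convergence-in-probability hypothesis on the SAA objective to the three displayed conclusions, the argument here will be a word-for-word transcription with $\hg^{ER}_n$ replaced by $\hg^{ER}_{s,n}$.

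First, for the convergence of optimal values, I would note that uniform convergence on~$\Z$ implies $\uset{z \in \Z}{\min}\, \hg^{ER}_{s,n}(z;x) \xrightarrow{p} \uset{z \in \Z}{\min}\, g(z;x)$, since the supremum norm dominates the difference of infima; this gives $\hv^{DRO}_n(x) \xrightarrow{p} v^*(x)$ for a.e.\ $x \in \X$. Here I will use that $g(\cdot;x)$ is lower semicontinuous on the compact set $\Z$ (from the standing framework assumptions of Section~\ref{sec:prelim}) and that the objective of the ER-DRO problem is l.s.c.\ on $\Z$ (assumed at the beginning of Section~\ref{sec:drsaa}), so that both minima are attained.

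Second, for $\dev{\hS^{DRO}_n(x)}{S^*(x)} \xrightarrow{p} 0$, I would use a standard subsequence/compactness argument: take any $\hz \in \hS^{DRO}_n(x)$ and any weak limit point~$\bar z$ of such a selection along a subsequence (which exists by compactness of~$\Z$). Combining uniform convergence with lower semicontinuity of $g(\cdot;x)$ yields $g(\bar z;x) \leq \liminf_n \hg^{ER}_{s,n}(\hz;x) = \liminf_n \hv^{DRO}_n(x) = v^*(x)$ (in probability, along subsequences), so $\bar z \in S^*(x)$; a standard contradiction argument on the event $\{\dev{\hS^{DRO}_n(x)}{S^*(x)} \geq \eta\}$ then concludes. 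This is exactly the structure used in~\cite[Theorem~1]{kannan2020data}, and because $\Z$ is compact and $S^*(x) \neq \emptyset$, no additional regularity is required.

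Third, for the final conclusion, any $\hz \in \hS^{DRO}_n(x)$ satisfies
\[
\abs*{g(\hz;x) - v^*(x)} \leq \uset{z \in \Z}{\sup} \abs*{g(z;x) - \hg^{ER}_{s,n}(z;x)} + \abs*{\hv^{DRO}_n(x) - v^*(x)},
\]
and taking the supremum over $\hz \in \hS^{DRO}_n(x)$ on the left preserves the bound, which vanishes in probability by Proposition~\ref{prop:unifconvsamprobust} and the first conclusion. I do not expect any serious obstacle here: the only subtle point is that convergence is in probability rather than almost sure, but this is handled by the usual device of passing to almost surely convergent subsequences and noting that every subsequence has a further a.s.-convergent sub-subsequence, which suffices to conclude convergence in probability of the original sequence.
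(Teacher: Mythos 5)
Your proposal is correct and follows exactly the route the paper takes: the paper omits the proof of Theorem~\ref{thm:samprobustconv} precisely because, given the uniform convergence in probability established in Proposition~\ref{prop:unifconvsamprobust}, the argument is identical to that of Theorem~1 of~\cite{kannan2020data}, which is the standard objective-convergence-to-solution-consistency machinery you describe. The three steps you outline (sup-norm domination of the difference of minima, the compactness/lower-semicontinuity subsequence argument for the solution-set deviation, and the triangle inequality for the out-of-sample cost) match that argument.
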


Next, we investigate the rate of convergence of the optimal value of the ER-DRO problem~\eqref{eqn:dro} to that of the true problem~\eqref{eqn:speq}.
To enable this, we require the following rate of convergence assumptions on the FI-SAA problem~\eqref{eqn:speq} and the regression estimate~$\hf_n$ (cf.\ Assumptions~5 and~6 of~\cite{kannan2020data}).

\begin{assumption}
\label{ass:functionalclt}
The function~$c$ in problem~\eqref{eqn:speq} and the data~$\D_n$ satisfy the following functional central limit theorem for the FI-SAA objective:
\[
\sqrt{n} \left( g^*_n(\cdot;x) - g(\cdot;x) \right) \xrightarrow{d} V(\cdot;x), \quad \text{for a.e. } x \in \X,
\]
where $g^*_n(\cdot;x)$, $g(\cdot;x)$, and $V(\cdot;x)$ are (random) elements of {$C(\Z)$}.
\end{assumption}

\begin{assumption}
\label{ass:regconvrate}
There is a constant\footnote{The constant $r$ is independent of~$n$, but could depend on the covariate dimension~$d_x$.} $0 < r \leq 1$ such that the regression estimate~$\hf_n$ satisfies the following convergence rate criteria for a.e.\ $x \in \X$:
\[
\norm{f^*(x) - \hf_n(x)}^2 = O_p(n^{-r}), \:\: \dfrac{1}{n} \displaystyle\sum_{i=1}^{n} \norm{f^*(x^i) - \hf_n(x^i)}^2 = O_p(n^{-r}).
\]
\end{assumption}

Assumption~\ref{ass:regconvrate} strengthens Assumption~\ref{ass:regconsist}. 
It typically holds with $r = 1$ for parametric regression methods such as OLS and Lasso regression under mild assumptions.
On the other hand, nonparametric regression methods such as kernel regression and random forests usually satisfy Assumption~\ref{ass:regconvrate} only with $r = O(1)/d_x$ due to the curse of dimensionality.

Our next result establishes a convergence rate for the ER-DRO problem~\eqref{eqn:dro}.
{The choice $\rho = 1+r$ in Assumption~\ref{ass:ambiguityset} ensures that the resulting ER-DRO estimators enjoy the same rate of convergence as the ER-SAA estimators in~\cite{kannan2020data}.}

\begin{theorem}[Rate of convergence]
\label{thm:samprobustrateofconv}
Suppose Assumptions~\ref{ass:equilipschitz},~\ref{ass:uniflln2},~\ref{ass:functionalclt}, and \ref{ass:regconvrate} hold.
In addition, suppose Assumption~\ref{ass:ambiguityset} holds with $\rho = 1+r$ and the radius~$\mu_{n}(x)$ satisfies $\mu_{n}(x) = O(n^{-r/2})$ for a.e.\ $x \in \X$, where the constant~$r$ is defined in Assumption~\ref{ass:regconvrate}.
Then, for a.e.\ $x \in \X$, the solution of the ER-DRO problem~\eqref{eqn:dro} satisfies 
\[
\abs*{\hv^{DRO}_n(x) - v^*(x)} = O_p(n^{-r/2}), \:\:\abs*{g(\hz^{DRO}_n(x);x) - v^*(x)} = O_p(n^{-r/2}).
\]
\end{theorem}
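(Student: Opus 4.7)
The plan is to use Lemma~\ref{lem:unifconvsamprobust} to obtain a uniform convergence rate $\sup_{z \in \Z} \abs*{\hg^{ER}_{s,n}(z;x) - g(z;x)} = O_p(n^{-r/2})$ for a.e.\ $x \in \X$, and then to translate this into the stated optimal value and out-of-sample cost guarantees by a standard ``uniform closeness implies same rate'' argument.

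First, I would bound each of the three terms on the right-hand side of inequality~\eqref{eqn:unifconvsamprobustres} in Lemma~\ref{lem:unifconvsamprobust}. For the first term, Assumption~\ref{ass:equilipschitz} gives $\sup_{z \in \Z} L(z) = O(1)$, the hypothesis $\mu_n(x) = O(n^{-r/2})$ is immediate, and Lemma~\ref{lem:boundregerror} together with Assumption~\ref{ass:regconvrate} yields $\bigl(\tfrac{1}{n}\sum_{i=1}^{n} \norm{\teps^{i}_{n}(x)}^2\bigr)^{1/2} = O_p(n^{-r/2})$ for a.e.\ $x \in \X$. Assumption~\ref{ass:ambiguityset} with $\rho = 1+r$ gives $n \sup_{p \in \Pf_n(x;\zeta_n(x))} \sum_{i=1}^{n} (p_i - 1/n)^2 = O(n^{-r}) = o(1)$, so the factor $(1 + n \sup_p \sum_i (p_i - 1/n)^2)^{1/2}$ is $O(1)$. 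The first term is therefore $O_p(n^{-r/2})$. For the second term, $\bigl(n \sup_p \sum_i (p_i - 1/n)^2\bigr)^{1/2} = O(n^{-r/2})$ by the same calculation, while Assumption~\ref{ass:uniflln2} (uniform weak LLN on the squared costs together with the integrability bound) implies $\sup_{z \in \Z} \bigl(\tfrac{1}{n}\sum_{i=1}^{n}\bigl(c(z,f^*(x) + \varepsilon^i)\bigr)^2\bigr)^{1/2} = O_p(1)$, so this term is also $O_p(n^{-r/2})$. Finally, the third term $\sup_{z \in \Z} \abs*{g^*_n(z;x) - g(z;x)}$ is $O_p(n^{-1/2})$ by Assumption~\ref{ass:functionalclt} (using that convergence in distribution to a tight limit in $C(\Z)$ implies boundedness in probability of the rescaled supremum), which is $O_p(n^{-r/2})$ since $r \leq 1$.

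Combining these bounds through Lemma~\ref{lem:unifconvsamprobust} gives
\[
\uset{z \in \Z}{\sup} \: \abs*{\hg^{ER}_{s,n}(z;x) - g(z;x)} = O_p(n^{-r/2}), \quad \text{for a.e. } x \in \X.
\]
From here, the optimal value bound follows by the standard argument: pick $z^*(x) \in S^*(x)$ and write $\hv^{DRO}_n(x) \leq \hg^{ER}_{s,n}(z^*(x);x) \leq g(z^*(x);x) + \sup_{z} \abs{\hg^{ER}_{s,n}(z;x) - g(z;x)} = v^*(x) + O_p(n^{-r/2})$; conversely $\hv^{DRO}_n(x) = \hg^{ER}_{s,n}(\hz^{DRO}_n(x);x) \geq g(\hz^{DRO}_n(x);x) - \sup_{z} \abs{\hg^{ER}_{s,n}(z;x) - g(z;x)} \geq v^*(x) - O_p(n^{-r/2})$. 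The latter chain also gives $g(\hz^{DRO}_n(x);x) - v^*(x) \leq \hv^{DRO}_n(x) - v^*(x) + O_p(n^{-r/2}) = O_p(n^{-r/2})$, and $g(\hz^{DRO}_n(x);x) \geq v^*(x)$ by definition, giving the out-of-sample rate.

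The main obstacle will be verifying the rate of the third term from Assumption~\ref{ass:functionalclt}: extracting the $O_p(n^{-1/2})$ bound on $\sup_{z \in \Z} \abs{g^*_n(z;x) - g(z;x)}$ from weak convergence of $\sqrt{n}(g^*_n(\cdot;x) - g(\cdot;x))$ to $V(\cdot;x)$ in $C(\Z)$ requires that $\sup_{z \in \Z} \abs{V(z;x)} < +\infty$ almost surely, which is standard for tight limits in $C(\Z)$ and implies that the sequence of sup-norms is $O_p(1)$ by the continuous mapping theorem. The remaining bookkeeping is routine once the three-term bound is in place.
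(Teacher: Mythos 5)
Your proposal is correct and follows essentially the same route as the paper's proof: bound the three terms in Lemma~\ref{lem:unifconvsamprobust} exactly as you do (the first via Assumptions~\ref{ass:equilipschitz}, \ref{ass:ambiguityset}, \ref{ass:regconvrate} and the radius hypothesis, the second via Assumptions~\ref{ass:ambiguityset} and~\ref{ass:uniflln2}, the third via the functional CLT), conclude $\sup_{z\in\Z}\abs{\hg^{ER}_{s,n}(z;x)-g(z;x)}=O_p(n^{-r/2})$, and then transfer this to the optimal value and out-of-sample cost. The paper writes the final transfer step in explicit probabilistic form with constants $M_\alpha$, but that is just a rephrasing of your sandwich argument.
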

\begin{proof}
We bound
$\sup_{z \in \Z} \: \abs*{\hg^{ER}_{s,n}(z;x) - g(z;x)}$ from above using Lemma~\ref{lem:unifconvsamprobust}.

Assumptions~\ref{ass:equilipschitz}, \ref{ass:ambiguityset}, and~\ref{ass:regconvrate} and $\mu_{n} = O(n^{-r/2})$
imply that the first term on the r.h.s.\ of inequality~\eqref{eqn:unifconvsamprobustres} satisfies for a.e.\ $x \in \X$
\[
\sup_{z \in \Z} \abs*{\hg^{ER}_{s,n}(z;x) - g^*_{s,n}(z;x)} = O_p(n^{-r/2}).
\]
Assumptions~\ref{ass:ambiguityset} and~\ref{ass:uniflln2}
imply that the second term on the r.h.s.\ of inequality~\eqref{eqn:unifconvsamprobustres} satisfies for a.e.\ $x \in \X$
\[
\sup_{z \in \Z} \abs*{g^*_{s,n}(z;x) - g^*_n(z;x)} = O_p(n^{-r/2}).
\]
Finally, Assumption~\ref{ass:functionalclt} implies $\sqrt{n} \sup_{z \in \Z} \abs*{g^*_n(z;x) - g(z;x)} = O_p(1)$ for a.e.\ $x \in \X$, which in turn implies $\sup_{z \in \Z} \abs*{g^*_n(z;x) - g(z;x)} = O_p(n^{-1/2})$.
Putting the above three inequalities together into inequality~\eqref{eqn:unifconvsamprobustres}, we obtain
\[
\uset{z \in \Z}{\sup} \abs*{\hg^{ER}_{s,n}(z;x) - g(z;x)} = O_p(n^{-r/2}), \quad \text{for a.e. } x \in \X.
\]
This implies that for a.e.\ $x \in \X$ and any $\alpha > 0$, there exists $M_{\alpha} > 0$ such that
\[
\prob{\uset{z \in \Z}{\sup} \abs*{\hg^{ER}_{s,n}(z;x) - g(z;x)} > M_{\alpha}n^{-r/2}} < \alpha.
\]
Consequently, we have for a.e.\ $x \in \X$
\begin{align*}
\prob{\hv^{DRO}_n(x) > v^*(x) + M_{\alpha}n^{-\frac{r}{2}}} &\leq \prob{\hg^{ER}_{s,n}(z^*(x);x) > v^*(x) + M_{\alpha}n^{-\frac{r}{2}}} \\
&\leq \prob{\abs*{\hg^{ER}_{s,n}(z^*(x);x) - v^*(x)} > M_{\alpha}n^{-\frac{r}{2}}},\\
\prob{v^*(x) > \hv^{DRO}_n(x) + M_{\alpha}n^{-\frac{r}{2}}} &\leq \prob{g(\hz^{DRO}_n(x);x) > \hv^{DRO}_n(x) + M_{\alpha}n^{-\frac{r}{2}}} \\
&\leq \prob{\abs*{\hv^{DRO}_n(x) - g(\hz^{DRO}_n(x);x)} > M_{\alpha}n^{-\frac{r}{2}}}.
\end{align*}
Therefore, both $\abs{\hv^{DRO}_n(x) - v^*(x)}$, $\abs*{g(\hz^{DRO}_n(x);x) - v^*(x)}$ are $O_p(n^{-r/2})$.
\end{proof}

{Our next result analyzes the rate of convergence of the ER-DRO objective with respect to the $L^q$-norm.
We require the following refined assumptions.}

\begin{assumption}
\label{ass:ambiguitysetunif}
The radius $\zeta_{n}(x)$ of the ambiguity set is chosen such that 
\[
\sup_{x \in \X} \: \uset{p \in \Pf_n(x;\zeta_{n}(x))}{\sup} \sum_{i=1}^{n} \Big(p_i - \frac{1}{n}\Big)^2 = O(n^{-\rho})
\]
for some constant $\rho > 1$.
\end{assumption}

\begin{assumption}
\label{ass:regconvrateunif}
{There is a constant $0 < r \leq 1$ such that the regression estimate~$\hf_n$ satisfies the following convergence rate criteria:}
\[
{
\norm{f^*(X) - \hf_n(X)}_{L^q} = O_p(n^{-r/2}), \:\: \dfrac{1}{n} \displaystyle\sum_{i=1}^{n} \norm{f^*(x^i) - \hf_n(x^i)}^2 = O_p(n^{-r}).
}
\]
\end{assumption}

\begin{assumption}
\label{ass:functionalclt2}
{The function~$c$ in problem~\eqref{eqn:speq} and the data~$\D_n$ satisfy:}
\begin{align*}
{\bigg\lVert \uset{z \in \Z}{\sup} \biggl(\frac{1}{n}\sum_{i=1}^{n} \big(c(z,f^*(X) + \varepsilon^i)\big)^2\biggr)^{\frac{1}{2}}\bigg\rVert_{L^q}} &{= O_p(1),} \\
{\Big\lVert\sup_{z \in \Z} \abs*{g^*_{n}(z;X) - g(z;X)} \Big\rVert_{L^q}} &{= O_p(n^{-1/2}).}
\end{align*}
\end{assumption}

{Asssumption~\ref{ass:ambiguitysetunif} requires Assumption~\ref{ass:ambiguityset} to hold uniformly over the covariates $x \in \X$.
It reduces to Assumption~\ref{ass:ambiguityset} when the ambiguity set $\Pf_n(x;\zeta_{n}(x))$ is chosen to be independent of~$x \in \X$.
Assumption~\ref{ass:regconvrateunif} requires the estimation error to converge to zero on average over the covariates.
Appendix~EC.3.\ of~\cite{kannan2020data} identifies conditions under which parametric regression methods satisfy Assumption~\ref{ass:regconvrateunif} under LLN and moment assumptions on the covariate distribution. 
Assumption~\ref{ass:functionalclt2} holds, for example, when the corresponding uniform LLNs (with respect to the decisions $z \in \Z$) hold uniformly over the covariates (cf.\ Theorem~7.48 of~\cite{shapiro2009lectures}).}

\begin{theorem}[Mean convergence rate]
\label{thm:samprobustrateofconvlq}
{Suppose Assumptions~\ref{ass:equilipschitz},~\ref{ass:regconvrateunif}, and~\ref{ass:functionalclt2} hold.
Let $q \in [1,+\infty]$. Suppose Assumption~\ref{ass:ambiguitysetunif} holds with $\rho = 1+r$, with constant $r$ defined in Assumption~\ref{ass:regconvrateunif}, and the radius~$\mu_{n}(x)$ satisfies $\norm{\mu_{n}(X)}_{L^q} = O(n^{-r/2})$.
Then, the solution of the ER-DRO problem~\eqref{eqn:dro} satisfies}
\begin{align*}
{\norm*{\hv^{DRO}_n(X) - v^*(X)}_{L^q}} &{= O_p(n^{-r/2}),} \\ 
{\norm*{g(\hz^{DRO}_n(X);X) - v^*(X)}_{L^q}} &{= O_p(n^{-r/2}).}
\end{align*}
\end{theorem}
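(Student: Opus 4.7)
The plan is to lift the proof of Theorem~\ref{thm:samprobustrateofconv} from pointwise to $L^q(P_X)$-norm control, by applying Lemma~\ref{lem:unifconvsamprobust} and then taking the $L^q$-norm in~$X$ of each of the three terms on the right-hand side of inequality~\eqref{eqn:unifconvsamprobustres}. The overall structure mirrors Theorem~\ref{thm:samprobustrateofconv}: first establish $\bigl\lVert \sup_{z \in \Z} \lvert \hg^{ER}_{s,n}(z;X) - g(z;X) \rvert \bigr\rVert_{L^q} = O_p(n^{-r/2})$, then convert this uniform-objective bound into $L^q$-bounds on $\hv^{DRO}_n(X) - v^*(X)$ and $g(\hz^{DRO}_n(X);X) - v^*(X)$ via pointwise optimality inequalities.

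For the first term of~\eqref{eqn:unifconvsamprobustres}, Assumption~\ref{ass:equilipschitz} gives $\sup_{z \in \Z} L(z) = O(1)$, and Assumption~\ref{ass:ambiguitysetunif} with $\rho = 1+r$ implies $\sup_{p \in \Pf_n(x;\zeta_n(x))} \bigl(1 + n\sum_i (p_i - 1/n)^2\bigr)^{1/2} = O(1)$ \emph{uniformly} in~$x$. Thus this first term is pointwise bounded by a deterministic constant times $\mu_n(x) + \bigl(\tfrac{1}{n}\sum_i \lVert \teps^{i}_n(x) \rVert^2 \bigr)^{1/2}$. Applying Lemma~\ref{lem:boundregerror} with $p = 2$ followed by the $L^q$-triangle inequality bounds this further by $\lVert \mu_n(X) \rVert_{L^q} + \lVert f^*(X) - \hf_n(X) \rVert_{L^q} + \bigl(\tfrac{1}{n}\sum_i \lVert f^*(x^i) - \hf_n(x^i) \rVert^2\bigr)^{1/2}$, each of which is $O_p(n^{-r/2})$ by the hypothesis on $\mu_n$ and Assumption~\ref{ass:regconvrateunif}. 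For the second term, the supremum over~$p$ contributes $(n \cdot O(n^{-1-r}))^{1/2} = O(n^{-r/2})$ uniformly in~$x$, while the other factor has $L^q$-norm $O_p(1)$ by the first part of Assumption~\ref{ass:functionalclt2}, yielding $O_p(n^{-r/2})$. The third term has $L^q$-norm $O_p(n^{-1/2}) \subseteq O_p(n^{-r/2})$ (as $r \leq 1$) by the second part of Assumption~\ref{ass:functionalclt2}. Summing the three $L^q$-bounds by the triangle inequality delivers the uniform-objective rate.

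To finish, I would use the standard optimality argument from the proof of Theorem~\ref{thm:samprobustrateofconv} to obtain, for a.e.\ $x \in \X$, the pointwise inequalities $\lvert \hv^{DRO}_n(x) - v^*(x) \rvert \leq \sup_{z \in \Z} \lvert \hg^{ER}_{s,n}(z;x) - g(z;x) \rvert$ and $0 \leq g(\hz^{DRO}_n(x);x) - v^*(x) \leq 2 \sup_{z \in \Z} \lvert \hg^{ER}_{s,n}(z;x) - g(z;x) \rvert$, and then take $L^q$-norms on both sides.

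The main technical subtlety is ensuring that the products of $x$-dependent factors in~\eqref{eqn:unifconvsamprobustres} decouple cleanly under the $L^q$-norm. This works precisely because Assumption~\ref{ass:ambiguitysetunif} controls the ambiguity-set supremum \emph{uniformly} in~$x$, so these factors pull out of the $L^q$-integration as deterministic $O(1)$ or $O(n^{-r/2})$ constants rather than as random functions of~$X$; without such uniformity, pulling the $L^q$-norm through the products would require a H\"older-type argument and correspondingly stronger joint moment hypotheses on the regression error and on the cost function.
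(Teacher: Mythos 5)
Your proposal is correct and follows essentially the same route as the paper's proof: bound $\bigl\lVert \sup_{z \in \Z} \lvert \hg^{ER}_{s,n}(z;X) - g(z;X) \rvert \bigr\rVert_{L^q}$ via the three-term decomposition of Lemma~\ref{lem:unifconvsamprobust}, pull the ambiguity-set and Lipschitz factors out as deterministic $O(1)$ or $O(n^{-r/2})$ constants using the uniformity in Assumption~\ref{ass:ambiguitysetunif}, invoke Assumptions~\ref{ass:regconvrateunif} and~\ref{ass:functionalclt2} for the remaining factors, and convert to bounds on the optimal values and out-of-sample cost by the standard optimality inequalities. The only cosmetic difference is that the paper closes with the triangle inequality $\norm{g(\hz^{DRO}_n(X);X) - v^*(X)}_{L^q} \leq \norm{g(\hz^{DRO}_n(X);X) - \hv^{DRO}_n(X)}_{L^q} + \norm{\hv^{DRO}_n(X) - v^*(X)}_{L^q}$ rather than your pointwise factor-of-two bound; both are valid.
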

\begin{proof}
{See Appendix~\ref{app:samprobustrateofconvlq}.}
\end{proof}

Finally, we make the following assumption to establish a finite sample certificate-type guarantee for sample robust optimization-based ER-DRO, i.e., when the radius $\zeta_{n}(x) \equiv 0$.
To achieve this, we utilize a connection between sample robust optimization-based ambiguity sets and ambiguity sets defined using the $\infty$-Wasserstein distance.
In particular, Theorem~5 of~\cite{bertsimas2018data} implies that the sample robust optimization-based ER-DRO problem is equivalent to the $\infty$-Wasserstein distance-based ER-DRO problem~\eqref{eqn:dro} with ambiguity set $\hP_n(x) := \big\{Q \in \mathcal{P}(\Y): d_{W,\infty}(Q,\hat{P}^{ER}_n(x)) \leq \mu_n(x)\big\}$.

\begin{assumption}
\label{ass:contsdist}
For a.e.\ $x \in \X$, the conditional distribution~$P_{Y \mid X = x}$ has a density $\Lambda_Y(\cdot;x) : \bar{\Y} \to [0,+\infty)$, where $\bar{\Y} \subset \Y$ is an open, connected and bounded set with a Lipschitz boundary.
Furthermore, for each $y \in \bar{\Y}$ and a.e.\ $x \in \X$, the density satisfies $1/\lambda(x) \leq \Lambda_Y(y;x) \leq \lambda(x)$, for some $\lambda(x) \geq 1$.
\end{assumption}

Trillos and Slep{\v{c}}ev~\cite{trillos2015rate} consider cases when Assumption~\ref{ass:contsdist} holds.
This assumption yields the following concentration of measure result for the true empirical distribution $P^*_n(x)$.
Note that Lemma~\ref{lem:wassinfmeasureconc} also applies to settings with non-i.i.d.\ data~$\D_n$ such as time series data.

\begin{lemma}[Theorem~1.1 of~\cite{trillos2015rate}]
\label{lem:wassinfmeasureconc}
Suppose Assumption~\ref{ass:contsdist} holds and the samples $\{\varepsilon^i\}_{i=1}^{n}$ are i.i.d.
Then, for any constant $\beta > 2$ and a.e.\ $x \in \X$
\[
\mathbb{P}\bigg\{d_{W,\infty}( P^*_n(x), P_{Y \mid X=x} ) \geq O(1) \frac{\log(n)}{n^{1/d_y}} \bigg\} \leq O(n^{-\beta/2}),
\]
where the $O(1)$ term depends only on~$\beta$, $\bar{\Y}$, and $\lambda(x)$ in Assumption~\ref{ass:contsdist}.
\end{lemma}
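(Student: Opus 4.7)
The plan is to reduce the $\infty$-Wasserstein bound to a covering/matching argument, following the strategy used by Trillos and Slep{\v{c}}ev. By a translation, it suffices to bound $d_{W,\infty}(\hat{\mu}_n, \mu)$ where $\mu := P_{Y \mid X=x}$ (restricted to $\bar{\Y}$) and $\hat{\mu}_n := \frac{1}{n}\sum_{i=1}^n \delta_{Y^i}$ with $Y^i := f^*(x) + \varepsilon^i$ i.i.d.\ from $\mu$. The key observation is that by the dual/matching characterization of $\infty$-Wasserstein distance, $d_{W,\infty}(\hat{\mu}_n, \mu) \leq \delta$ iff there exists a measurable map $T : \bar{\Y} \to \bar{\Y}$ with $T_\# \mu = \hat{\mu}_n$ and $\norm{T(y) - y} \leq \delta$ $\mu$-a.e. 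Hence the task becomes constructing a transport of displacement $O(\log(n)/n^{1/d_y})$ with high probability.

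The construction is based on a dyadic partition of $\bar{\Y}$. First, fix a scale $h_n := C_1 (\log(n)/n)^{1/d_y}$ and partition $\bar{\Y}$ into cells $\{A_j\}_{j=1}^{M_n}$ with diameters $\Theta(h_n)$ and $M_n = \Theta(n/\log n)$ (this is possible because $\bar{\Y}$ is bounded with a Lipschitz boundary, so one can use cubes intersected with $\bar{\Y}$ and absorb small boundary fragments into adjacent interior cubes). The density bounds in Assumption~\ref{ass:contsdist} give $p_j := \mu(A_j) \in [\lambda(x)^{-1} \mathrm{vol}(A_j), \lambda(x)\mathrm{vol}(A_j)]$, so each $np_j$ is of order $\log n$. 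Letting $N_j$ denote the number of samples in $A_j$, Bernstein's or Chernoff's inequality yields $\mathbb{P}(|N_j - np_j| \geq t\sqrt{np_j \log n}) \leq 2 n^{-ct^2}$ for appropriate $c > 0$. Choosing $t = t(\beta)$ large enough and union-bounding over the $M_n = O(n/\log n)$ cells gives
\[
\mathbb{P}\Bigl( \max_j |N_j - np_j| \geq K \log n \Bigr) \leq O(n^{-\beta/2}),
\]
where $K = K(\beta, \lambda(x))$.

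Conditional on this favorable event, I would construct the transport map $T$ hierarchically. Within each cell $A_j$, arbitrarily match $\min(N_j, np_j)$ of the mass from $\mu|_{A_j}$ to the empirical atoms in $A_j$; such intra-cell transport has displacement $\leq \mathrm{diam}(A_j) = O(h_n)$. The remaining mismatches are small (total mass at most $O(M_n \cdot \log n / n) = O(1)$ split across cells, with each cell's surplus/deficit $O(\log n / n)$) and must be routed \emph{between} cells. Using a coarser partition at scale $2h_n, 4h_n, \ldots$ and balancing surpluses against deficits at each level (a standard cascading/flow argument on the dyadic tree), the connectedness of $\bar{\Y}$ and the Lipschitz boundary ensure one can move the excess mass along chains of adjacent cells so that the total displacement for any mass particle is bounded by $O(h_n)$ times the maximum level at which balancing is needed. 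A Chernoff union bound at each dyadic level yields that this max level is $O(1)$ w.h.p., giving overall displacement $O(h_n) = O(\log(n)/n^{1/d_y})$.

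The main obstacle is the cascading step: naively, deficits at one cell could require transporting mass from arbitrarily far, inflating the $\infty$-Wasserstein cost. The Lipschitz boundary and bounded density hypotheses are used precisely to rule this out, ensuring cells of comparable volume at each dyadic scale and that imbalances concentrate at the right scale. Bookkeeping the constants so that the final probability matches $O(n^{-\beta/2})$ for arbitrary $\beta > 2$, rather than a fixed exponent, requires tuning the Chernoff threshold $t$ as a function of $\beta$; everything else is routine but combinatorially delicate.
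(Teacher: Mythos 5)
The paper does not prove this lemma at all: it is imported verbatim as Theorem~1.1 of the cited reference (Trillos and Slep\v{c}ev), and the authors' only contribution here is to restate it for the measure $P^*_n(x) = \frac{1}{n}\sum_i \delta_{f^*(x)+\varepsilon^i}$, which is the empirical measure of $n$ i.i.d.\ draws from $P_{Y\mid X=x}$ under Assumption~\ref{ass:contsdist}. So what you have written is an attempted reconstruction of the external result rather than an alternative to anything in this paper. Your reduction to a transport-map construction and the dyadic-partition-plus-Chernoff architecture do match the strategy of the cited source, but the sketch has a genuine gap exactly at the step you flag as the ``main obstacle.''

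Concretely: at the finest scale $h_n = C_1(\log n/n)^{1/d_y}$ each cell carries expected count $np_j = \Theta(\log n)$, so the Chernoff fluctuation $\sqrt{np_j\log n} = \Theta(\log n)$ is of the \emph{same order as the mean}. Cells can be empty or carry twice their expected mass on the favorable event, so the inter-cell routing is not a small correction --- it must move a constant fraction of the total mass, and you concede as much when you compute the mismatched mass as $O(1)$. Your resolution, that the ``max level at which balancing is needed is $O(1)$ w.h.p.,'' is not correct and is not how the cited proof works: imbalances are propagated through \emph{all} $\Theta(\log n)$ dyadic levels, and the displacement of a particle is bounded by $\sum_k 2^k h_n \cdot \delta_k$, where $\delta_k \sim 2^{-k d_y/2}$ is the relative fluctuation at scale $2^k h_n$. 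This sum is dominated by the finest scale only when $d_y \geq 3$; for $d_y = 2$ the levels contribute comparably and produce the extra logarithmic factor in the sharp rate $(\log n)^{3/4} n^{-1/2}$ (the lemma's $\log(n)\, n^{-1/d_y}$ bound is weak enough to absorb this, but your argument as written does not see it). Handling the level-by-level propagation, the bi-Lipschitz decomposition of the Lipschitz domain into cubes, and the passage from the uniform density to a density pinched in $[1/\lambda(x),\lambda(x)]$ are precisely the technical content of the cited theorem; absent those, the sketch does not close. For the purposes of this paper the correct ``proof'' is the citation.
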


The next result is the analogue of Lemma~\ref{lem:wass_dist_bound} for the $\infty$-Wasserstein distance.

\begin{lemma}
\label{lem:wassinf_dist_bound}
For each $x \in \X$
\[
d_{W,\infty}( \hat{P}^{ER}_n(x), P_{Y \mid X=x} ) \leq 2\sup_{x \in \X} \norm{f^*(x) - \hf_n(x)} + d_{W,\infty}( P^*_n(x), P_{Y \mid X=x} ).
\]
\end{lemma}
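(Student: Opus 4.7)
The plan is to mirror the proof of Lemma~\ref{lem:wass_dist_bound}, replacing the $p$-Wasserstein averaging over the $n$ matched atoms by a maximum, as is appropriate for the $\infty$-Wasserstein distance.

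First, I would invoke the triangle inequality for the $\infty$-Wasserstein distance to split
\[
d_{W,\infty}( \hat{P}^{ER}_n(x), P_{Y \mid X=x} ) \leq d_{W,\infty}( \hat{P}^{ER}_n(x), P^*_n(x) ) + d_{W,\infty}( P^*_n(x), P_{Y \mid X=x} ),
\]
which isolates the second term that already appears on the right-hand side of the lemma statement. I would then bound the first term by exhibiting a particular coupling $\pi$ between the two empirical measures $\hat{P}^{ER}_n(x)$ and $P^*_n(x)$: namely, the ``diagonal'' coupling that assigns mass $1/n$ to each pair $(\proj{\Y}{\hf_n(x) + \heps^i_{n}},\, f^*(x)+\varepsilon^i)$ for $i \in [n]$. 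Since this $\pi$ is supported on these $n$ pairs, its $\pi$-essential supremum of $\norm{y_1 - y_2}$ is simply the maximum norm over the pairs, yielding
\[
d_{W,\infty}( \hat{P}^{ER}_n(x), P^*_n(x) ) \leq \max_{i \in [n]} \norm{\proj{\Y}{\hf_n(x) + \heps^i_{n}} - (f^*(x) + \varepsilon^i)}.
\]

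Next, I would apply inequality~\eqref{eqn:projlipschitz} to each term in the maximum to bound it by $\max_{i \in [n]} \norm{\teps^{i}_{n}(x)}$. Using the decomposition $\teps^{i}_{n}(x) = (\hf_n(x) - f^*(x)) + (f^*(x^i) - \hf_n(x^i))$ together with the triangle inequality for $\norm{\cdot}$, each $\norm{\teps^{i}_{n}(x)}$ is bounded by $\norm{\hf_n(x)-f^*(x)} + \norm{f^*(x^i)-\hf_n(x^i)}$, and both summands are dominated by $\sup_{x' \in \X}\norm{f^*(x')-\hf_n(x')}$. This gives $\max_{i \in [n]}\norm{\teps^i_n(x)} \leq 2\sup_{x' \in \X}\norm{f^*(x')-\hf_n(x')}$, which combined with the triangle-inequality split yields the lemma.

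I do not anticipate any significant obstacle here: the only slightly delicate point is justifying that the diagonal coupling attains the essential supremum as a plain maximum, which is immediate because its support consists of finitely many atoms of positive $\pi$-mass. In contrast to Lemma~\ref{lem:wass_dist_bound}, we lose the averaging effect over $i$, which is precisely why the bound must be stated in terms of the uniform regression error $\sup_{x \in \X}\norm{f^*(x)-\hf_n(x)}$ rather than a power-mean of estimation errors at the sample points.
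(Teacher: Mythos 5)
Your proposal is correct and follows essentially the same route as the paper's proof: triangle inequality for $d_{W,\infty}$, the diagonal coupling of the two empirical measures to bound $d_{W,\infty}(\hat{P}^{ER}_n(x), P^*_n(x))$ by the maximum over $i$ of the distances between matched atoms, inequality~\eqref{eqn:projlipschitz}, and the uniform bound $2\sup_{x' \in \X}\norm{f^*(x')-\hf_n(x')}$. Your explicit justification of the diagonal coupling and the essential supremum over finitely many atoms only makes explicit what the paper compresses into ``the definition of $d_{W,\infty}$''.
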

\begin{proof}
The triangle inequality for the $\infty$-Wasserstein distance yields 
\[
d_{W,\infty}( \hat{P}^{ER}_n(x), P_{Y \mid X=x} ) \leq d_{W,\infty}( \hat{P}^{ER}_n(x), P^*_n(x) ) + d_{W,\infty}( P^*_n(x), P_{Y \mid X=x} ).
\]
The result then follows from~\eqref{eqn:projlipschitz} and the definition of $d_{W,\infty}$, which yield
\begin{align*}
d_{W,\infty}( \hat{P}^{ER}_n(x), P^*_n(x) ) &\leq \uset{i \in [n]}{\sup} \norm{\proj{\Y}{\hf_n(x) + \heps^i_{n}} - (f^*(x) + \varepsilon^i)} \\
&\leq \uset{i \in [n]}{\sup} \norm{(\hf_n(x) + \heps^i_{n}) - (f^*(x) + \varepsilon^i)} \\
&\leq 2\sup_{x \in \X} \norm{f^*(x) - \hf_n(x)}. \qedhere
\end{align*}
\end{proof}

For a given realization $x \in \X$ and risk level $\alpha \in (0,1)$, we hereafter use
\begin{align}
\label{eqn:wassinfradius}
\zeta_{n}(\alpha,x) := 0, \quad \mu_{n}(\alpha,x) := \kappa^{(1)}_{\infty,n}(\alpha) +  \kappa^{(2)}_{\infty,n}(x)
\end{align}
as the radii for the sample robust optimization-based ambiguity set, where 
\[
\kappa^{(1)}_{\infty,n}(\alpha) := 2\kappa_{n}(\alpha), \quad \kappa^{(2)}_{\infty,n}(x) := O(1) n^{-\theta/d_y},
\]
the constant $\kappa_{n}$ is defined in Assumption~\ref{ass:reglargedevunif} and the constant {$0 < \theta < 1$ may be chosen arbitrarily close to one.
The term $\kappa^{(2)}_{\infty,n}(x)$ above is chosen so that it is greater than the $O(1)\log(n)/n^{1/d_y}$ term in Lemma~\ref{lem:wassinfmeasureconc} for $\beta = 4$ and $n$ large enough.}
Similar to the specification of the Wasserstein DRO radius in~\eqref{eqn:wassradius}, the sample robust optimization radius~$\mu_n$ equals the sum of two contributions---the first accounts for the error in estimating~$f^*$, and the second corresponds to the radius used in the absence of covariate information~\cite{bertsimas2019two}.
While the above choice of~$\mu_{n}$ helps us derive our theoretical guarantees, it involves unknown constants and is typically conservative in practice (cf.\ Remark~\ref{rem:wassdrorate}).
We investigate practical approaches for choosing the radius $\mu_{n}$ in Section~\ref{subsec:wass_radius}.

\begin{theorem}[Finite sample certificate-type guarantee]
\label{thm:wassinffinitesampcert}
Suppose Assumptions~\ref{ass:reglargedevunif} and~\ref{ass:contsdist} hold, the samples~$\{\varepsilon^i\}_{i=1}^{n}$ are i.i.d., there exists a sequence of risk levels $\{\alpha_n\}_{n \in \mathbb{N}} \subset (0,1)$ such that $\sum_{n} \alpha_n < +\infty$, and for a.e.\ $x \in \X$, $\lim_{n \to \infty} \mu_{n}(\alpha_n,x) = 0$ with~$\mu_{n}$ defined in equation~\eqref{eqn:wassinfradius}.
Then, for a.e.\ $x \in \X$, there exists $N(x) \in \mathbb{N}$ such that the solution of the ER-DRO problem~\eqref{eqn:dro} with radii $\zeta_{n}(\alpha_n,x)$ and $\mu_{n}(\alpha_n,x)$ specified by equation~\eqref{eqn:wassinfradius} a.s.\ satisfies
\[
g(\hz^{DRO}_n(x);x) \leq \hv^{DRO}_n(x), \quad \forall n \geq N(x).
\]
\end{theorem}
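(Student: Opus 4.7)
The plan is to show that, a.s.\ for all sufficiently large $n$, the true conditional distribution $P_{Y \mid X=x}$ belongs to the ambiguity set $\hP_n(x;\mu_n(\alpha_n,x))$, since then, by the equivalence of sample robust optimization with $\infty$-Wasserstein DRO noted immediately before the theorem, we obtain
\[
g(\hz^{DRO}_n(x);x) = \expectation{Y \sim P_{Y \mid X=x}}{c(\hz^{DRO}_n(x),Y)} \leq \uset{Q \in \hP_n(x;\mu_n(\alpha_n,x))}{\sup} \expectation{Y \sim Q}{c(\hz^{DRO}_n(x),Y)} = \hv^{DRO}_n(x).
\]
By Lemma~\ref{lem:wassinf_dist_bound}, membership of $P_{Y \mid X=x}$ in the $\infty$-Wasserstein ball of radius $\mu_n(\alpha_n,x) = \kappa^{(1)}_{\infty,n}(\alpha_n) + \kappa^{(2)}_{\infty,n}(x)$ reduces to controlling (i) the uniform prediction/estimation error $\sup_{x \in \X}\norm{f^*(x) - \hf_n(x)}$ by $\kappa^{(1)}_{\infty,n}(\alpha_n)/2 = \kappa_n(\alpha_n)$, and (ii) the Wasserstein deviation $d_{W,\infty}(P^*_n(x),P_{Y \mid X=x})$ by $\kappa^{(2)}_{\infty,n}(x)$.

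For (i), Assumption~\ref{ass:reglargedevunif} applied at risk level $\alpha_n$ yields
\[
\mathbb{P}\bigl\{\sup_{x\in\X}\norm{f^*(x)-\hf_n(x)} > \kappa_n(\alpha_n)\bigr\} \leq \alpha_n.
\]
Since $\sum_n \alpha_n < \infty$ by hypothesis, the Borel--Cantelli lemma gives that a.s.\ only finitely many of these events occur, so a.s.\ for $n$ sufficiently large, $2\sup_{x\in\X}\norm{f^*(x)-\hf_n(x)} \leq \kappa^{(1)}_{\infty,n}(\alpha_n)$. For (ii), apply Lemma~\ref{lem:wassinfmeasureconc} with $\beta = 4$, which yields for a.e.\ $x \in \X$
\[
\mathbb{P}\bigl\{d_{W,\infty}(P^*_n(x),P_{Y \mid X=x}) \geq O(1)\tfrac{\log n}{n^{1/d_y}}\bigr\} \leq O(n^{-2}).
\]
Because $\theta < 1$, the rate $\kappa^{(2)}_{\infty,n}(x) = O(1) n^{-\theta/d_y}$ dominates $O(1)\log(n)/n^{1/d_y}$ for all $n$ sufficiently large, so the above bound implies $\mathbb{P}\{d_{W,\infty}(P^*_n(x),P_{Y \mid X=x}) \geq \kappa^{(2)}_{\infty,n}(x)\} \leq O(n^{-2})$ for $n$ large. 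Summability of $\{n^{-2}\}$ and another application of Borel--Cantelli give that a.s., for $n$ large enough (depending on $x$), $d_{W,\infty}(P^*_n(x),P_{Y \mid X=x}) \leq \kappa^{(2)}_{\infty,n}(x)$.

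Combining (i) and (ii) through Lemma~\ref{lem:wassinf_dist_bound}, we obtain the existence of $N(x) \in \mathbb{N}$ such that a.s., $d_{W,\infty}(\hat{P}^{ER}_n(x),P_{Y \mid X=x}) \leq \mu_n(\alpha_n,x)$ for all $n \geq N(x)$, which gives the desired inclusion and hence the certificate inequality. The main subtlety is switching from the in-probability bounds provided by Assumption~\ref{ass:reglargedevunif} and Lemma~\ref{lem:wassinfmeasureconc} to the almost-sure conclusion claimed in the theorem; this is precisely why summability of $\{\alpha_n\}$ is imposed and why we choose $\beta = 4$ (any $\beta > 2$ giving a summable tail would do) and $\theta < 1$ in the definition of $\kappa^{(2)}_{\infty,n}$, so that both tail bounds are summable and Borel--Cantelli applies. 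Note also that the conclusion is pathwise in $n$ but pointwise in $x$, so $N(x)$ legitimately depends on $x$; no uniform-in-$x$ rate is claimed.
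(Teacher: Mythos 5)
Your proposal is correct and follows essentially the same route as the paper's proof: reduce to showing $d_{W,\infty}(\hat{P}^{ER}_n(x),P_{Y \mid X=x}) \leq \mu_n(\alpha_n,x)$ a.s.\ for large $n$ via Lemma~\ref{lem:wassinf_dist_bound}, control the two contributions using Assumption~\ref{ass:reglargedevunif} and Lemma~\ref{lem:wassinfmeasureconc} with $\beta = 4$, invoke Borel--Cantelli on the summable tails, and conclude via the equivalence with the $\infty$-Wasserstein ball. The only cosmetic difference is that you apply Borel--Cantelli separately to the two events rather than first combining the tail probabilities into a single bound $\alpha_n + O(n^{-2})$ as the paper does; both are valid.
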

\begin{proof}
Our proof follows the outline of the proof of~\cite[Theorem~1]{bertsimas2019two}.

Lemma~\ref{lem:wassinf_dist_bound}, the probability inequality used in the proof of Lemma~\ref{lem:wassfinitesamp}, and Assumption~\ref{ass:reglargedevunif} yield for a.e.\ $x \in \X$
\begin{align*}
&\mathbb{P}\big\{d_{W,\infty}( \hat{P}^{ER}_n(x), P_{Y \mid X=x} ) > \mu_{n}(\alpha_n,x)\big\} \\
\leq& \: \alpha_n + \mathbb{P}\big\{d_{W,\infty}( P^*_n(x), P_{Y \mid X=x}) > \kappa^{(2)}_{\infty,n}(x) \big\}.
\end{align*}
Consider $\beta = 4$ in Lemma~\ref{lem:wassinfmeasureconc}.
Because $\kappa^{(2)}_{\infty,n}(x) \geq O(1) \log(n)/n^{1/d_y}$ for $n$ large enough, we have from Lemma~\ref{lem:wassinfmeasureconc} that for a.e.\ $x \in \X$ and $n$ large enough
\[
\mathbb{P}\big\{d_{W,\infty}( \hat{P}^{ER}_n(x), P_{Y \mid X=x} ) > \mu_{n}(\alpha_n,x)\big\} \leq \alpha_n + O(n^{-2}).
\]
Therefore, we have $\sum_{n=1}^{\infty} \mathbb{P}\big\{d_{W,\infty}( \hat{P}^{ER}_n(x), P_{Y \mid X=x} ) > \mu_{n}(\alpha_n,x)\big\} < +\infty$.
The Borel-Cantelli lemma then implies that for a.e.\ $x \in \X$, there a.s.\ exists $N(x) \in \mathbb{N}$ such that for $n \geq N(x)$, $d_{W,\infty}( \hat{P}^{ER}_n(x), P_{Y \mid X=x} ) \leq \mu_{n}(\alpha_n,x)$.

Recall that our sample robust optimization-based ER-DRO problem is equivalent to the $\infty$-Wasserstein distance-based ER-DRO problem with ambiguity set $\hP_n(x) := \big\{Q \in \mathcal{P}(\Y): d_{W,\infty}(Q,\hat{P}^{ER}_n(x)) \leq \mu_n(\alpha_n,x)\big\}$~\cite[Theorem~5]{bertsimas2018data}.
The stated result then follows by the definition of the $\infty$-Wasserstein distance-based ER-DRO problem~\eqref{eqn:dro}.
\end{proof}

Hereafter, we revert to the shortened notation~$\zeta_n(x)$ and also use it to denote the radius of sample robust optimization ambiguity sets for simplicity.

\begin{algorithm}[h]
{\small 
\caption{Specifying a covariate-independent radius~$\zeta_n$ using a naive SAA-based DRO problem}
\label{alg:naivesaaradius}
{
\begin{algorithmic}[1]
\State \textbf{Input}: data~$\D_n$, set of candidate radii $\Delta$, and number of folds $K$.

\State Partition $[n]$ into $K$ subsets $S_1, \dots, S_K$ of (roughly) equal size at random.

\For{$k = 1, \dots, K$}

\For{$\zeta \in \Delta$}

\State Solve the following DRO problem to get a solution~$\hz^{DRO}_{-k}(\zeta)$:
\begin{align*}
&\uset{z \in \Z}{\min} \: \uset{Q \in \hP_{-k}}{\sup} \: \expectation{Y \sim Q}{c(z,Y)},
\end{align*}
\Statex \hspace*{0.4in} where the ambigiuity set $\hP_{-k}$ with radius~$\zeta$ is centered at the empirical

\Statex \hspace*{0.4in} distribution $\tilde{P}_{-k} := \dfrac{1}{n-\abs{S_k}} \displaystyle\sum_{i \in [n] \backslash S_k} \delta_{y^i}$.

\EndFor

\EndFor

\State \textbf{Output}: Radius $\zeta_n \in \uset{\zeta \in \Delta}{\argmin} \dfrac{1}{K} \displaystyle\sum_{k \in [K]} \dfrac{1}{\abs{S_k}}\sum_{i\in S_k} c(\hz^{DRO}_{-k}(\zeta), y^i)$ of the ambiguity set $\hP_n(x)$ for the ER-DRO problem~\eqref{eqn:dro}.

\end{algorithmic}
}
}
\end{algorithm}

\begin{algorithm}
{\small
\caption{Specifying a covariate-independent radius~$\zeta_n$ using the ER-DRO problem}
\label{alg:ersaasameradius}
{
\begin{algorithmic}[1]
\State \textbf{Input}: data~$\D_n$, set of candidate radii $\Delta$, number of folds $K$, and number of covariate realizations sampled during each fold~$T \leq \lfloor \frac{n}{K} \rfloor$.

\State Partition $[n]$ into subsets $S_1, \dots, S_K$ of (roughly) equal size at random. Let $\D_{-k} := \D_n \backslash \{(y^i,x^i)\}_{i \in S_k}$.

\For{$k = 1, \dots, K$}

\State Pick without replacement a random subset~$\bar{\X}$ of $\{x^i\}_{i \in S_k}$ of size $T$.

\For{$\bar{x} \in \bar{\X}$}

\For{$\zeta \in \Delta$}

\State Fit a regression model $\hf_{-k}$ using the data~$\D_{-k}$ and compute its in-sample

\Statex \hspace*{0.6in} residuals $\{\heps^i_{-k}\}_{i \not\in S_k} := \{y^i - \hf_{-k}(x^i)\}_{i \not\in S_k}$.

\State Solve the ER-DRO problem below at covariate $\bar{x}$ to get solution $\hz^{DRO}_{-k}(\bar{x},\zeta)$
\begin{align*}
&\uset{z \in \Z}{\min} \: \uset{Q \in \hP_{-k}(\bar{x})}{\sup} \: \expectation{Y \sim Q}{c(z,Y)},
\end{align*}%
\Statex \hspace*{0.6in} where the ambigiuity set $\hP_{-k}(\bar{x})$ with radius~$\zeta$ is centered at the

\Statex \hspace*{0.6in} estimated empirical distribution $\hat{P}^{ER}_{-k}(\bar{x}) := \dfrac{1}{n-\abs{S_k}} \displaystyle\sum_{i \not\in S_k} \delta_{\hf_{-k}(\bar{x})+\heps^i_{-k}}$. 

\EndFor

\EndFor

\EndFor

\State \textbf{Output}: Radius $\zeta_n \in \uset{\zeta \in \Delta}{\argmin} \dfrac{1}{T} \displaystyle\sum_{\bar{x} \in \bar{\X}} \dfrac{1}{K}\displaystyle\sum_{k \in [K]}\dfrac{1}{\abs{S_k}} \displaystyle\sum_{i\in S_k} c(\hz^{DRO}_{-k}(\bar{x},\zeta), y^i)$ for the ambiguity set $\hP_n(x)$ for the ER-DRO problem~\eqref{eqn:dro}.

\end{algorithmic}
}
}
\end{algorithm}

\begin{algorithm}
{\small 
\caption{Specifying a covariate-dependent radius~$\zeta_n(x)$ using the ER-DRO problem}
\label{alg:ersaadiffradius}
{
\begin{algorithmic}[1]
\State \textbf{Input}: data~$\D_n$, set of candidate radii $\Delta$, number of folds $K$, and new covariate realization~$x \in \X$.

\State Partition $[n]$ into subsets $S_1, \dots, S_K$ of (roughly) equal size at random. Let $\D_{-k} := \D_n \backslash \{(y^i,x^i)\}_{i \in S_k}$.

\For{$k = 1, \dots, K$}

\For{$\zeta \in \Delta$}

\State Fit a regression model $\hf_{-k}$ using the data~$\D_{-k}$ and compute its in-sample

\Statex \hspace*{0.4in} residuals $\{\heps^i_{-k}\}_{i \not\in S_k} := \{y^i - \hf_{-k}(x^i)\}_{i \not\in S_k}$.

\State Solve the ER-DRO problem below at covariate $x$ to obtain  solution $\hz^{DRO}_{-k}(x,\zeta)$
\begin{align*}
&\uset{z \in \Z}{\min} \: \uset{Q \in \hP_{-k}(x)}{\sup} \: \expectation{Y \sim Q}{c(z,Y)},
\end{align*}
\Statex \hspace*{0.4in} where the ambigiuity set $\hP_{-k}(x)$ with radius~$\zeta$ is centered at the estimated

\Statex \hspace*{0.4in} empirical distribution $\hat{P}^{ER}_{-k}(x) := \dfrac{1}{n-\abs{S_k}} \displaystyle\sum_{i \not\in S_k} \delta_{\hf_{-k}(x)+\heps^i_{-k}}$.

\State Fit a regression model $\hf_{k}$ using the data $\{(y^i,x^i)\}_{i \in S_k}$ and compute its

\Statex \hspace*{0.4in} in-sample residuals $\{\heps^i_{k}\}_{i \in S_k} := \{y^i - \hf_{k}(x^i)\}_{i \in S_{k}}$.

\EndFor

\EndFor

\State \mbox{\textbf{Output}: Radius $\zeta_n(x) \in \uset{\zeta \in \Delta}{\argmin} \dfrac{1}{K}\displaystyle\sum_{k \in [K]}\dfrac{1}{\abs{S_k}} \displaystyle\sum_{i\in S_k} c(\hz^{DRO}_{-k}(x,\zeta), \hf_{k}(x) + \heps^i_{k})$} for the ambiguity set $\hP_n(x)$ for the ER-DRO problem~\eqref{eqn:dro}.

\end{algorithmic}
}
}
\end{algorithm}

\section{Specifying the radius of the ambiguity set} 
\label{subsec:wass_radius}

Determining the optimal radius~$\zeta_n(x)$ of the ambiguity sets in Section~\ref{sec:drsaa} using the theory in Sections~\ref{sec:wassdro} and~\ref{sec:samprobustopt} is hard for two reasons: (i) the theory usually involves unknown constants, and (ii) even if these constants are known or estimated, this specification of~$\zeta_n(x)$ is typically conservative in practice (see Remark~\ref{rem:wassdrorate}).
Therefore, we propose data-driven approaches that use cross-validation (CV) to specify~$\zeta_n(x)$ for the ER-DRO problem~\eqref{eqn:dro} with the goal of minimizing the out-of-sample cost $g(\hz^{DRO}_n(x);x)$ of the resulting ER-DRO solution~$\hz^{DRO}_n(x)$.
Once we choose $\zeta_n(x)$, we re-solve the ER-DRO problem~\eqref{eqn:dro} with the ambiguity set of radius~$\zeta_n(x)$ centered at the empirical distribution $\hat{P}^{ER}_{n}(x)$ to determine the optimal value $\hv^{DRO}_n(x)$ and a solution $\hz^{DRO}_n(x)$.

We outline two approaches, Algorithms~\ref{alg:naivesaaradius} and~\ref{alg:ersaasameradius}, 
for choosing the radius $\zeta_n(x)$ independently of the covariate realization~$x \in \X$.
Algorithm~\ref{alg:naivesaaradius} ignores covariate information altogether, whereas Algorithm~\ref{alg:ersaasameradius} uses all of the data~$\D_n$, including covariates, but does not use the new covariate realization~$x \in \X$ for specifying the radius.
Algorithm~\ref{alg:ersaadiffradius} presents an alternative that also uses the realization~$x \in \X$ to choose~$\zeta_n(x)$.
Algorithms~\ref{alg:naivesaaradius} and~\ref{alg:ersaasameradius} are less data and computation intensive and can be readily used in applications where the DRO problem~\eqref{eqn:dro} is repeatedly solved for different covariate realizations.
Allowing~$\zeta_n(x)$ to depend on the realization~$x \in \X$, on the other hand, could yield estimators with better out-of-sample performance, which might justify the added computational cost of Algorithm~\ref{alg:ersaadiffradius}.

Algorithm~\ref{alg:naivesaaradius} chooses a covariate-independent radius~$\zeta_n$ for the ambiguity set~$\hP_n(x)$ using $K$-fold CV on a DRO extension of a naive SAA problem that does not use covariate information (cf.\ \cite[Section~7.2.2]{esfahani2018data}).
This algorithm does not require estimation of the regression function~$f^*$.
The radius~$\zeta_n$ determined using Algorithm~\ref{alg:naivesaaradius} necessarily converges to zero as the sample size~$n$ increases.
This may result in suboptimal estimators~$\hz^{DRO}_n(x)$ when the prediction model is misspecified ({cf.\ Remark~\ref{rem:modelclass} in Section~\ref{sec:prelim}}), in which case it may be beneficial to use a positive value of~$\zeta_n$ even for large values of~$n$ (cf.\ Figure~\ref{fig:comp_ols_wass_radii} in Appendix~\ref{app:computres}).
Algorithm~\ref{alg:ersaasameradius} determines a covariate-independent radius~$\zeta_n$ using $K$-fold CV on ER-DRO problems.
Note that the objective in line~12 of Algorithm~\ref{alg:ersaasameradius} for choosing the radius~$\zeta_n$ is similar to the objective in line~8 of Algorithm~\ref{alg:naivesaaradius}.

Algorithm~\ref{alg:ersaadiffradius} determines a covariate-dependent radius~$\zeta_n(x)$ using $K$-fold CV on the ER-DRO problem~\eqref{eqn:dro}.
For each fold, this algorithm estimates the regression function~$f^*$ twice: once using the data omitted in the fold for setting up the ER-DRO problem~\eqref{eqn:dro}, and once using the data in the fold for estimating the out-of-sample costs of the constructed DRO solutions.
{The motivation for estimating the function~$f^*$ a second time is to approximate the following radius selection problem that uses $f^*$ only to construct $\abs{S_k}$ i.i.d.\ samples from the conditional distribution $P_{Y \mid X = x}$ for evaluating the quality of the $K$-fold CV-based ER-DRO solutions $\hz^{DRO}_{-k}(x,\zeta)$:}
\[
{
\zeta^*_n(x) \in \uset{\zeta \in \Delta}{\argmin} \dfrac{1}{K}\displaystyle\sum_{k \in [K]}\dfrac{1}{\abs{S_k}} \displaystyle\sum_{i\in S_k} c(\hz^{DRO}_{-k}(x,\zeta), f^*(x) + \varepsilon^i_{k}).
}
\]
Clearly, there is a trade-off between the number of data samples used to construct each estimate of~$f^*$.
Because we are particularly interested in the limited data regime, we propose to use a sparse estimation technique (such as the Lasso) for the second estimation step (i.e., for line~7 of Algorithm~\ref{alg:ersaadiffradius}).

\section{Computational experiments}
\label{sec:computexp}

We consider instances of the following mean-risk portfolio optimization model adapted from~\cite{esfahani2018data}:
\[
\uset{z \in \Z}{\min} \: \expect{-\tr{Y}z} + \rho \: \text{CVaR}_{\beta}(-\tr{Y}z) ,
\]
where $\Z := \big\{z \in \R^{d_z}_+ : \sum_j z_j = 1\big\}$, $\rho$ and $\beta$ are given parameters, and
\[
\text{CVaR}_{\beta}(-\tr{Y}z) := \uset{\tau \in \R}{\min} \: \expect{\tau + \frac{1}{1-\beta} \max\{0, -\tr{Y}z -  \tau\}}.
\]
{We can rewrite this model as the following single-stage stochastic program:
\[
\uset{z \in \Z, \tau \in \R}{\min} \expect{-\tr{Y}z + \rho\tau + \frac{\rho}{1-\beta} \max\{0, -\tr{Y}z -  \tau\}}.
\]
The variable $\tau$ can be bounded under mild conditions on the distribution of~$Y$ (see~\citep[Theorem~10]{rockafellar2002conditional}).}
For each $j \in [d_z]$, the decision variable $z_j$ denotes the fraction of capital invested in asset $j$ and the random variable $Y_j$ denotes the net return of asset~$j$.
The parameters $\rho \geq 0$ and $\beta \in (0,1)$ specify the decision-maker's risk aversion level, with $\text{CVaR}_{\beta}$ (roughly) averaging over the $100(1-\beta)\%$ worst return outcomes under the distribution of $Y$.
Following~\cite{esfahani2018data}, we use $\beta = 0.8$, $\rho = 10$, and $d_y = d_z = 10$.

Similar to~\cite{kannan2020data}, we assume that the returns $Y$ satisfy the relationship
\[
Y_j = \nu^*_j + \sum_{l \in \mathcal{L}^*} {\mu^*_{\theta,jl}} (X_l)^{\theta} + \bar{\varepsilon}_j + \omega, \quad \forall j \in [d_y],
\]
where $X_l$, $l \in \mathcal{L}$, are covariates, $\theta \in \{0.5,1,2\}$ is a fixed parameter that determines the model class, $\bar{\varepsilon}_j \sim {\mathcal{N}\left(0,0.025j\right)}$ and $\omega \sim \mathcal{N}(0,0.02)$ are additive errors {whose variances are chosen to match the case study in~\citep[Section~7.2]{esfahani2018data}}, $\nu^*$ and {$\mu^*_{\theta}$} are model parameters, and $\mathcal{L}^* \subseteq \mathcal{L}$ contains the indices of the covariates with predictive power ($\mathcal{L}^*$ does not depend on the index~$j \in [d_y]$). 
{Note that $\abs{\mathcal{L}} = d_x$}.
We draw covariate samples $\{x^i\}_{i=1}^{n}$ from a multivariate \textit{folded-normal/half-normal} distribution with the underlying normal distribution having zero mean and covariance matrix equal to a random correlation matrix generated using the \textit{vine method} of~\cite{lewandowski2009generating}.
Throughout, we assume that $\abs{\mathcal{L}^*} = 3$, i.e., the returns truly depend only on three covariates.
We simulate i.i.d.\ data $\D_n$ with {
\begin{alignat*}{2}
&\nu^*_j = 0.005j, \qquad &&\mu^*_{\theta,j2} = \bigl(0.0075j \bigr) s_{\theta} \xi_{j2}, \\
&\mu^*_{\theta,j3} = \bigl(0.005j \bigr) s_{\theta} \xi_{j3}, \qquad &&\mu^*_{\theta,j1} = 0.025j s_{\theta} - \mu^*_{\theta,j2} - \mu^*_{\theta,j3}
\end{alignat*}
}%
for each $j \in [d_y]$,
where $\xi_{j2}$ and $\xi_{j3}$ are i.i.d.\ samples from the uniform distribution {$U(0.8,1.2)$ and the scaling factor $s_{\theta}$ is (approximately) $1.25$, $1.22$, and $1$ when the exponent $\theta$ is equal to $1$, $0.5$, and $2$, respectively.
The above coefficients are chosen such that $\mathbb{E}_{X,\bar{\varepsilon},\omega}[Y_j \mid X] = 0.03j$, $\forall j \in [d_y]$, which mirrors the setup in~\citep[Section~7.2]{esfahani2018data} (the scaling factor $s_{\theta}$ offsets the differences in the term $\mathbb{E}_X[(X_l)^{\theta}]$ for $\theta \in \{1,0.5,2\}$).
Once the coefficients $\nu^*$ and $\mu^*_{\theta}$ are generated, they are considered fixed for different replications of the data $\D_n$.
}

Given joint data $\D_n$ on the random returns and random covariates, we estimate the coefficients of the linear model
\[
Y_j = \nu_j + \sum_{l \in \mathcal{L}} \mu_{jl} X_l + \eta_j, \quad \forall j \in [d_y],
\]
where $\eta_j$ are zero-mean errors, using OLS, Lasso, {or Ridge}  regression and use this model within our residuals-based formulations.
We use this linear model even when the degree $\theta \neq 1$, in which case it is misspecified.
Note that OLS, {Lasso, and Ridge} regression estimate $d_x + 1$ parameters for each $j \in [d_y]$.

We compare the ER-SAA formulation~\eqref{eqn:app} (denoted by \texttt{E}) 
with ER-DRO formulations that use the $1$-Wasserstein-based ambiguity set defined using the $\ell_1$-norm (denoted by \texttt{W}), the sample robust optimization-based ambiguity set constructed using the $\ell_1$-norm (denoted by \texttt{S}), and the ambiguity set with the same support as $\hat{P}^{ER}_n(x)$ defined using the Hellinger distance (denoted by \texttt{H}, see Example~\ref{exm:phidivergence} in Section~\ref{sec:drsaa}).
Different from the setup in Section~\ref{sec:drsaa}, we use the $\ell_1$-norm to define the $1$-Wasserstein and sample robust optimization-based ambiguity sets so that the resulting ER-DRO problems can be expressed as LPs~\cite{esfahani2018data}.
Formulation~\texttt{H} can be expressed as a conic quadratic program~\cite{ben2013robust}.

We vary the dimension $d_x$ of the covariates, the sample size~$n$, and the degree~$\theta$ in our computational experiments.
We use Algorithms~\ref{alg:naivesaaradius},~\ref{alg:ersaasameradius}, and~\ref{alg:ersaadiffradius} to specify the radii~$\zeta_n(x)$ of the above ambiguity sets for the ER-DRO problem~\eqref{eqn:dro} with $K = 5$ folds in all three cases and $T = \min\{50, \lfloor \frac{n}{5} \rfloor\}$ in Algorithm~\ref{alg:ersaasameradius}.
We use Lasso regression in line 7 of Algorithm~\ref{alg:ersaadiffradius} with $5$-fold CV.
For all ER-DRO formulations, following~\cite{esfahani2018data}, we choose the radius~$\zeta_n(x)$ from the set of $28$ candidate points $\Set{b \times 10^{e}}{b \in \{0,1,\dots,9\}, \: e \in \{-1,-2,-3\}}$ instead of $\mathbb{R}_+$.

Solutions obtained from the different approaches are compared by estimating a normalized version of the upper bound of a $99\%$ confidence interval (UCB) on their {out-of-sample} optimality gaps using the multiple replication procedure (MRP)~\cite{mak1999monte} {(see Algorithm~\ref{alg:99ucbs} in Appendix~\ref{app:computres} for details)}.
We use {$20{,}000$} i.i.d.\ samples from the conditional distribution of $Y$ given $X = x$ to compute these UCBs.
Because the data-driven solutions depend on the realization of~$\D_n$, we perform $50$ data replications per test instance, sample $20$ different covariate realizations~$x \in \X$, and report our results in the form of box plots of these $50 \times 20 = 1000$ UCBs. The boxes denote the $25^{\text{th}}$, $50^{\text{th}}$, and $75^{\text{th}}$ percentiles of the $99\%$ UCBs, and the whiskers denote the {$5^{\text{th}}$ and $95^{\text{th}}$ percentiles} of the $99\%$ UCBs over the $1000$ instances.

Source code and data {are} available at \url{https://github.com/rohitkannan/ER-DRO}.
Our codes are written in Julia~0.6.4~\citep{bezanson2017julia}, use Gurobi 8.1.0 to solve LPs and conic quadratic programs through the JuMP~0.18.5 interface~\citep{dunning2017jump}, and use \texttt{glmnet}~0.3.0~\citep{friedman2010regularization} for Lasso {and Ridge} regression. 
All computational tests were conducted through the UW-Madison Center for High Throughput Computing (CHTC) software \texttt{HTCondor} (\url{http://chtc.cs.wisc.edu/}).
\\

\begin{figure}[t!]
    \centering
    \begin{subfigure}[t]{0.33\textwidth}
        \centering
        \includegraphics[width=\textwidth]{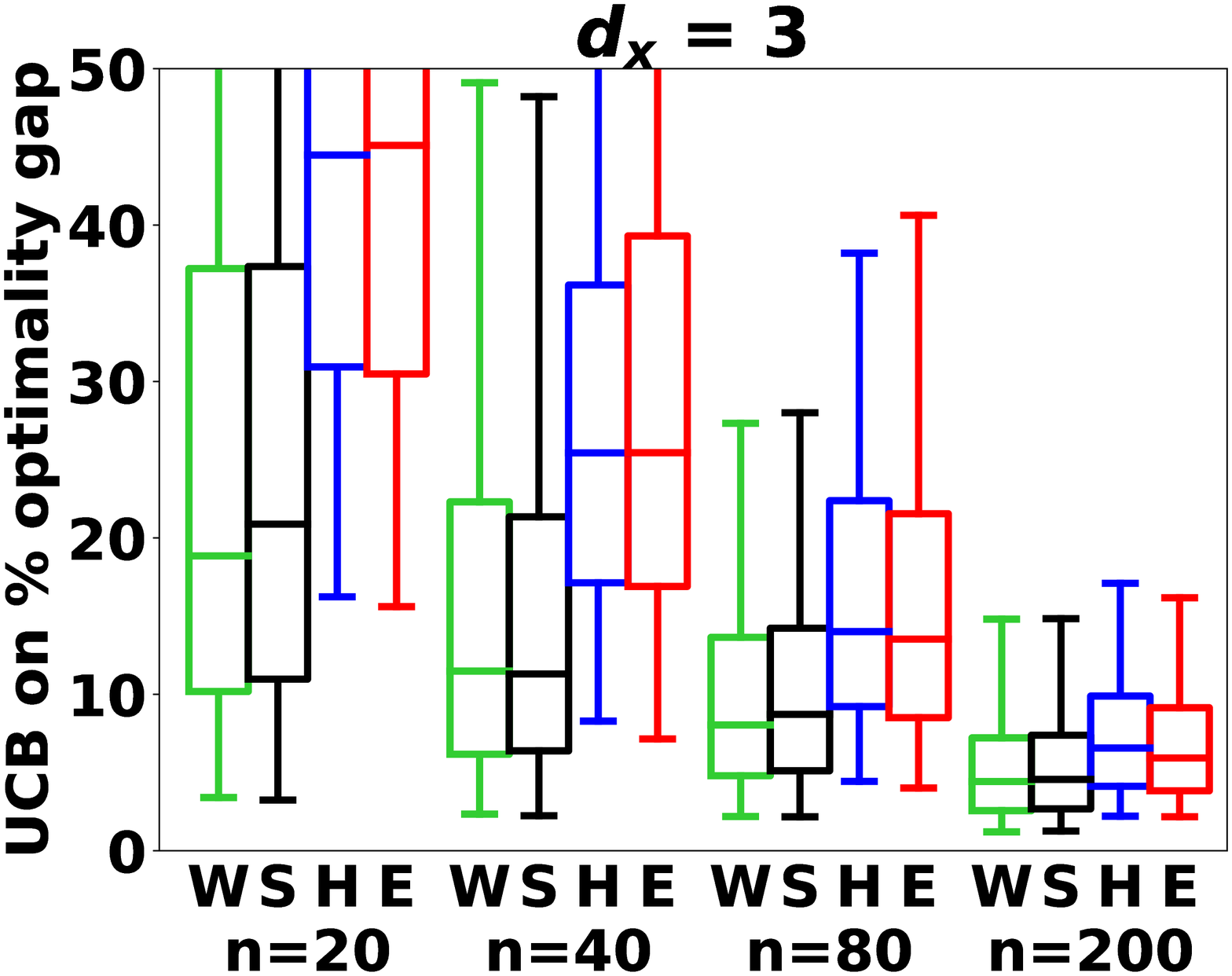}
    \end{subfigure}%
    ~ 
    \begin{subfigure}[t]{0.33\textwidth}
        \centering
        \includegraphics[width=\textwidth]{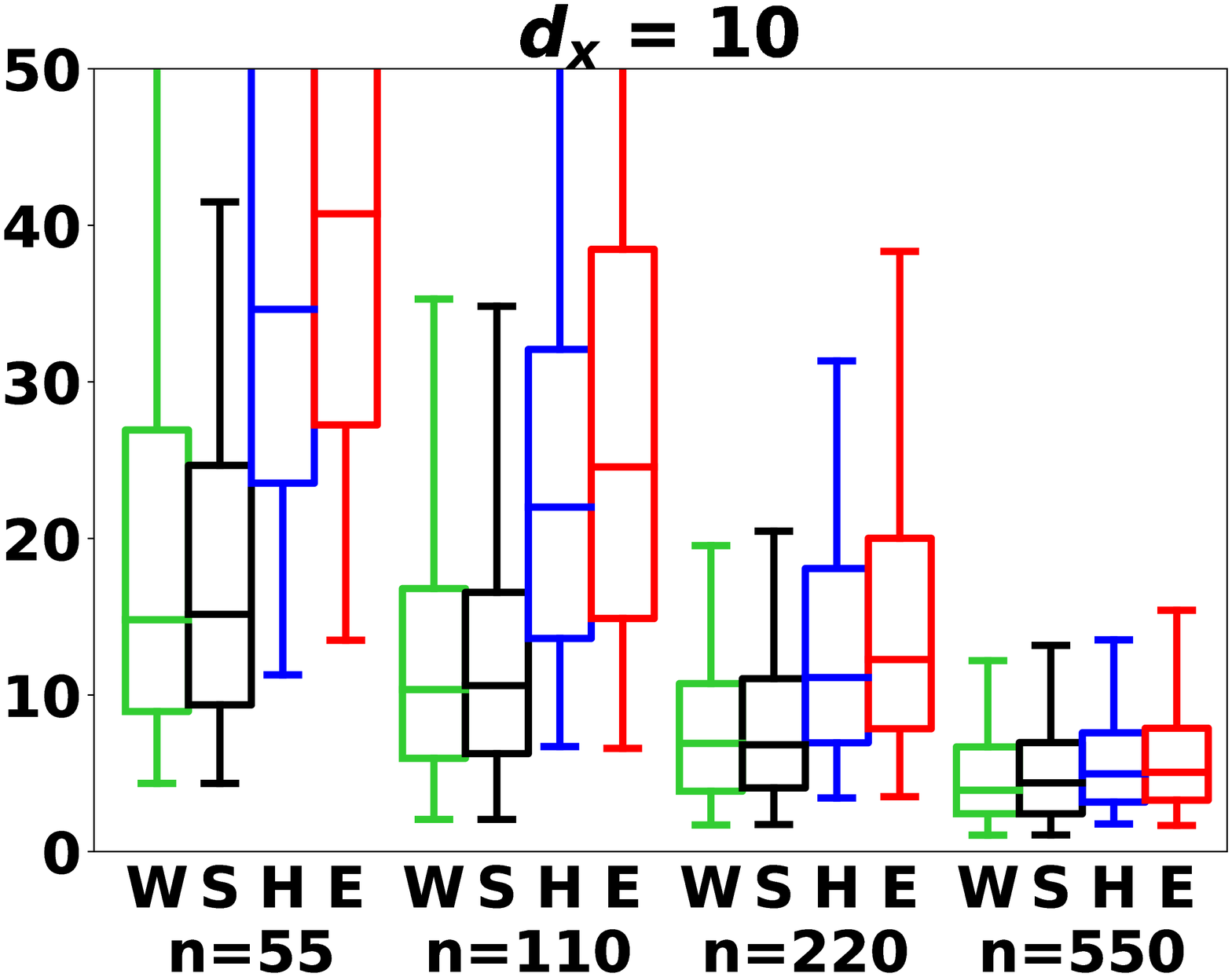}
    \end{subfigure}%
    ~ 
    \begin{subfigure}[t]{0.33\textwidth}
        \centering
        \includegraphics[width=\textwidth]{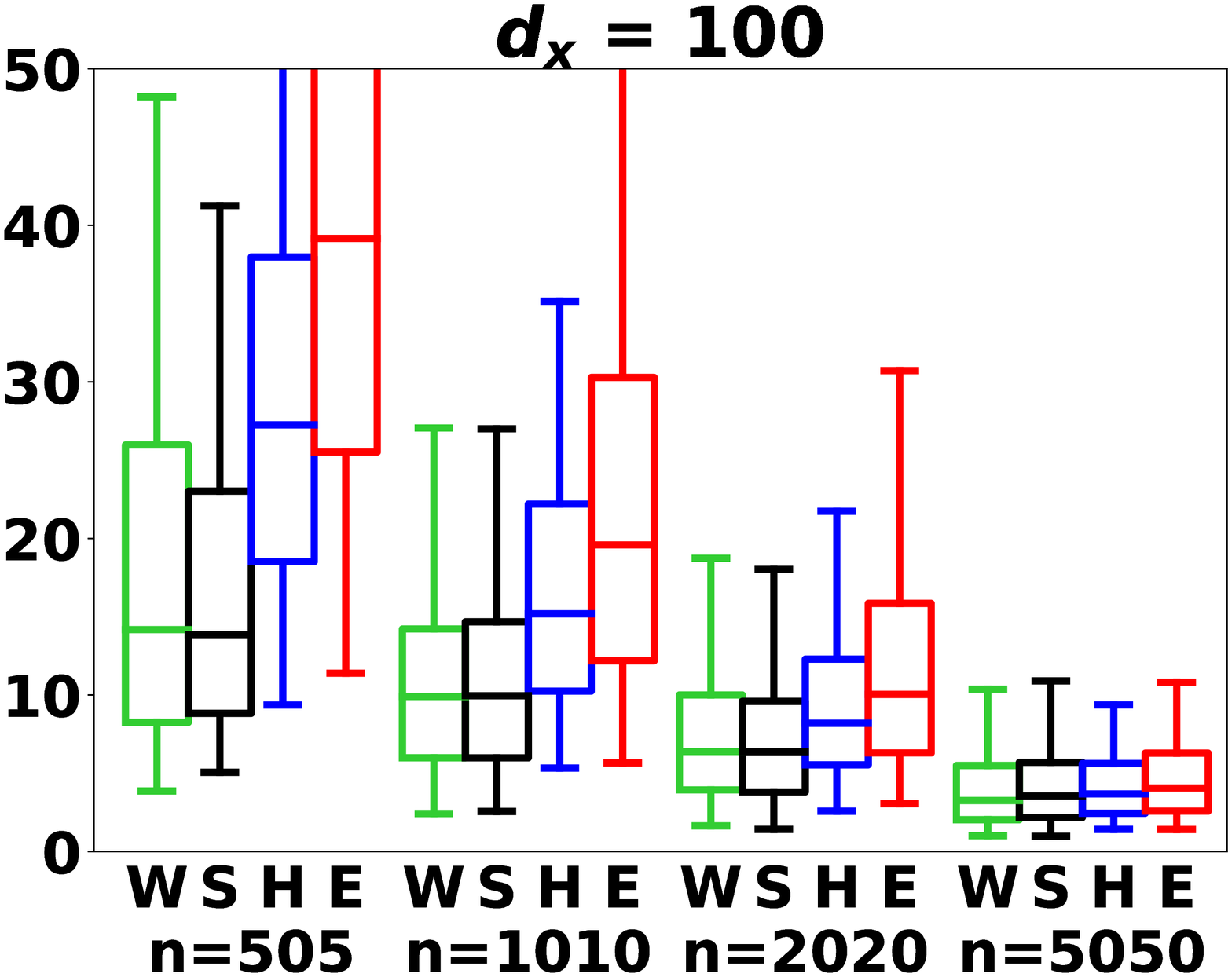}
    \end{subfigure}\\
    \begin{subfigure}[t]{0.33\textwidth}
        \centering
        \includegraphics[width=\textwidth]{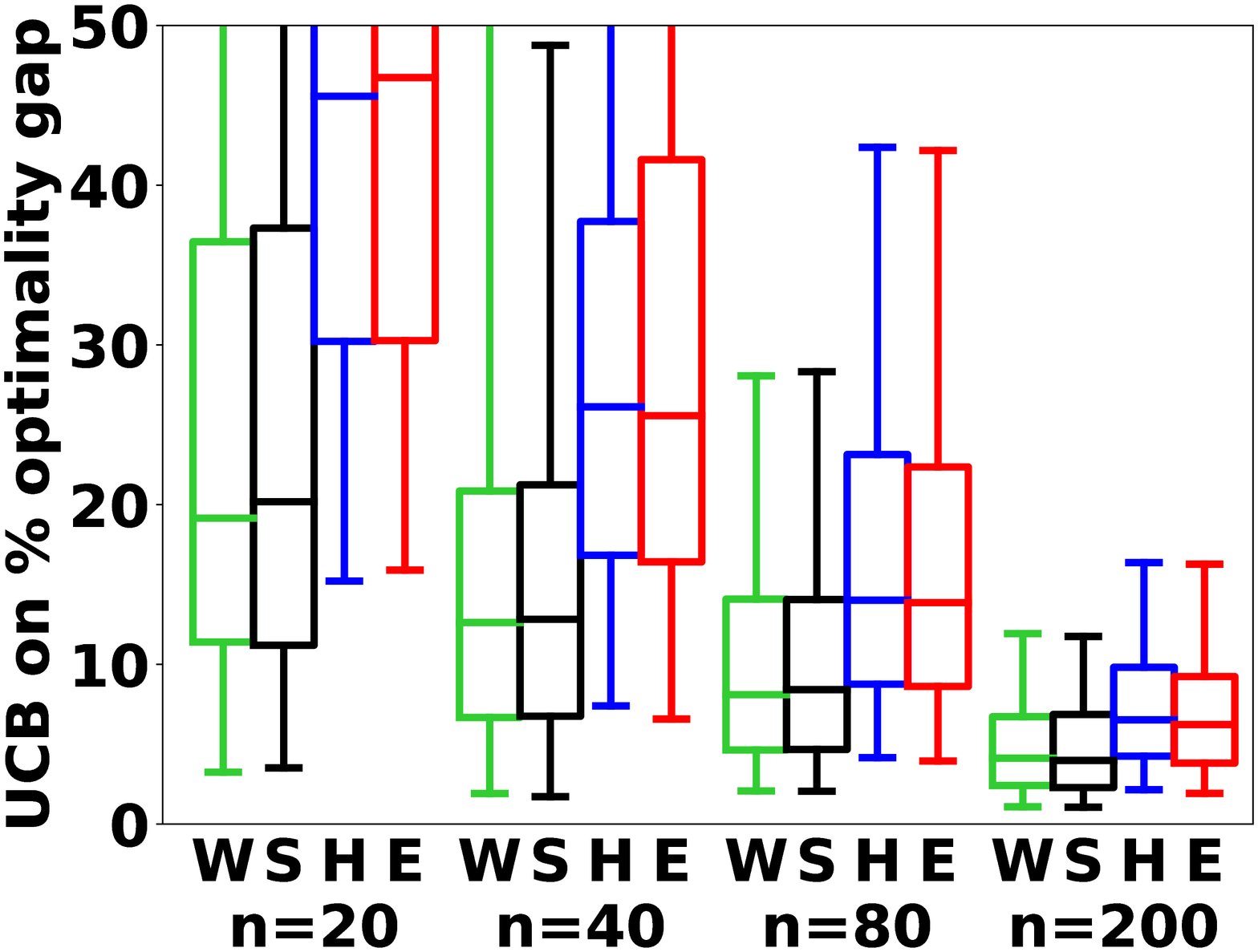}
    \end{subfigure}%
    ~ 
    \begin{subfigure}[t]{0.33\textwidth}
        \centering
        \includegraphics[width=\textwidth]{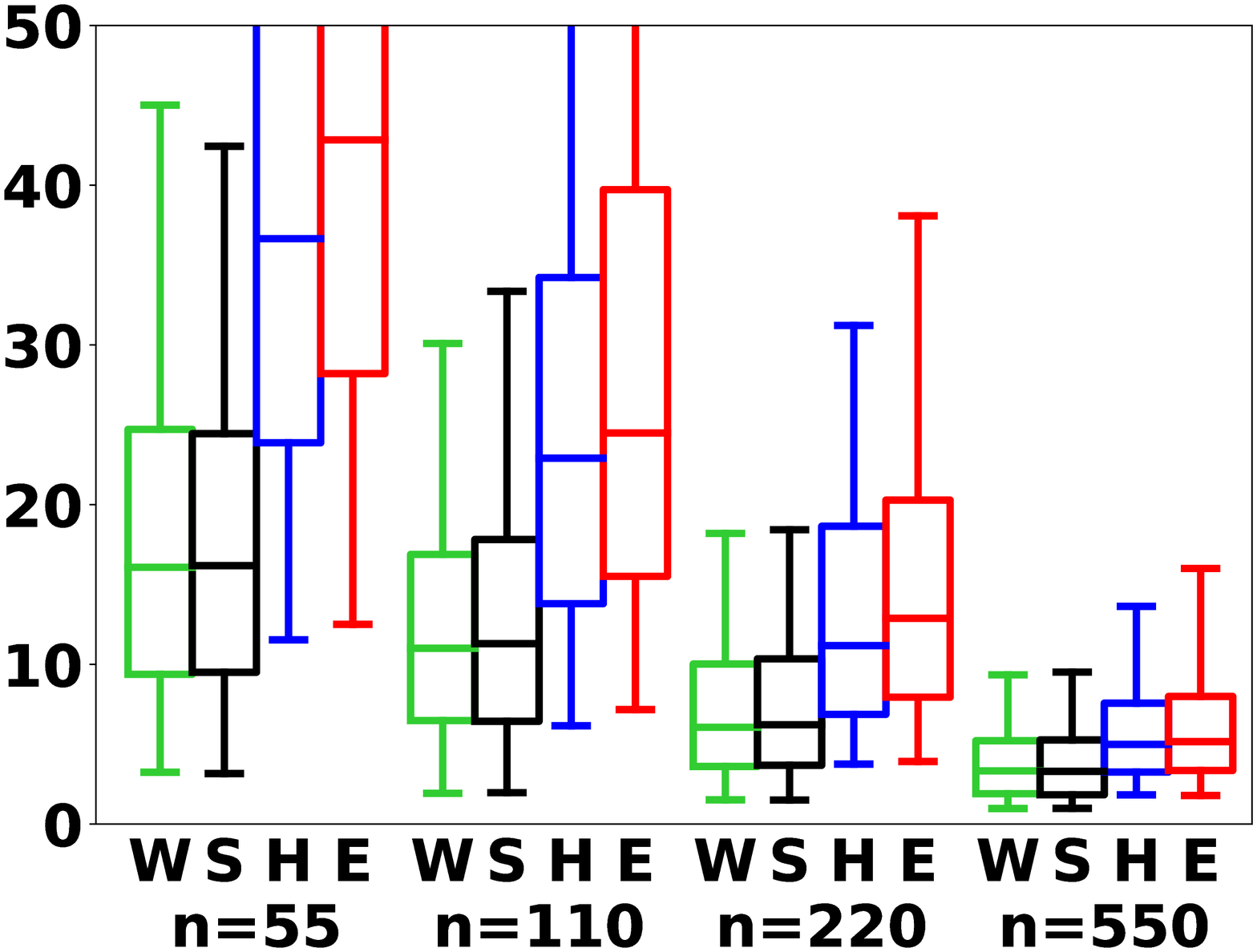}
    \end{subfigure}%
    ~ 
    \begin{subfigure}[t]{0.33\textwidth}
        \centering
        \includegraphics[width=\textwidth]{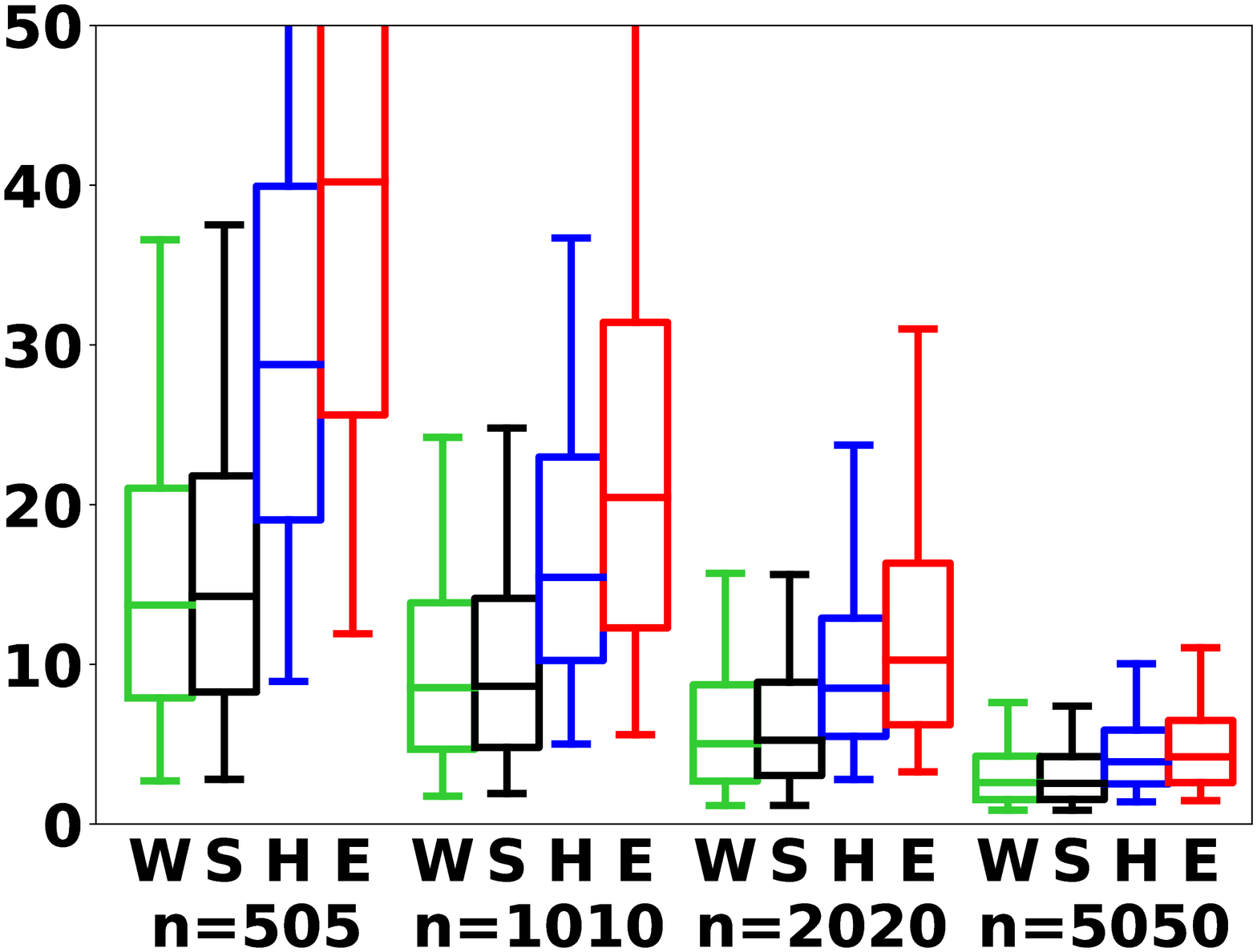}
    \end{subfigure}\\
    \begin{subfigure}[t]{0.33\textwidth}
        \centering
        \includegraphics[width=\textwidth]{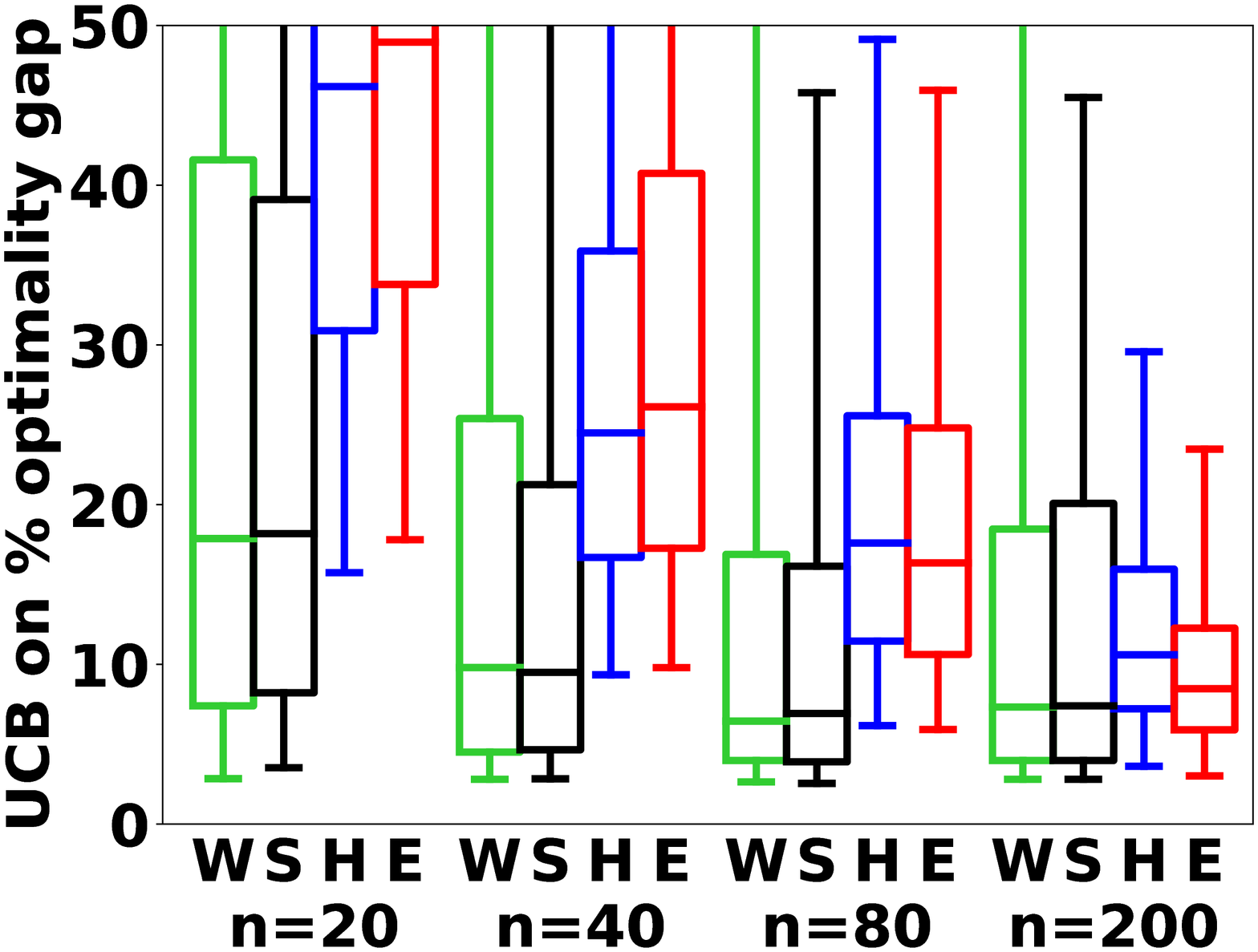}
    \end{subfigure}%
    ~ 
    \begin{subfigure}[t]{0.33\textwidth}
        \centering
        \includegraphics[width=\textwidth]{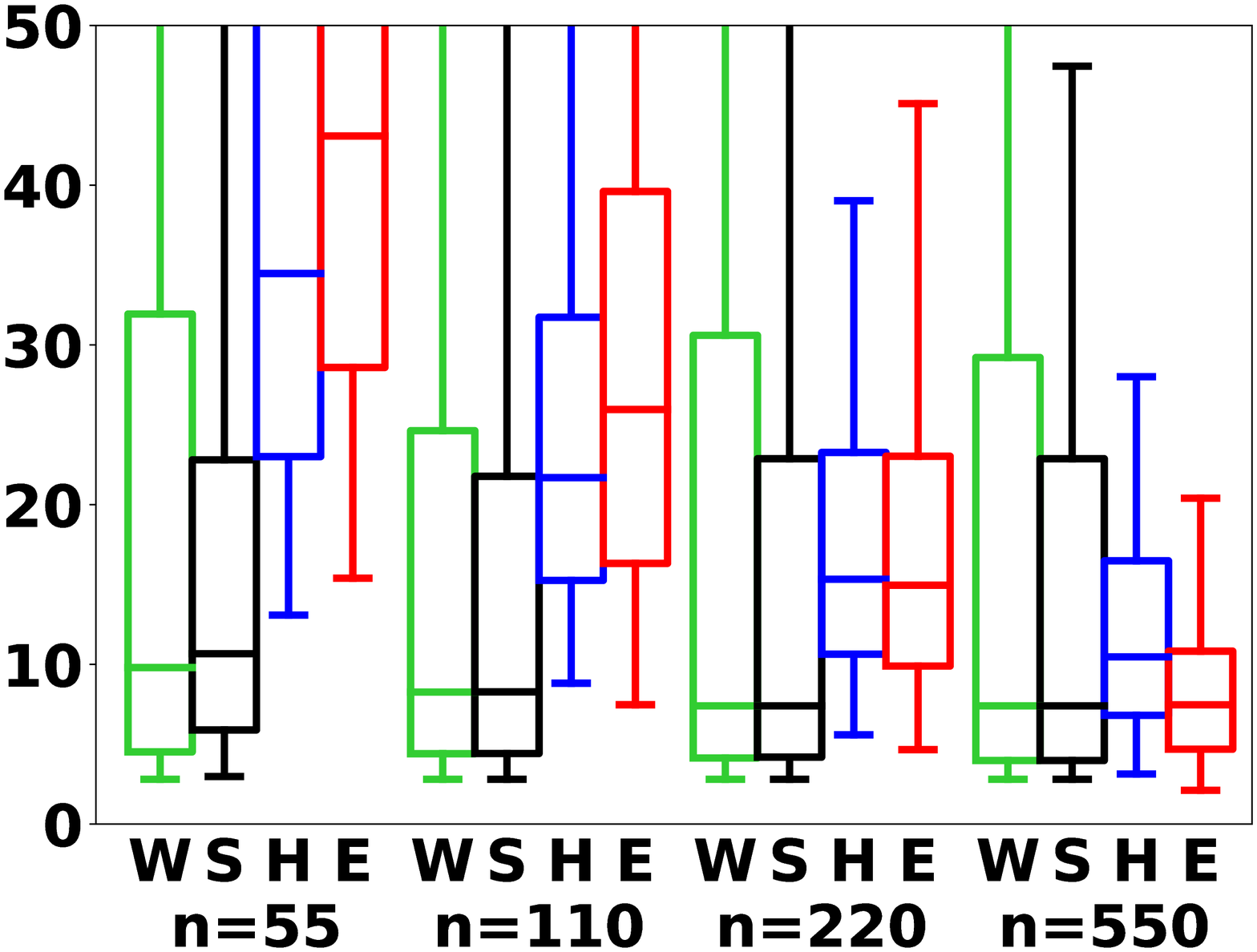}
    \end{subfigure}%
    ~ 
    \begin{subfigure}[t]{0.33\textwidth}
        \centering
        \includegraphics[width=\textwidth]{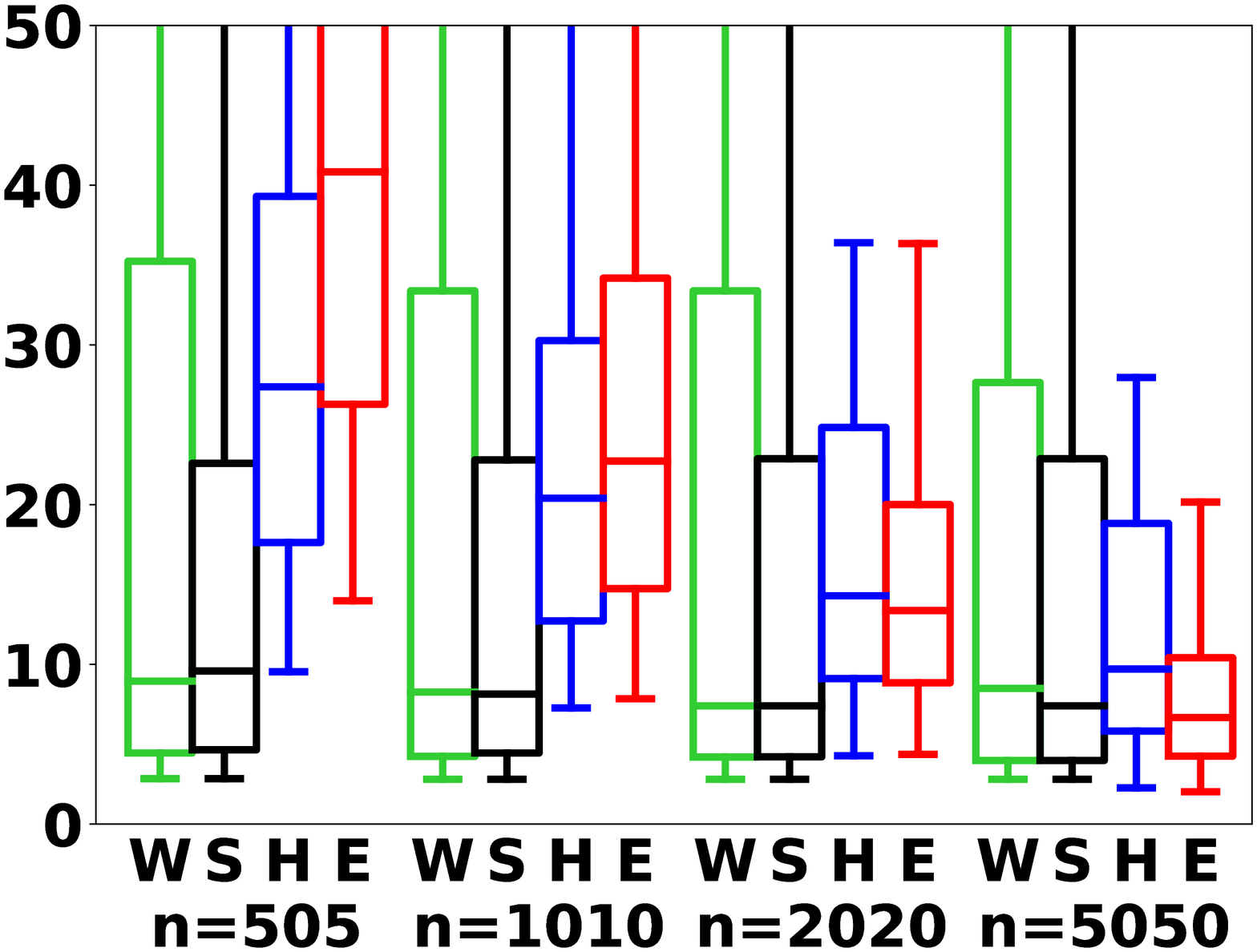}
    \end{subfigure}
    \caption{{\textbf{(Comparison of the different ER-DRO formulations)} Comparison of the \tE+OLS approach (\tE) with the covariate-independent tuning of the \tW+OLS radius (\tW), the \tS+OLS radius (\tS), and the \tH+OLS radius (\tH), all tuned using Algorithm~\ref{alg:ersaasameradius}. Top row: $\theta = 1$. Middle row: $\theta = 0.5$. Bottom row: $\theta = 2$. Left column: $d_x = 3$. Middle column: $d_x = 10$. Right column: $d_x = 100$.}}
    \label{fig:comp_ols_dros_indep}
\end{figure}

\noindent
\textbf{Comparison of the different ER-DRO formulations.}
Figure~\ref{fig:comp_ols_dros_indep} compares the performance of the \tE+OLS formulation with the \tW+OLS, \tS+OLS, and \tH+OLS formulations when the radius~$\zeta_n$ of the ambiguity sets of all three ER-DRO formulations are specified using Algorithm~\ref{alg:ersaasameradius}.
{We vary the model degree $\theta$, the covariate dimension among $d_x \in \{3,10,100\}$, and the sample size among $n \in \{5(d_x + 1),10(d_x + 1),20(d_x + 1),50(d_x + 1)\}$ in these experiments.
Note that OLS regression estimates $d_x + 1$ parameters for each $j \in \mathcal{J}$ even though the true model only contains $\abs{\mathcal{L}^*} + 1 = 4$ nonzero parameters for each $j$.}
The performance of the \tS+OLS formulation is similar to that of the \tW+OLS formulation {with the \tS+OLS formulation performing slightly better when $\theta = 2$}.
The \tH+OLS formulation does not significantly improve over the \tE+OLS formulation {for smaller covariate dimensions but provides an intermediate level of improvement relative to the \tW+OLS and \tS+OLS approaches for larger covariate dimensions}.
Recall that the Wasserstein (\tW) and sample robust optimization (\tS) ambiguity sets allow distributions with support different from $\hat{P}^{ER}_n(x)$, whereas the Hellinger (\tH) ambiguity set only considers distributions with the same support as $\hat{P}^{ER}_n(x)$. 
Because the data~$\D_n$ comes from a continuous distribution and $\hat{P}^{ER}_n(x)$ may be a crude estimate of $P^*_n(x)$ for small $n$, this highlights the advantage of DRO formulations that go beyond the estimated empirical distribution $\hat{P}^{ER}_n(x)$. 
From this point on, we do not include any more results for the $\tS$ formulations because they are similar to those of the $\tW$ formulations.
We also do not consider the $\tH$ formulations further.

{The optimality gaps of the \tE+OLS and the ER-DRO+OLS estimators are not guaranteed to converge to zero whenever $\theta \neq 1$ due to model misspecification; however, the \tW+OLS and \tS+OLS formulations are able to effectively mitigate the impact of model misspecification, especially for $\theta = 0.5$, in this case study.
Interestingly, choosing the radius~$\zeta_n$ of the ambiguity sets of all three ER-DRO formulations using Algorithm~\ref{alg:ersaasameradius} performs worse than the \tE+OLS formulation when $\theta = 2$ and $n$ is large.
This indicates that Algorithm~\ref{alg:ersaasameradius} may not yield a good choice of the radius~$\zeta_n$ for large sample sizes when model misspecification is significant.
From Figure~\ref{fig:comp_ols_wass_indep}, we see that Algorithm~\ref{alg:ersaadiffradius} provides a better-performing alternative to Algorithm~\ref{alg:ersaadiffradius} 
in this regime.}
{Note that relatively large optimality gaps for DRO at small sample sizes $n$ is to be expected for this case study because of the risk-averse nature of the portfolio objective.}

\begin{figure}[t!]
    \centering
    \begin{subfigure}[t]{0.33\textwidth}
        \centering
        \includegraphics[width=\textwidth]{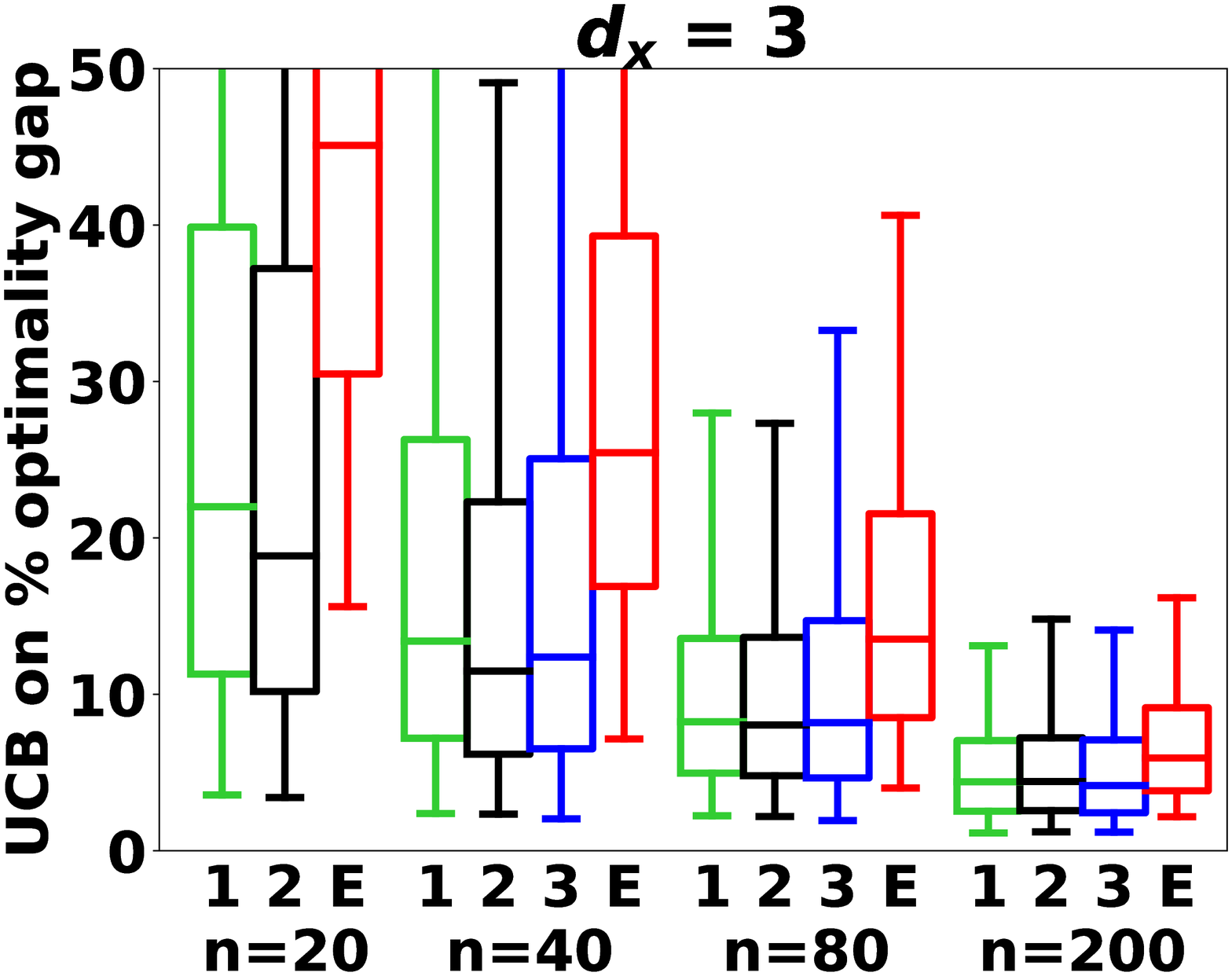}
    \end{subfigure}%
    ~ 
    \begin{subfigure}[t]{0.33\textwidth}
        \centering
        \includegraphics[width=\textwidth]{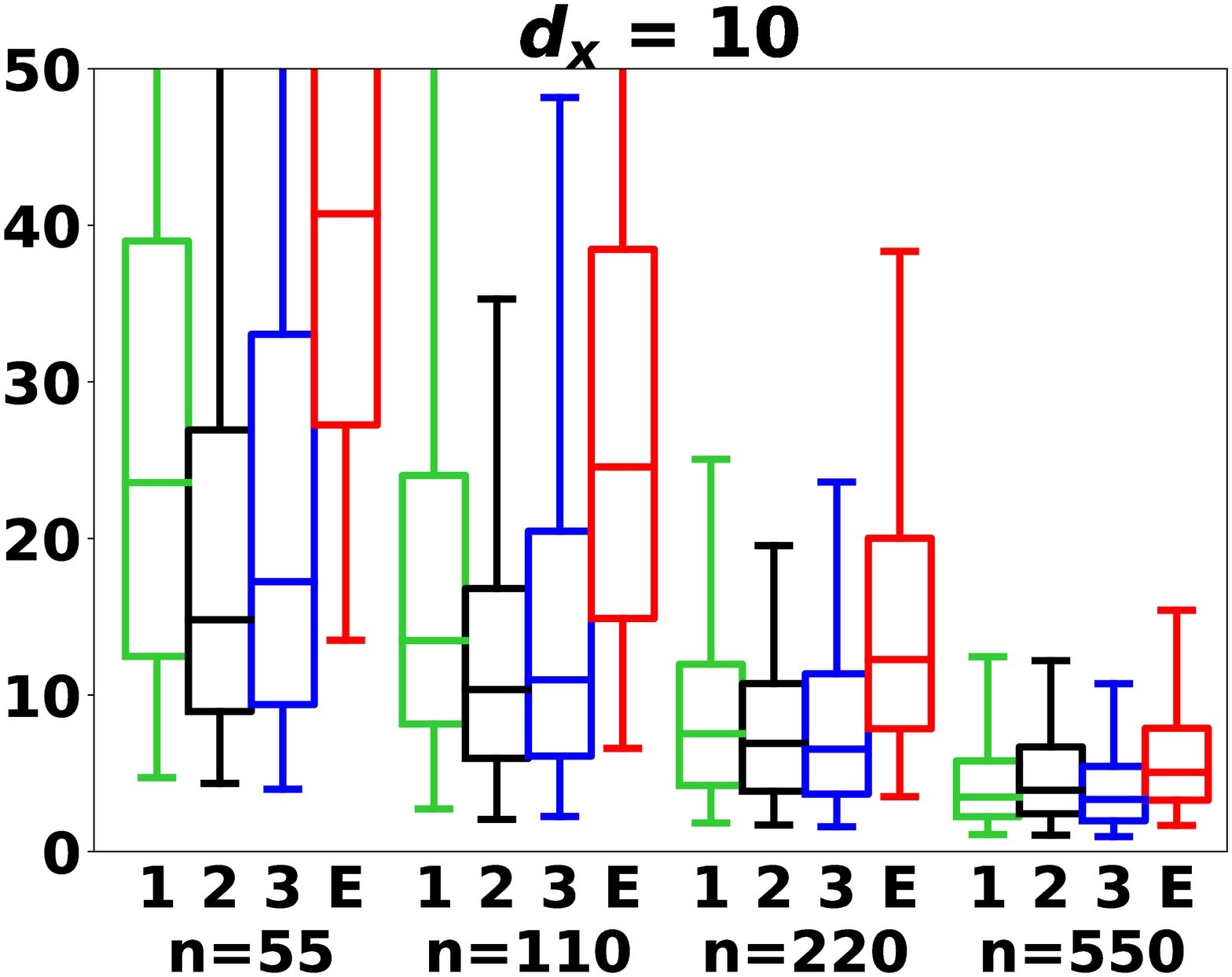}
    \end{subfigure}%
    ~ 
    \begin{subfigure}[t]{0.33\textwidth}
        \centering
        \includegraphics[width=\textwidth]{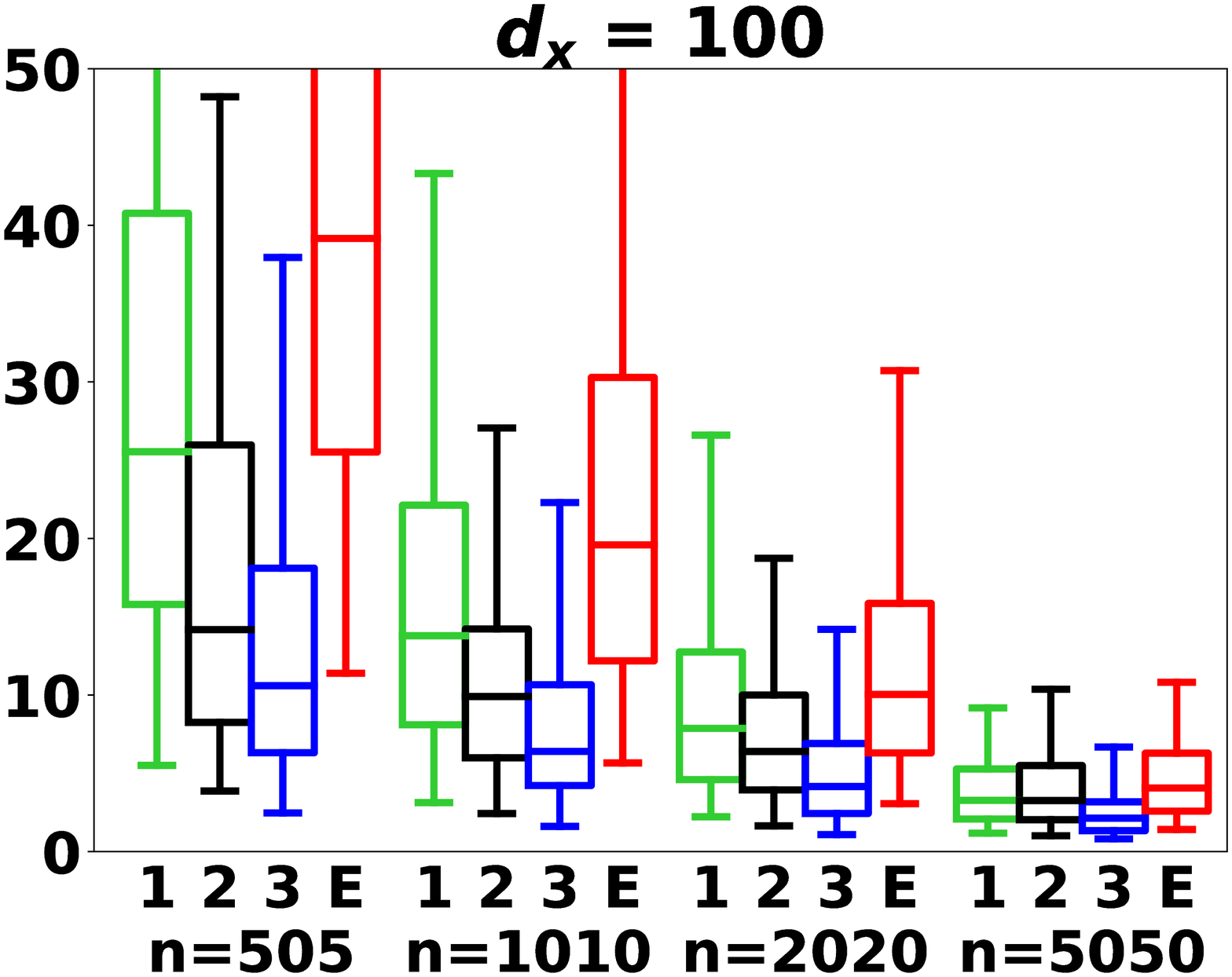}
    \end{subfigure}\\
    \begin{subfigure}[t]{0.33\textwidth}
        \centering
        \includegraphics[width=\textwidth]{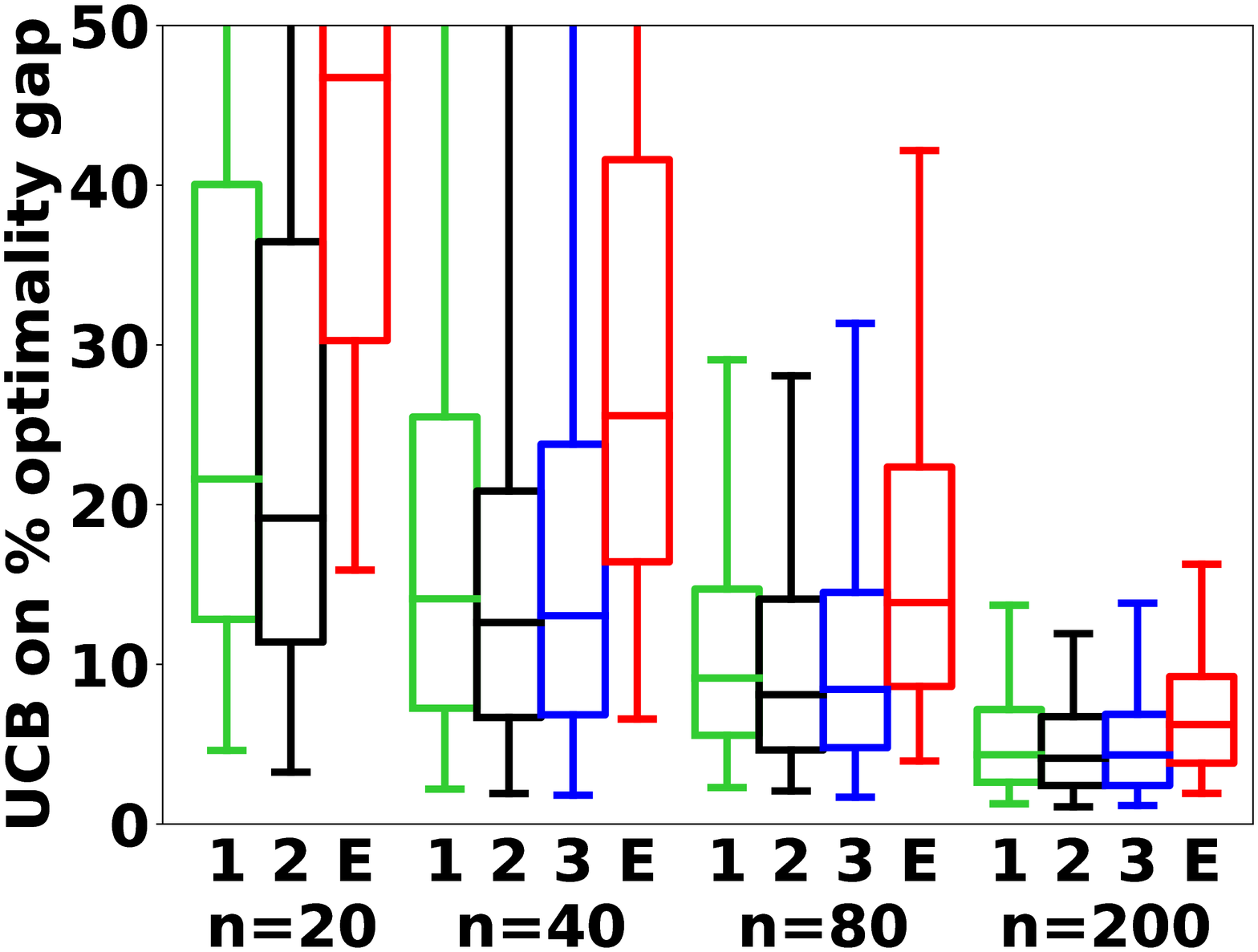}
    \end{subfigure}%
    ~ 
    \begin{subfigure}[t]{0.33\textwidth}
        \centering
        \includegraphics[width=\textwidth]{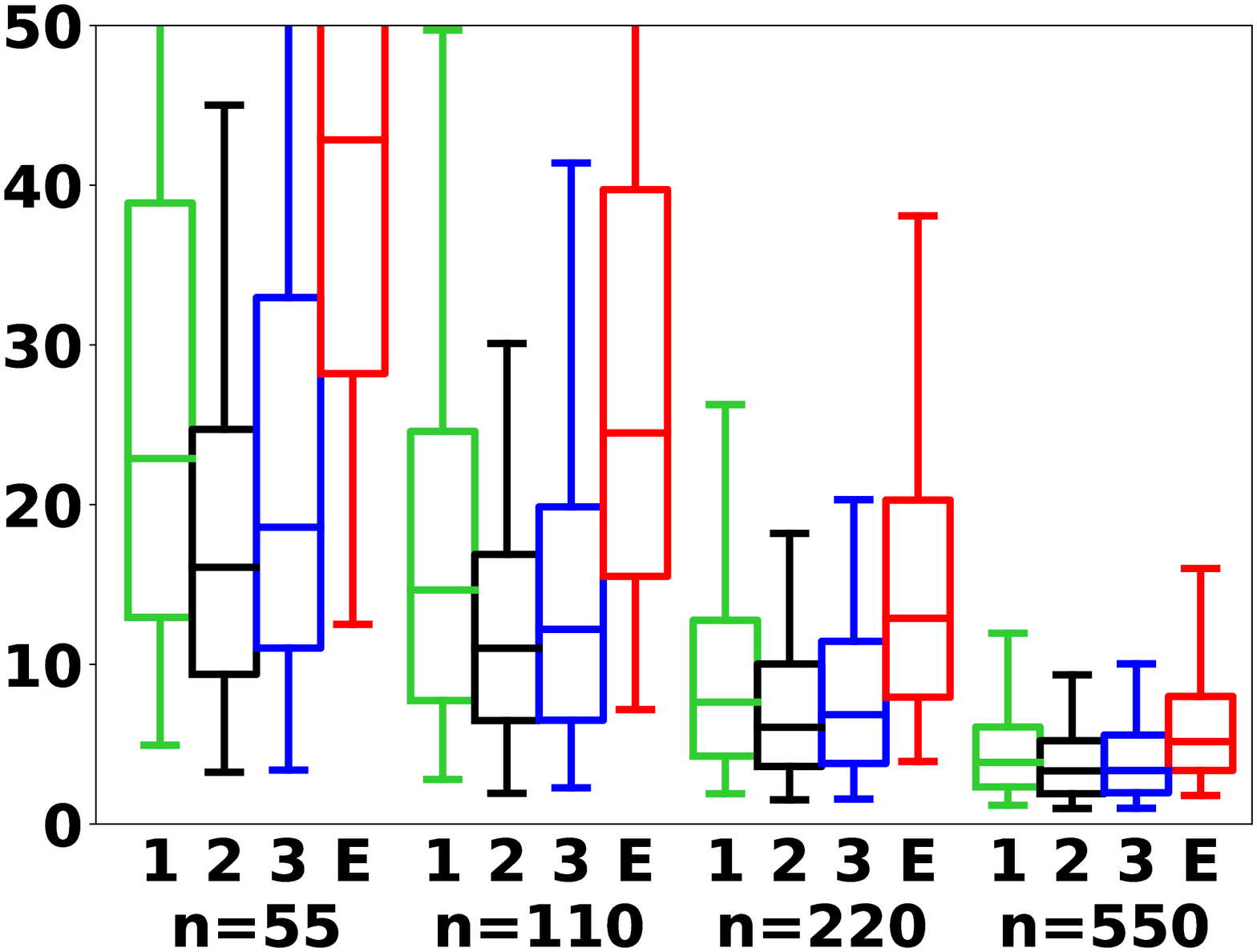}
    \end{subfigure}%
    ~ 
    \begin{subfigure}[t]{0.33\textwidth}
        \centering
        \includegraphics[width=\textwidth]{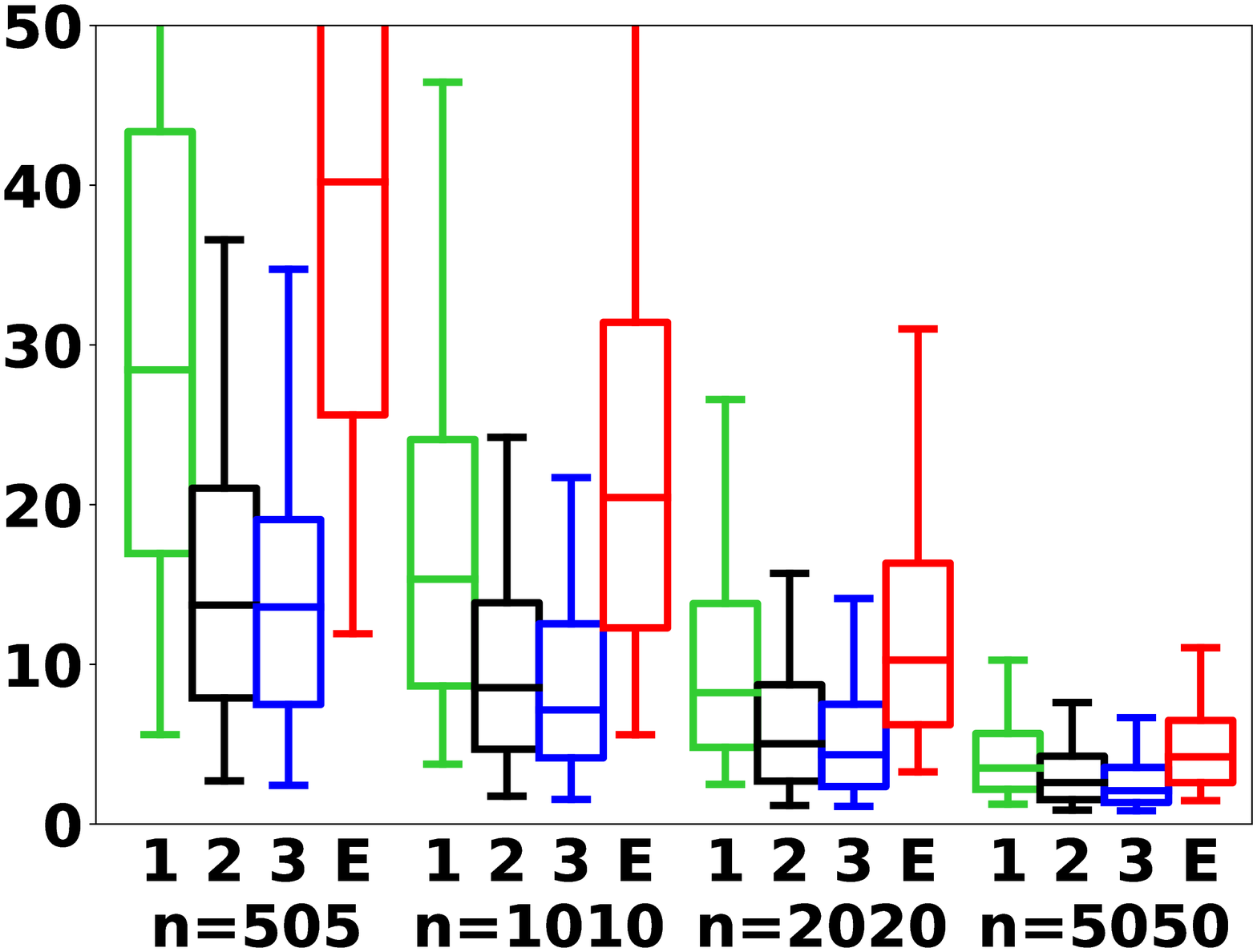}
    \end{subfigure}\\
    \begin{subfigure}[t]{0.33\textwidth}
        \centering
        \includegraphics[width=\textwidth]{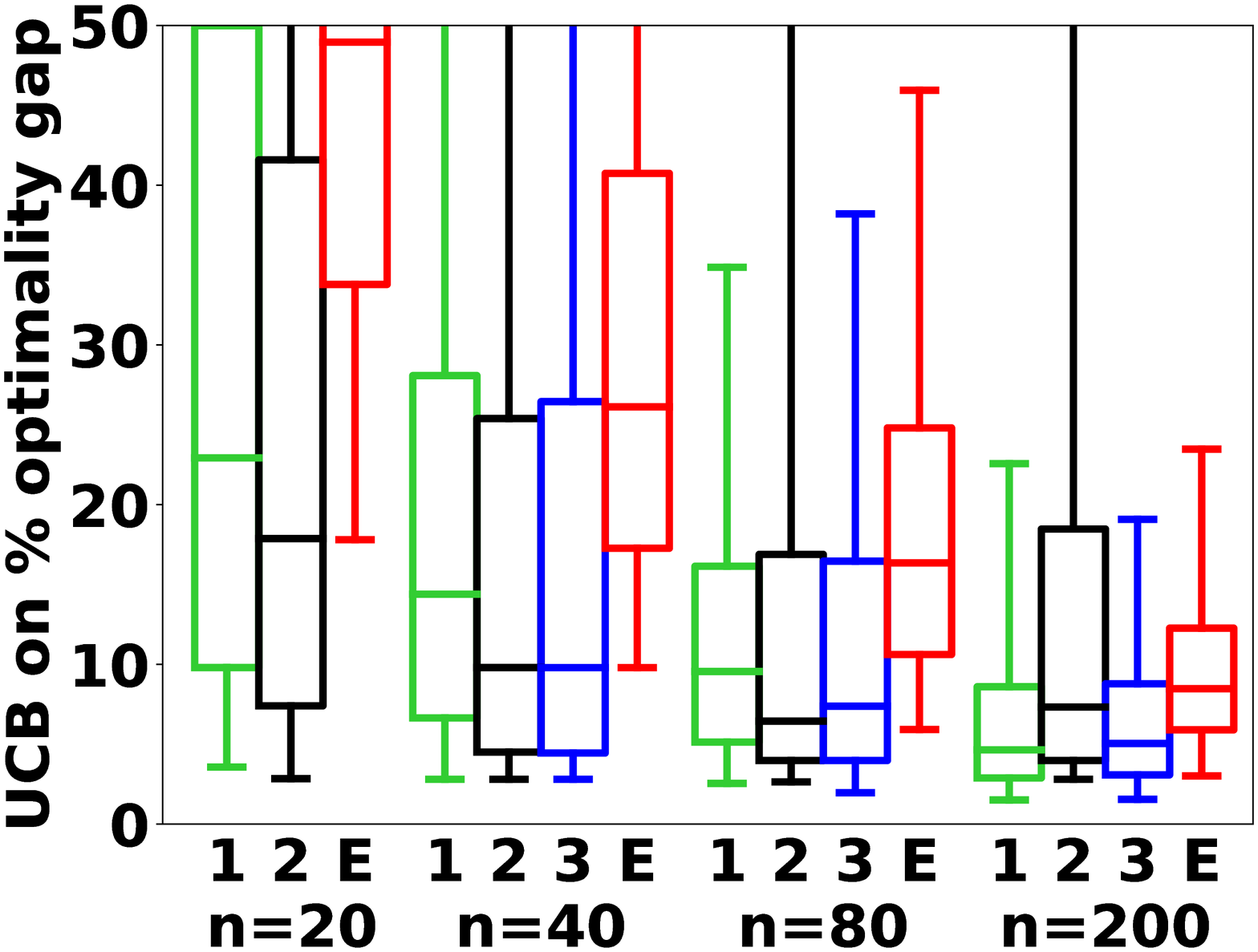}
    \end{subfigure}%
    ~ 
    \begin{subfigure}[t]{0.33\textwidth}
        \centering
        \includegraphics[width=\textwidth]{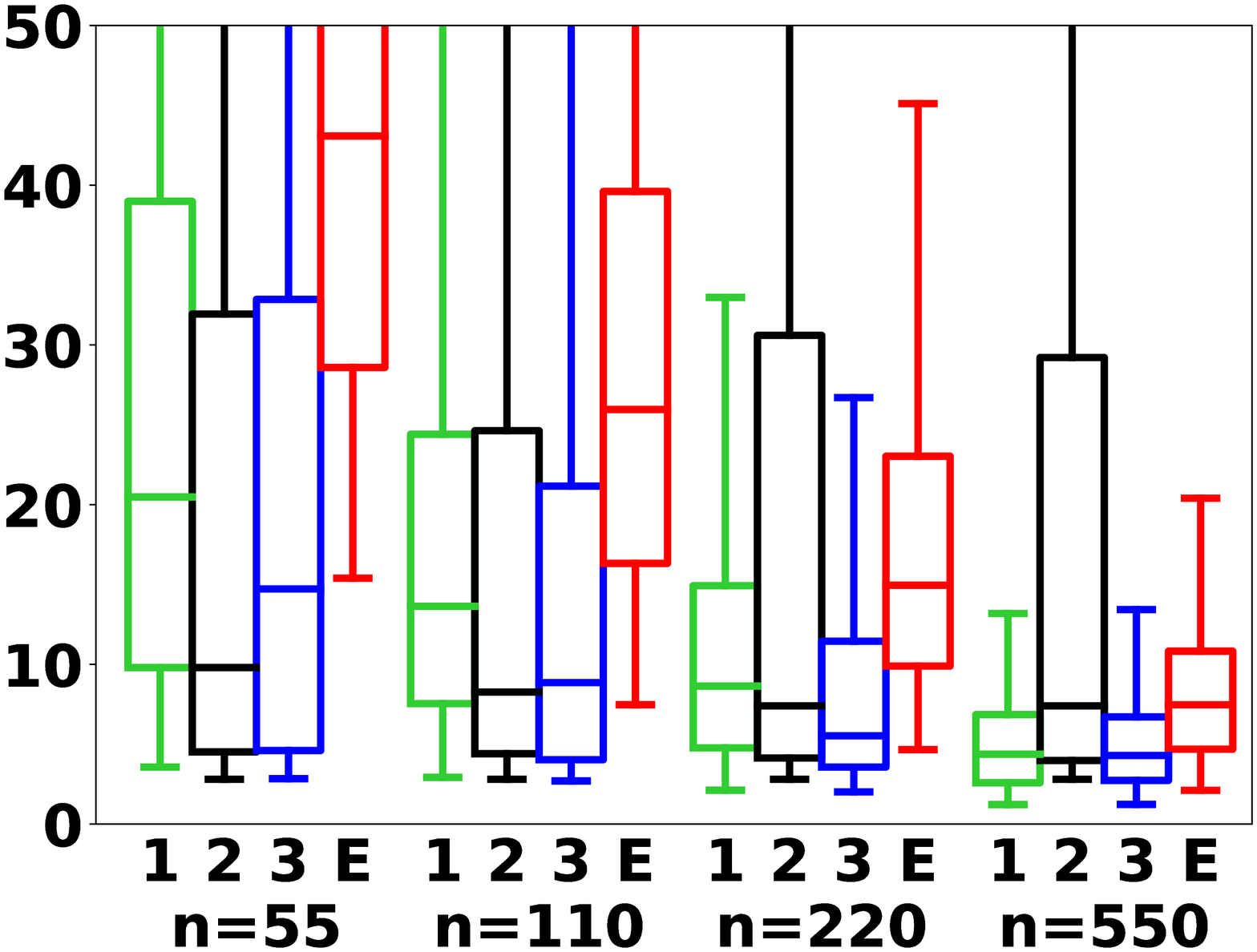}
    \end{subfigure}%
    ~ 
    \begin{subfigure}[t]{0.33\textwidth}
        \centering
        \includegraphics[width=\textwidth]{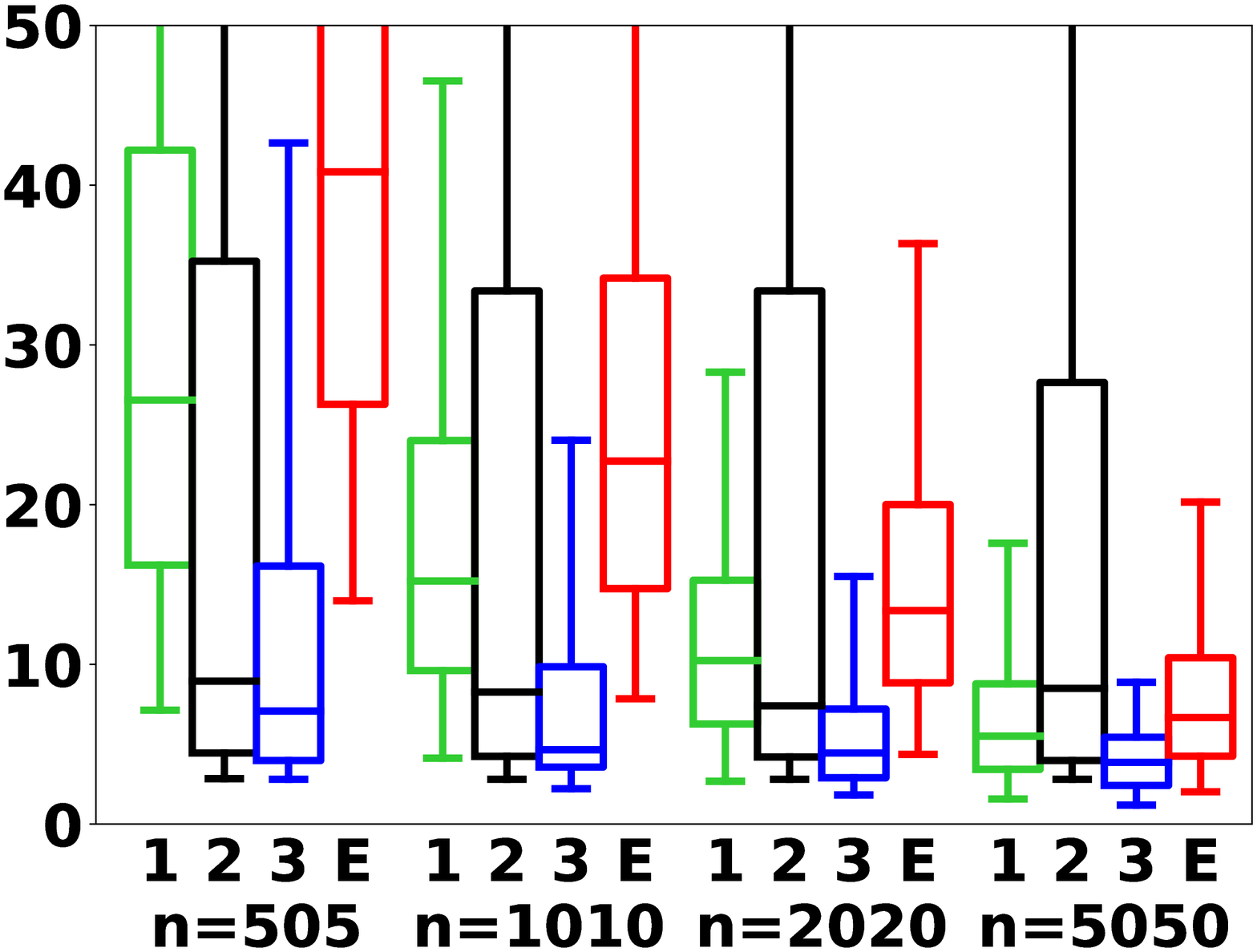}
    \end{subfigure}
    \caption{{\textbf{(Wasserstein-DRO with OLS regression)} Comparison of the \tE+OLS approach (\tE) with the tuning of the \tW+OLS radius using Algorithms~\ref{alg:naivesaaradius} (\tone),~\ref{alg:ersaasameradius} (\ttwo), and~\ref{alg:ersaadiffradius} (\tthree). Top row: $\theta = 1$. Middle row: $\theta = 0.5$. Bottom row: $\theta = 2$. Left column: $d_x = 3$. Middle column: $d_x = 10$. Right column: $d_x = 100$.}}
    \label{fig:comp_ols_wass_indep}
\end{figure}

\vspace*{0.1in}
\noindent
\textbf{Evaluation of the different radius selection strategies\footnote{We do not include results for Algorithm~\ref{alg:ersaadiffradius} with $n = 20$ because it requires at least $30$ samples (line~7 of Algorithm~\ref{alg:ersaadiffradius} needs at least $6$ points for Lasso regression with $5$-fold CV).}.}
Figure~\ref{fig:comp_ols_wass_indep} compares the performance of the \tE+OLS formulation with the \tW+OLS formulation {over the same range of parameter values as in Figure~\ref{fig:comp_ols_dros_indep}}.
The radius~$\zeta_n(x)$ of the ambiguity set for the \tW+OLS formulation is determined using Algorithms~\ref{alg:naivesaaradius} and~\ref{alg:ersaasameradius} {that pick covariate-independent radii and Algorithm~\ref{alg:ersaadiffradius} that picks a covariate-dependent radius}.
{Note that all three strategies for choosing the radius~$\zeta_n(x)$ are based on the same underlying Wasserstein ER-DRO formulation with OLS regression.}
In this small sample regime, the \tW+OLS formulations perform better than the \tE+OLS formulation across {almost} all cases---{the only exception again is when Algorithm~\ref{alg:ersaasameradius} is used for $\theta = 2$ and $n$ large.}
The radius specified by Algorithm~\ref{alg:ersaasameradius} performs better than the radius specified using Algorithm~\ref{alg:naivesaaradius} {in all other cases}, with the difference being most significant for larger covariate dimensions {and smaller sample sizes}.
{The radius specified by Algorithm~\ref{alg:ersaasameradius} performs better than the radius specified using Algorithm~\ref{alg:ersaadiffradius} for smaller sample sizes and covariate dimensions, and the converse holds for larger covariate dimensions and sample sizes}.
{When $\theta \neq 1$, the \tE+OLS and \tW+OLS approaches are not guaranteed to yield consistent estimators because the regression model is misspecified; however, Figure~\ref{fig:comp_ols_wass_indep} shows that the \tW+OLS formulation with the radius~$\zeta_n$ specified by Algorithm~\ref{alg:ersaasameradius} is able to effectively mitigate the impact of model misspecification for $\theta = 0.5$ in this case study.
Similarly, \tW+OLS with Algorithm~\ref{alg:ersaadiffradius} is able to effectively mitigate the impact of model misspecification for both $\theta = 0.5$ and $\theta = 2$.}
Finally, as expected, the benefits of the ER-DRO formulations diminish with increasing sample size.
\\

\begin{figure}[t!]
    \centering
    \begin{subfigure}[t]{0.33\textwidth}
        \centering
        \includegraphics[width=\textwidth]{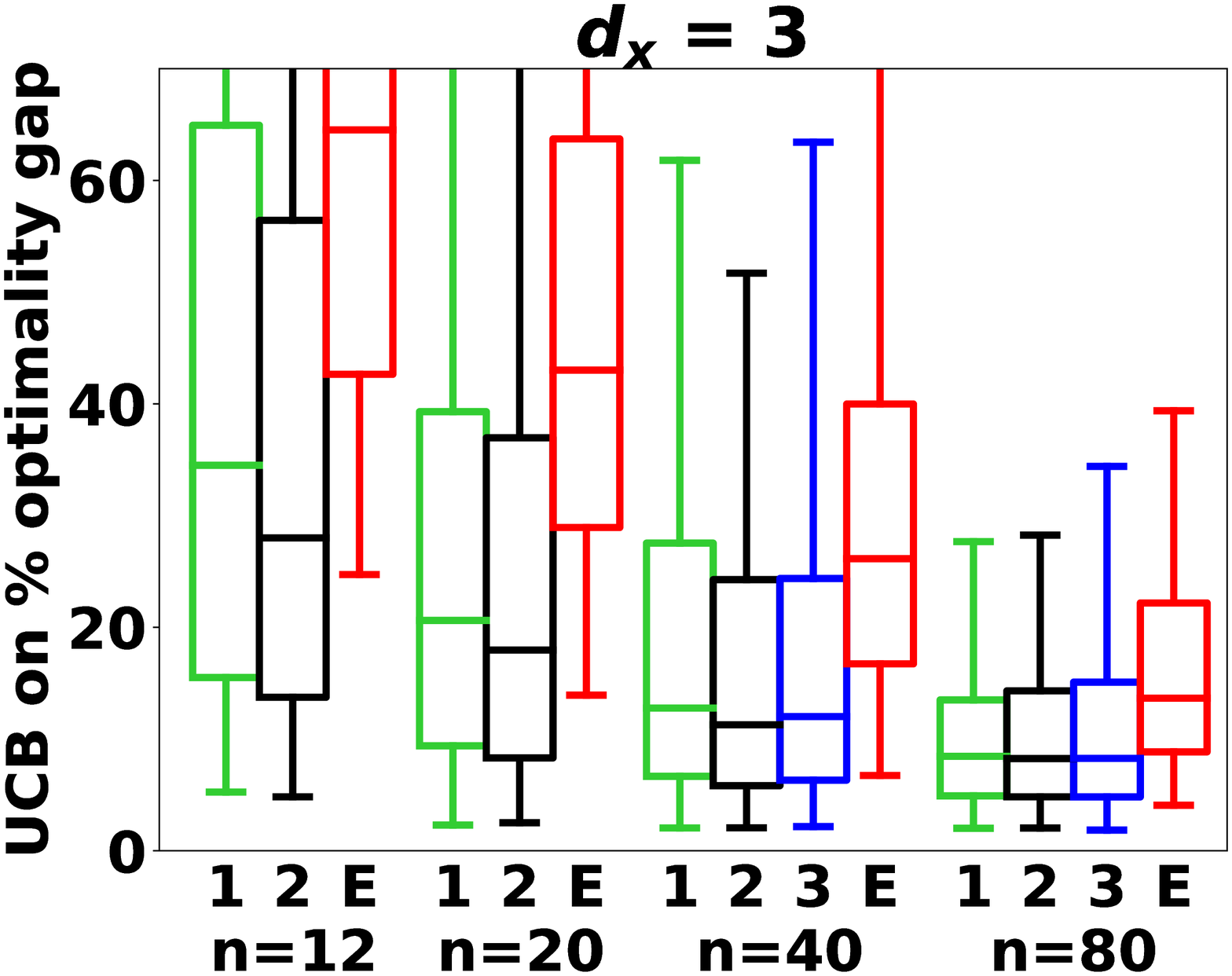}
    \end{subfigure}%
    ~ 
    \begin{subfigure}[t]{0.33\textwidth}
        \centering
        \includegraphics[width=\textwidth]{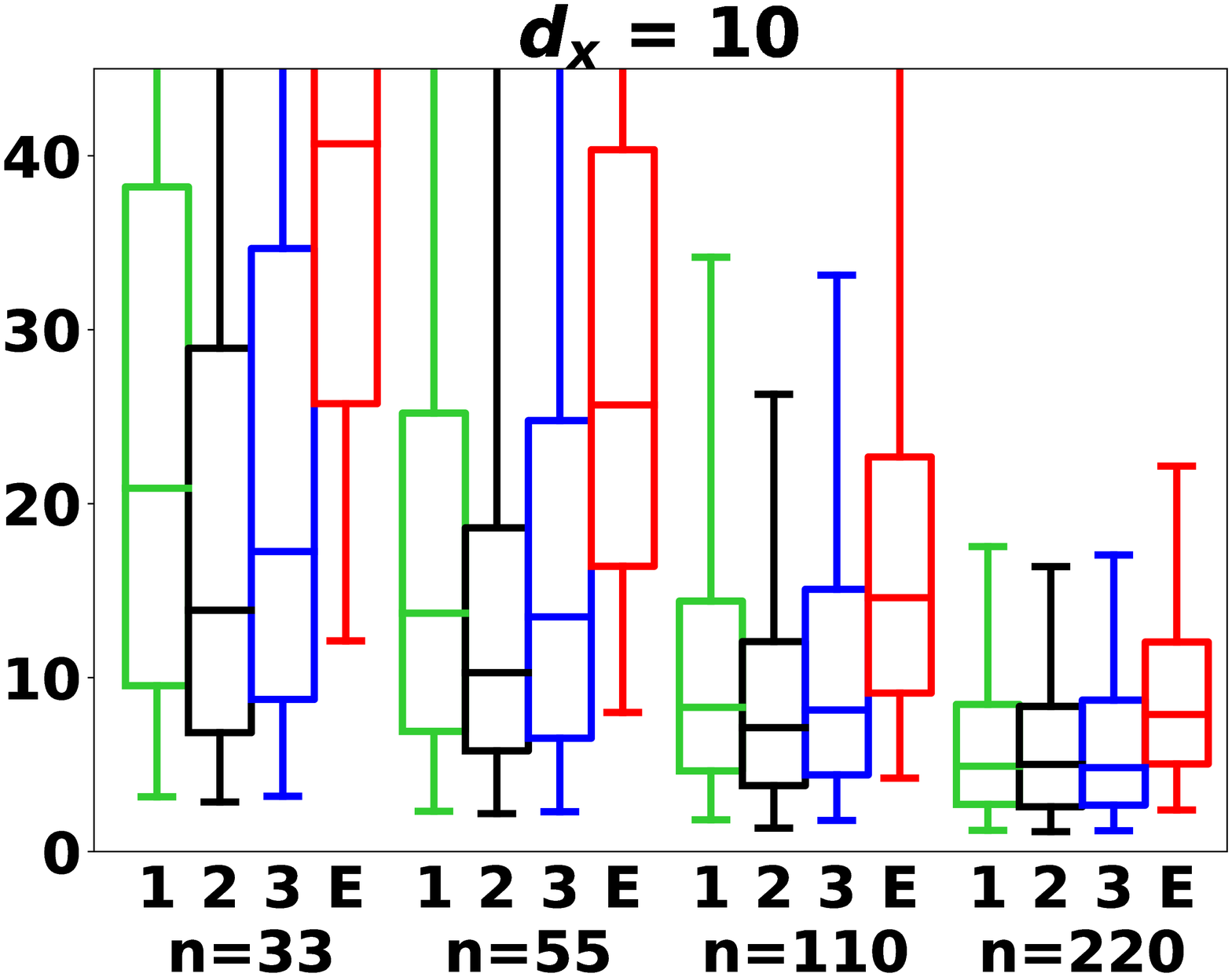}
    \end{subfigure}%
    ~ 
    \begin{subfigure}[t]{0.33\textwidth}
        \centering
        \includegraphics[width=\textwidth]{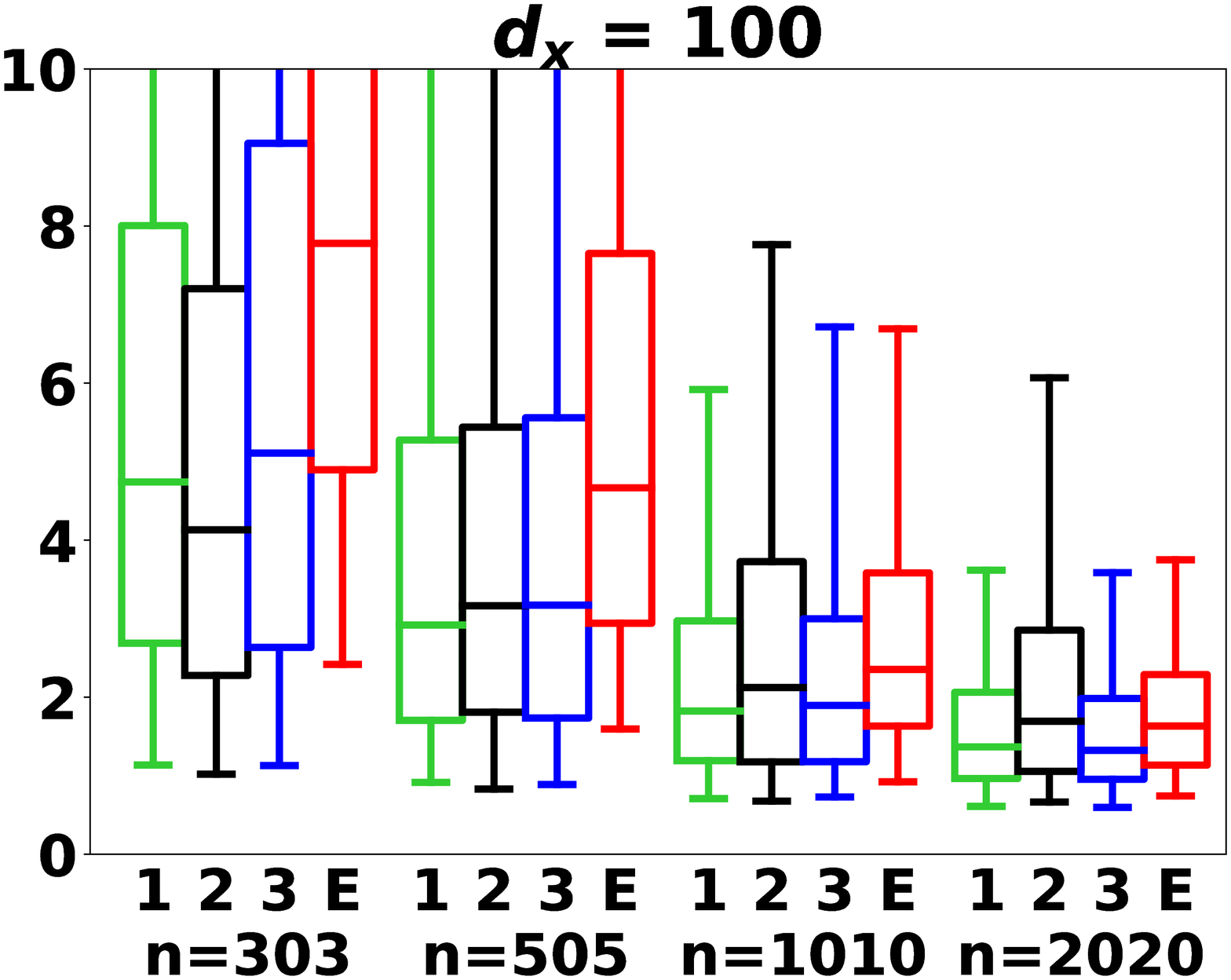}
    \end{subfigure}\\
    \begin{subfigure}[t]{0.33\textwidth}
        \centering
        \includegraphics[width=\textwidth]{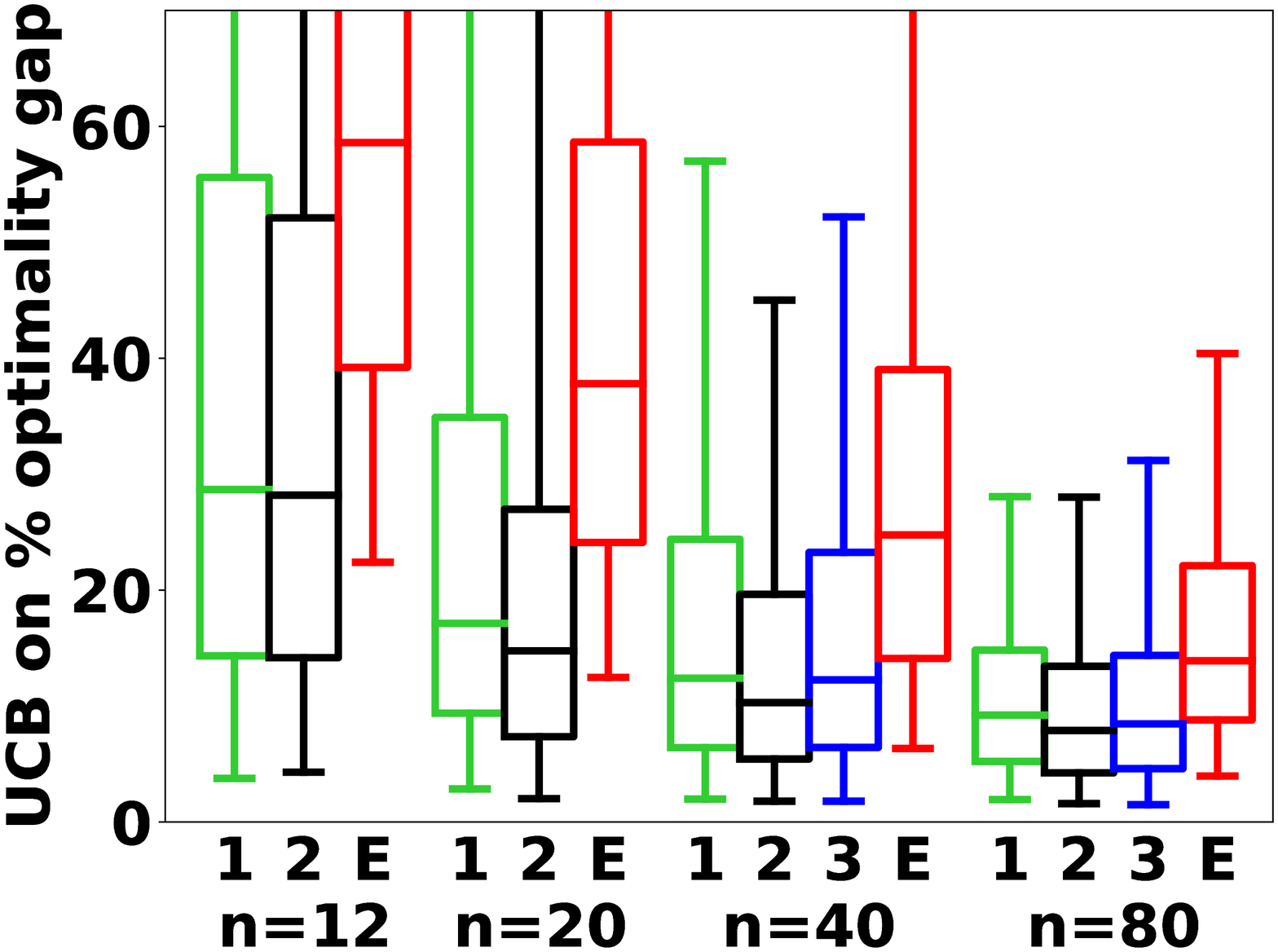}
    \end{subfigure}%
    ~ 
    \begin{subfigure}[t]{0.33\textwidth}
        \centering
        \includegraphics[width=\textwidth]{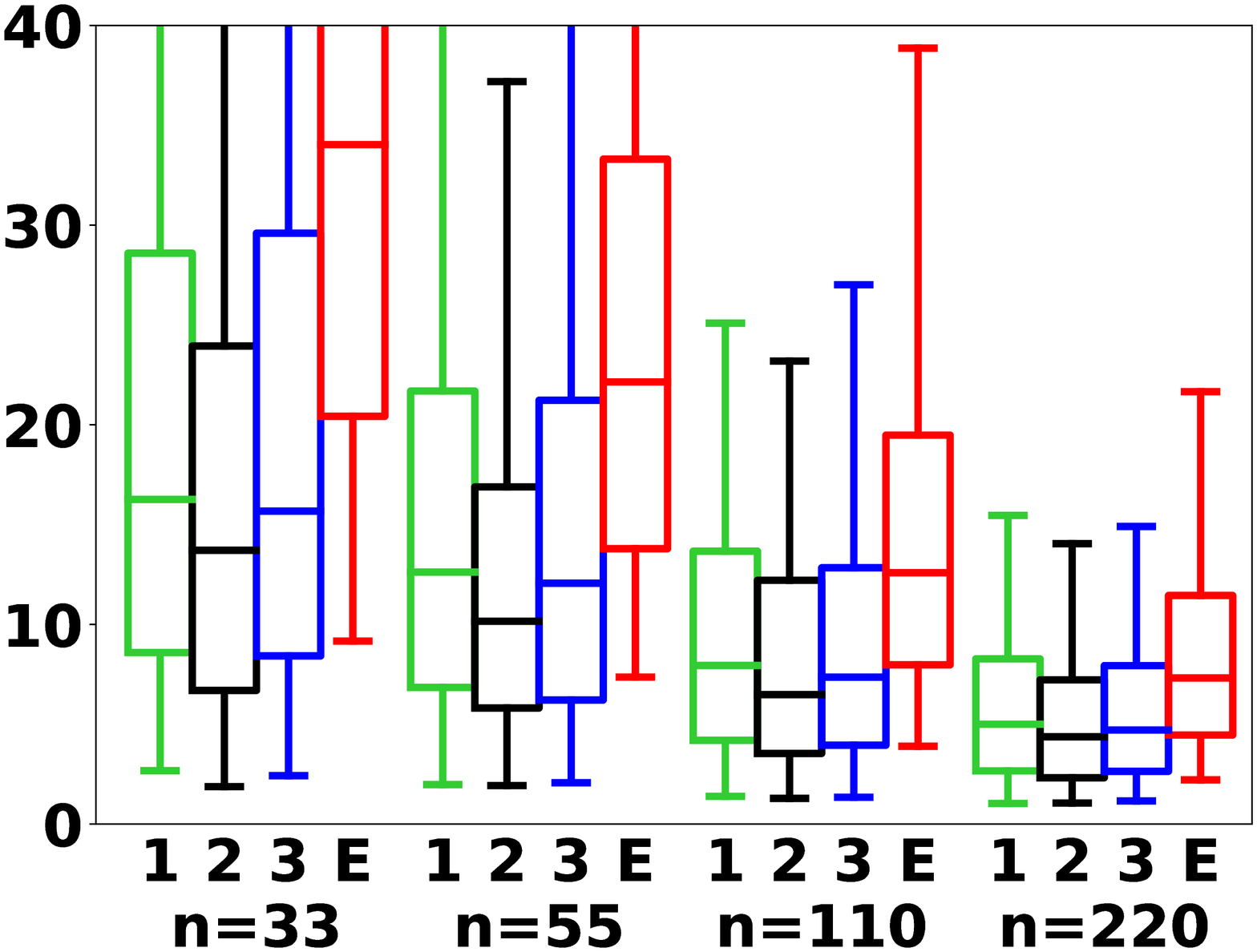}
    \end{subfigure}%
    ~ 
    \begin{subfigure}[t]{0.33\textwidth}
        \centering
        \includegraphics[width=\textwidth]{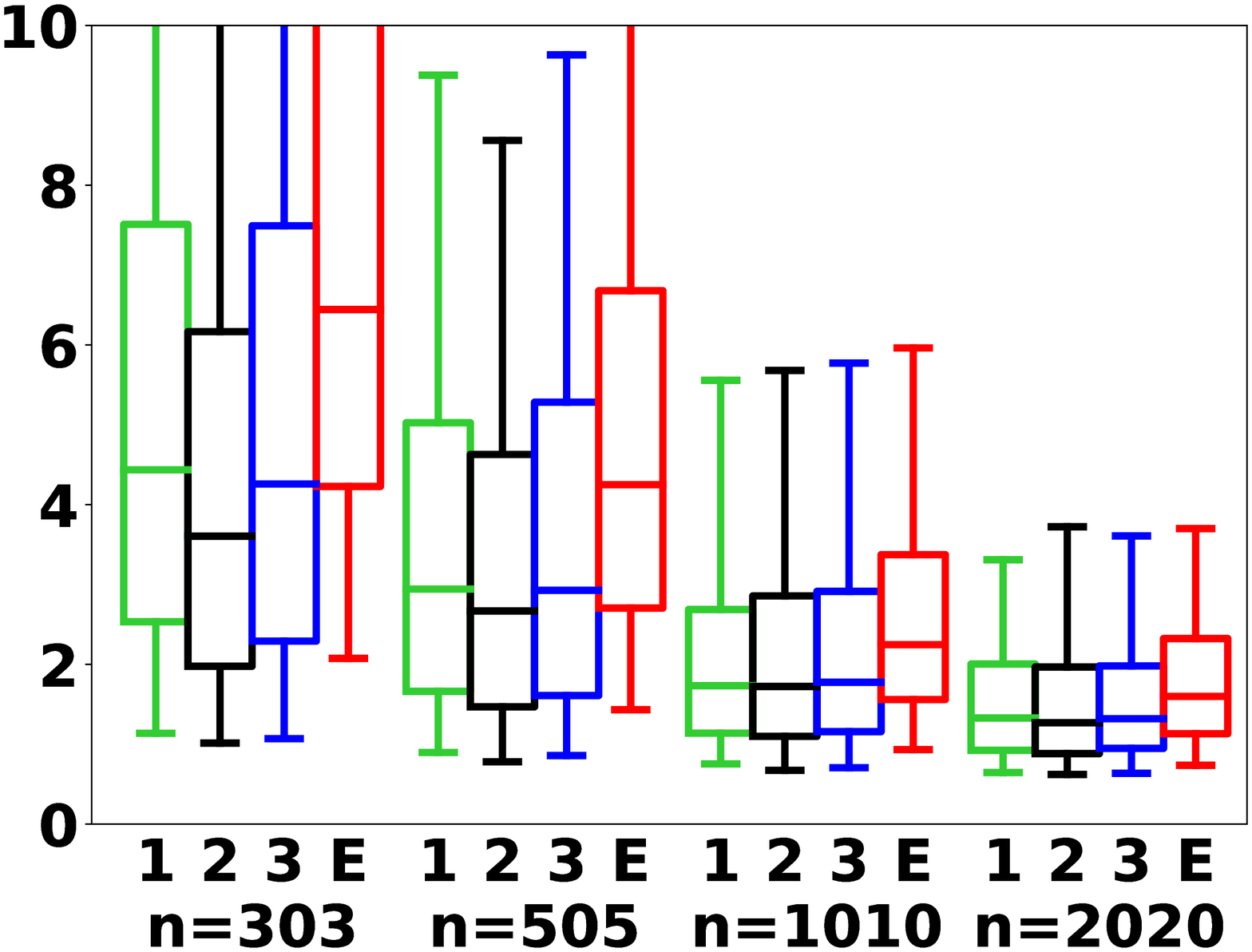}
    \end{subfigure}\\
    \begin{subfigure}[t]{0.33\textwidth}
        \centering
        \includegraphics[width=\textwidth]{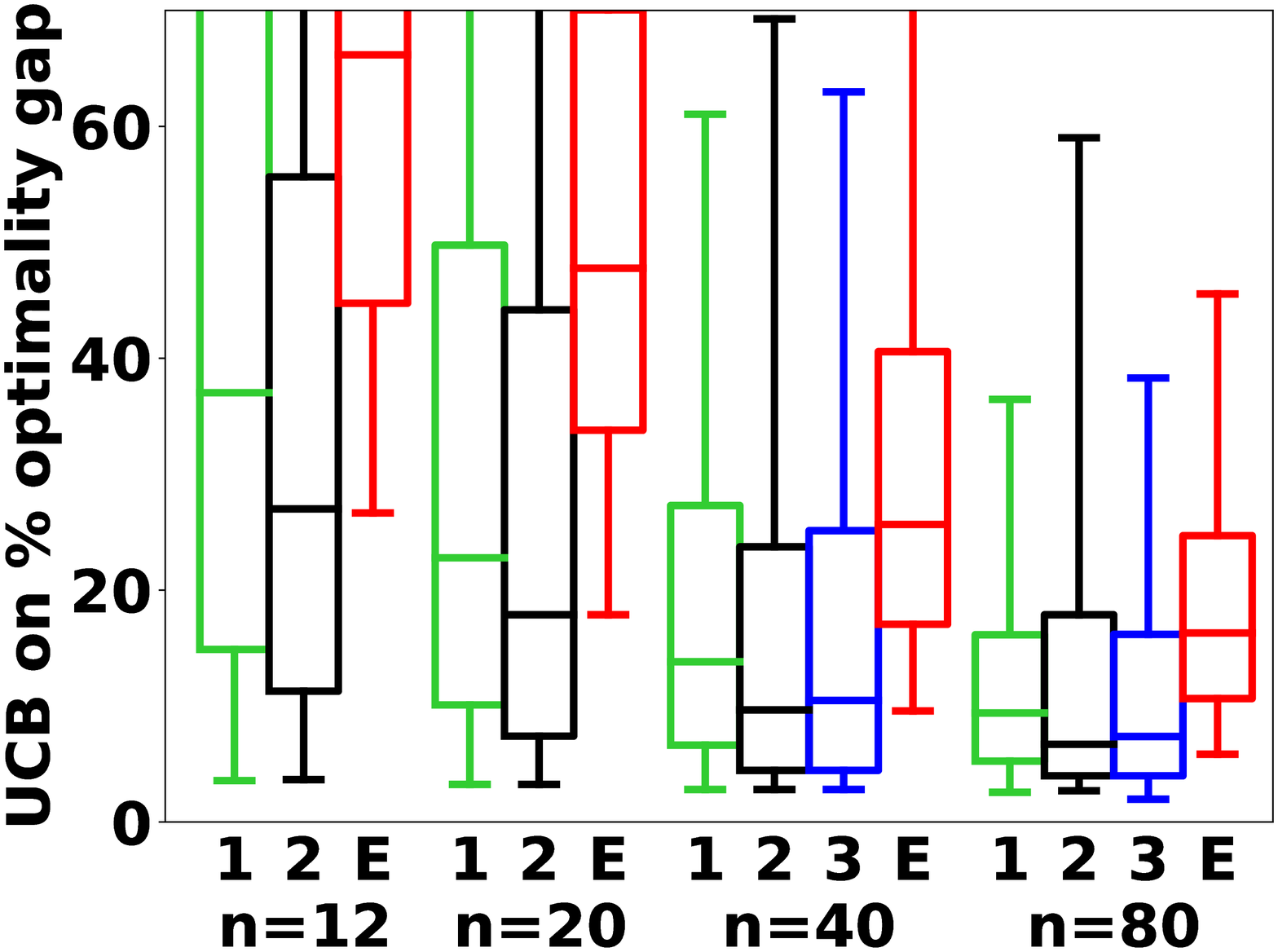}
    \end{subfigure}%
    ~ 
    \begin{subfigure}[t]{0.33\textwidth}
        \centering
        \includegraphics[width=\textwidth]{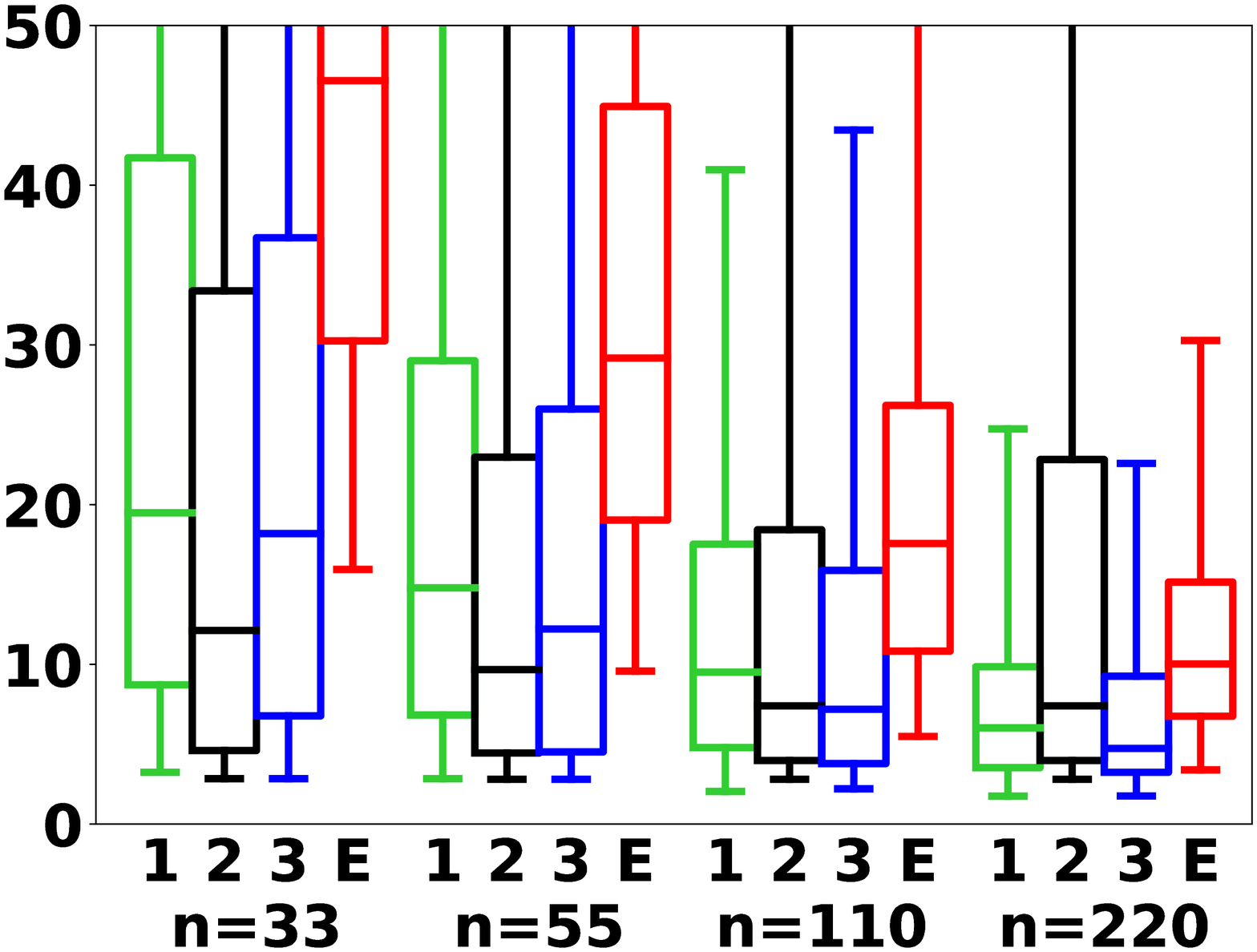}
    \end{subfigure}%
    ~ 
    \begin{subfigure}[t]{0.33\textwidth}
        \centering
        \includegraphics[width=\textwidth]{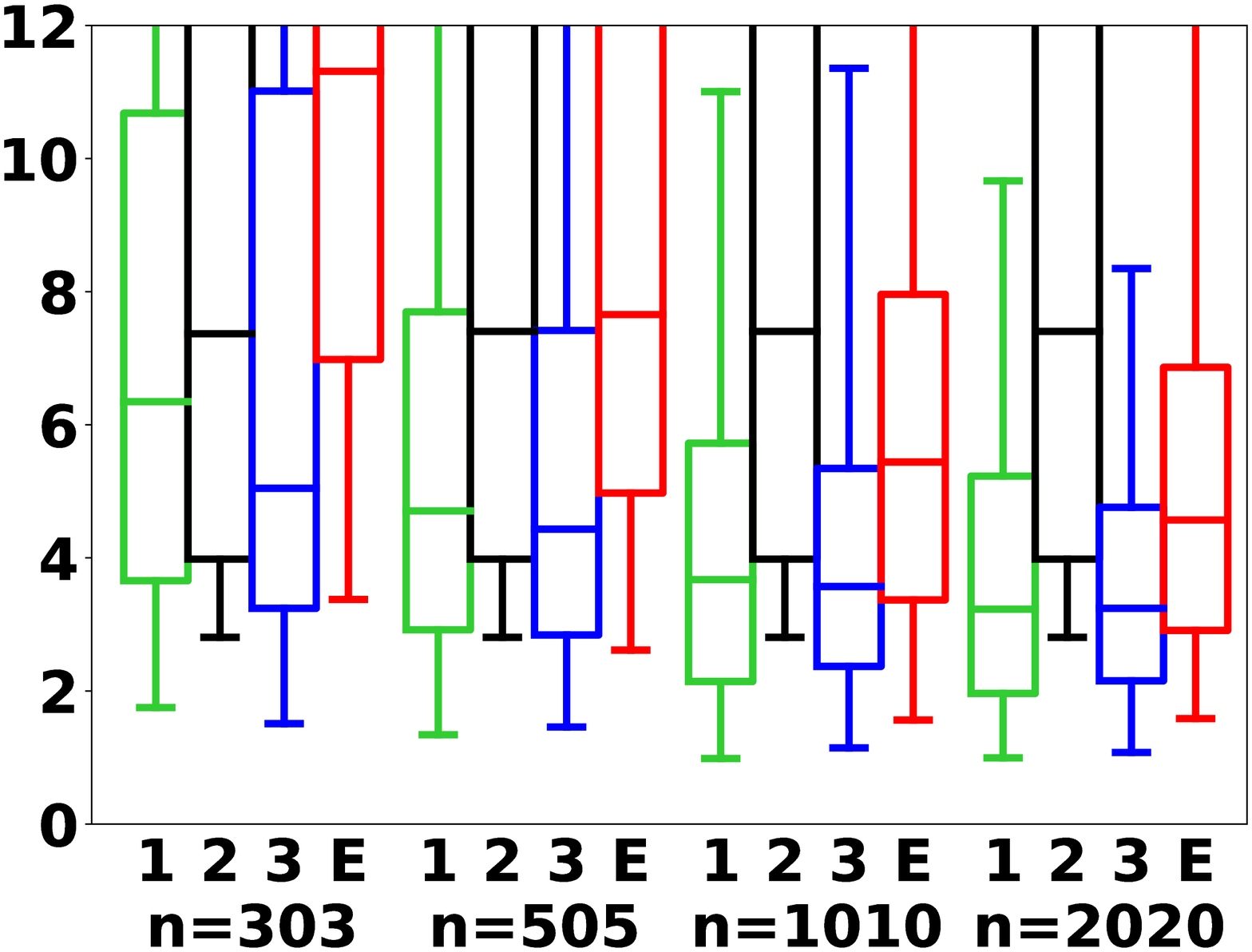}
    \end{subfigure}
    \caption{{\textbf{(Wasserstein-DRO with the Lasso)} Comparison of the \tE+Lasso approach (\texttt{E}) with the tuning of the \tW+Lasso radius using Algorithms~\ref{alg:naivesaaradius} (\tone),~\ref{alg:ersaasameradius} (\ttwo), and~\ref{alg:ersaadiffradius} (\tthree). Top row: $\theta = 1$. Middle row: $\theta = 0.5$. Bottom row: $\theta = 2$. Left column: $d_x = 3$. Middle column: $d_x = 10$. Right column: $d_x = 100$.}}
    \label{fig:comp_lasso_wass_dep}
\end{figure}

\noindent
\textbf{Impact of the prediction step.}
{We now highlight the modularity of our ER-DRO framework by exploring the potential benefits of regularization-based methods for estimating $f^*$.}
Figure~\ref{fig:comp_lasso_wass_dep} compares the performance of the \tE+Lasso approach with the \tW+Lasso approach, {whereas Figure~\ref{fig:comp_ridge_wass} in Appendix~\ref{app:computres} compares the performance of the \tE+Ridge approach with the \tW+Ridge approach.
The radius~$\zeta_n$ of the ambiguity set for these \tW\ formulations is determined using Algorithms~\ref{alg:naivesaaradius},~\ref{alg:ersaasameradius}, and~\ref{alg:ersaadiffradius}.}
We consider {$d_x \in \{3, 10, 100\}$}, vary the model degree $\theta$, and vary the sample size among {$n \in \{3(d_x + 1),5(d_x + 1),10(d_x + 1),20(d_x + 1)\}$} in these experiments\footnote{Once again, we only report results for Algorithm~\ref{alg:ersaadiffradius} when $n \geq 30$.}.
We consider smaller sample sizes because the Lasso {and Ridge regression} are most effective in this regime.
These experiments also illustrate the modularity of our residuals-based formulations.
{The \tW+Lasso and \tW+Ridge formulations outperform the \tE+Lasso and \tE+Ridge formulations, respectively, when the sample size $n$ is small relative to the covariate dimension $d_x$ (except when Algorithm~\ref{alg:ersaasameradius} is used for $\theta = 2$ and $n$ large)}.
Note that the $y$-axis limits are different {across the subplots in Figures~\ref{fig:comp_lasso_wass_dep} and~\ref{fig:comp_ridge_wass}}.
{Regularization as in the case of Lasso and Ridge regression reduces the variance of the regression estimate~$\hat{f}_n$ at the expense of an increase in its bias. 
Since the ambiguity sets of our residuals-based ER-DRO formulations do not \textit{explicitly} address the uncertainty in the coefficients of the regression estimate~$\hat{f}_n$, trading the variance of the coefficient estimates for some bias can result in ER-DRO estimators with better out-of-sample performance.}
\\

\begin{figure}
    \centering
    \begin{subfigure}[t]{0.33\textwidth}
        \centering
        \includegraphics[width=\textwidth]{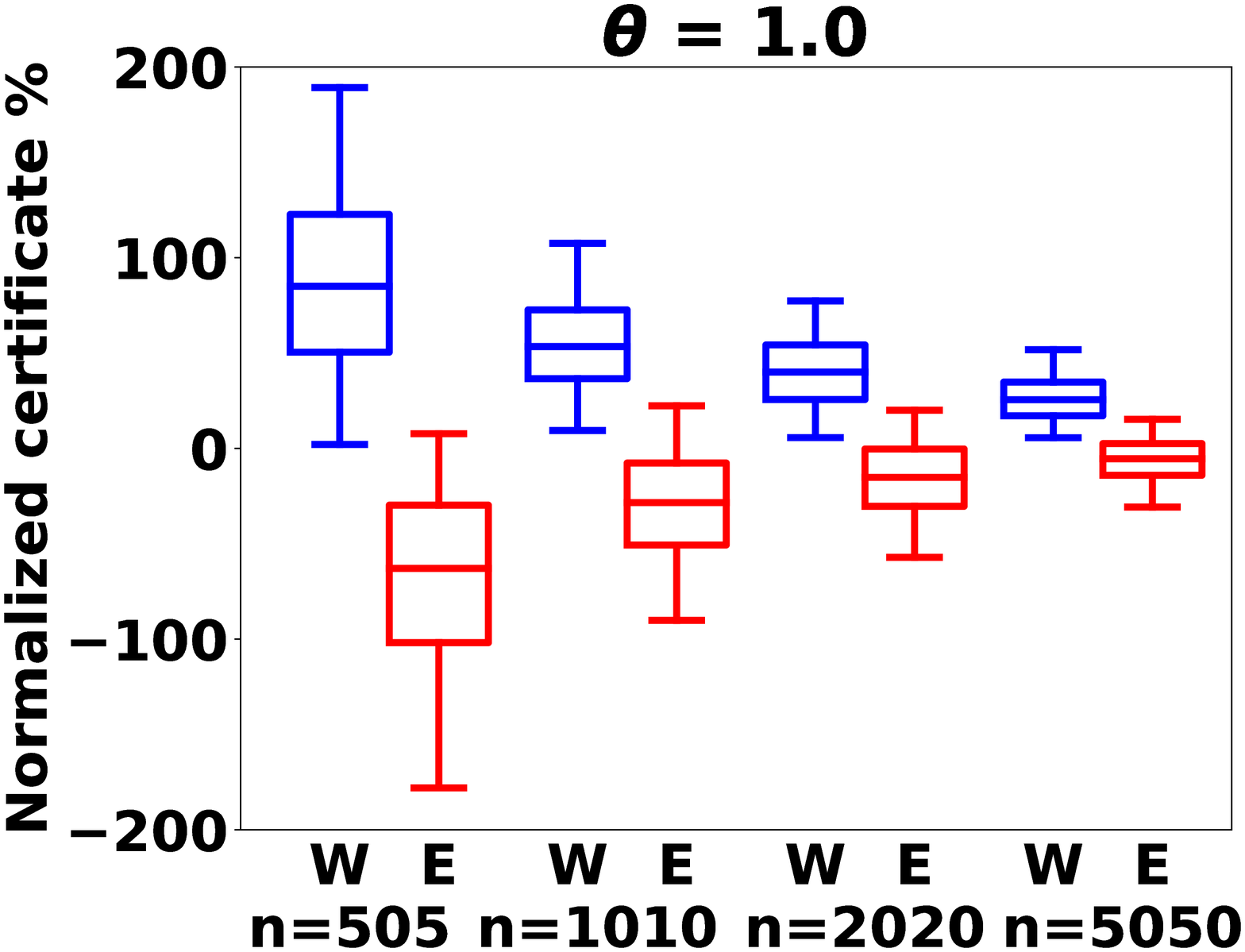}
    \end{subfigure}%
    ~ 
    \begin{subfigure}[t]{0.33\textwidth}
        \centering
        \includegraphics[width=\textwidth]{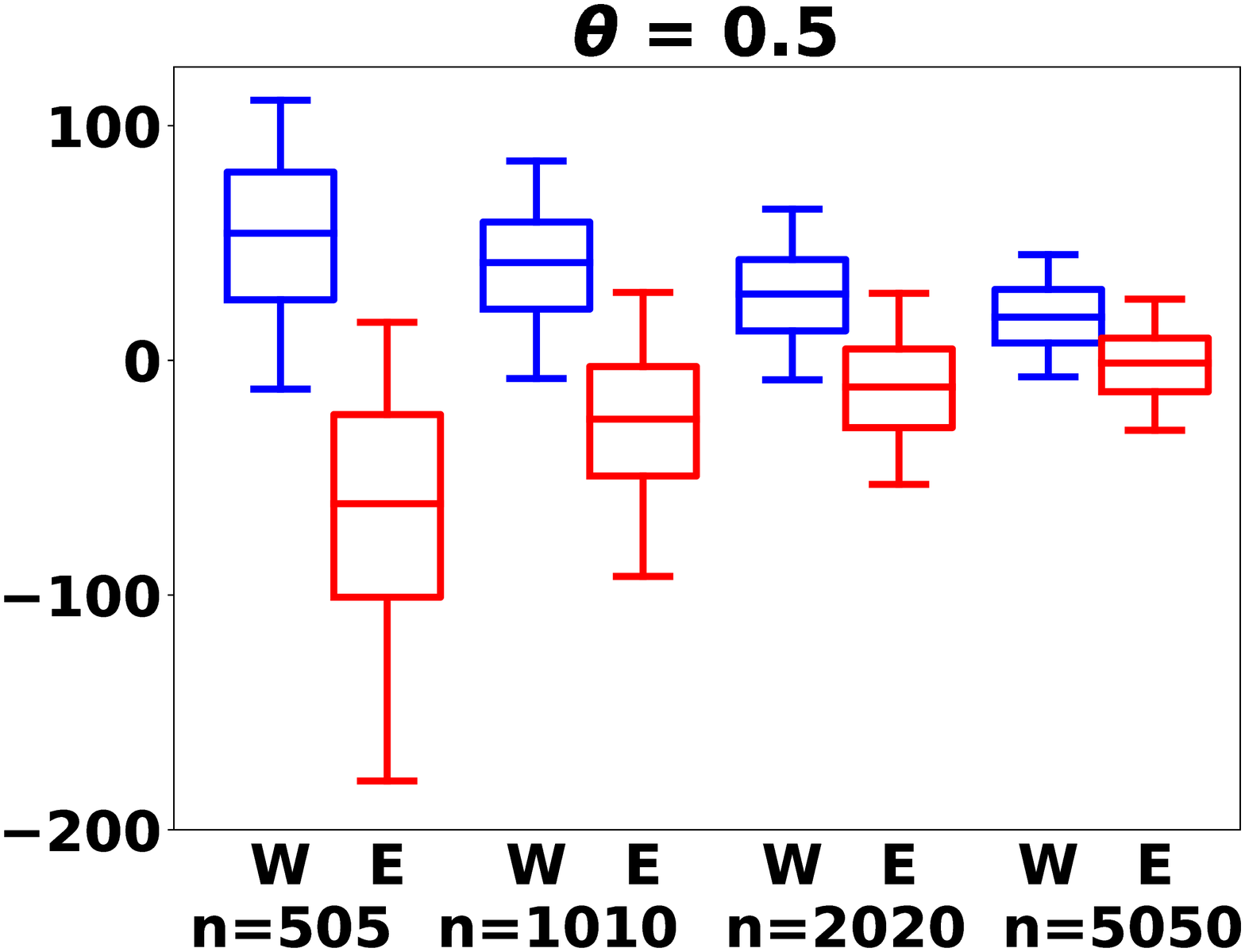}
    \end{subfigure}%
    ~ 
    \begin{subfigure}[t]{0.33\textwidth}
        \centering
        \includegraphics[width=\textwidth]{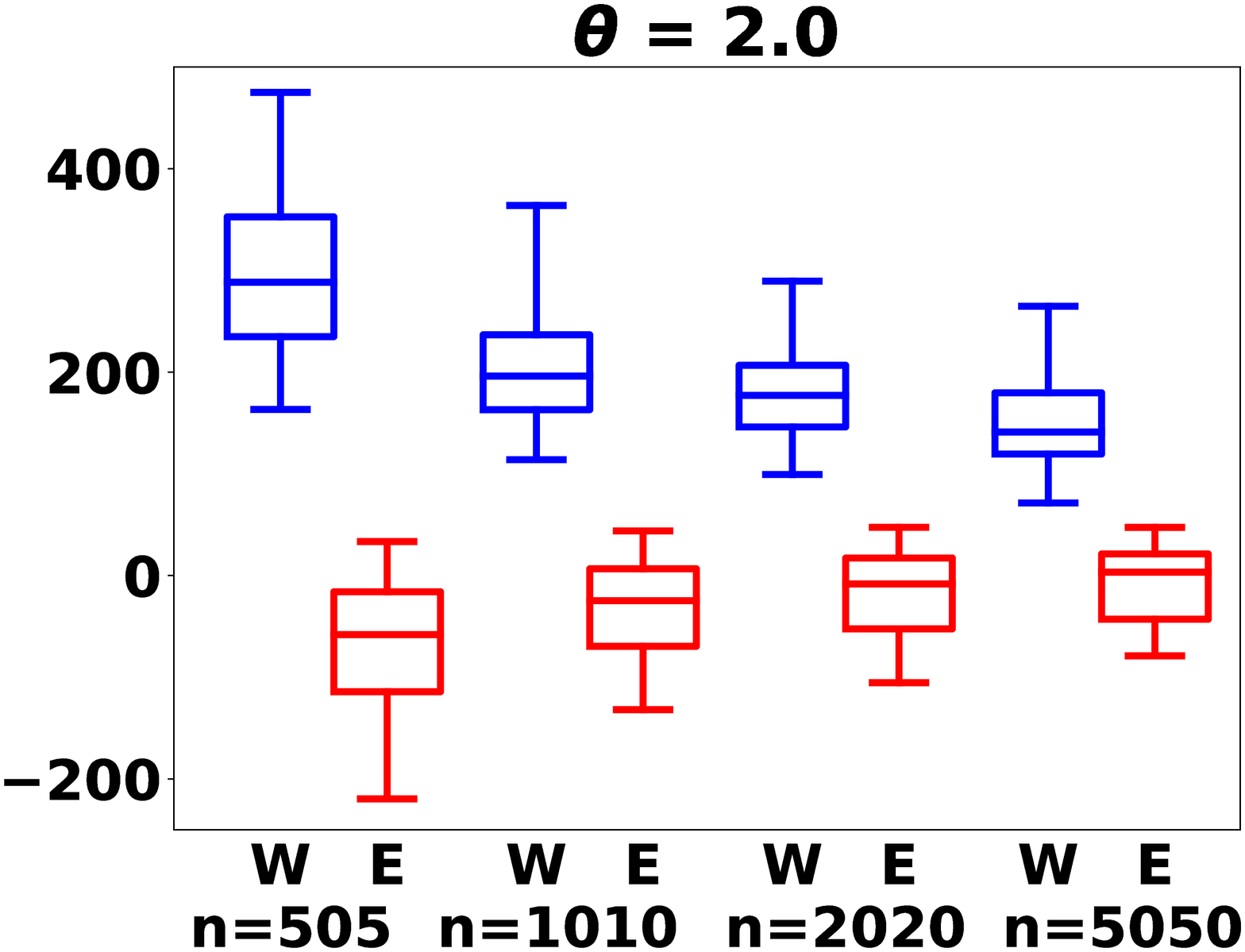}
    \end{subfigure}
    \caption{{\textbf{(Wasserstein-DRO certificate)} Comparison of the \tE+OLS approach (\texttt{E}) with the covariate-independent tuning of the \tW+OLS Wasserstein radius using Algorithm~\ref{alg:ersaasameradius} (\texttt{W}) for $d_x = 100$. Left: $\theta = 1$. Middle: $\theta = 0.5$. Right: $\theta = 2$.}}
    \label{fig:comp_ols_wass_cert}
\end{figure}

\noindent
\textbf{Wasserstein-DRO certificates.}
Figure~\ref{fig:comp_ols_wass_cert} compares the normalized optimal objective value $100(\hv^{ER}_n(x) - v^*(x))$ of the \tE+OLS formulation with the normalized optimal objective value $100(\hv^{DRO}_n(x) - v^*(x))$ of the \tW+OLS formulation when the radius~$\zeta_n$ is specified by Algorithm~\ref{alg:ersaasameradius}.
We consider $d_x = 100$, vary the model degree $\theta$, and vary the {sample size among $n \in \{5(d_x + 1),10(d_x + 1),20(d_x + 1),50(d_x + 1)\}$ in these experiments}.
We omit the results for smaller covariate dimensions for brevity.
{Note that the $y$-axis limits are different across the subplots}.
First, we see that the ER-SAA solutions are optimistically biased and the bias reduces with increasing sample size (cf.\ \cite{bertsimas2018data,esfahani2018data,mak1999monte}).
{Second, we see that ER-DRO solutions tend to err on the side of caution, with the pessimism of the \tW+OLS formulation shrinking to zero for $\theta = 1$ and $\theta = 0.5$ as the sample size increases.
Finally, the pessimistic bias of the \tW+OLS formulation does not rapidly shrink to zero for $\theta = 2$ because the radius~$\zeta_n$ specified using Algorithm~\ref{alg:ersaasameradius} does not shrink to zero for this case due to significant model misspecification (cf.\ Figure~\ref{fig:comp_ols_wass_radii} in Appendix~\ref{app:computres}).}

\section{Conclusion and future work}

We propose a flexible data-driven DRO framework for incorporating covariate information in stochastic optimization when we only have limited concurrent observations of random variables and covariates.
We study formulations that build a Wasserstein ambiguity set or an ambiguity set with only discrete distributions on top of a data-driven SAA formulation.
Our approach seamlessly generalizes existing DRO formulations that do not use covariate information without sacrificing tractability or favorable theoretical guarantees.
We explore {new} data-driven approaches for sizing our ambiguity sets {that do not require samples from the conditional distribution of the random variables}.
Numerical experiments illustrate that our residuals-based Wasserstein and sample robust optimization DRO formulations can {significantly} outperform the ER-SAA formulation in the limited data regime.
We conclude that the ER-DRO and ER-SAA approaches are complementary. With limited data, the ER-DRO approach can yield better solutions. On the other hand, the value of ER-DRO over ER-SAA diminishes if there is ample data available, and
the ER-SAA formulation remains tractable under milder assumptions on the true problem~\eqref{eqn:speq} compared to the Wasserstein and sample robust optimization-based ER-DRO formulations. In particular, these ER-DRO formulations generally result in NP-hard formulations for two-stage stochastic programs and hence may require approximations~\cite{kuhn2019wasserstein,rahimian2019distributionally}.

{Deriving sharper finite sample guarantees for Wasserstein ER-DRO is an interesting avenue for future work.}
Extensions of the ER-SAA and ER-DRO formulations to multi-stage stochastic programming (cf.\ \cite{bertsimas2019dynamic}), for the case when decisions affect the realizations of the random variables (cf.\ \cite{bertsimas2014predictive}), and for problems with stochastic constraints (cf.\ \cite{homem2014monte}) also merit further investigation.

\section*{Acknowledgments}
This material is based upon work supported by the U.S.\ Department of Energy, Office of Science, Office of Advanced Scientific Computing Research (ASCR) under Contract No.\ DE-AC02-06CH11357.
{R.K.\ also gratefully acknowledges the support of the U.S.\ Department of Energy through the LANL/LDRD Program and the Center for Nonlinear Studies.}
This research was performed using the computing resources of the UW-Madison CHTC in the CS Department. The CHTC is supported by UW-Madison, the Advanced Computing Initiative, the Wisconsin Alumni Research Foundation, the Wisconsin Institutes for Discovery, and the National Science Foundation.
{The authors thank the three anonymous reviewers for suggestions that helped improve the readability of this paper.}
R.K.\ {also} thanks Nam Ho-Nguyen for helpful discussions.

{
\footnotesize
\section*{References}
\begingroup
\renewcommand{\section}[2]{}%
\bibliographystyle{abbrvnat}
\bibliography{main}
\endgroup
}


\appendix

\section{Omitted Proofs}
\label{app:proofs}

\subsection{Proof of Proposition~\ref{prop:subgaussian}}
\label{app:subgaussian}

{We begin with the following useful results.}

\begin{lemma}
\label{lem:ineq}
{Let $a_1, a_2, \dots, a_{d}$ be positive constants with $a_j \geq 1$, $\forall j \in [d]$.
Then, we have}
\[
{
\Bigl( \sum_{i=1}^{d} a^2_i \Bigr)^{\theta/2} \leq \sum_{i=1}^{d} a^{1+ \frac{\theta}{2}}_i, \quad \forall \theta \in [1,2].
}
\]
\end{lemma}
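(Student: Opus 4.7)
The plan is to split the inequality into two steps via the intermediate quantity $\sum_{i=1}^{d} a_i^{\theta}$, bounding $\bigl(\sum_i a_i^2\bigr)^{\theta/2} \leq \sum_i a_i^{\theta} \leq \sum_i a_i^{1+\theta/2}$.

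For the first inequality, I would use the elementary subadditivity of concave power functions: for any $p \in (0,1]$ and nonnegative $b_1,\dots,b_d$, one has $\bigl(\sum_i b_i\bigr)^p \leq \sum_i b_i^p$, which follows from $(s+t)^p \leq s^p + t^p$ by induction on $d$. Applying this with $b_i = a_i^2$ and $p = \theta/2 \in [1/2, 1]$ (valid because $\theta \in [1,2]$) gives $\bigl(\sum_i a_i^2\bigr)^{\theta/2} \leq \sum_i (a_i^2)^{\theta/2} = \sum_i a_i^{\theta}$. This step uses only $\theta \leq 2$ and the nonnegativity of the $a_i$; the assumption $a_i \geq 1$ is not yet needed.

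For the second inequality, the hypothesis $a_j \geq 1$ plays its role: since $\theta \leq 2$ implies $\theta \leq 1 + \theta/2$, and $a_i \geq 1$, we have $a_i^{\theta} \leq a_i^{1+\theta/2}$ termwise, so summing yields $\sum_i a_i^{\theta} \leq \sum_i a_i^{1+\theta/2}$. Chaining the two bounds gives the claim.

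There is no real obstacle here; the main consideration is simply verifying that the exponent $\theta/2$ lies in $(0,1]$ so that the concavity-based subadditivity applies, and that $a_i \geq 1$ gives the termwise domination in the second step. Both follow directly from $\theta \in [1,2]$.
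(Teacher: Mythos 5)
Your proof is correct, but it takes a different route from the paper's. You chain through the intermediate quantity $\sum_i a_i^{\theta}$: first $\bigl(\sum_i a_i^2\bigr)^{\theta/2} \leq \sum_i a_i^{\theta}$ by subadditivity of the concave power $t \mapsto t^{\theta/2}$ (valid since $\theta/2 \in [1/2,1]$, and not needing $a_i \geq 1$), and then $\sum_i a_i^{\theta} \leq \sum_i a_i^{1+\theta/2}$ termwise, which is where $a_i \geq 1$ and $\theta \leq 1 + \theta/2$ enter. The paper instead observes that the claim is equivalent to $F(2) \leq F(\theta)$ for $F(\theta) := \bigl(\sum_i a_i^{1+\theta/2}\bigr)^{2/\theta}$ and proves that $F$ is nonincreasing on $[1,2]$ by differentiating $\log F$ and invoking the weighted AM--GM inequality (with $a_i \geq 1$ used to absorb the resulting geometric mean into $\sum_i a_i^{1+\theta/2}$). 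Your argument is shorter and more elementary, using only two standard one-line inequalities; the paper's argument yields the slightly stronger monotonicity statement comparing $F$ at every pair of exponents in $[1,2]$, though for the lemma as stated only the comparison against the endpoint $\theta = 2$ is needed, so both approaches fully suffice.
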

\begin{proof}
{Let $F(\theta) := \bigl(\sum_{i=1}^{d} a^{1+ \frac{\theta}{2}}_i\bigr)^{2/\theta}$ and $G(\theta) := \log(F(\theta)) =  \frac{2}{\theta} \log\bigl(\sum_{i=1}^{d} a^{1+ \frac{\theta}{2}}_i \bigr)$.
The stated result holds if $F$ (or equivalently, $G$) is nonincreasing on $\theta \in [1,2]$.
We have}
\[
{
G'(\theta) = \frac{1}{\theta} \biggl[ \sum_{i=1}^{d} w_i \log(a_i) - \frac{2}{\theta} \log\Bigl(\sum_{i=1}^{d} a^{1+ \frac{\theta}{2}}_i\Bigr)  \biggr] = \frac{1}{\theta} \biggl[  \log\biggl(\prod_{i=1}^{d} (a_i)^{w_i}\biggr) - \frac{2}{\theta} \log\Bigl(\sum_{i=1}^{d} a^{1+ \frac{\theta}{2}}_i\Bigr)  \biggr],
}
\]
{where the nonnegative weight $w_i := \bigl(a^{1+ \theta/2}_i\bigr) \bigl(\sum_{j=1}^{d} a^{1+ \theta/2}_j\bigr)^{-1}$.
Note that $w_i \in (0,1)$ and $\sum_{i=1}^{d} w_i = 1$.
For $\theta \in [1,2]$, the above expression implies that $G'(\theta) \leq 0$ whenever $\prod_{i=1}^{d} (a_i)^{w_i} \leq \sum_{i=1}^{d} a^{1+ \theta/2}_i$.
We have}
\[
{
\prod_{i=1}^{d} (a_i)^{w_i} \leq \sum_{i=1}^{d} w_i a_i \leq \sum_{i=1}^{d} a_i \leq \sum_{i=1}^{d} a^{1+ \frac{\theta}{2}}_i,
}
\]
{where the first inequality follows from the weighted AM-GM inequality, the second inequality follows from the fact that $0 < w_i < 1$, $\forall i \in [d]$, and the final inequality follows from the assumption that $a_i \geq 1$, $\forall i \in [d]$.
Consequently, $G'(\theta) \leq 0$, $\forall \theta \in [1,2]$, which implies $F$ is nonincreasing on $\theta \in [1,2]$.}
\end{proof}

\begin{lemma}
\label{lem:subgineq}
{Let $W$ be a sub-Gaussian random variable with variance proxy $\sigma^2_w$.
Then}
\[
{
\expect{\exp\bigl(\abs{W}^{1 + \frac{\theta}{2}}\bigr)} < +\infty, \quad \forall \theta \in (1,2).
}
\]
\end{lemma}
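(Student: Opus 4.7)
The plan is to reduce the question to showing $\mathbb{E}[\exp(\lambda W^2)] < +\infty$ for some sufficiently small $\lambda > 0$, and then combine this with a deterministic domination of $|W|^a$ by $\lambda W^2 + C$ that is available precisely because $a := 1 + \theta/2 < 2$.

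First, since $\theta \in (1,2)$, we have $a \in (3/2, 2)$ and hence $t^{a-2} \to 0$ as $t \to \infty$. Consequently, for any fixed $\eta > 0$ the continuous function $t \mapsto t^a - \eta t^2$ on $[0,\infty)$ attains a finite supremum, call it $C(\eta, a) < +\infty$. Specialising to $\eta := 1/(4\sigma_w^2)$ yields the pointwise bound
\[
\exp\!\bigl(|W|^{a}\bigr) \leq e^{C(\eta,a)} \exp\!\left( W^2 / (4\sigma_w^2) \right) \quad \text{almost surely}.
\]

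Second, I would verify that $\mathbb{E}[\exp(\lambda W^2)]$ is finite for $\lambda = 1/(4\sigma_w^2)$. Applying Definition~\ref{def:subgaussian} with $u = \pm 1$ gives $\mathbb{E}[\exp(sW)] \leq \exp(\sigma_w^2 s^2 / 2)$ for every $s \in \mathbb{R}$, and the standard Chernoff optimisation in $s$ then yields the sub-Gaussian tail bound $\mathbb{P}(|W| > t) \leq 2 \exp(-t^2/(2\sigma_w^2))$ for all $t \geq 0$. Using the layer-cake representation together with this tail bound,
\[
\mathbb{E}[\exp(\lambda W^2)] = 1 + \int_0^\infty 2\lambda t\, e^{\lambda t^2}\, \mathbb{P}(|W| > t)\, dt \leq 1 + 4\lambda \int_0^\infty t\, \exp\!\bigl((\lambda - 1/(2\sigma_w^2))\, t^2\bigr)\, dt,
\]
and the right-hand integral is finite because the coefficient of $t^2$ in the exponent equals $-1/(4\sigma_w^2) < 0$. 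Taking expectations in the pointwise bound of the first paragraph then delivers the claim.

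The only place where the assumption $\theta < 2$ is essential is in securing the finite constant $C(\eta, a)$: if $a = 2$, no domination by $\eta t^2$ with $\eta < 1/(2\sigma_w^2)$ is possible and the argument breaks down (indeed $\mathbb{E}[\exp(W^2)]$ can be infinite for sub-Gaussian $W$ with a large enough variance proxy). I do not anticipate any genuine obstacle beyond routine bookkeeping of constants.
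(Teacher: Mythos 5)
Your proof is correct, but it takes a genuinely different route from the paper's. The paper expands $\exp\bigl(\abs{W}^{1+\theta/2}\bigr)$ as a Taylor series, interchanges expectation and summation by Fubini--Tonelli, bounds each term $\expect{\abs{W}^{(1+\theta/2)j}}/j!$ using a standard sub-Gaussian moment bound ($\expect{\abs{W}^k} \leq (\sigma_w^2 e^{2/e} k)^{k/2} \cdot k!/k!$-type estimate from Rigollet's lecture notes), and then verifies convergence of the resulting series by the ratio test; the hypothesis $\theta<2$ enters through the exponent $(1+\theta/2)/2<1$, which forces the ratio of consecutive terms to vanish. You instead dominate pointwise, writing $\abs{W}^{a} \leq C(\eta,a) + W^2/(4\sigma_w^2)$ for $a=1+\theta/2<2$, and reduce everything to the finiteness of the quadratic exponential moment $\expect{\exp\bigl(W^2/(4\sigma_w^2)\bigr)}$, which you obtain from the Chernoff tail bound and the layer-cake formula. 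Both arguments are complete; yours is shorter, avoids the moment-growth asymptotics and the somewhat fiddly ratio-test limit, and makes the role of the restriction $\theta<2$ (equivalently $a<2$) completely transparent -- it is exactly the condition that lets a sub-quadratic power be absorbed into an arbitrarily small multiple of $t^2$. As a side benefit, your argument visibly works for every $a\in(0,2)$, not just the range $a\in(3/2,2)$ corresponding to $\theta\in(1,2)$, whereas the paper's ratio-test computation is carried out only for the stated range (though it too would extend). The paper's series approach buys nothing essential here beyond being self-contained given the cited moment lemma.
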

\begin{proof}
{We have
\begin{align}
\label{eqn:ineq1}
\expect{\exp\bigl(\abs{W}^{1 + \frac{\theta}{2}}\bigr)} &= \mathbb{E}\biggl[\sum_{j=0}^{\infty} \frac{\abs{W}^{(1+\frac{\theta}{2})j}}{j!}\biggr]
\end{align}
Lemma~1.4 of~\cite{rigollet2015high} implies that}
\[
{
\mathbb{E}\biggl[\frac{\abs{W}^{(1+\frac{\theta}{2})j}}{j!}\biggr] \leq \frac{\bigl( \sigma^2_w e^{\frac{2}{e}} j \bigr)^{(1+\frac{\theta}{2})\frac{j}{2}}}{j!}, \quad \forall j \geq 2,
}
\]
{where $e := \exp(1)$.
Therefore, inequality~\eqref{eqn:ineq1}, the Fubini-Tonelli theorem, and the ratio test imply the stated result whenever $\lim_{j \to \infty} t_{j+1}/t_j < 1$, where 
$t_j := \bigl( \sigma^2_w e^{\frac{2}{e}} j \bigr)^{(1+\frac{\theta}{2})\frac{j}{2}}(j!)^{-1}$.
Let $C := \sigma^2_w e^{\frac{2}{e}}$. We have
\begin{align*}
\lim_{j \to \infty} \frac{t_{j+1}}{t_j} &= \lim_{j \to \infty} \frac{\bigl( C (j+1) \bigr)^{(1+\frac{\theta}{2})\frac{j+1}{2}}}{\bigl( C j \bigr)^{(1+\frac{\theta}{2})\frac{j}{2}}} \frac{j!}{(j+1)!} \\
&= \lim_{j \to \infty} \frac{(C(j+1))^{0.5(1+\frac{\theta}{2})}}{j+1} \biggl(\frac{j+1}{j}\biggr)^{(1+\frac{\theta}{2})\frac{j}{2}} \\
&= \lim_{j \to \infty} O(1) (j+1)^{0.5(\frac{\theta}{2}-1)} \: \lim_{j \to \infty} \biggl(\frac{j+1}{j}\biggr)^{(1+\frac{\theta}{2})\frac{j}{2}} \\
&= O(1)\lim_{j \to \infty} (j+1)^{0.5(\frac{\theta}{2}-1)} \: \lim_{j \to \infty} \biggl(1 + \frac{(1+\frac{\theta}{2})\frac{1}{2}}{(1+\frac{\theta}{2})\frac{j}{2}}\biggr)^{(1+\frac{\theta}{2})\frac{j}{2}} \\
&= 0 \times O(1) = 0,
\end{align*}
where we use the fact $\theta \in (1,2)$ in the last step.}
\end{proof}

{We are now ready to prove Proposition~\ref{prop:subgaussian}.
To establish $\expect{\exp(\norm{\varepsilon}^a)} < +\infty$, it suffices to show that $\expect{\exp(\norm{\varepsilon}^a) \1_{\{\norm{\varepsilon}_{\infty} \geq 1\}}} < +\infty$, where $\1_{\{\norm{\varepsilon}_{\infty} \geq 1\}} = 1$ if $\norm{\varepsilon}_{\infty} \geq 1$ and $0$ otherwise.
Lemma~\ref{lem:ineq} implies that
\[
\expect{\exp(\norm{\varepsilon}^a)} \leq \mathbb{E}\biggl[\exp\Bigl(\sum_{i=1}^{d_y} \abs{\varepsilon_i}^{1 + \frac{a}{2}}\Bigr)\biggr] = \mathbb{E}\biggl[\prod_{i=1}^{d_y}\exp\Bigl( \abs{\varepsilon_i}^{1 + \frac{a}{2}}\Bigr)\biggr].
\]
Independence of the components of $\varepsilon$ further implies
\[
\expect{\exp(\norm{\varepsilon}^a)} \leq \prod_{i=1}^{d_y} \mathbb{E}\biggl[\exp\Bigl( \abs{\varepsilon_i}^{1 + \frac{a}{2}}\Bigr)\biggr].
\]
Consequently, it suffices to show that $\mathbb{E}\Bigl[\exp\bigl( \abs{\varepsilon_i}^{1 + \frac{a}{2}}\bigr) \1_{\{\abs{\varepsilon_i} \geq 1\}}\Bigr] < +\infty$ for each $i \in [d_y]$, which follows from Lemma~\ref{lem:subgineq}.}
\qed

\subsection{Proof of Lemma~\ref{lem:wassintres}}
\label{app:wassintres}

{We require the following result (cf.\ \cite[Lemma~3.7]{esfahani2018data}).}

\begin{lemma}
\label{lem:convwassdist}
{Suppose Assumptions~\ref{ass:lighttailerr},~\ref{ass:reglargedevwass}, and~\ref{ass:wassrisklevelseq} hold and the samples $\{\varepsilon^{i}\}_{i=1}^{n}$ are i.i.d.
Let $\{Q_n(x)\}$ be a sequence of distributions with $Q_n(x) \in \hP_n(x;\zeta_n(\alpha_n,x))$.
Then
\[
\mathbb{P}\Bigl\{ d_{W,p}(P_{Y \mid X=x},Q_n(x)) \leq 2\zeta_n(\alpha_n,x) \Bigr\} \geq 1 - \alpha_n, \quad \text{for a.e. } x \in \X.
\]
Consequently, we a.s.\ have for $n$ large enough:
\[
d_{W,p}(P_{Y \mid X=x},Q_n(x))  \leq 2\zeta_n(\alpha_n,x), \quad \text{for a.e. } x \in \X.
\]
Furthermore, $\{Q_n(x)\}$ converges a.s.\ converges to $P_{Y \mid X = x}$ under the Wasserstein metric}
\[
{
\mathbb{P}\Bigl\{ \lim_{n \to \infty} d_{W,p}(P_{Y \mid X=x},Q_n(x)) = 0 \Bigr\} = 1.
}
\]
\end{lemma}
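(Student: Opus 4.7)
\textbf{Proof proposal for Lemma~\ref{lem:convwassdist}.}
The plan is to combine the triangle inequality for the $p$-Wasserstein distance with the finite sample certificate machinery already developed for Theorem~\ref{thm:wassfinitesampcert}, and then to promote the resulting probabilistic bound to an almost sure statement via the Borel--Cantelli lemma. First, I would observe that by definition of the ambiguity set, any $Q_n(x) \in \hP_n(x;\zeta_n(\alpha_n,x))$ satisfies $d_{W,p}(Q_n(x),\hat{P}^{ER}_n(x)) \leq \zeta_n(\alpha_n,x)$ deterministically. The triangle inequality for $d_{W,p}$ then gives, for each $x \in \X$,
\[
d_{W,p}(P_{Y \mid X = x}, Q_n(x)) \leq d_{W,p}(P_{Y \mid X = x}, \hat{P}^{ER}_n(x)) + \zeta_n(\alpha_n,x).
\]

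Next, I would invoke the calculation already performed in the proof of Theorem~\ref{thm:wassfinitesampcert}, which establishes that for a.e.\ $x \in \X$,
\[
\mathbb{P}\big\{d_{W,p}( \hat{P}^{ER}_n(x), P_{Y \mid X=x} ) > \zeta_n(\alpha_n,x)\big\} \leq \alpha_n.
\]
Combining this with the deterministic triangle bound above immediately yields the first assertion:
\[
\mathbb{P}\Bigl\{ d_{W,p}(P_{Y \mid X=x}, Q_n(x)) \leq 2\zeta_n(\alpha_n,x) \Bigr\} \geq 1 - \alpha_n, \quad \text{for a.e. } x \in \X.
\]

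For the almost sure statement, I would apply the Borel--Cantelli lemma. By Assumption~\ref{ass:wassrisklevelseq}, $\sum_n \alpha_n < +\infty$, so the events $A_n(x) := \{d_{W,p}(P_{Y \mid X=x}, Q_n(x)) > 2\zeta_n(\alpha_n,x)\}$ have summable probabilities for a.e.\ $x \in \X$. Hence $\mathbb{P}(\limsup_n A_n(x)) = 0$, which gives the claimed eventual a.s.\ inequality. Finally, since $\lim_{n \to \infty} \zeta_n(\alpha_n,x) = 0$ for a.e.\ $x \in \X$ (again by Assumption~\ref{ass:wassrisklevelseq}), sandwiching $d_{W,p}(P_{Y \mid X=x}, Q_n(x))$ between $0$ and $2\zeta_n(\alpha_n,x)$ on a set of full measure delivers the a.s.\ convergence $d_{W,p}(P_{Y \mid X=x}, Q_n(x)) \to 0$.

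This proof is largely a bookkeeping exercise building directly on Theorem~\ref{thm:wassfinitesampcert}, so there is no serious obstacle; the only minor care needed is to ensure that the $\{A_n(x)\}$ are measurable (a standard concern addressed by the blanket measurability convention in Section~\ref{sec:intro}) and that the triangle inequality step treats $Q_n(x)$ as a (possibly random) distribution in the ambiguity set, which is why the inequality $d_{W,p}(Q_n(x),\hat{P}^{ER}_n(x)) \leq \zeta_n(\alpha_n,x)$ holds pathwise rather than only in probability.
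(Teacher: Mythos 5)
Your proposal is correct and takes essentially the same route as the paper: the paper's proof of this lemma simply states that it mirrors the proof of Lemma~3.7 of~\cite{esfahani2018data} "on account of Theorem~\ref{thm:wassfinitesampcert}," which is precisely the argument you spell out (triangle inequality plus the bound $\mathbb{P}\{d_{W,p}(\hat{P}^{ER}_n(x),P_{Y\mid X=x}) > \zeta_n(\alpha_n,x)\} \leq \alpha_n$ from that theorem's proof, then Borel--Cantelli and $\zeta_n(\alpha_n,x)\to 0$). Your write-up is a faithful, fully detailed version of what the paper leaves to the cited reference.
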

\begin{proof}
{Mirrors the proof of~\cite[Lemma~3.7]{esfahani2018data} on account of Theorem~\ref{thm:wassfinitesampcert}.}
\end{proof}

{We now prove Lemma~\ref{lem:wassintres}.
From Theorem~\ref{thm:wassfinitesampcert}, we have for a.e.\ $x \in \X$ that
\[
\mathbb{P}\big\{v^*(x) \leq g(\hz^{DRO}_n(x);x) \leq \hv^{DRO}_n(x)\big\} \geq 1 - \alpha_n, \quad \forall n \in \mathbb{N}.
\]
Since $\sum_n \alpha_n < +\infty$, the Borel-Cantelli lemma implies a.s.\ that for all $n$ large enough
\[
v^*(x) \leq g(\hz^{DRO}_n(x);x) \leq \hv^{DRO}_n(x), \quad \text{for a.e. } x \in \X.
\]
{From Lemma~\ref{lem:convwassdist},}
we a.s.\ have for any distribution $Q_n(x) \in \hP_n(x;\zeta_n(\alpha_n,x))$ and $n$ large enough that
$d_{W,p}(P_{Y \mid X=x},Q_n(x)) \leq 2\zeta_n(\alpha_n,x)$ for a.e.\ $x \in \X$.}

{Suppose Assumption~\ref{ass:lipschitzy} holds.
Using the fact that
$\hP_n(x;\zeta_n(\alpha_n,x)) \subseteq \bar{\mathcal{P}}_{1,n}(x;\zeta_n(\alpha_n,x))$ for all orders $p \in [1,+\infty)$,
we a.s.\ have for $n$ large enough and for a.e.\ $x \in \X$ that
\begin{align*}
&\hv^{DRO}_n(x) \leq \uset{Q \in \bar{\mathcal{P}}_{1,n}(x;\zeta_n(\alpha_n,x))}{\sup} \expectation{Y \sim Q}{c(z^*(x),Y)} \leq g(z^*(x);x) + 2L_1(z^*(x))\zeta_n(\alpha_n,x),
\end{align*}
where the second inequality follows from Assumption~\ref{ass:lipschitzy} and the Kantorovich-Rubinstein theorem (cf.\ \citep[Theorem~5]{kuhn2019wasserstein}).}

{Suppose instead that Assumption~\ref{ass:lipschitzy2} holds and $p \in [2,+\infty)$.
Since
$\hP_n(x;\zeta_n(\alpha_n,x)) \subseteq \bar{\mathcal{P}}_{2,n}(x;\zeta_n(\alpha_n,x))$ for all $p \in [2,+\infty)$,
we a.s.\ have for $n$ large enough and a.e.\ $x \in \X$ that}
{\begin{align*}
\hv^{DRO}_n(x) &\leq \uset{Q \in \bar{\mathcal{P}}_{2,n}(x;\zeta_n(\alpha_n,x))}{\sup} \expectation{Y \sim Q}{c(z^*(x),Y)} \\
&\leq g(z^*(x);x) + 2\bigl(\expect{\norm{\nabla c(z^*(x),Y)}^2}\bigr)^{1/2} \zeta_n(\alpha_n,x) + 4L_2(z^*(x))\zeta^2_n(\alpha_n,x),
\end{align*}}
{where the latter inequality follows from Assumption~\ref{ass:lipschitzy2} and~\cite[Lemma~2]{gao2020finite} (see also~\cite{gao2017wasserstein}).}
\qed

\subsection{Proof of Theorem~\ref{thm:wassaympconsist}}
\label{app:wassaympconsist}

From Theorem~\ref{thm:wassfinitesampcert}, we have 
\[
\mathbb{P}\big\{d_{W,p}( \hat{P}^{ER}_n(x), P_{Y \mid X=x} ) > \zeta_n(\alpha_n,x)\big\} \leq \alpha_n, \quad \text{for a.e. } x \in \X.
\]
{From Lemma~\ref{lem:convwassdist},}
we a.s.\ have $\lim_{n \to \infty} d_{W,p}(P_{Y \mid X=x},Q_n(x)) = 0$ for a.e.\ $x \in \X$ for any $Q_n(x) \in \hP_n(x;\zeta_n(\alpha_n,x))$.
Theorem~6.9 of~\cite{villani2008optimal} then a.s.\ implies that $Q_n(x)$ converges weakly to $P_{Y \mid X = x}$ in the space of distributions with finite $p$th moments for a.e.\ $x \in \X$.

Lemma~\ref{lem:wassintres} implies a.s.\ that for all $n$ large enough
\begin{align}
\label{eqn:borelcantelli}
&v^*(x) \leq g(\hz^{DRO}_n(x);x) \leq \hv^{DRO}_n(x), \quad \text{for a.e. } x \in \X.
\end{align}
Therefore, to prove $\lim_{n \to \infty} \hv^{DRO}_n(x) = v^*(x) = \lim_{n \to \infty} g(\hz^{DRO}_n(x);x)$ in probability (or a.s.) for a.e.\ $x \in \X$, it suffices to show that $\limsup_{n \to \infty} \hv^{DRO}_n(x) \leq v^*(x)$ a.s.\ for a.e.\ $x \in \X$.

Fix~$\eta > 0$.
For a.e.\ $x \in \X$, let $z^*(x) \in S^*(x)$ be an optimal solution to the true problem~\eqref{eqn:speq}, and $Q^{*}_{n}(x) \in \hP_n(x;\zeta_n(\alpha_n,x))$ be such that 
\[
\uset{Q \in \hP_n(x;\zeta_n(\alpha_n,x))}{\sup} \expectation{Y \sim Q}{c(z^*(x),Y)} \leq  \expectation{Y \sim Q^*_n(x)}{c(z^*(x),Y)} + \eta.
\]
We suppress the dependence of $Q^{*}_{n}(x)$ on $\eta$ for simplicity.
We a.s.\ have for a.e.\ $x \in \X$
\begin{align*}
\limsup_{n \to \infty} \hv^{DRO}_n(x) &\leq \limsup_{n \to \infty} \uset{Q \in \hP_n(x;\zeta_n(\alpha_n,x))}{\sup} \expectation{Y \sim Q}{c(z^*(x),Y)} \\
&\leq \limsup_{n \to \infty} \expectation{Y \sim Q^*_n(x)}{c(z^*(x),Y)} + \eta \\
&= g(z^*(x);x) + \eta = v^*(x) + \eta.
\end{align*}
The first equality above follows from the fact that $Q^*_n(x)$ converges weakly to $P_{Y \mid X = x}$ (as noted above) and by Definition~6.8 of~\cite{villani2008optimal} (which holds by virtue of Assumption~\ref{ass:growthrate}).
Since $\eta > 0$ was arbitrary, we conclude that $\limsup_{n \to \infty} \hv^{DRO}_n(x) \leq v^*(x)$ a.s.\ for a.e.\ $x \in \X$. 

Finally, we show that any accumulation point of $\{\hz^{DRO}_n(x)\}$ is almost surely an element of $S^*(x)$ for a.e.\ $x \in \X$, and argue that this implies $\text{dist}(\hz^{DRO}_n(x),S^*(x)) \xrightarrow{a.s.} 0$ for a.e.\ $x \in \X$.
From~\eqref{eqn:borelcantelli} and the above conclusion, we a.s.\ have 
\[
\liminf_{n \to \infty} g(\hz^{DRO}_n(x);x) \leq \lim_{n \to \infty} \hv^{DRO}_n(x) = v^*(x), \quad \text{for a.e. } x \in \X.
\]
Let $\bar{z}(x)$ be an accumulation point of $\hz^{DRO}_n(x)$ for a.e.\ $x \in \X$.
Assume by moving to a subsequence if necessary that $\lim_{n \to \infty} \hz^{DRO}_n(x) = \bar{z}(x)$.
We a.s.\ have for a.e.\ $x \in \X$
\[
v^*(x) \leq g(\bar{z}(x);x) \leq \expect{\liminf_{n \to \infty} c(\hz^{DRO}_n(x),f^*(x)+\varepsilon)} \leq \liminf_{n \to \infty} g(\hz^{DRO}_n(x);x) \leq v^*(x),
\]
where the second inequality follows from the lower semicontinuity of~$c(\cdot,Y)$ on~$\Z$ for each $Y \in \Y$ and the third inequality follows from Fatou's lemma by virtue of Assumption~\ref{ass:growthrate}.
Consequently, we a.s.\ have that $\bar{z}(x) \in S^*(x)$.

Suppose by contradiction that $\text{dist}(\hz^{DRO}_n(x),S^*(x))$ does not a.s.\ converge to zero for a.e.\ $x \in \X$.
Then, there exists $\bar{\X} \subseteq \X$ with $P_X(\bar{\X}) > 0$ such that for each $x \in \bar{\X}$, $\text{dist}(\hz^{DRO}_n(x),S^*(x))$ does not a.s.\ converge to zero.
Since $\Z$ is compact, any sequence of estimators $\{\hz^{DRO}_n(x)\}$ has a convergent subsequence for each $x \in \bar{\X}$.
Therefore, whenever $\text{dist}(\hz^{DRO}_n(x),S^*(x))$ does not converge to zero for some $x \in \bar{\X}$ and a realization of the data~$\D_n$, there exists an accumulation point of the sequence $\{\hz^{DRO}_n(x)\}$ that is not a solution to problem~\eqref{eqn:speq}.
This contradicts the fact that every accumulation point of $\{\hz^{DRO}_n(x)\}$ is almost surely a solution to problem~\eqref{eqn:speq} for a.e.\ $x \in \X$. \qed

\subsection{Proof of Theorem~\ref{thm:wassconvrate}}
\label{app:wassconvrate}

Lemma~\ref{lem:wassintres} implies that inequality~\eqref{eqn:borelcantelli} a.s.\ holds for all $n$ large enough.
{From Lemma~\ref{lem:convwassdist},}
we a.s.\ have for any distribution $Q_n(x) \in \hP_n(x;\zeta_n(\alpha_n,x))$ and sample size $n$ large enough that
$d_{W,p}(P_{Y \mid X=x},Q_n(x)) \leq 2\zeta_n(\alpha_n,x)$ for a.e.\ $x \in \X$.

If Assumption~\ref{ass:lipschitzy} holds, then the desired result follows from inequality~\eqref{eqn:borelcantelli} and part A of Lemma~\ref{lem:wassintres}.
On the other hand, if Assumption~\ref{ass:lipschitzy} holds and $p \geq 2$, then the desired result follows from inequality~\eqref{eqn:borelcantelli} and part B of Lemma~\ref{lem:wassintres}.
\qed

\subsection{Proof of Lemma~\ref{lem:wassfinitesampsoln}}
\label{app:wassfinitesampsoln}

{Theorem~\ref{thm:wassfinitesampcert} implies $g(\hz^{DRO}_n(x);x) \leq \hv^{DRO}_n(x)$ with probability at least $1-\alpha$ when~$\zeta_n(\alpha,x)$ is chosen according to equation~\eqref{eqn:wassradius}.
Lemma~\ref{lem:probineq} then yields
\begin{align*}
\prob{g(\hz^{DRO}_n(x);x) > v^*(x) + \kappa} =&\: \prob{g(\hz^{DRO}_n(x);x) - \hv^{DRO}_n(x) + \hv^{DRO}_n(x) > v^*(x) + \kappa} \\
\leq&\: \alpha + \prob{\hv^{DRO}_n(x) > v^*(x) + \kappa}.
\end{align*}}%
{Suppose Assumption~\ref{ass:lipschitzy} holds.
Following the proof of part A of Lemma~\ref{lem:wassintres} (see Lemma~\ref{lem:convwassdist}), we have for any $z^*(x) \in S^*(x)$
\[
\prob{\hv^{DRO}_n(x) > v^*(x) + 2L_1(z^*(x))\zeta_n(\alpha,x)} \leq \alpha.
\]
Therefore, if we choose $\alpha \in (0,1)$ so that $2L_1(z^*(x))\zeta_n(\alpha,x) \leq \kappa$, we have
\[
\prob{g(\hz^{DRO}_n(x);x) > v^*(x) + \kappa} \leq 2\alpha.
\]
Equation~\eqref{eqn:wassradius} implies that $2L_1(z^*(x))\kappa^{(2)}_{p,n}(\alpha) \leq \kappa/2$ whenever the risk level 
\[
\alpha \geq O(1) \exp\bigl(-O(1) n \bigl(\tfrac{\kappa}{4L_1(z^*(x))}\bigr)^{1/\theta}\bigr)
\]
with $\theta$ equal to $\min\{p/d_y, 1/2\}$ or $p/a$.
Assumption~\ref{ass:reglargedevwass2} implies that we can choose the constant $\kappa^{(1)}_{p,n}(\alpha,x)$ in equation~\eqref{eqn:wassradius} such that for a.e.\ $x \in \X$, $2L_1(z^*(x))\kappa^{(1)}_{p,n}(\alpha,x) \leq \kappa/2$ whenever
\begin{align*}
\alpha \geq 4\max\bigl\{&K_{p,f}\bigl(\tfrac{\kappa}{8L_1(z^*(x))},x\bigr) \exp\bigl(-n\beta_{p,f}\bigl(\tfrac{\kappa}{8L_1(z^*(x))},x\bigr)\bigr), \\
&\quad\quad\quad \bar{K}_{p,f}\bigl(\tfrac{\kappa}{8L_1(z^*(x))}\bigr) \exp\bigl(-n\bar{\beta}_{p,f}\bigl(\tfrac{\kappa}{8L_1(z^*(x))}\bigr)\bigr)\bigr\}.
\end{align*}
The above bounds imply the existence of constants $\tilde{\Gamma}(\kappa,x), \tilde{\gamma}(\kappa,x) > 0$ such that the risk level $\alpha = \tilde{\Gamma}(\kappa,x) \exp(-n\tilde{\gamma}(\kappa,x))$ satisfies $2L_1(z^*(x))\zeta_n(\alpha,x) \leq \kappa$.
Consequently,~\eqref{eqn:largedevoutofsampcost} holds.}

{Next, suppose instead that Assumption~\ref{ass:lipschitzy2} holds.
Following the proof of part~B of Lemma~\ref{lem:wassintres} (see Lemma~\ref{lem:convwassdist}), we have for any $z^*(x) \in S^*(x)$
\[
\mathbb{P}\bigl\{\hv^{DRO}_n(x) > v^*(x) + \bigl(\expect{\norm{\nabla c(z^*(x),Y)}^2}\bigr)^{1/2} \zeta_n(\alpha,x) + 4L_2(z^*(x))\zeta^2_n(\alpha,x)\bigr\} \leq \alpha.
\]
Therefore, if we pick $\alpha \in (0,1)$ so that 
\[
\bigl(\expect{\norm{\nabla c(z^*(x),Y)}^2}\bigr)^{1/2} \zeta_n(\alpha,x) + 4L_2(z^*(x))\zeta^2_n(\alpha,x) \leq \kappa,
\]
then $\prob{g(\hz^{DRO}_n(x);x) > v^*(x) + \kappa} \leq 2\alpha$.
Similar to the analysis above, positive constants $\tilde{\Gamma}(\kappa,x)$ and $\tilde{\gamma}(\kappa,x)$ and inequality~\eqref{eqn:largedevoutofsampcost} can be obtained by bounding the smallest value of~$\alpha$ using Assumption~\ref{ass:reglargedevwass2} and equation~\eqref{eqn:wassradius} so that}
\begin{align*}
&{\bigl(\expect{\norm{\nabla c(z^*(x),Y)}^2}\bigr)^{1/2} \zeta_n(\alpha,x) + 4L_2(z^*(x))\zeta^2_n(\alpha,x) \leq \kappa. \tag*{\qed}}
\end{align*}

\subsection{Proof of Lemma~\ref{lem:unifconvsamprobust}}
\label{app:unifconvsamprobust}

{By first adding and subtracting $g^*_n(z;x)$, defined in problem~\eqref{eqn:fullinfsaa}, and then doing the same with $g^*_{s,n}(z;x)$, we obtain}
{

\begin{align}
\label{eqn:unifconvsamprobust}
\uset{z \in \Z}{\sup}\: \abs*{\hg^{ER}_{s,n}(z;x) - g(z;x)} \nonumber \leq& \: \uset{z \in \Z}{\sup} \: \uset{p \in \Pf_n(x;\zeta_{n}(x))}{\sup}\displaystyle\sum_{i=1}^{n} p_i \Big\lvert \sup_{y \in \hat{\Y}^{i}_n(x;\mu_n(x))} c(z,y) - c \left( z,f^*(x) + \varepsilon^i \right)\Big\rvert \nonumber \\
&\quad + \uset{z \in \Z}{\sup} \: \abs*{g^*_{s,n}(z;x) - g^*_n(z;x)} + \uset{z \in \Z}{\sup} \: \abs*{g^*_n(z;x) - g(z;x)}.
\end{align}
}%
{Consider the first term on the r.h.s.\ of~\eqref{eqn:unifconvsamprobust}.
We have for each $x \in \X$}
{

\begin{align*}
&\uset{z \in \Z}{\sup} \: \uset{p \in \Pf_n(x;\zeta_{n}(x))}{\sup}\displaystyle\sum_{i=1}^{n} p_i \Big\lvert \sup_{y \in \hat{\Y}^{i}_n(x;\mu_n(x))} c(z,y) - c \left( z,f^*(x) + \varepsilon^i \right)\Big\rvert \nonumber\\
\leq& \: \uset{z \in \Z}{\sup} \: \uset{p \in \Pf_n(x;\zeta_{n}(x))}{\sup}\displaystyle\sum_{i=1}^{n} p_i \sup_{y \in \hat{\Y}^{i}_n(x;\mu_n(x))} L(z) \norm{y - (f^*(x) + \varepsilon^i)} \nonumber\\
\leq& \: \uset{z \in \Z}{\sup} \: \uset{p \in \Pf_n(x;\zeta_{n}(x))}{\sup}\displaystyle\sum_{i=1}^{n} p_i L(z) \left(\mu_{n}(x) + \norm{\teps^{i}_{n}(x)}\right) \nonumber\\
=& \: \uset{z \in \Z}{\sup} \: L(z) \uset{p \in \Pf_n(x;\zeta_{n}(x))}{\sup}\displaystyle\sum_{i=1}^{n} p_i \left(\mu_{n}(x) + \norm{\teps^{i}_{n}(x)}\right) \nonumber\\
\leq& \: \uset{z \in \Z}{\sup} \: L(z) \biggl(\mu_{n}(x) + \biggl(\dfrac{1}{n}\displaystyle\sum_{i=1}^{n} \bigl( \norm{\teps^{i}_{n}(x)}\bigr)^2\biggr)^{\frac{1}{2}} \biggr) \uset{p \in \Pf_n(x;\zeta_{n}(x))}{\sup} \biggl( n \sum_{i=1}^{n} p^2_i\biggr)^{\frac{1}{2}} \nonumber\\
=& \: \uset{z \in \Z}{\sup} \: L(z) \biggl(\mu_{n}(x) + \biggl(\dfrac{1}{n}\displaystyle\sum_{i=1}^{n} \bigl( \norm{\teps^{i}_{n}(x)}\bigr)^2\biggr)^{\frac{1}{2}} \biggr) \uset{p \in \Pf_n(x;\zeta_{n}(x))}{\sup} \biggl( 1 + n \sum_{i=1}^{n} \Big(p_i - \frac{1}{n}\Big)^2\biggr)^{\frac{1}{2}}, \nonumber
\end{align*}
}%
{where the first step follows from Assumption~\ref{ass:equilipschitz}, the second step follows from the definition of the set~$\hat{\Y}^i_n(x;\mu_n(x))$, the triangle inequality, and inequality~\eqref{eqn:projlipschitz}, and the fourth step follows by applying the Cauchy-Schwarz inequality twice.}

{Next, consider the second term on the r.h.s.\ of~\eqref{eqn:unifconvsamprobust}.
For each $x \in \X$}
{

\begin{align*}
&\uset{z \in \Z}{\sup} \: \lvert g^*_{s,n}(z;x) - g^*_n(z;x) \rvert \nonumber\\
=& \: \uset{z \in \Z}{\sup} \Big\lvert \uset{p \in \Pf_n(x;\zeta_{n}(x))}{\sup} \sum_{i=1}^{n} p_i c(z,f^*(x) + \varepsilon^i) - \frac{1}{n} \sum_{i=1}^{n} c(z,f^*(x) + \varepsilon^i) \Big\rvert \nonumber\\
= & \: \uset{z \in \Z}{\sup} \Big\lvert \uset{p \in \Pf_n(x;\zeta_{n}(x))}{\sup} \sum_{i=1}^{n} \Big( p_i - \frac{1}{n} \Big) c(z,f^*(x) + \varepsilon^i) \Big\rvert \nonumber\\
\leq & \: \uset{p \in \Pf_n(x;\zeta_{n}(x))}{\sup} \biggl(n \sum_{i=1}^{n} \Big(p_i - \frac{1}{n}\Big)^2 \biggr)^{\frac{1}{2}} \uset{z \in \Z}{\sup} \biggl(\frac{1}{n}\sum_{i=1}^{n} \big(c(z,f^*(x) + \varepsilon^i)\big)^2\biggr)^{\frac{1}{2}}, \nonumber
\end{align*}
}%
{where the inequality follows by Cauchy-Schwarz.}
\qed

\subsection{Proof of Theorem~\ref{thm:samprobustrateofconvlq}}
\label{app:samprobustrateofconvlq}

{Since}
{

\begin{align}
\label{eqn:samprobustlq}
\norm*{\hv^{DRO}_n(X) - v^*(X)}_{L^q} &= \big\lVert \min_{z \in \Z} \hg^{ER}_{s,n}(z;X) - \min_{z \in \Z} g(z;X)\big\rVert_{L^q} \nonumber \\
&\leq \Big\lVert \uset{z \in \Z}{\sup} \abs*{\hg^{ER}_{s,n}(z;X) - g(z;X)}\Big\rVert_{L^q},
\end{align}
}%
{we look to establish uniform rates of convergence of $\hg^{ER}_{s,n}(\cdot;X)$ to $g(\cdot;X)$ with respect to the $L^q$-norm on~$\X$.
From~\eqref{eqn:unifconvsamprobust} and the triangle inequality, we have}
{

\begin{align}
\label{eqn:unifconvsamprobust2}
\Big\lVert \uset{z \in \Z}{\sup}\: \abs*{\hg^{ER}_{s,n}(z;X) - g(z;X)}\Big\rVert_{L^q} \nonumber \leq& \: \Big\lVert \uset{z \in \Z}{\sup} \: \abs*{\hg^{ER}_{s,n}(z;x) - g^*_{s,n}(z;X)} \Big\rVert_{L^q} + \nonumber \\
&\quad \Big\lVert \uset{z \in \Z}{\sup} \: \abs*{g^*_{s,n}(z;X) - g^*_n(z;X)}\Big\rVert_{L^q} + \nonumber \\
&\quad \Big\lVert \uset{z \in \Z}{\sup} \: \abs*{g^*_n(z;X) - g(z;X)}\Big\rVert_{L^q}.
\end{align}
}%
{We bound the terms on the r.h.s.\ of~\eqref{eqn:unifconvsamprobust2} using Lemma~\ref{lem:unifconvsamprobust}.
Assumptions~\ref{ass:equilipschitz}, \ref{ass:ambiguitysetunif}, and~\ref{ass:regconvrateunif} and $\norm{\mu_{n}(X)}_{L^q} = O(n^{-r/2})$
imply the first term on the r.h.s.\ of inequality~\eqref{eqn:unifconvsamprobustres} satisfies:
\begin{align*}
&\Big\lVert \uset{z \in \Z}{\sup} \: \abs*{\hg^{ER}_{s,n}(z;X) - g^*_{s,n}(z;X)} \Big\rVert_{L^q} \\
\leq \: & \bigg\lVert \uset{z \in \Z}{\sup} \: L(z) \biggl(\mu_{n}(X) + \biggl(\dfrac{1}{n}\displaystyle\sum_{i=1}^{n} \norm{\teps^{i}_{n}(X)}^2\biggr)^{\frac{1}{2}} \biggr) \uset{p \in \Pf_n(X;\zeta_{n}(X))}{\sup} \biggl( 1 + n \sum_{i=1}^{n} \Big(p_i - \frac{1}{n}\Big)^2\biggr)^{\frac{1}{2}} \bigg\rVert_{L^q} \\
= \: & O(1) \biggl[ \norm*{\mu_n(X)}_{L^q} + \bigg\lVert \biggl(\dfrac{1}{n}\displaystyle\sum_{i=1}^{n} \norm{\teps^{i}_{n}(X)}^2\biggr)^{\frac{1}{2}} \bigg\rVert_{L^q} \biggr] \\
= \: & O(1) \biggl[ \norm*{\mu_n(X)}_{L^q} + \norm*{f^*(X) - \hf_n(X)}_{L^q} + \bigg\lVert \biggl(\dfrac{1}{n}\displaystyle\sum_{i=1}^{n} \norm{f^*(x^i) - \hf_n(x^i)}^2\biggr)^{\frac{1}{2}} \bigg\rVert_{L^q} \biggr] \\
= \: & O_p(n^{-r/2}).
\end{align*}}%
{Assumptions~\ref{ass:ambiguitysetunif} and~\ref{ass:functionalclt2}
imply that the second term on the r.h.s.\ of inequality~\eqref{eqn:unifconvsamprobustres} satisfies
\begin{align*}
&\Big\lVert\sup_{z \in \Z} \abs*{g^*_{s,n}(z;X) - g^*_n(z;X)} \Big\rVert_{L^q} \\
\leq \: & \bigg\lVert \uset{p \in \Pf_n(X;\zeta_{n}(X))}{\sup} \biggl(n \sum_{i=1}^{n} \Big(p_i - \frac{1}{n}\Big)^2 \biggr)^{\frac{1}{2}} \uset{z \in \Z}{\sup} \biggl(\frac{1}{n}\sum_{i=1}^{n} \big(c(z,f^*(X) + \varepsilon^i)\big)^2\biggr)^{\frac{1}{2}}\bigg\rVert_{L^q} \\
= \: & O_p(1) \bigg\lVert \uset{p \in \Pf_n(X;\zeta_{n}(X))}{\sup} \biggl(n \sum_{i=1}^{n} \Big(p_i - \frac{1}{n}\Big)^2 \biggr)^{\frac{1}{2}} \bigg\rVert_{L^q} = O_p(n^{-r/2}).
\end{align*}
Finally, Assumption~\ref{ass:functionalclt} implies 
\[
\Big\lVert\sup_{z \in \Z} \abs*{g^*_{n}(z;X) - g(z;X)} \Big\rVert_{L^q} = O_p(n^{-1/2}).
\]
Putting the above three inequalities together into inequality~\eqref{eqn:unifconvsamprobust2}, we obtain
\[
\Big\lVert \uset{z \in \Z}{\sup} \abs*{\hg^{ER}_{s,n}(z;X) - g(z;X)}\Big\rVert_{L^q} = O_p(n^{-r/2}).
\]
The first part of the stated result then follows from~\eqref{eqn:samprobustlq}. 
The second part of the stated result follows from~\eqref{eqn:samprobustlq} and the fact that}
\begin{align*}
{\norm*{g(\hz^{DRO}_n(X);X) - v^*(X)}_{L^q}} &{\leq \norm*{g(\hz^{DRO}_n(X);X) - \hv^{DRO}_n(X)}_{L^q} +} \\
&\qquad {\norm*{\hv^{DRO}_n(X) - v^*(X)}_{L^q}.} \tag*{\qed}
\end{align*}

\section{Ambiguity sets satisfying Assumption~\ref{ass:ambiguityset}}
\label{app:ambiguitysetrate}

{In Section~\ref{sec:samprobustopt}, we outlined conditions under which phi-divergence ambiguity sets $\Pf_n(x;\zeta_{n}(x))$ satisfy Assumption~\ref{ass:ambiguityset} for a suitable choice of the radius~$\zeta_{n}(x)$}.
Lemma~\ref{lem:ambiguityset} below determines sharp bounds on the radius~$\zeta_{n}(x)$ for some other families of ambiguity sets to satisfy Assumption~\ref{ass:ambiguityset}.
Before presenting the lemma, we introduce a third example of the ambiguity set $\Pf_n(x;\zeta_{n}(x))$ to add to Examples~\ref{exm:cvar} and~\ref{exm:phidivergence} in Section~\ref{sec:drsaa}.

\begin{example}
\label{exm:meanuppersemidev}
Mean-upper-semideviation-based ambiguity sets~\cite{shapiro2009lectures}: given order $a \in [1,+\infty)$ and radius $\zeta_n(x) \geq 0$, let $b := a/(a-1)$ and define~$\hP_n(x)$ using
\begin{align*}
\mathfrak{P}_n(x;\zeta_n(x)) := \bigg\{p \in \R^{n}_+ : &\sum_{i=1}^{n} p_i = 1 \: \text{and } \exists \: q \in \R^n_+ \text{ such that } \norm{q}_b \leq \zeta_n(x), \\
&\quad p_i = \frac{1}{n} \bigg[ 1 + q_i - \frac{1}{n} \sum_{j=1}^{n} q_j \bigg], \forall i \in [n] \bigg\}.
\end{align*}
\end{example}

\begin{lemma}
\label{lem:ambiguityset}
The following ambiguity sets satisfy Assumption~\ref{ass:ambiguityset} with constant $\rho \in (1,2]$:
\begin{enumerate}[label=(\alph*)]
\item CVaR-based ambiguity sets (see Example~\ref{exm:cvar}) with radius $\zeta_{n}(x) = O(n^{1-\rho})$, 

\item Variation distance-based ambiguity sets (see Example~\ref{exm:phidivergence}) with radius $\zeta_{n}(x) = O(n^{-\rho/2})$,

\item Mean-upper-semideviation-based ambiguity sets of order $a \in [1,+\infty)$ (see Example~\ref{exm:meanuppersemidev}) with radius $\zeta_{n}(x) = \begin{cases} O(n^{1 - \rho/2}) &\mbox{if } a \geq 2 \\
O(n^{3/2 - 1/a - \rho/2}) & \mbox{if } a < 2 \end{cases}$.
\end{enumerate}
Furthermore, these bounds are sharp in the sense described above.
\end{lemma}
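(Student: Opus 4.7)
The plan is to verify each of the three families separately by giving a sharp estimate of the quantity
\[
S_n(\zeta) := \sup_{p \in \Pf_n(x;\zeta)} \sum_{i=1}^{n} \Bigl(p_i - \tfrac{1}{n}\Bigr)^2
\]
as a function of $\zeta$ and $n$, and then inverting this estimate to identify the largest $\zeta$ for which $S_n(\zeta) = O(n^{-\rho})$. Sharpness in each case will follow by exhibiting an explicit feasible $p$ that realizes $S_n(\zeta)$ up to a multiplicative constant, which rules out any larger radius.

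For part (a), I would exploit the fact that the objective is convex and the feasible set is a polytope, so the supremum is attained at a vertex of $\Pf_n(x;\zeta)$; these vertices put mass $1/(n(1-\zeta))$ on an index set of size $k = \lfloor n(1-\zeta) \rfloor$, possibly one fractional component, and zero elsewhere. A direct calculation would yield $S_n(\zeta) = \Theta(\zeta/n)$ for small $\zeta$, giving the sharp bound $\zeta = O(n^{1-\rho})$. For part (b), observing that $\phi(t) = |t-1|$ reduces the constraint to $\sum_i |p_i - 1/n| \leq \zeta$, I would set $x_i := p_i - 1/n$ and use $\sum_i x_i = 0$ to conclude $\max_i |x_i| \leq \zeta/2$; hence $\sum_i x_i^2 \leq (\max_i |x_i|)\sum_i |x_i| \leq \zeta^2/2$, which is tight up to constants at $x_1 = \zeta/2$, $x_2 = -\zeta/2$, and yields $\zeta = O(n^{-\rho/2})$.

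For part (c), the change of variables $p_i - 1/n = (1/n)(q_i - \bar{q})$, with $\bar{q} := (1/n)\sum_j q_j$, gives
\[
S_n(\zeta) = \frac{1}{n^2} \sup_{\substack{q \geq 0 \\ \|q\|_b \leq \zeta}} \sum_{i=1}^{n} (q_i - \bar{q})^2 \leq \frac{1}{n^2} \sup_{\|q\|_b \leq \zeta} \|q\|_2^2.
\]
When $a \geq 2$ (so $b \leq 2$), monotonicity of $\ell_p$-norms gives $\|q\|_2 \leq \|q\|_b \leq \zeta$, and sharpness via $q = (\zeta, 0, \ldots, 0)$ yields $\zeta = O(n^{1-\rho/2})$. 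When $a < 2$ (so $b > 2$), H\"older's inequality produces $\|q\|_2 \leq n^{1/a - 1/2}\|q\|_b$ after applying $1/b = 1 - 1/a$, giving $S_n(\zeta) = O(\zeta^2 n^{2/a - 3})$ and hence $\zeta = O(n^{3/2 - 1/a - \rho/2})$.

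The main obstacle will be the sharpness argument in part (c) for $a < 2$: the extremal vector for the inequality $\|q\|_2 \leq n^{1/2 - 1/b}\|q\|_b$ has all components equal and therefore zero variance, so it does not realize the supremum of $\sum_i (q_i - \bar{q})^2$ that actually appears in $S_n(\zeta)$. To recover sharpness, I would take $q$ supported on $k = \lfloor c^* n \rfloor$ coordinates with common magnitude $\zeta k^{-1/b}$, then optimize the constant $c^* \in (0,1)$. A single-variable calculus exercise would give $c^* = (2-a)/2$ and $\sum_i (q_i - \bar{q})^2 = \Theta(\zeta^2 n^{(2-a)/a})$, matching the upper bound and closing the argument.
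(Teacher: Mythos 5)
Your proposal follows essentially the same route as the paper in all three parts: vertex analysis of the CVaR polytope for (a), the $\ell_1$-to-$\ell_2$ reduction for (b), and the change of variables $p_i - 1/n = (q_i-\bar q)/n$ followed by $\ell_b$-versus-$\ell_2$ norm comparison for (c); your computations of the exponents, including the conjugacy bookkeeping $1/b = 1-1/a$, all check out. Two remarks. First, a small but genuine flaw in the sharpness witness for (b): the two-point perturbation $x_1 = \zeta/2$, $x_2 = -\zeta/2$ forces $p_2 = 1/n - \zeta/2$, which is negative (hence infeasible, since $\mathfrak{P}_n$ requires $p \in \R^n_+$) whenever $\zeta > 2/n$ --- and the relevant regime $\zeta \asymp n^{-\rho/2}$ with $\rho < 2$ lies precisely there. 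The fix is to spread the negative mass over the remaining $n-1$ coordinates, i.e.\ $p_1 = 1/n + \zeta/2$ and $p_i = 1/n - \zeta/(2n-2)$ for $i \geq 2$ (which is what the paper does); the order $\Theta(\zeta^2)$ is unchanged. Second, your sharpness construction for (c) with $a<2$ is correct and you correctly diagnose why the constant vector fails; optimizing the support fraction to $c^* = (2-a)/2$ is a nice touch but more than is needed --- any fixed $c \in (0,1)$ (the paper uses $c=1/2$) already gives $\sum_i(q_i-\bar q)^2 = \zeta^2 n^{1-2/b}c^{1-2/b}(1-c) = \Theta(\zeta^2 n^{(2-a)/a})$, which suffices for order sharpness. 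You should also record the mild standing restrictions ($\zeta < 1/2$ in (a), $\zeta = O(n^{1/b})$ in (c)) needed so that the constructed $p$ remain in the simplex, though these hold automatically for the radii in question.
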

\begin{proof}
\begin{enumerate}[label=(\alph*)]
\item 
Assume that $\zeta_{n}(x) < 0.5$.
We begin by noting that there exists an optimal solution to the problem $\sup_{p \in \Pf_n(x;\zeta_{n}(x))} \sum_{i=1}^{n} \big(p_i - \frac{1}{n}\big)^2$ that is an extreme point of the polytope $\Pf_n(x;\zeta_{n}(x))$.
Furthermore, every extreme point of $\Pf_n(x;\zeta_{n}(x))$ satisfies at least $n-1$ of the set of $2n$ inequalities $\Bigl\{p_i \geq 0, \: i \in [n], \: p_i \leq \frac{1}{n(1-\zeta_{n}(x))}, \: i \in [n]\Bigr\}$, with equality.
This implies that there exists an optimal solution at which at least $n-1$ of the $p_i$s either take the value zero, or take the value $\frac{1}{n(1-\zeta_{n}(x))}$.
At this solution, $n-1$ of the terms $\big(p_i - \frac{1}{n}\big)^2$ are either $\frac{1}{n^2}$ or $\frac{1}{n^2} \bigl(\frac{\zeta_{n}(x)}{1-\zeta_{n}(x)}\bigr)^2$ (with $\frac{1}{n^2}$ larger since $\zeta_{n}(x) < 0.5$ by assumption).

\hspace*{0.17in} Suppose $M \in \{0,\dots,n-1\}$ of the inequalities $p_i \geq 0$, $i \in [n]$, are satisfied with equality at such an optimal solution.
Since $\sum_{i=1}^{n} p_i = 1$ and $p_i \leq \frac{1}{n(1-\zeta_{n}(x))}$, $\forall i \in [n]$, we require $(n-M)\frac{1}{n(1-\zeta_{n}(x))} \geq 1$, which implies $M \leq n\zeta_{n}(x)$.
Consequently, $M \leq n\zeta_{n}(x) < n/2$ of the inequalities $p_i \geq 0$, $i \in [n]$, are satisfied with equality and at least $(n-1-M) \geq n(1-\zeta_{n}(x)) - 1 > n/2 - 1$ of the inequalities $p_i \leq \frac{1}{n(1-\zeta_{n}(x))}$, $i \in [n]$, are satisfied with equality.
Therefore, whenever $\zeta_n(x) < 0.5$, we have:
\begin{align*}
\sum_{i=1}^{n} \bigg(p_i - \frac{1}{n}\bigg)^2 &\leq (n\zeta_{n}(x)+1)\frac{1}{n^2} + n(1-\zeta_{n}(x))\frac{1}{n^2} \biggl(\frac{\zeta_{n}(x)}{1-\zeta_{n}(x)}\biggr)^2 \\
&= \frac{1}{n^2} + \frac{1}{n}\biggl(\frac{\zeta_{n}(x)}{1-\zeta_{n}(x)}\biggr).
\end{align*}
Because the above analysis is constructive, it can be immediately used to deduce that the bound on~$\zeta_n(x)$ is sharp.

\item The stated result follows from the fact that 
\[
\sum_{i=1}^{n} \left(p_i - \frac{1}{n}\right)^2 \leq \left(\sum_{i=1}^{n} \abs*{p_i - \frac{1}{n}}\right)^2 \leq \zeta^2_{n}(x), \quad \forall p \in \Pf_n(x;\zeta_{n}(x)), \: x \in \X.
\]
To see that the above bound is sharp, assume without loss of generality that $n \geq 2$ and $\zeta_{n}(x) \leq 1$.
Then, because
\[
\biggl(\frac{1}{n} + \frac{\zeta_{n}(x)}{2}, \underbrace{\frac{1}{n} - \frac{\zeta_{n}(x)}{2n-2},\dots,\frac{1}{n} - \frac{\zeta_{n}(x)}{2n-2}}_{n-1 \text{ terms}}\biggr) \in \Pf_n(x;\zeta_{n}(x)),
\]
we have
\[
\uset{p \in \Pf_n(x;\zeta_{n}(x))}{\sup} \sum_{i=1}^{n} \Big(p_i - \frac{1}{n}\Big)^2 \geq \frac{\zeta^2_n(x)}{4} + \frac{\zeta^2_n(x)}{4(n-1)}.
\]

\item  Let $\bar{q} := \frac{1}{n}\sum_{i=1}^{n} q_i$. 
We have:
\[
\uset{p \in \Pf_n(x;\zeta_{n}(x))}{\sup} \sum_{i=1}^{n} \Big(p_i - \frac{1}{n}\Big)^2 \leq \uset{q \in \mathfrak{Q}_n(x;\zeta_{n}(x))}{\sup} \frac{1}{n^2}\sum_{i=1}^{n} (q_i - \bar{q})^2,
\]
where $\mathfrak{Q}_n(x;\zeta_{n}(x)) := \Set{q \in \R^n_+}{\norm{q}_b \leq \zeta_{n}(x)}$.
Note that for each $q \in \mathfrak{Q}_n(x;\zeta_{n}(x))$, we have $\abs{\bar{q}} \leq n^{-1} \norm{q}_1 \leq n^{-1/b} \norm{q}_b$, which in turn implies 
\[
\norm{q - \bar{q} \mathbf{1}}_b \leq \norm{q}_b + \abs{\bar{q}} \norm{\mathbf{1}}_b = \norm{q}_b + \abs{\bar{q}} n^{1/b} \leq \norm{q}_b + \norm{q}_b \leq 2\zeta_{n}(x),
\]
where $\mathbf{1}$ is a vector of ones of appropriate dimension.
Additionally, note that
\[
\sum_{i=1}^{n} (q_i - \bar{q})^2 = \norm{q - \bar{q}\mathbf{1}}^2 \leq \begin{cases} \norm{q - \bar{q}\mathbf{1}}^2_b &\mbox{if } b \leq 2 \\
n^{1 - 2/b} \norm{q - \bar{q}\mathbf{1}}^2_b & \mbox{if } b > 2 \end{cases}.
\]
The desired result then follows from
\begin{align*}
\uset{q \in \mathfrak{Q}_n(x;\zeta_{n}(x))}{\sup} \frac{1}{n^2}\sum_{i=1}^{n} (q_i - \bar{q})^2 &\leq \uset{\Set{q}{\norm{q - \bar{q}\mathbf{1}}_b \leq 2\zeta_{n}(x)}}{\sup} \frac{1}{n^2}\norm{q - \bar{q}\mathbf{1}}^2 \\
&\leq \begin{cases} \frac{4}{n^2}\zeta^2_{n}(x) &\mbox{if } b \leq 2 \\
\frac{4}{n^{1 + 2/b}} \zeta^2_{n}(x) & \mbox{if } b > 2 \end{cases}.
\end{align*}
We now show that the above bounds are sharp.

\hspace*{0.17in} Consider first the case when $b \leq 2$ and assume without loss of generality that $\zeta_{n}(x) = O(\sqrt{n})$.
Note that $p_i = \frac{1}{n} \big[ 1 + q_i - \frac{1}{n} \sum_{j=1}^{n} q_j \big]$, $i \in [n]$, with $q_1 = \zeta_{n}(x)$ and $q_i = 0$, $\forall i \geq 2$, is an element of~$\Pf_n(x;\zeta_{n}(x))$.
Therefore
\[
\uset{p \in \Pf_n(x;\zeta_{n}(x))}{\sup} \sum_{i=1}^{n} \Big(p_i - \frac{1}{n}\Big)^2 = \Theta\biggl(\frac{\zeta^2_{n}(x)}{n^2}\biggr).
\]

Next, suppose instead that $b > 2$ and assume without loss of generality that $\zeta_{n}(x) = O(n^{1/b})$.
Note that $p_i = \frac{1}{n} \big[ 1 + q_i - \frac{1}{n} \sum_{j=1}^{n} q_j \big]$ with $q_i = \begin{cases} \bigl(\frac{2}{n}\bigr)^{1/b}\zeta_{n}(x) &\mbox{if } i \equiv 0 \;(\bmod\; 2) \\
0 & \mbox{if } i \equiv 1 \;(\bmod\; 2) \end{cases}$, $i \in [n]$, is an element of~$\Pf_n(x;\zeta_{n}(x))$.
Therefore
\[
\uset{p \in \Pf_n(x;\zeta_{1,n}(x))}{\sup} \sum_{i=1}^{n} \Big(p_i - \frac{1}{n}\Big)^2 = \Theta\biggl(\frac{\zeta^2_{n}(x)}{n^{1+2/b}}\biggr). \qedhere
\]
\end{enumerate}
\end{proof}

\section{Additional computational results}
\label{app:computres}

\begin{algorithm}[t]
\caption{
{Estimating the $99$\% UCB on the optimality gap of a solution}}
\label{alg:99ucbs}
{
\begin{algorithmic}[1]
\State {\textbf{Input}: Covariate realization $X = x$ and data-driven solution $\hz_n(x)$ for a particular realization of the data~$\D_n$.}

\State {\textbf{Output}: $\hat{B}_{99}(x)$, which is a normalized estimate of the $99$\% UCB on the out-of-sample optimality gap of $\hz_n(x)$.}

\For{{$k = 1, \dots, 30$}}

\State {Draw $10^5$ i.i.d.\ samples $\bar{\D}^k := \{\bar{\varepsilon}^{k,i}\}_{i=1}^{10^5}$ of $\varepsilon$ according to $P_{\varepsilon}$.}

\State {Estimate the optimal value $v^*(x)$ by solving the full-information SAA} 

\Statex \hspace*{0.2in} {problem~\eqref{eqn:fullinfsaa} using the scenarios $\{f^*(x)+\bar{\varepsilon}^{k,i}\}_{i=1}^{10^5}$ constructed with~$\bar{\D}^k$}

\State {Estimate the out-of-sample cost of the solution $\hz_n(x)$ using the first}

\Statex \hspace*{0.2in} {$20{,}000$ samples of~$\bar{\D}^k$, i.e., $\hat{v}^k(x) := \frac{1}{20000} \sum_{i=1}^{2 \times 10^4} c(\hz_n(x),f^*(x)+\bar{\varepsilon}^{k,i})$}

\Statex \vspace*{-0.05in}

\State {Estimate the optimality gap of the $\hz_n(x)$ as $\hat{G}^k(x) = \hat{v}^k(x) - \bar{v}^k(x)$.}

\EndFor

\State {Construct the estimate of the $99$\% UCB on the optimality gap of $\hz_n(x)$ as 
\[
\hat{B}_{99}(x) := 100 \Bigl( \texttt{avg}(\{\hat{G}^k(x)\}) + 2.462\sqrt{\texttt{var}(\{\hat{G}^k(x)\}) / 30}\Bigr),
\]
\mbox{where $\texttt{avg}(\{\hat{G}^k(x)\})$/$\texttt{var}(\{\hat{G}^k(x)\})$ denote the mean/variance of estimates.}}
\end{algorithmic}
}
\end{algorithm}

{
Algorithm~\ref{alg:99ucbs} describes our procedure for estimating the $99$\% UCB on the optimality gap of our data-driven solutions using the multiple replication procedure~\citep{mak1999monte}.
We only use $20{,}000$ of the generated $10^5$ samples from the conditional distribution of $Y$ given $X = x$ to compute these UCBs since they are sufficient to yield an accurate estimate of the optimality gaps.
Unlike~\citep[Algorithm~1]{kannan2020data} that uses {\it relative} optimality gaps, we use {\it absolute} optimality gaps in our $99$\% UCB estimates to avoid division by small quantities when $v^*(x)$ is close to zero.}

We compare Algorithm~\ref{alg:ersaasameradius} with an ``optimal covariate-independent'' specification of the radius~$\zeta_n$.
This optimal covariate-independent radius is determined by choosing~$\zeta_n$ such that the medians of the $99\%$ UCBs over the $20$ different covariate realizations are minimized.
We also benchmark Algorithm~\ref{alg:ersaadiffradius} against an ``optimal covariate dependent'' specification of~$\zeta_n(x)$ that is determined by choosing~$\zeta_n(x)$ such that the $99\%$ UCBs are minimized.
Determining these optimal covariate-independent and covariate-dependent radii~$\zeta_n(x)$ is impractical because it requires {$20{,}000$} i.i.d.\ samples from the conditional distribution of $Y$ given $X = x$ (which a decision-maker does not have).
We consider it only to benchmark the performance of Algorithms~\ref{alg:ersaasameradius} and~\ref{alg:ersaadiffradius}. 
\\

\begin{figure}[t!]
    \centering
    \begin{subfigure}[t]{0.33\textwidth}
        \centering
        \includegraphics[width=\textwidth]{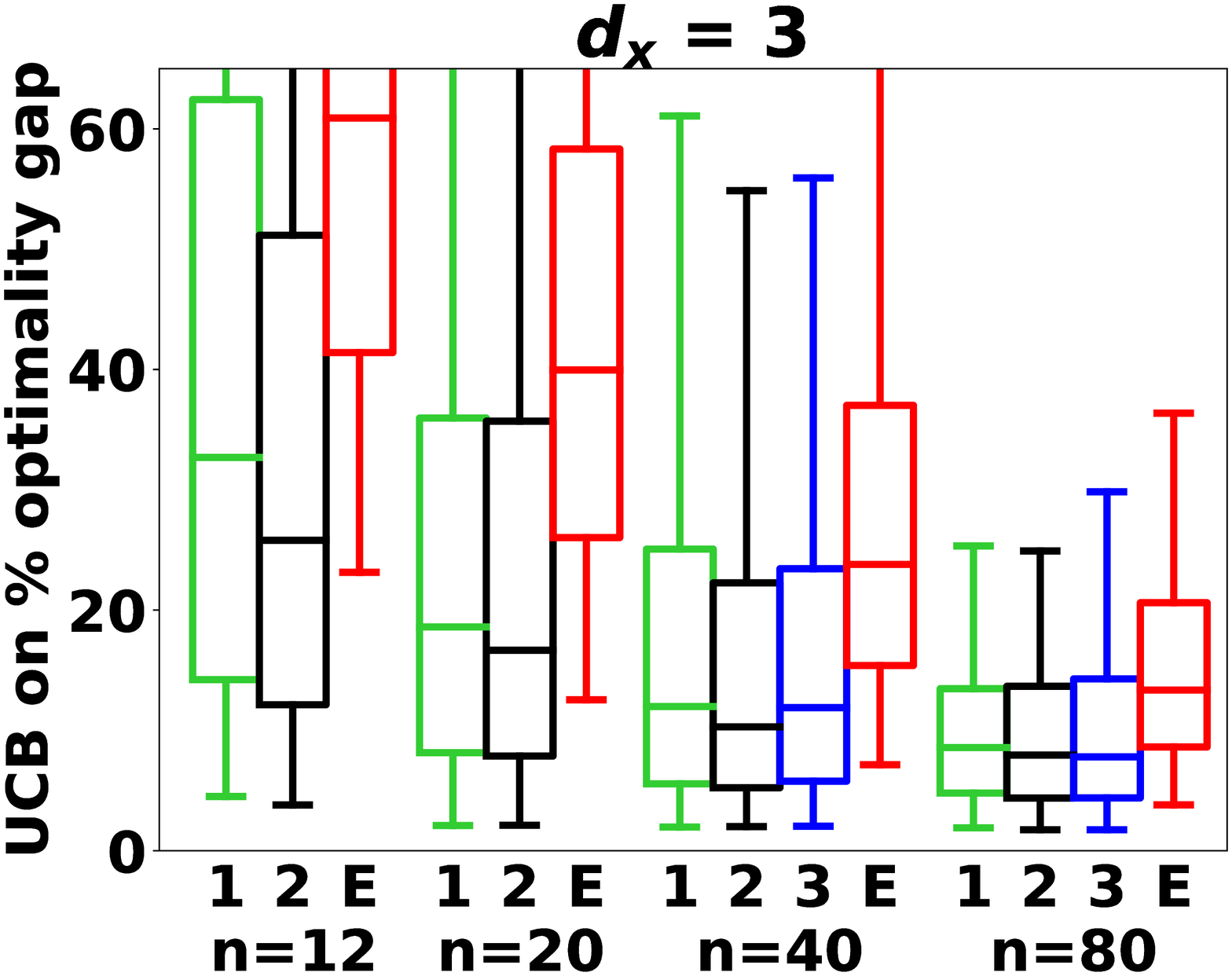}
    \end{subfigure}%
    ~ 
    \begin{subfigure}[t]{0.33\textwidth}
        \centering
        \includegraphics[width=\textwidth]{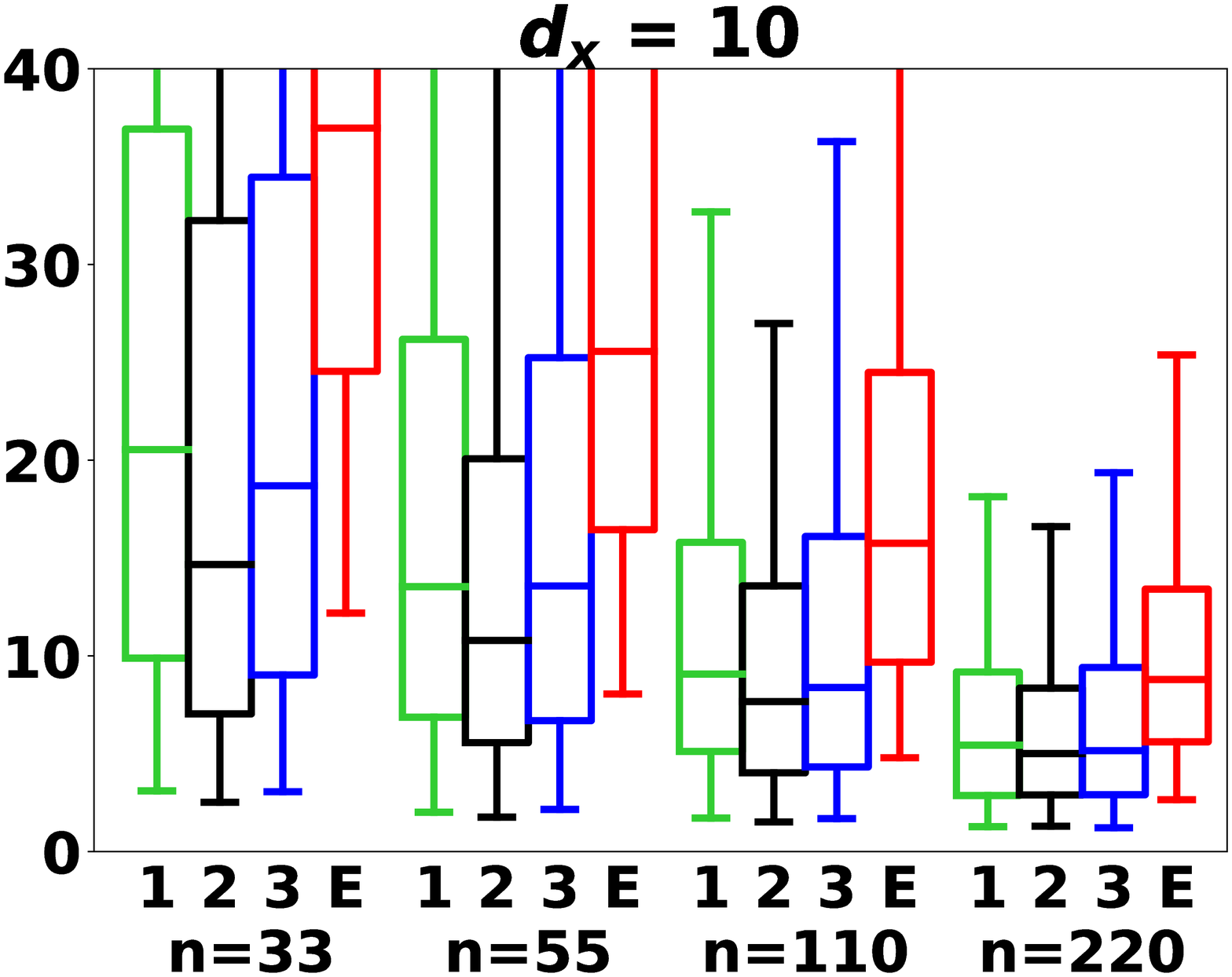}
    \end{subfigure}%
    ~ 
    \begin{subfigure}[t]{0.33\textwidth}
        \centering
        \includegraphics[width=\textwidth]{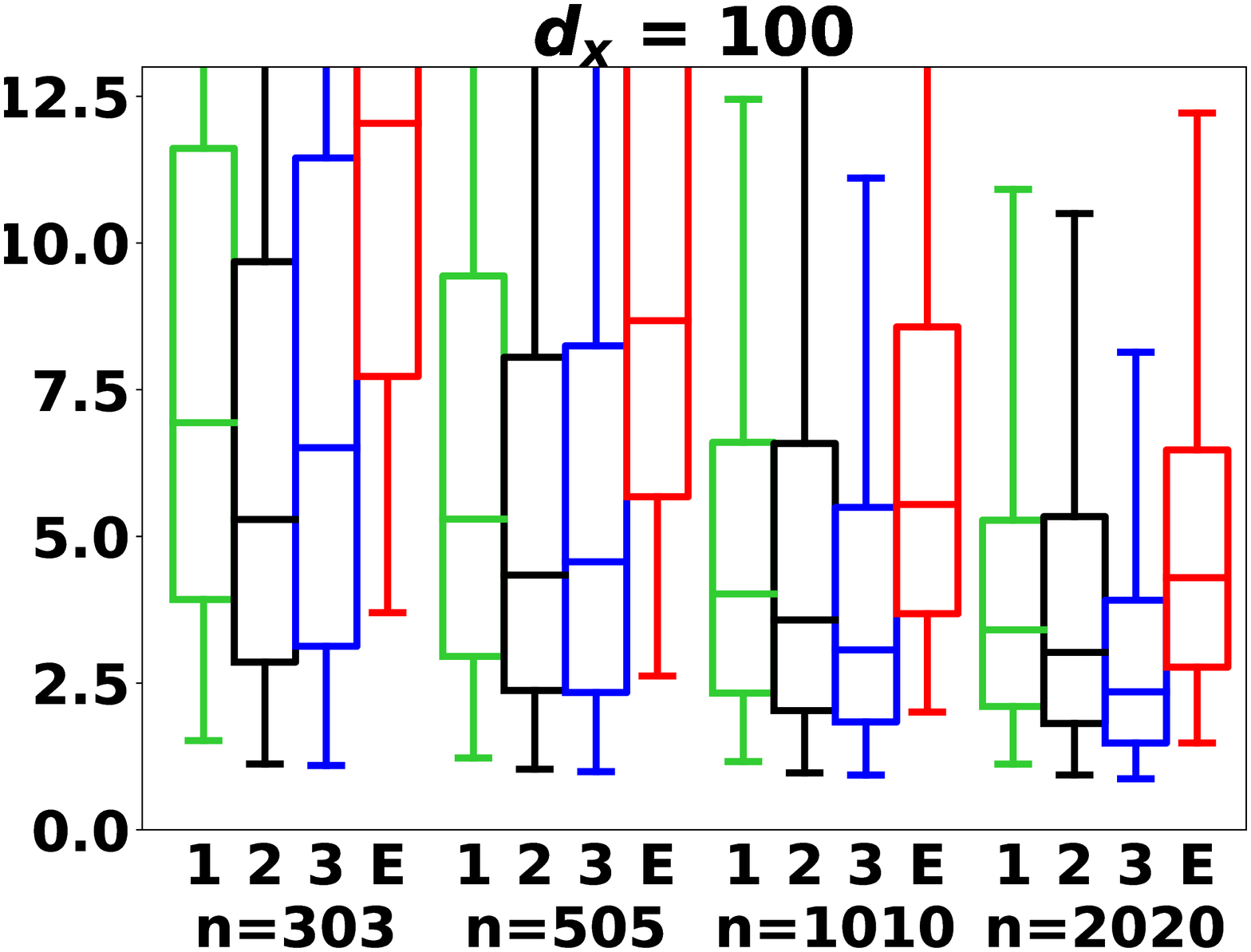}
    \end{subfigure}\\
    \begin{subfigure}[t]{0.33\textwidth}
        \centering
        \includegraphics[width=\textwidth]{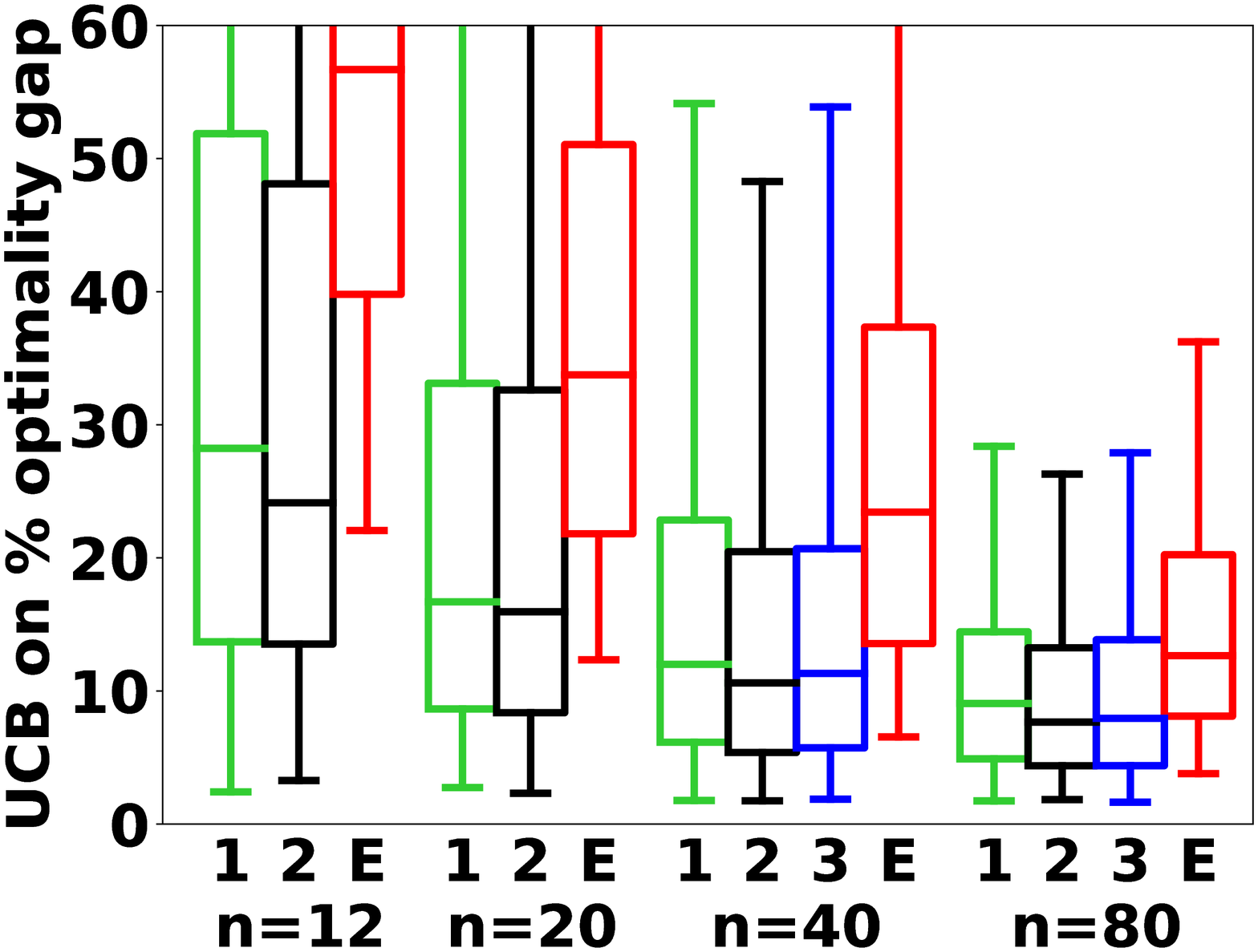}
    \end{subfigure}%
    ~ 
    \begin{subfigure}[t]{0.33\textwidth}
        \centering
        \includegraphics[width=\textwidth]{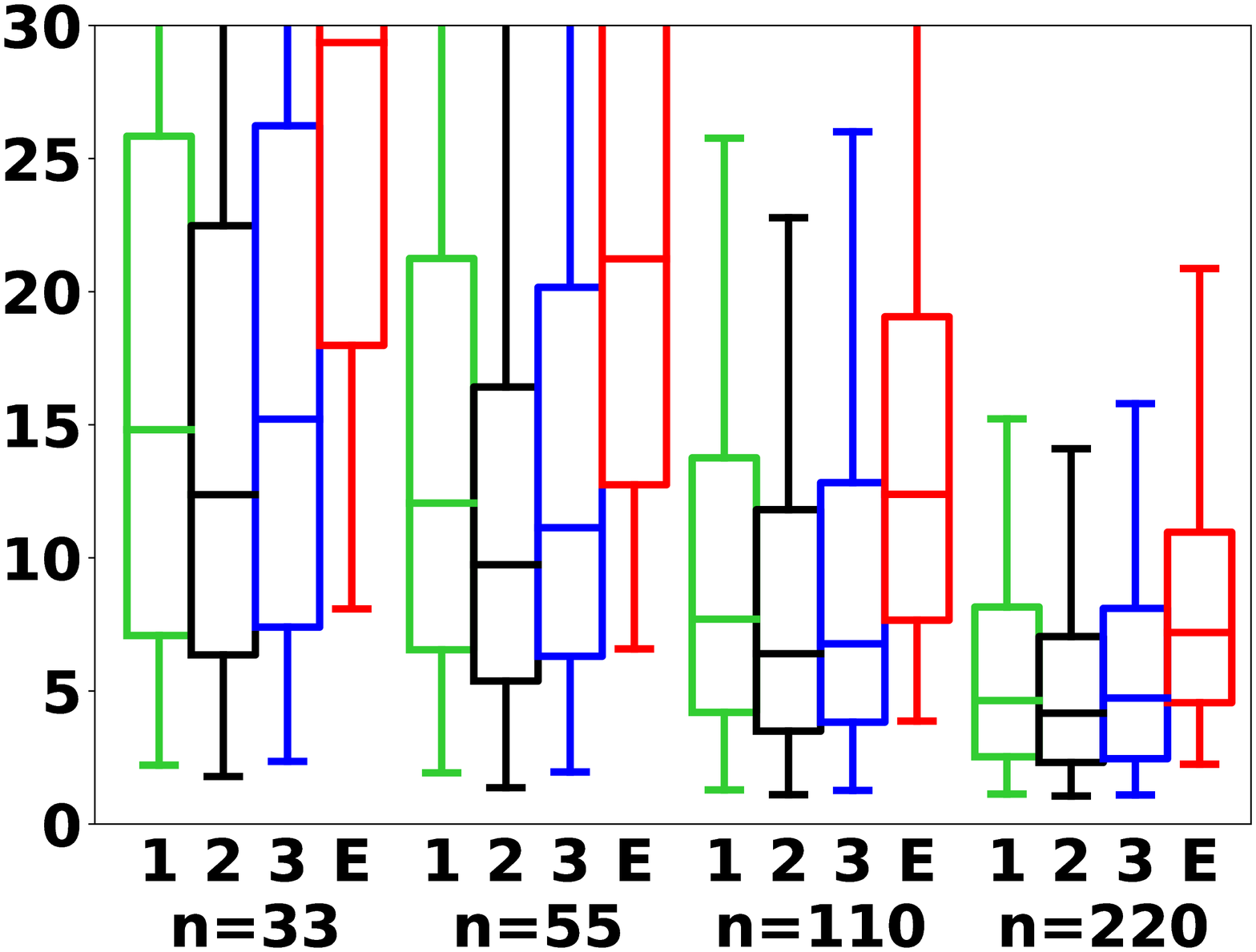}
    \end{subfigure}%
    ~ 
    \begin{subfigure}[t]{0.33\textwidth}
        \centering
        \includegraphics[width=\textwidth]{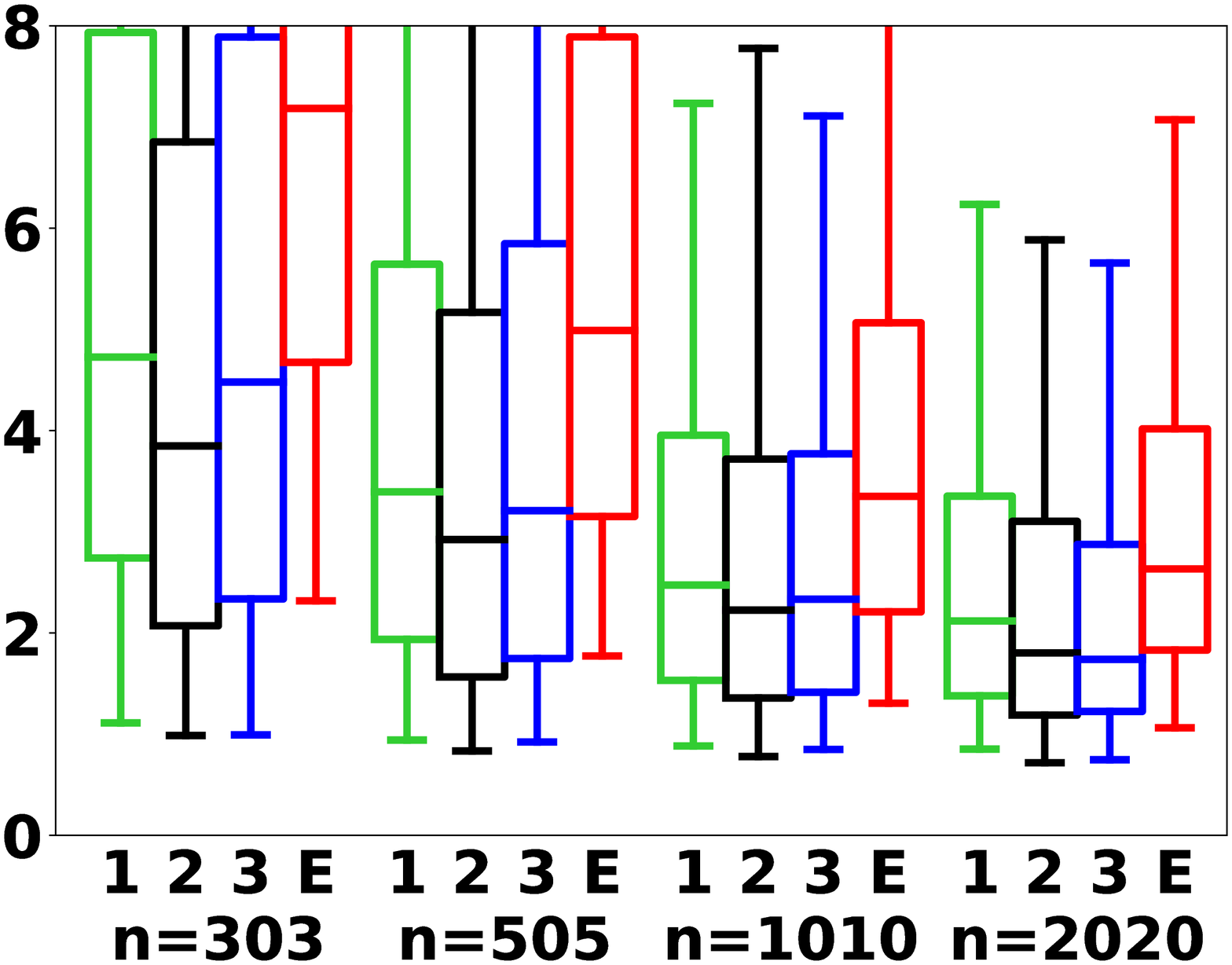}
    \end{subfigure}\\
    \begin{subfigure}[t]{0.33\textwidth}
        \centering
        \includegraphics[width=\textwidth]{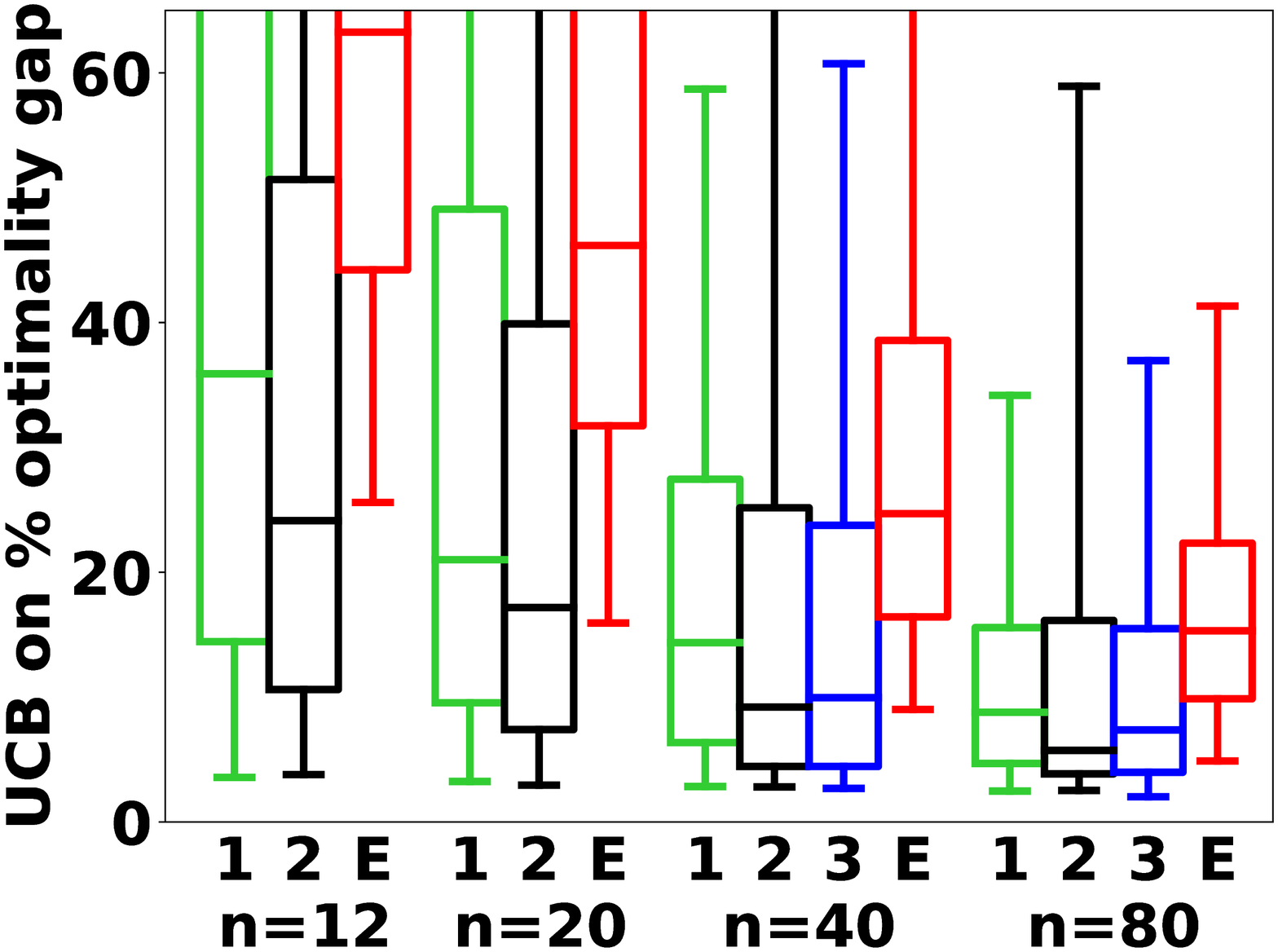}
    \end{subfigure}%
    ~ 
    \begin{subfigure}[t]{0.33\textwidth}
        \centering
        \includegraphics[width=\textwidth]{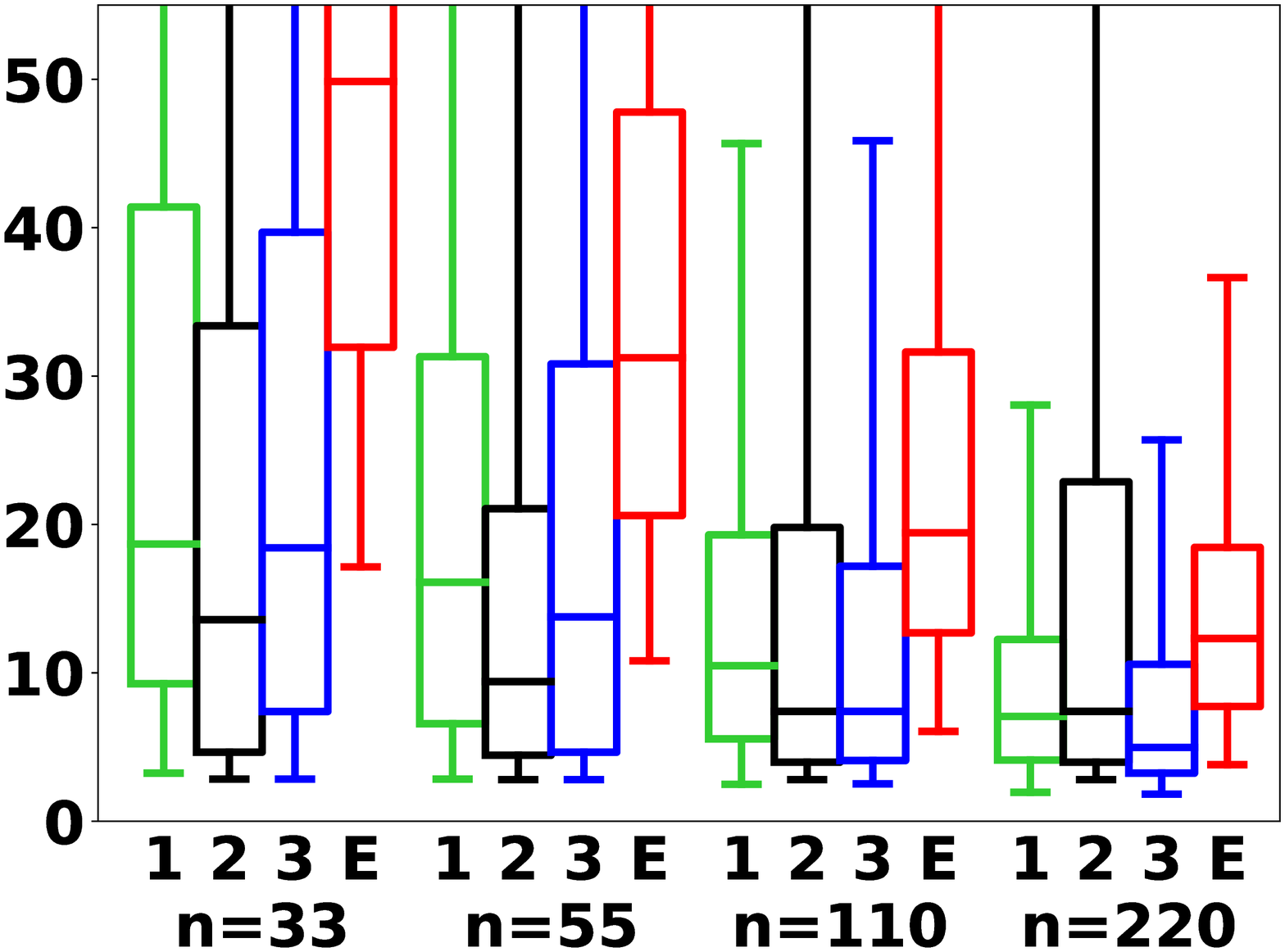}
    \end{subfigure}%
    ~ 
    \begin{subfigure}[t]{0.33\textwidth}
        \centering
        \includegraphics[width=\textwidth]{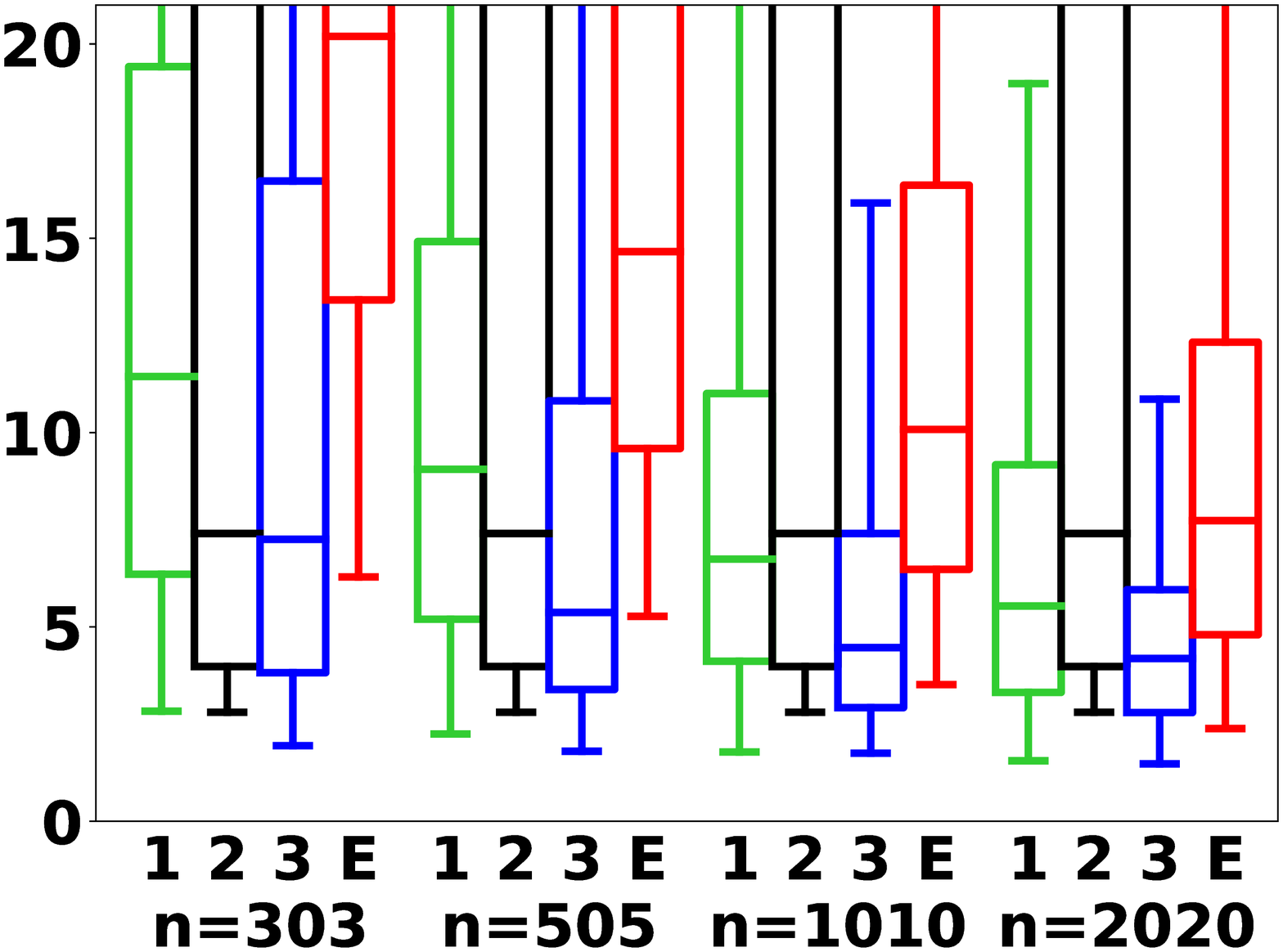}
    \end{subfigure}
    \caption{{\textbf{(Wasserstein-DRO with Ridge regression)} Comparison of the \tE+Ridge approach (\texttt{E}) with tuning of the \tW+Ridge radius using Algorithms~\ref{alg:naivesaaradius} (\tone),~\ref{alg:ersaasameradius} (\ttwo), and~\ref{alg:ersaadiffradius} (\tthree). Top row: $\theta = 1$. Middle row: $\theta = 0.5$. Bottom row: $\theta = 2$. Left column: $d_x = 3$. Middle column: $d_x = 10$. Right column: $d_x = 100$.}}
    \label{fig:comp_ridge_wass}
\end{figure}

\noindent
\textbf{``Optimal'' tuning of the Wasserstein radius.}
Figure~\ref{fig:comp_ols_wass_dep} compares the performance of the \tW+OLS formulations when the radius~$\zeta_n(x)$ of the ambiguity set is determined using Algorithms~\ref{alg:ersaasameradius} and~\ref{alg:ersaadiffradius} and optimal covariate-dependent and covariate-independent tuning.
We vary the model degree $\theta$, the covariate dimension among $d_x \in \{3,10,100\}$, and the {sample size among $n \in \{5(d_x + 1),10(d_x + 1),20(d_x + 1),50(d_x + 1)\}$} in these experiments.
{The radius specified by Algorithm~\ref{alg:ersaasameradius} performs better than the radius specified using Algorithm~\ref{alg:ersaadiffradius} for smaller sample sizes and covariate dimensions, and the converse holds for larger covariate dimensions and sample sizes}.
These results indicate that while covariate-dependent tuning theoretically has potential to yield better results than the covariate-independent tuning of Algorithm~\ref{alg:ersaasameradius}, Algorithm~\ref{alg:ersaadiffradius} is {only} able to obtain good estimates of the optimal covariate-dependent radius~$\zeta_n(x)$ {for relatively large sample sizes $n$}.
The difference between the performance of Algorithm~\ref{alg:ersaasameradius} and the optimal covariate-independent tuning reduces with increasing sample size and covariate dimension {except for $\theta = 2$}.
The difference between the performance of Algorithm~\ref{alg:ersaadiffradius} and optimal covariate-dependent tuning of the radius also {reduces} with increasing covariate dimension and sample size.

\begin{figure}[t!]
    \centering
    \begin{subfigure}[t]{0.33\textwidth}
        \centering
        \includegraphics[width=\textwidth]{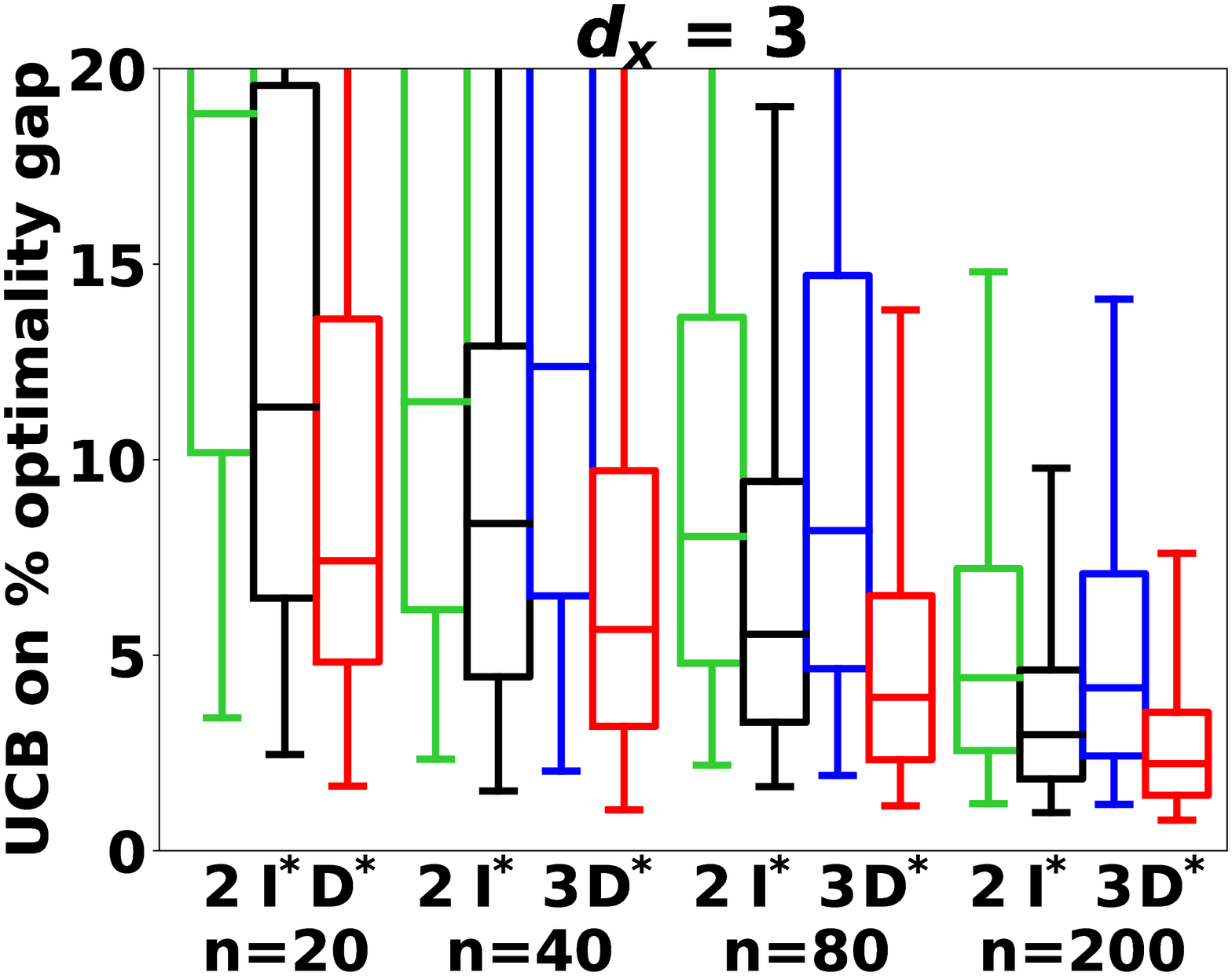}
    \end{subfigure}%
    ~ 
    \begin{subfigure}[t]{0.33\textwidth}
        \centering
        \includegraphics[width=\textwidth]{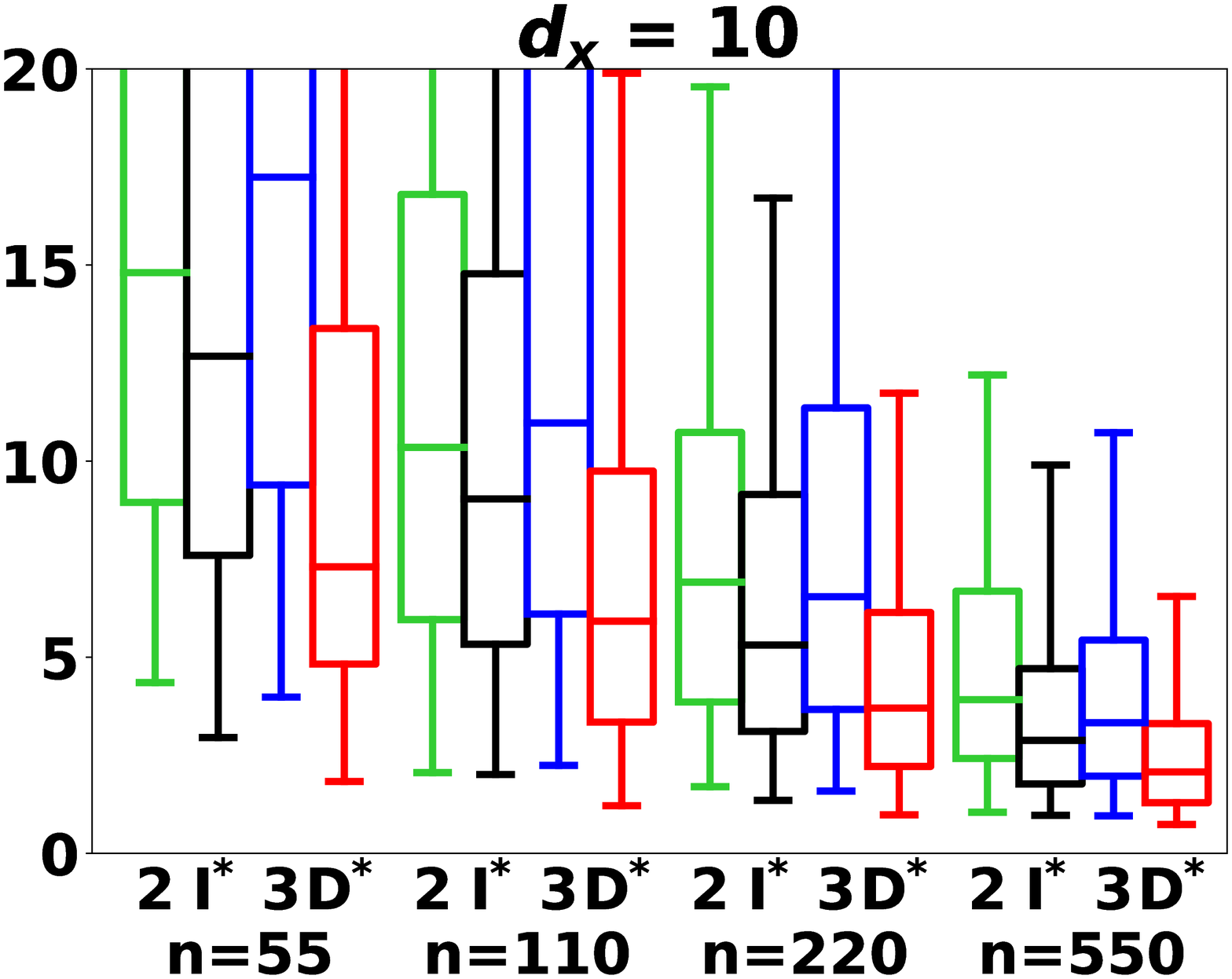}
    \end{subfigure}%
    ~ 
    \begin{subfigure}[t]{0.33\textwidth}
        \centering
        \includegraphics[width=\textwidth]{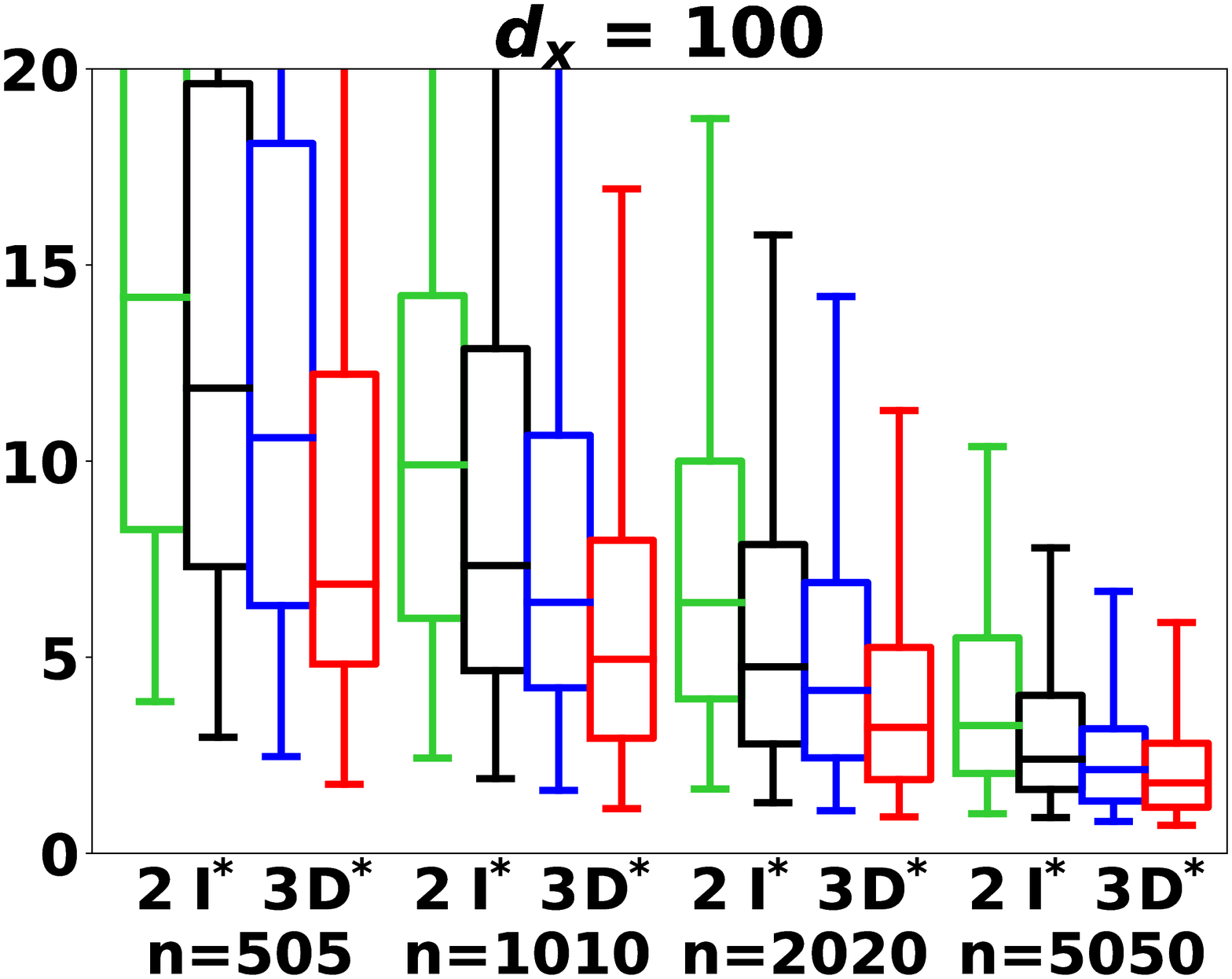}
    \end{subfigure}\\
    \begin{subfigure}[t]{0.33\textwidth}
        \centering
        \includegraphics[width=\textwidth]{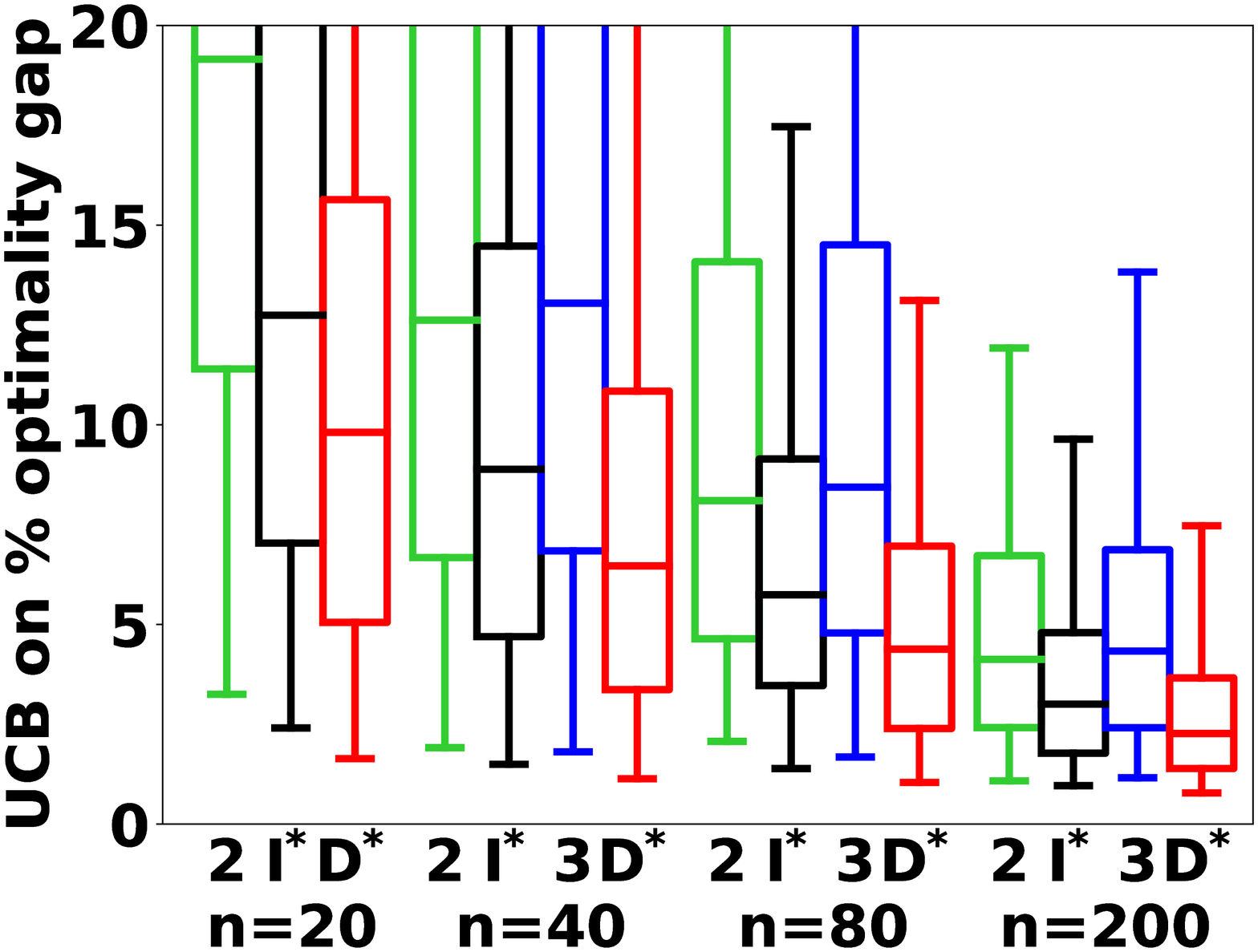}
    \end{subfigure}%
    ~ 
    \begin{subfigure}[t]{0.33\textwidth}
        \centering
        \includegraphics[width=\textwidth]{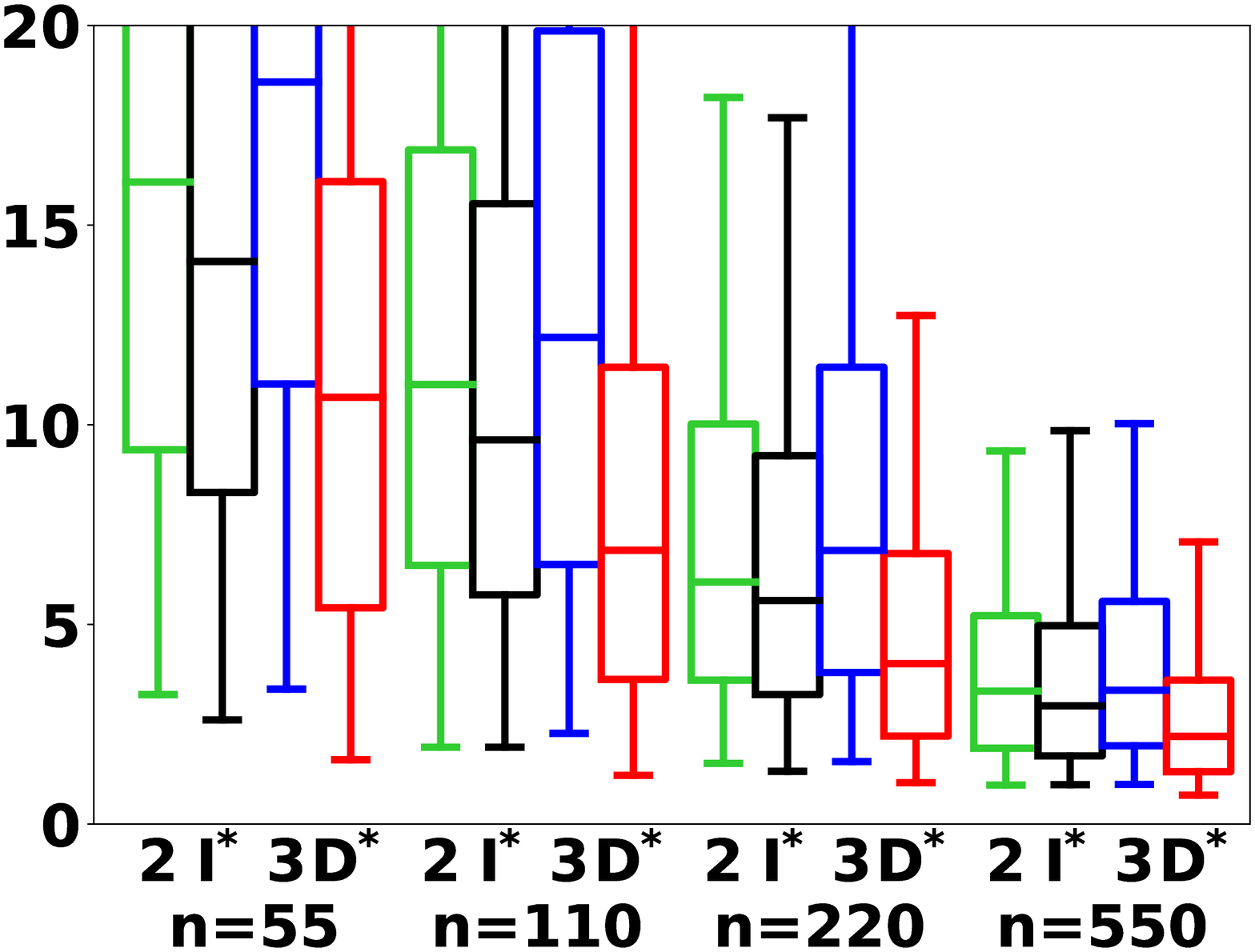}
    \end{subfigure}%
    ~ 
    \begin{subfigure}[t]{0.33\textwidth}
        \centering
        \includegraphics[width=\textwidth]{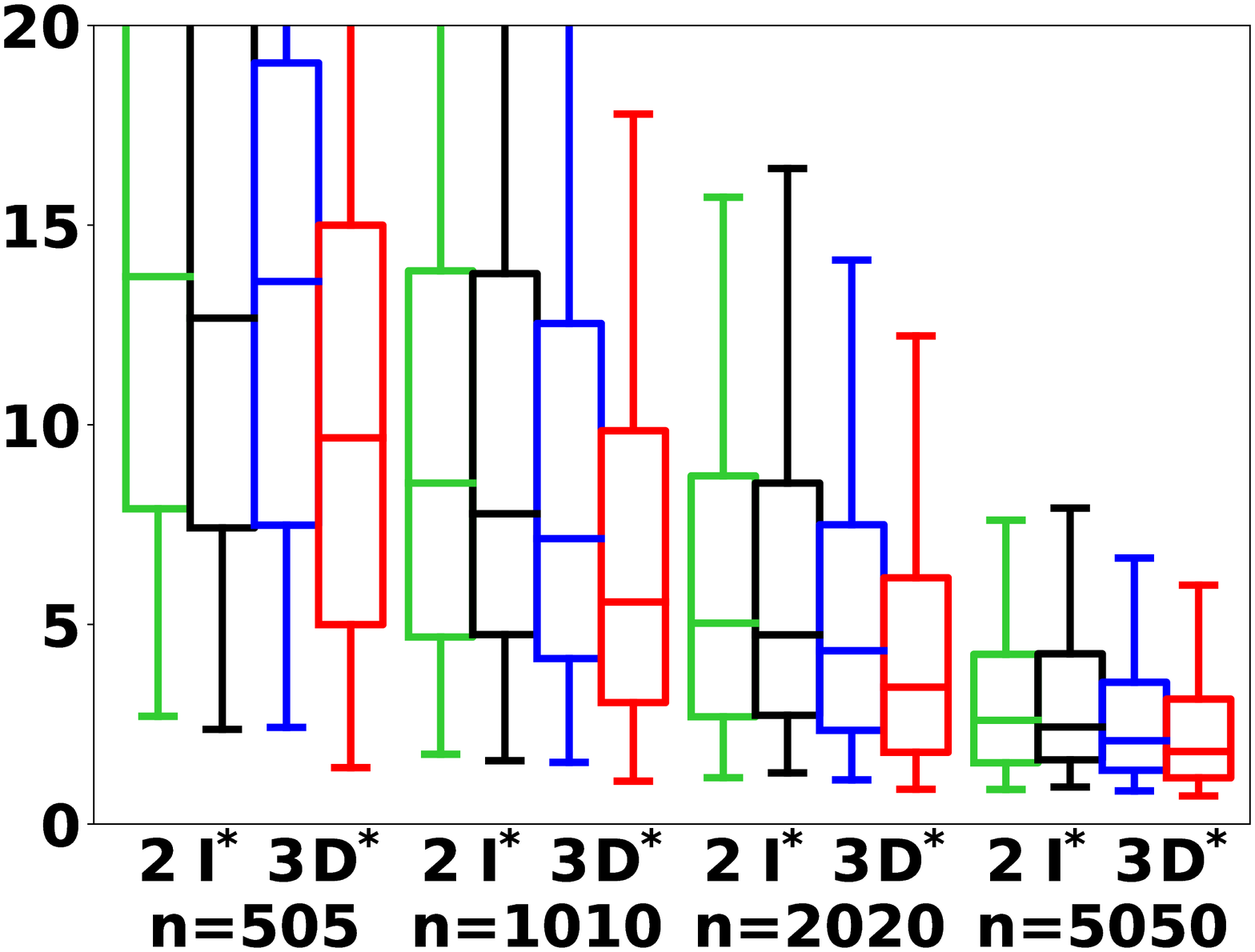}
    \end{subfigure}\\
    \begin{subfigure}[t]{0.33\textwidth}
        \centering
        \includegraphics[width=\textwidth]{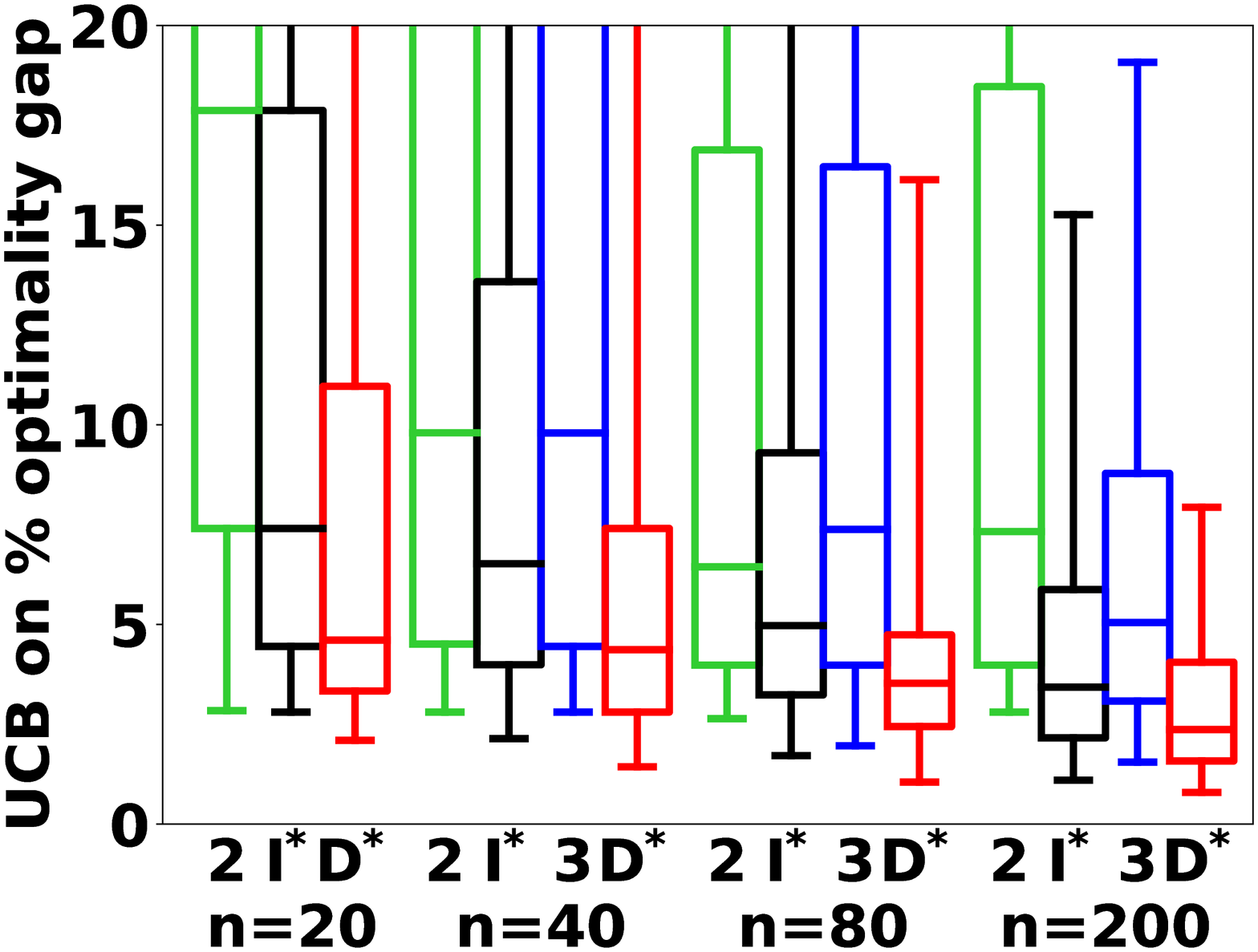}
    \end{subfigure}%
    ~ 
    \begin{subfigure}[t]{0.33\textwidth}
        \centering
        \includegraphics[width=\textwidth]{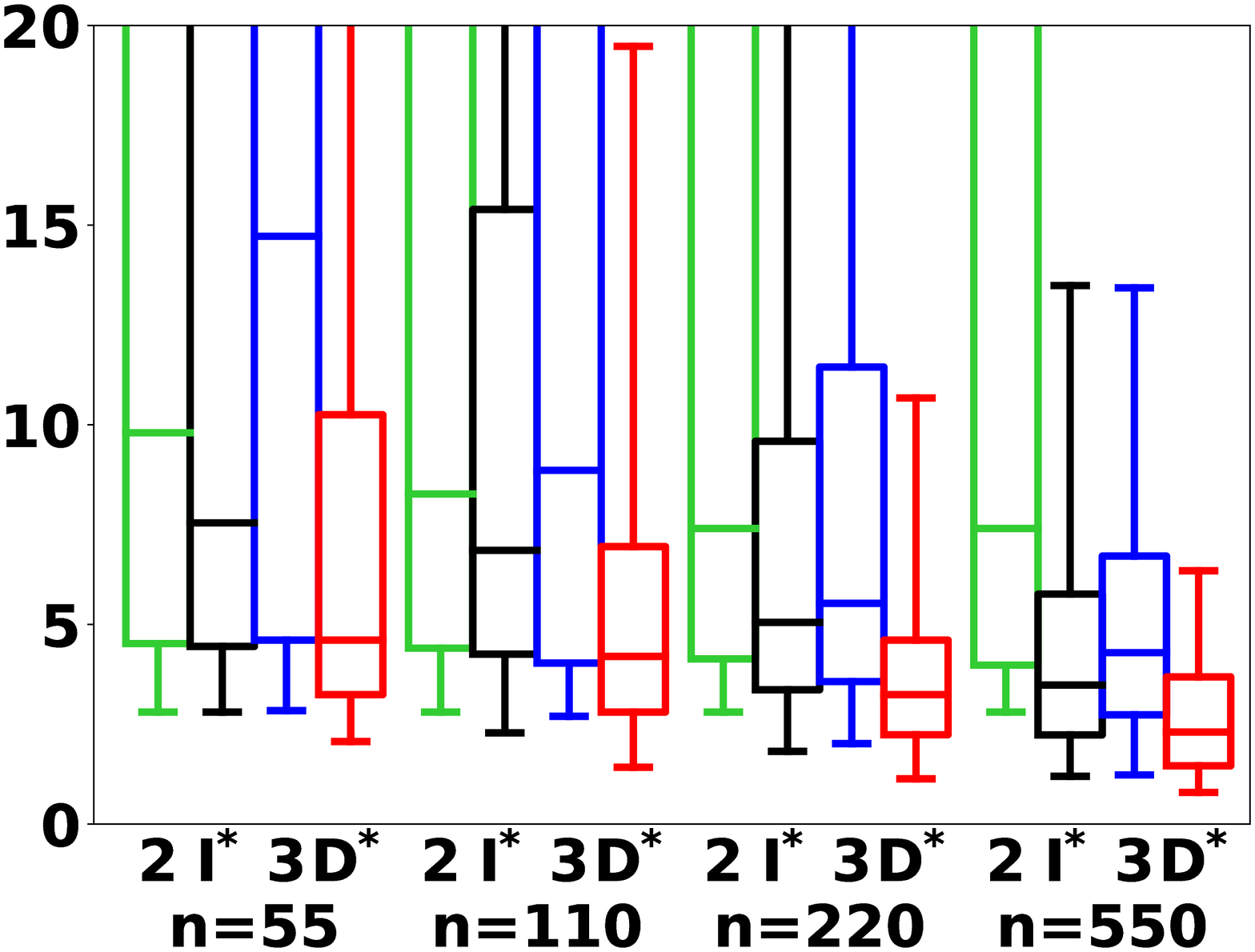}
    \end{subfigure}%
    ~ 
    \begin{subfigure}[t]{0.33\textwidth}
        \centering
        \includegraphics[width=\textwidth]{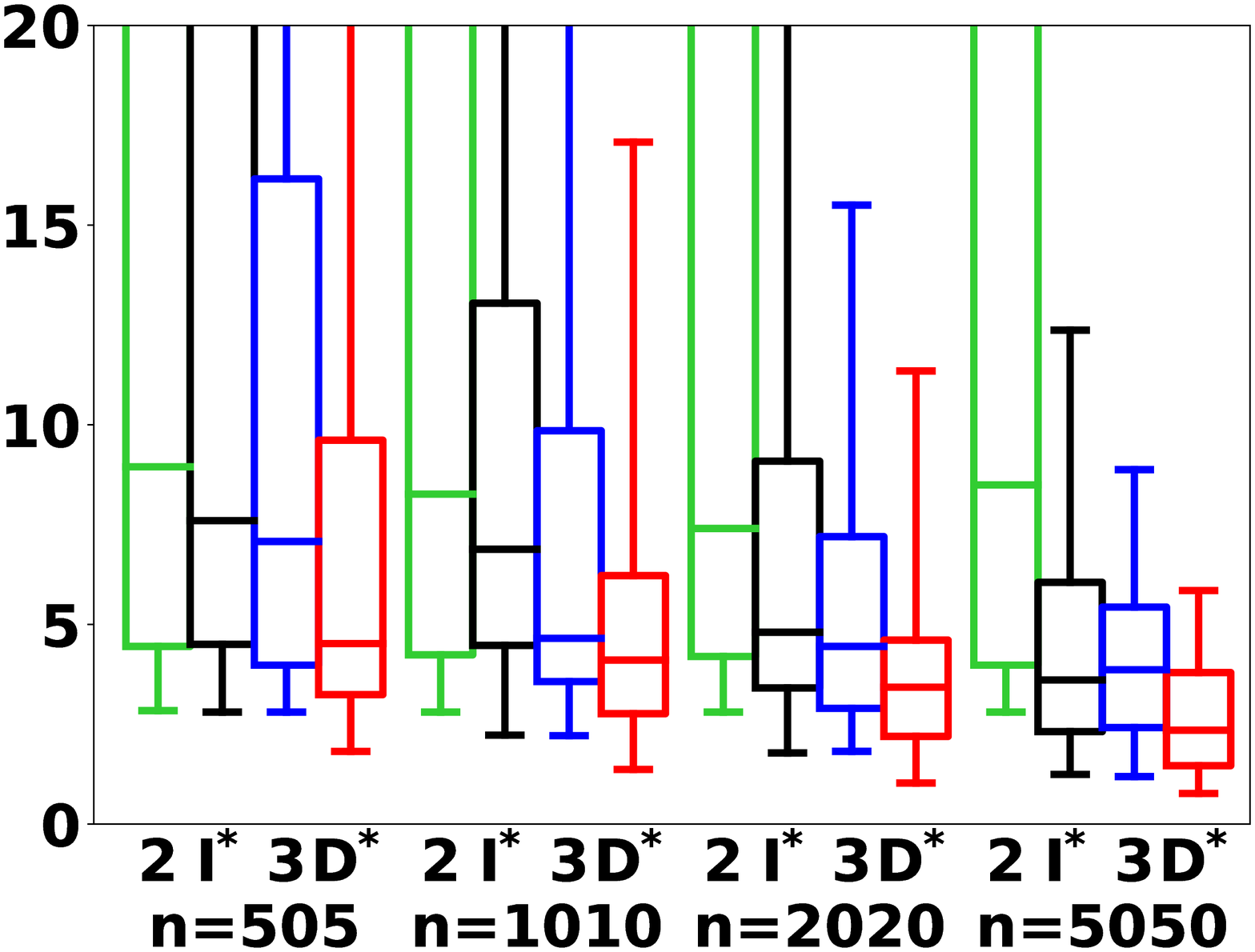}
    \end{subfigure}
    \caption{{\small \textbf{(Comparison with ``optimal'' specification of the Wasserstein radius)} Comparison of the \tW+OLS approach with the optimal covariate-dependent (\texttt{$\text{D}^*$}) and covariate-independent (\texttt{$\text{I}^*$}) tuning of the \tW+OLS radius, and the tuning of the \tW+OLS radius using Algorithm~\ref{alg:ersaadiffradius} (\texttt{3}) and Algorithm~\ref{alg:ersaasameradius} (\texttt{2}). Top row: $\theta = 1$. Middle row: $\theta = 0.5$. Bottom row: $\theta = 2$. Left column: $d_x = 3$. Middle column: $d_x = 10$. Right column: $d_x = 100$.}}
    \label{fig:comp_ols_wass_dep}
\end{figure}

\vspace{0.1in}
\noindent
\textbf{Comparison of the radii specified by Algorithms~\ref{alg:naivesaaradius},~\ref{alg:ersaasameradius}, and~\ref{alg:ersaadiffradius}.}
Figure~\ref{fig:comp_ols_wass_radii} compares the radii specified by Algorithms~\ref{alg:naivesaaradius},~\ref{alg:ersaasameradius}, and~\ref{alg:ersaadiffradius} with the optimal covariate-dependent radius and optimal covariate-independent radius for the \tW+OLS formulation.
We consider $d_x = 100$, vary the model degree $\theta$, and vary the sample size {among $n \in \{5(d_x + 1),10(d_x + 1),20(d_x + 1),50(d_x + 1)\}$ in these experiments}.
{Note that the $y$-axis limits are different across the subplots.}
First, note that the radius specified by Algorithm~\ref{alg:naivesaaradius} shrinks very quickly to zero for all three values of~$\theta$.
{Consequently, we note from Figure~\ref{fig:comp_ols_wass_indep} that the resulting ER-DRO estimators typically do not perform as well as the estimators obtained when the radius~$\zeta_n$ is specified using Algorithms~\ref{alg:ersaasameradius} and~\ref{alg:ersaadiffradius}.}
Second, we see that the covariate-independent specifications of the radius result in more narrow distributions {overall} compared to the covariate-dependent specifications.
This may be because the covariate-independent specifications of the radius attempt to choose a single value of~$\zeta_n(x)$ for all possible covariate realizations~$x \in \X$, whereas the covariate-dependent specifications can choose a different value of~$\zeta_n(x)$ depending on the realization~$x \in \X$.
Third, the distribution of the radius determined using Algorithm~\ref{alg:ersaadiffradius} {appears to converge} to the distribution of the optimal covariate-dependent radius as the sample size increases.
Similarly, the distribution of the radius determined using Algorithm~\ref{alg:ersaasameradius} {also appears to converge} to the distribution of the optimal covariate-independent radius as $n$ increases (except for the case when $\theta = 2$).
Finally, as noted in Section~\ref{subsec:wass_radius}, it may be advantageous to use a positive radius for the ambiguity set when the prediction model is misspecified (e.g., using OLS regression even when $\theta \neq 1$).
This is corroborated by the plots for $\theta = 2$, where the distribution of the optimal covariate-dependent radius is far from the zero distribution even for large sample sizes~$n$.
\\

\noindent
\textbf{Comparison with the {jackknife-based} formulations.}
Figure~\ref{fig:comp_ols_wass_jack} compares the performance of the ER-SAA+OLS approach and the {jackknife-based} SAA (J-SAA+OLS) approach \cite{kannan2020data} with the \tW+OLS formulations when the radius~$\zeta_n(x)$ is specified using Algorithms~\ref{alg:ersaasameradius} and~\ref{alg:ersaadiffradius}.
We consider $d_x = 100$, vary the model degree $\theta$, and vary the {sample size among $n \in \{3(d_x + 1),5(d_x + 1),10(d_x + 1),20(d_x + 1)\}$ in these experiments}.
As observed in~\cite{kannan2020data}, the J-SAA+OLS formulation performs better than the \tE+OLS formulation in the small sample size regime.
Figure~\ref{fig:comp_ols_wass_jack} shows that the \tW+OLS formulations outperform the J-SAA+OLS formulation {(except when using Algorithm~\ref{alg:ersaasameradius} for $\theta = 2$ and large $n$)}.
This is expected because the ER-DRO formulations account for both the errors in the approximation of $f^*$ by $\hf_n$ \textit{and} in the approximation of $P_{Y \mid X = x}$ by $P^*_n(x)$, whereas the J-SAA+OLS formulation only addresses the bias in the residuals obtained from OLS regression (i.e., even if $\hf_n$ is an accurate estimate of $f^*$, the jackknife formulations do not account for the fact that $P^*_n(x)$ may be a crude approximation of $P_{Y \mid X = x}$).
We omit the results for the J+-SAA+OLS formulation because they are similar to those for the J-SAA+OLS formulation.

\begin{figure}
    \centering
    \begin{subfigure}[t]{0.33\textwidth}
        \centering
        \includegraphics[width=\textwidth]{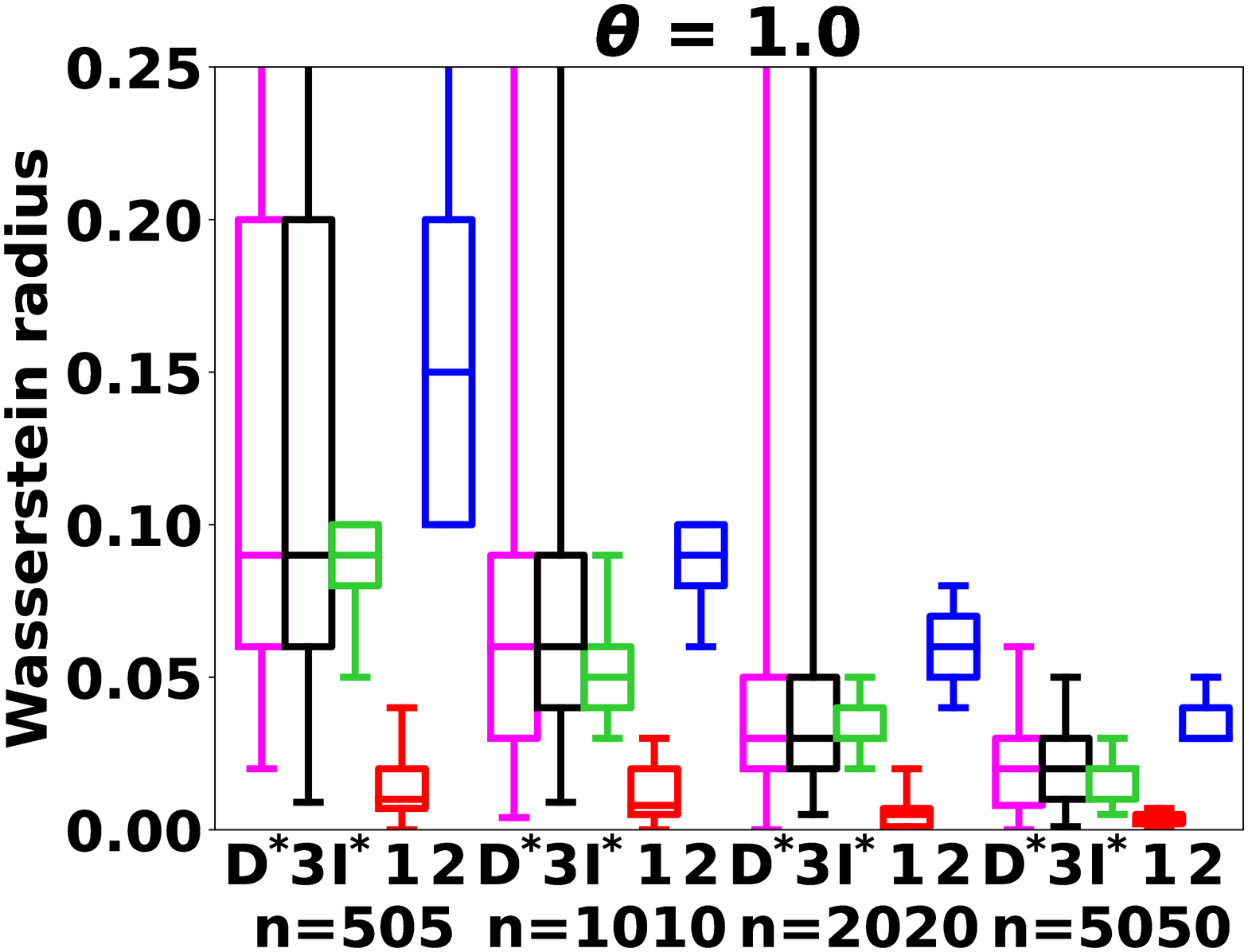}
    \end{subfigure}%
    ~ 
    \begin{subfigure}[t]{0.33\textwidth}
        \centering
        \includegraphics[width=\textwidth]{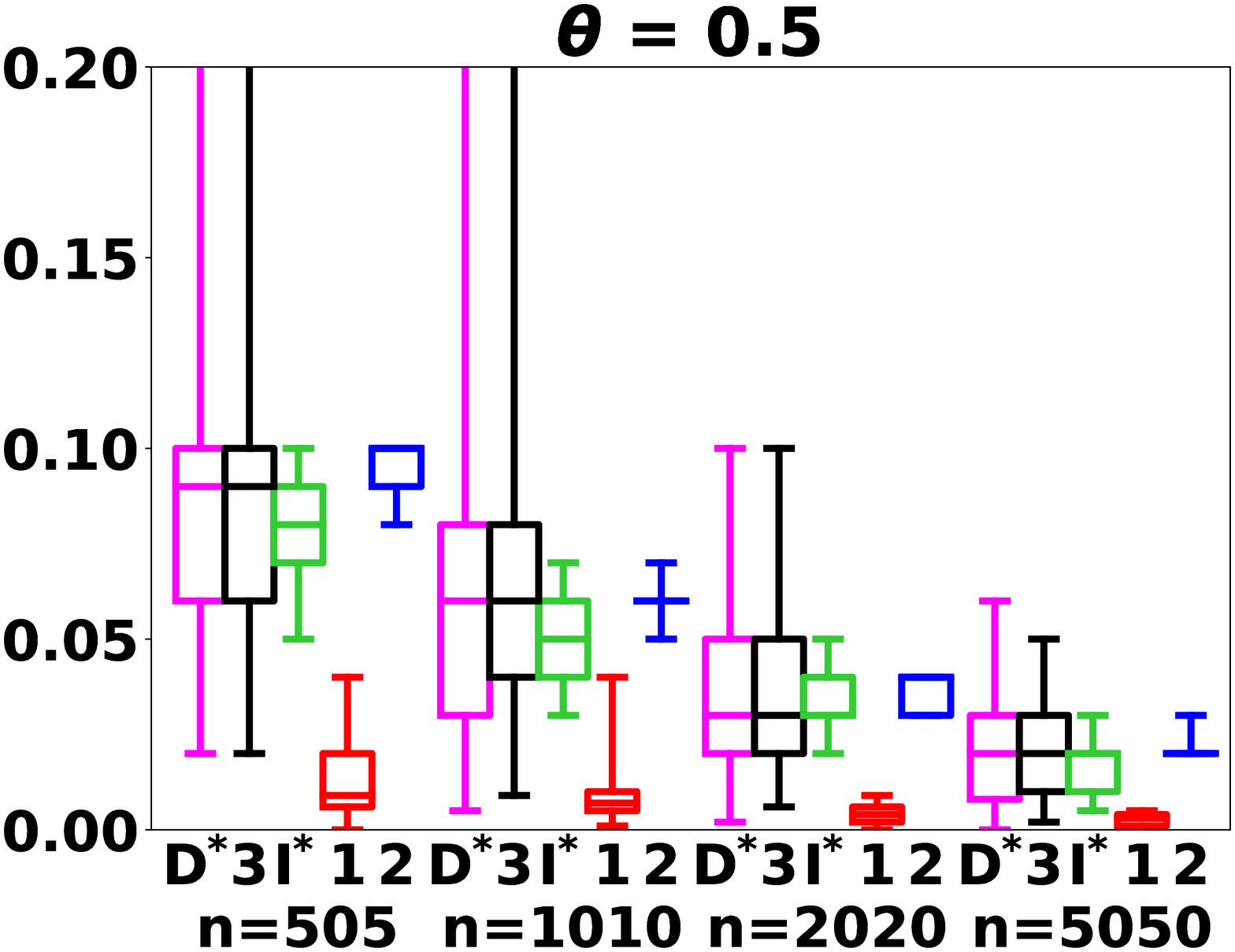}
    \end{subfigure}%
    ~ 
    \begin{subfigure}[t]{0.33\textwidth}
        \centering
        \includegraphics[width=\textwidth]{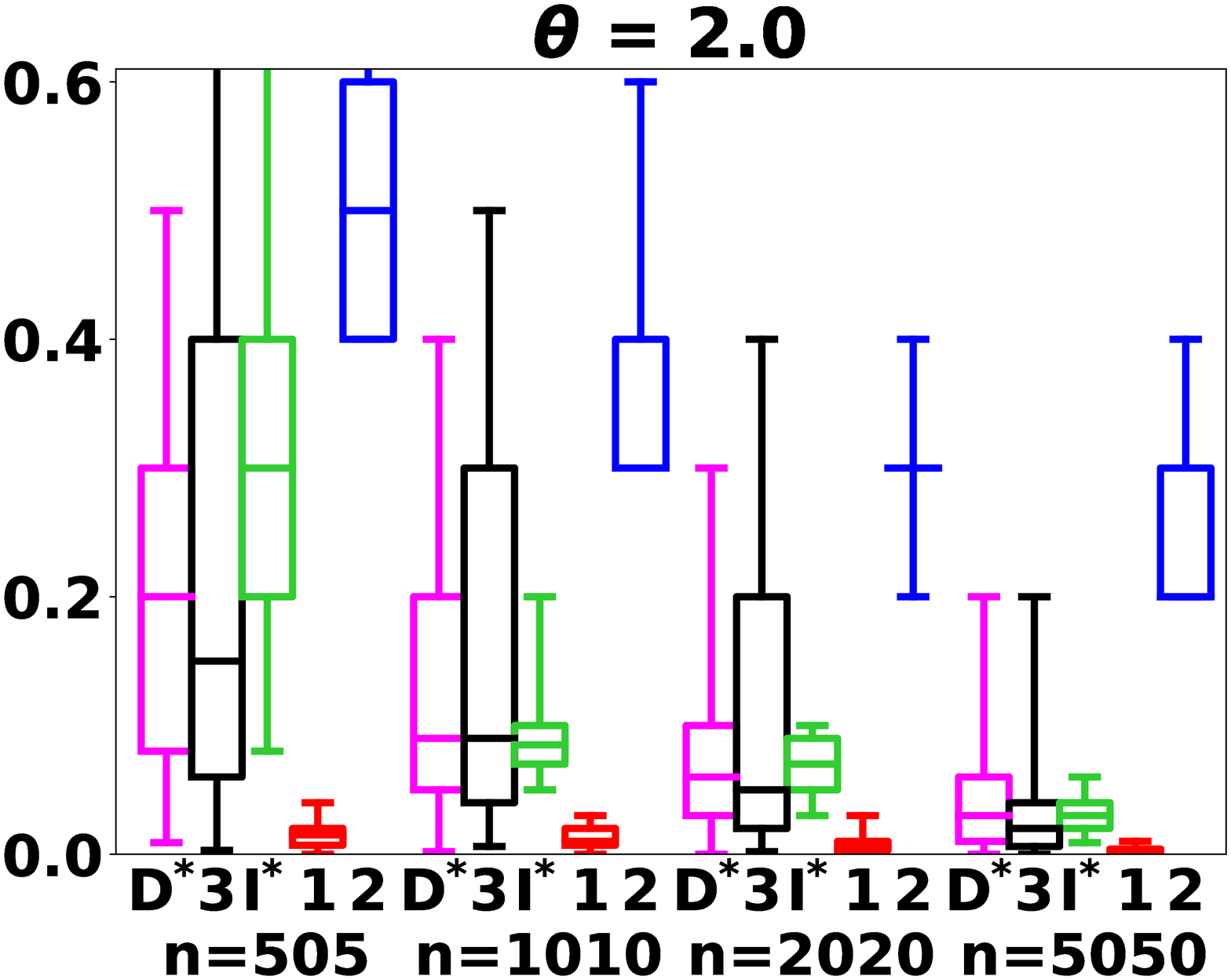}
    \end{subfigure}
    \caption{\small {\textbf{(Comparison of the radii specified by Algorithms~\ref{alg:naivesaaradius},~\ref{alg:ersaasameradius}, and~\ref{alg:ersaadiffradius})} Comparison of the optimal covariate-dependent tuning (\texttt{$\text{D}^*$}) and optimal covariate-independent tuning (\texttt{$\text{I}^*$}) of the \tW+OLS radius, the covariate-dependent tuning of the \tW+OLS radius using Algorithm~\ref{alg:ersaadiffradius} (\texttt{3}), and the covariate-independent tuning of the \tW+OLS radius using Algorithm~\ref{alg:naivesaaradius} (\texttt{1}) and Algorithm~\ref{alg:ersaasameradius} (\texttt{2}) for $d_x = 100$. Left: $\theta = 1$. Middle: $\theta = 0.5$. Right: $\theta = 2$.}}
    \label{fig:comp_ols_wass_radii}
\end{figure}

\begin{figure}
    \centering
    \begin{subfigure}[t]{0.33\textwidth}
        \centering
        \includegraphics[width=\textwidth]{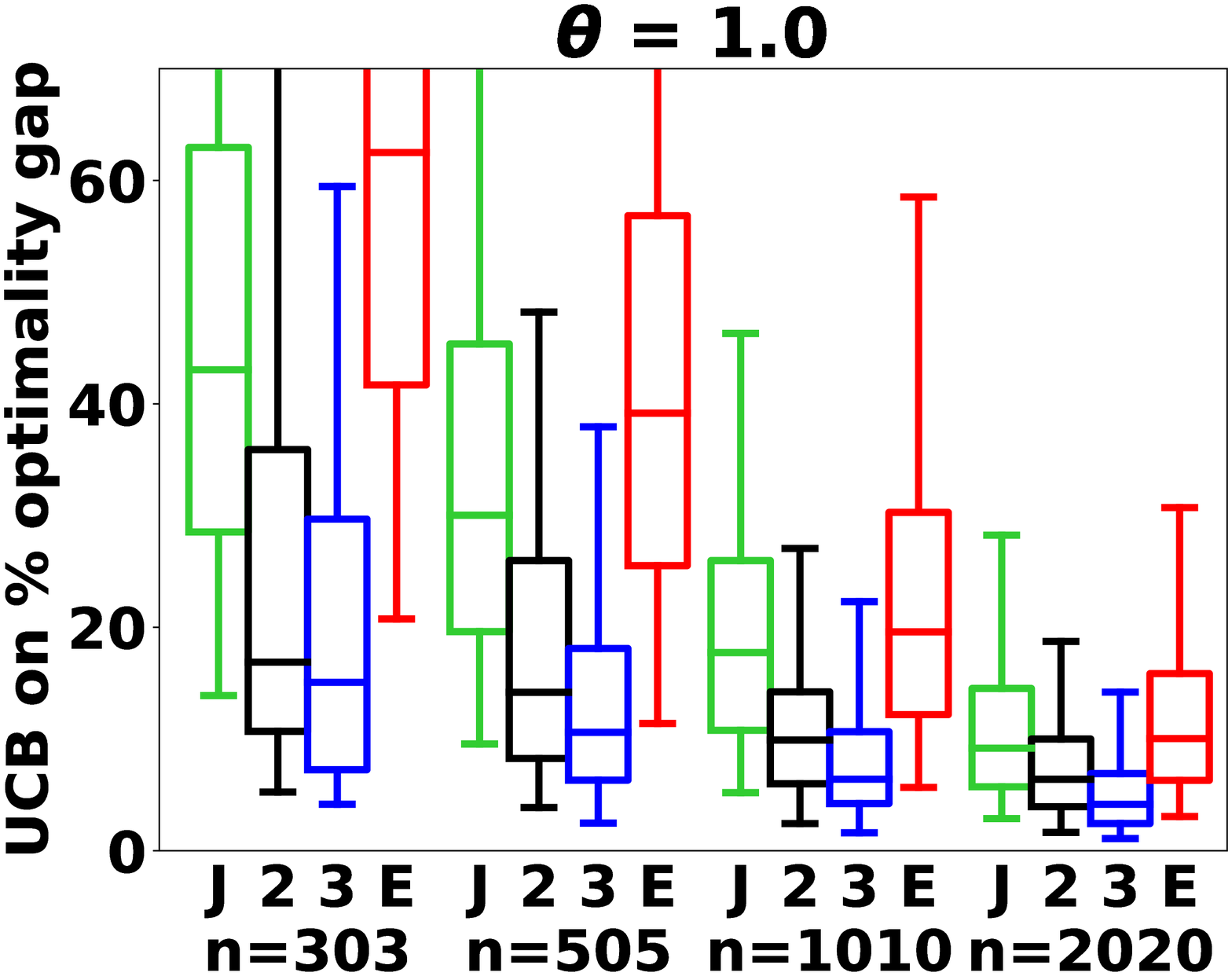}
    \end{subfigure}%
    ~ 
    \begin{subfigure}[t]{0.33\textwidth}
        \centering
        \includegraphics[width=\textwidth]{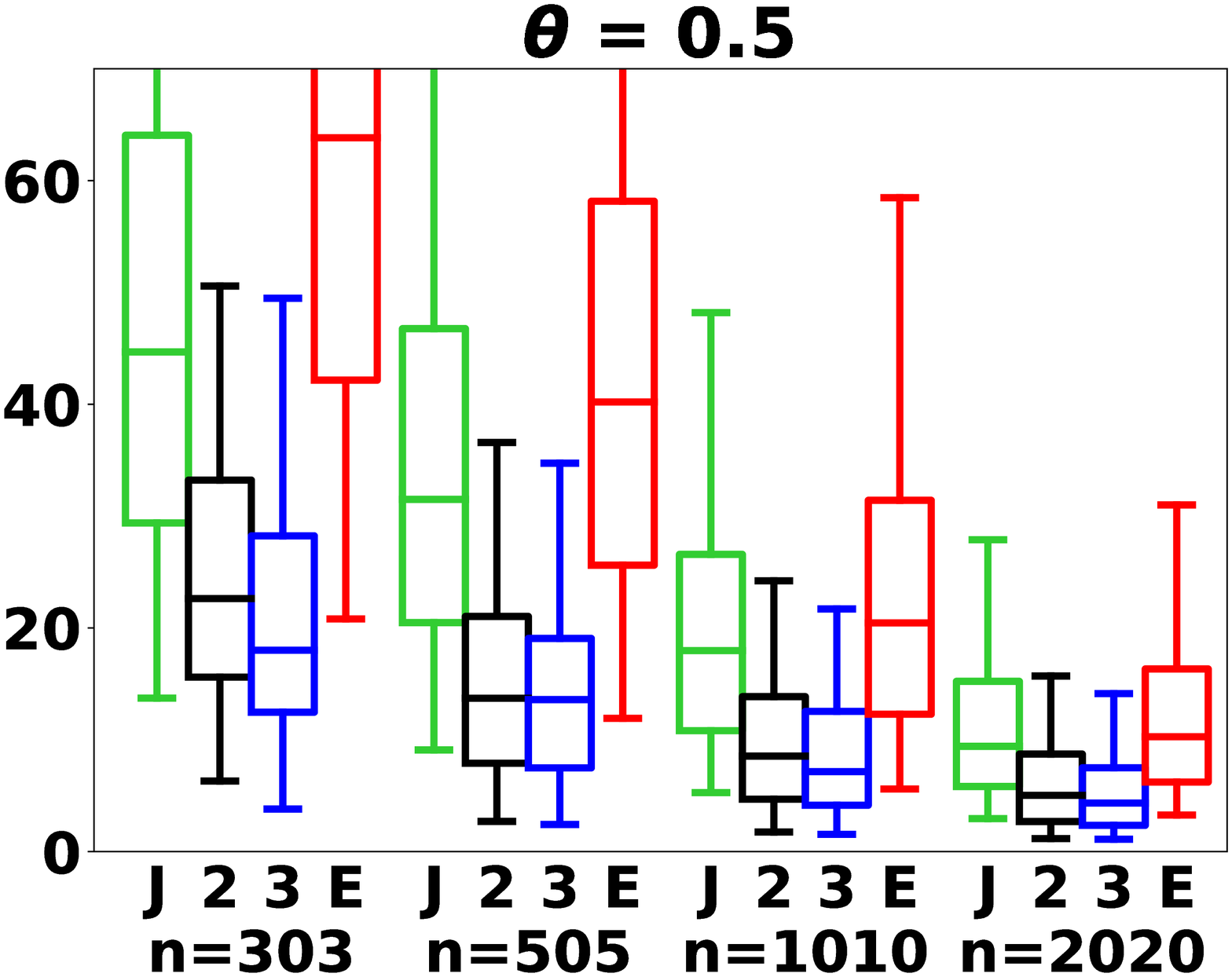}
    \end{subfigure}%
    ~ 
    \begin{subfigure}[t]{0.33\textwidth}
        \centering
        \includegraphics[width=\textwidth]{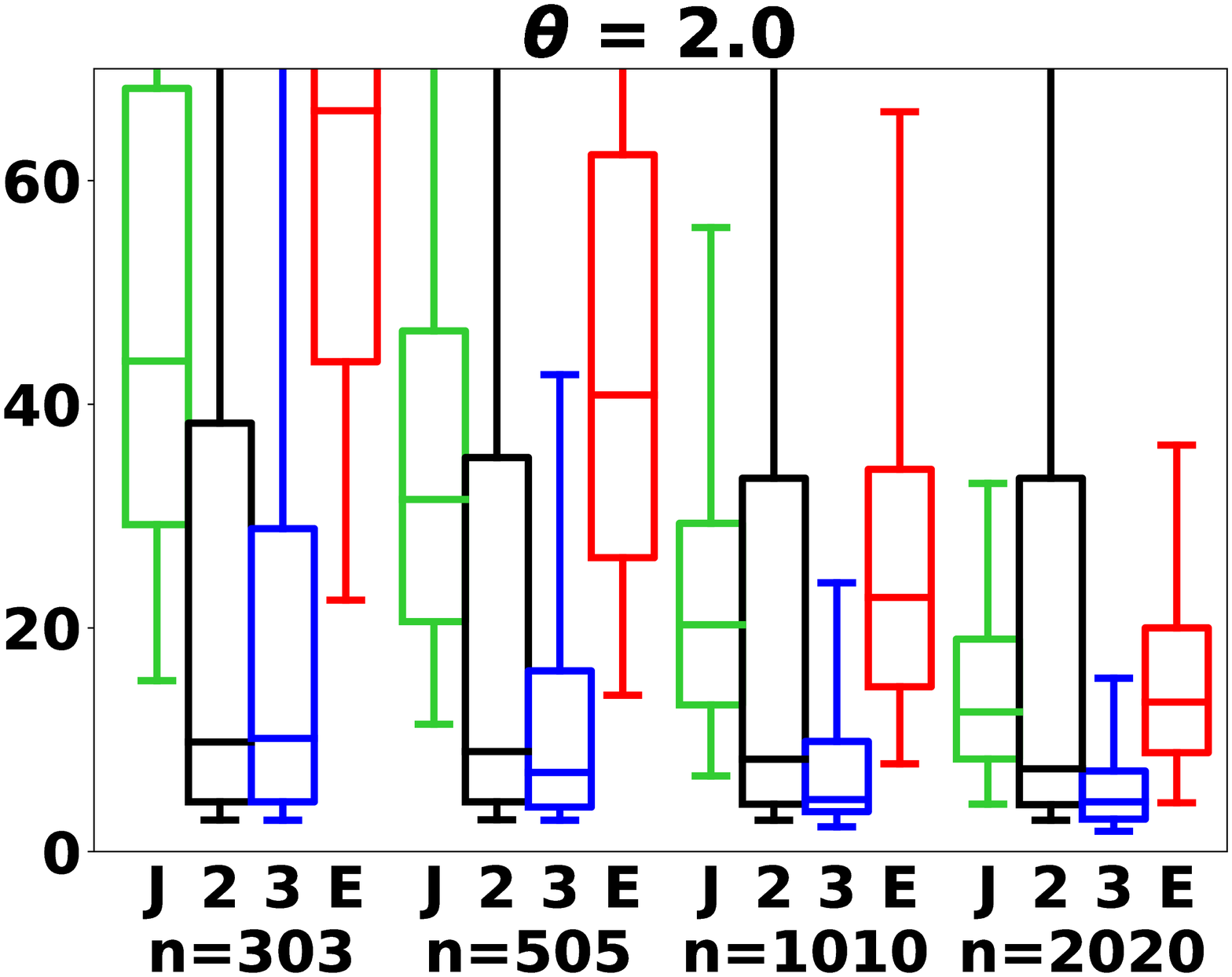}
    \end{subfigure}
    \caption{{\small \textbf{(Comparison of Wasserstein-DRO with J-SAA)} Comparison of the \tE+OLS (\tE) and \tJ+OLS (\tJ) approaches with tuning of the \tW+OLS radius using Algorithms~\ref{alg:ersaasameradius} (\texttt{2}) and~\ref{alg:ersaadiffradius} (\texttt{3}) for $d_x = 100$. Left: $\theta = 1$. Middle: $\theta = 0.5$. Right: $\theta = 2$.}}
    \label{fig:comp_ols_wass_jack}
\end{figure}

\end{document}